\newcommand{\A}{\mathbb{A}}
\newcommand{\C}{\mathbb{C}}
\newcommand{\Q}{\mathbb{Q}}
\newcommand{\Z}{\mathbb{Z}}
\newcommand{\wh}{\widehat}
\newcommand{\codim}{\text{codim}}
\newcommand{\op}{\operatorname}
\newcommand{\pic}{\op{Pic}}
\newcommand{\im}{\op{im}}
\newcommand{\wrt}{w.r.t.\ }
\newtheorem{theorem}{Theorem}[section]
\newtheorem{remark}[theorem]{ Remark}
\newtheorem{corollary}[theorem]{Corollary}
\newtheorem{obs}[theorem]{Observation}
\newtheorem{proposition}[theorem]{Proposition}
\newtheorem{lemma}[theorem]{Lemma}
\newtheorem{definition/lemma}[theorem]{Definition/Lemma}
\newtheorem{defi}[theorem]{Definition}
\title[Embedded subgroup extremal rays]{Extremal rays of the embedded subgroup saturation cone}
\author{Joshua Kiers}
\begin{document}

\begin{abstract}
We examine the extremal rays of the cone of dominant weights $(\mu, \wh\mu)$ for groups $G\subseteq \wh G$ for which there exists $N \gg0$ such that
$$
\left(V(N\mu)\otimes V(N\wh \mu)\right)^G\ne (0).
$$
We exhibit formulas for a class of rays (``type I'') on any regular face of the cone. These rays are identified thanks to a generalization of Fulton's conjecture, which we prove along the way. We verify that the remaining rays (``type II'') on the face are the images of extremal rays for a smaller cone under a certain map, whose formula is given. A procedure is given for finding the rays of the cone not on any regular face. This is a generalization of the work of Belkale and Kiers on extremal rays for the saturated tensor cone; the specialization is given by $\wh G = G\times G$ with the diagonal embedding of $G$. We include several examples to illustrate the formulas. 
\end{abstract}

\maketitle

\section{Introduction}

In this paper we extend the main results of \cite{BKiers} on the extremal rays of the saturated tensor cone. For a connected semisimple complex algebraic group $G$ and fixed maximal toral and Borel subgroups $T\subset B$, the saturated tensor cone $\mathcal{C}(G)$ consists of triples of dominant 
weights $\lambda, \mu,\nu:T\to \C^*$ such that $\lambda+\mu+\nu$ is in the root lattice for $G$ and the tensor product of irreducible representations 
$$
V(N\lambda)\otimes V(N\mu)\otimes V(N\nu)
$$
has a nontrivial subspace of $G$-invariants for some $N>0$. See \cite{Kumar} for a survey of the study of this cone, which has been studied extensively, since an original conjecture of Horn on eigenvalues of a sum of Hermitian matrices, with contributions from \cite{Kly, Bel, KTW, BeS, KLM, BK, Ress, RessII}. 

A more general setup is the following: let $\wh G$ be a connected semisimple complex algebraic group, and let $G\subseteq \wh G$ be a connected reductive subgroup.  Let $T\subseteq B$, $\wh T\subseteq \wh B$ be fixed maximal tori and Borel subgroups for $G$ and $\wh G$ satisfying $T\subseteq \wh T$ and $B\subseteq \wh B$.  The saturated tensor cone $\mathcal{C}(G\hookrightarrow \wh G)$ is the semigroup consisting of pairs of dominant (w.r.t. $B$, $\wh B$) weights $\mu, \wh \mu$ s.t. 
$$
\dim \left(V(N\mu)\otimes V(N\wh \mu)\right)^G > 0
$$
for some $N>0$. This cone was analyzed in \cite{BeS} and \cite{Ress,RessII}, and it is the natural extension of $\mathcal{C}(G)$ to a much broader range of examples and applications (see for example \cite{DYNKIN} for a comprehensive study of the possible embeddings $G\subseteq \wh G$).
When $G$ is diagonally embedded in $G\times G$, one recovers $\mathcal{C}(G) = \mathcal{C}(G\xrightarrow{\text{diag}}G\times G)$. 

Our main results are formulas for the extremal rays of the rational cone $\mathcal{C}(G\hookrightarrow \wh G)_\Q:=\mathcal{C}(G\hookrightarrow \wh G)\otimes_{\Z_{\ge 0}}\Q_{\ge 0}$, generalizing the formulas given in \cite{BKiers} for $\mathcal{C}(G)_\Q$ by adapting them to the complexities of the Lie combinatorics in the $G\hookrightarrow \wh G$ context. There are a few differences:
\begin{enumerate}[label=(\Roman*)]
\item Unlike in \cite{BKiers}, extremal rays of $\mathcal{C}(G\hookrightarrow \wh G)$ need not lie on a \emph{regular face} - that is, the locus where one of the Schubert calculus inequalities holds with equality. We only present formulas for rays on regular faces; however, the other rays are easy to check for: see Observation \ref{obviate} and the discussion preceding.

\item The formulas for extremal rays on a regular face $\mathcal{F}$ are most conveniently expressed and used when $\wh B$ is in good position relative to part of the data defining $\mathcal{F}$. This may not be the case a priori, but we can conjugate $\wh B$ suitably (depending on $\mathcal{F}$) to account for this; see Section \ref{changingtime}. In \cite{BKiers}, the choice $\wh B = B\times B$ is already in good position for every face, so this issue did not arise. 

\item  Underpinning the main results of \cite{BKiers} was the main theorem of \cite{BKR}:  a generalization of a conjecture of Fulton. We need a new (more general) case of this conjecture, so we prove it here. 

\item In the case $G\xrightarrow{\op{diag}} G\times G$, we could just as well have assumed $G$ is reductive. If $G' = [G,G]$ is the semisimple part of $G'$, then there is a natural map $\mathcal{C}(G)\twoheadrightarrow \mathcal{C}(G')$ given by the restriction of dominant weights of $T$ to $T\cap [G,G]$, and moreover the fibres of this map are readily identified. 

In the case $G\subseteq \wh G$ with both $G,\wh G$ reductive, there is still a natural map 
$$
\mathcal{C}(G\hookrightarrow \wh G) \twoheadrightarrow \mathcal{C}(G\cap [\wh G,\wh G] \hookrightarrow [\wh G,\wh G])
$$
with identifiable fibres. However, $G\cap [\wh G,\wh G]$ need not be semisimple. Therefore, while we can reduce to $\wh G$ semisimple, for full generality we only assume $G$ to be reductive. In practice this means that our type I ray formulas will include an additional parameter, as compared to \cite{BKiers}, describing the action of $Z^0(G)$. Thanks to a suggestion from P. Belkale, we can calculate this parameter using $T$-equivariant cohomology. 

\item In order for one to use the aforementioned $T$-equivariant cohomology formula, one needs to have a means of calculating not only cup products but also pullbacks. We outline a trick for ``approximating'' double Schubert polynomials in all types that turns out to be sufficient for such calculations. 
\end{enumerate}

\subsection{Facets of $\mathcal{C}(G\hookrightarrow \wh G)$} 

We make one more simplifying assumption on the embedding $G\hookrightarrow \wh G$: 
\begin{align}\label{assumB}
&\text{assume there is no nontrivial connected normal subgroup}\\\nonumber&\text{$N\trianglelefteq G$ such that $N\trianglelefteq \wh G$ as well}
\end{align} 
(equivalently, no nontrivial ideal of the Lie algebra $\mathfrak{g}$ is also an ideal of $\wh{\mathfrak{g}}$). Indeed, if such a subgroup $N$ exists, then one finds that $N,G/N,$ and $\wh G/N$ are reductive (in fact $\wh G/N$ is semisimple if $\wh G$ is) by examining the Lie algebras. Furthermore there is a natural isomorphism of cones 
$$
\mathcal{C}(G\hookrightarrow \wh G) \simeq \mathcal{C}(G/N\hookrightarrow \wh G/N)\times \mathcal{C}(N\hookrightarrow N),
$$
the latter factor being trivial to describe: $\mathcal{C}(N\hookrightarrow N) = \{(\lambda,\mu): \lambda=-\mu\}$.  As shown in \cite{Ress}, assumption (\ref{assumB}) is equivalent to the condition that $\mathcal{C}(G\hookrightarrow\wh G)$ have nonempty interior inside the ambient vector space of all rational weights $(\mu, \wh \mu)$. 

Now let $\delta:\C^*\to T$ be a one-parameter subgroup such that $\alpha(\delta)\ge 0$ for each positive root $\alpha$ of $G$; that is, $\delta$ is $G$-dominant. One defines a parabolic subgroup $P(\delta)\subseteq G$ by 
$$
P(\delta):=\{g\in G: \lim_{t\to 0} \delta(t) g\delta(t)^{-1}\text{~exists in~}G\}.
$$
The dominance assumption on $\delta$ ensures $B\subseteq P(\delta)$. Viewing $\delta$ naturally as a cocharacter of $\wh T$, one also defines the parabolic subgroup $\wh P(\delta)$ of $\wh G$ in the same way, although notably $\wh P(\delta)$ need not contain $\wh B$ as a subgroup. By definition, $P(\delta) = \wh P(\delta)\cap G$. 

There are associated Levi subgroups $L(\delta)\subseteq P(\delta)$ and $\wh L(\delta)\subseteq \wh P(\delta)$ defined by 
$$
L(\delta):=\{g\in G: \lim_{t\to 0} \delta(t) g\delta(t)^{-1}=g\},
$$
and similarly for $\wh L(\delta)$. Again $L(\delta) = \wh L(\delta)\cap G$. When context makes it clear, we may omit the reference to $\delta$ and simply write $P,\wh P, L, \wh L$. 

The cohomology rings $H^*(G/P)$ and $H^*(\wh G/\wh P)$ have distinguished bases given by the Schubert varieties: for $w\in W/W_\delta$, define $X_w^P:=\overline{BwP}\subseteq G/P$; similarly define $\wh X_{\wh w}^{\wh P}:= \overline{\wh B \wh w\wh P}\subseteq \wh G/\wh P$ for $\wh w\in \wh W/\wh W_{\delta}$ (here $W_\delta$ is the stabilizer subgroup of $\delta$ in $W$, similarly $\wh W_{\delta}$ in $\wh W$.) We write $X_w$ and $\wh X_{\wh w}$ when the reference to $P$, $\wh P$ is clear. The Schubert basis consists of the Poincar\'e duals, $[X_w]$ (resp., $[\wh X_{\wh w}]$), of the homology fundamental classes of the Schubert varieties. 
Moreover, the equivariant cohomology rings $H^*_T(G/P)$ and $H^*_{\wh T}(\wh G/\wh P)$ also have distinguished Schubert bases as modules over $H^*_T(pt)$ (respectively, $H^*_{\wh T}(pt)$). We denote the equivariant class of a Schubert variety by $[X_w]^T$ or $[\wh X_{\wh w}]^{\wh T}$.

Say a $G$-dominant one-parameter subgroup $\delta$ is \emph{indivisible} if it cannot be written $\delta = \bar{\delta}^n$ as the power of another such OPS. 
Following \cite{Ress}, a (nonzero) indivisible $G$-dominant one-parameter subgroup $\delta$ is called \emph{admissible} (or \emph{special} in \cite{Kumar}) w.r.t. $(G,\wh G)$ if the span $\C \dot\delta\subset \mathfrak{h} = \op{Lie}(T)$ is orthogonal to a hyperplane of $\mathfrak{h}^*$ spanned by a subset of $\op{Wt}_{\mathfrak{h}}(\wh{\mathfrak{g}}/\mathfrak{g})$, the set of $\mathfrak{h}$-weights of $\wh{\mathfrak{g}}/\mathfrak{g}$. Equivalently, $\C \dot\delta$ equals
 the common kernel of the $\mathfrak{h}$-weights of $\wh {\mathfrak{l}}(\delta)/\mathfrak{l}(\delta)$. Let $\mathfrak{S}$ denote the set of all admissible indivisible $G$-dominant one-parameter subgroups. It's easy to see $\mathfrak{S}$ is a finite set; moreover it is nonempty as $\op{Wt}_{\mathfrak{h}}(\wh{\mathfrak{g}}/\mathfrak{g})$ spans $\mathfrak{h}^*$ 
(this follows from our assumption (\ref{assumB}): by the proof of \cite[Proposition 12]{Ress},
$\mathfrak{h}\to \op{End}(\wh{\mathfrak{g}}/\mathfrak{g})$ is injective and induces a surjection from the abstract span of $\op{Wt}_{\mathfrak{h}}(\wh{\mathfrak{g}}/\mathfrak{g})$ to $\mathfrak{h}^*$.)

Let $\phi_\delta$ denote the induced map $G/P\to \wh G/\wh P$, and $\phi_\delta^*$ the corresponding pullback in (equivariant) cohomology. In \cite{RessRich}, Ressayre and Richmond define a deformed pullback 
$$
\phi_\delta^\odot: H^*(\wh G/\wh P; \odot_0)\to H^*(G/P; \odot_0)
$$
which is a ring homomorphism for the Belkale-Kumar deformed product in cohomology of flag varieties \cite{BK}. 

We recall now the theorem of Ressayre \cite{Ress,RessRich} describing the cone $\mathcal{C}(G\hookrightarrow \wh G)$ with a minimal set of inequalities:  
\begin{theorem}\label{christus}
A pair of dominant weights $\mu,\wh \mu$ is in $\mathcal{C}(G\hookrightarrow \wh G)$ if and only if for every $\delta\in \mathfrak{S}$ and every pair $w,\wh w\in W/W_\delta\times \wh W/\wh W_\delta$ such that 
\begin{align}\label{prod}
\phi_{\delta}^\odot\left([\wh X_{\wh w}]\right)\odot_0 [X_w] = [X_e]
\end{align}
in $H^*(G/P;\odot_0)$, the inequality
$$
\mu(w\dot \delta)+\wh \mu(\wh w\dot \delta)\le 0 
$$
holds. Furthermore, no inequalities may be removed from this list. 
\end{theorem}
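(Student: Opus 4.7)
The plan is to recognize the saturation condition as a GIT semistability statement on a product of flag varieties and apply Mumford's numerical criterion, then refine the resulting family of linear inequalities using Schubert calculus and the Belkale--Kumar deformed product to isolate the facet-defining ones.

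For the translation step, I would use Borel--Weil to identify $(V(N\mu) \otimes V(N\wh\mu))^G$ (up to the usual $w_0$-twist) with the space of $G$-invariant global sections of a $G$-linearized line bundle $\mathcal{L}_{\mu,\wh\mu}$ on $X := G/B \times \wh G/\wh B$. Thus $(\mu,\wh\mu) \in \mathcal{C}(G \to \wh G)$ if and only if $X$ admits a $G$-semistable point for some positive power of $\mathcal{L}_{\mu,\wh\mu}$. The Hilbert--Mumford criterion then reduces semistability to the existence, for every $G$-dominant one-parameter subgroup $\delta$ of $H$, of an $x \in X$ with Mumford numerical function $\mu^{\mathcal{L}}(x, \delta) \le 0$. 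A direct weight computation shows that on the $B \times \wh B$-orbit of $(w B, \wh w \wh B)$, this numerical function takes the value $\mu(w \dot\delta) + \wh\mu(\wh w \dot\delta)$, producing the stated family of inequalities indexed by $\delta$ together with $(w, \wh w) \in W/W_\delta \times \wh W/\wh W_\delta$.

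To cut this infinite family down to the finite list in the theorem, I would apply Kempf's theorem on optimal destabilizing one-parameter subgroups together with a study of the intersection of $\phi_\delta^{-1}(\wh B \wh w \wh P)$ with $BwP$ inside $G/P$. Ressayre's key observation is that the existence of a semistable point realizing equality in a given inequality is equivalent to such a generic translate-intersection being transverse and reducing to a single point, which is the geometric content of a Schubert-calculus identity. Passing to the Belkale--Kumar Levi-movable subring and using Ressayre--Richmond's deformed pullback $\phi_\delta^\odot$ converts this into the clean form $\phi_\delta^\odot([\wh X_{\wh w}]) \odot_0 [X_w] = [X_e]$. The reduction of $\delta$ to the finite set $\mathfrak{S}$ follows because the inequalities depend only on the chamber of $G$-dominant one-parameter subgroups with fixed parabolic type $P(\delta)$; in cases \ref{caseA} and \ref{caseB} these chambers are described explicitly via the compatibility (resp.\ incompatibility) of ideals in $\mathfrak{g}$ and $\wh{\mathfrak{g}}$.

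The main obstacle, and the bulk of the technical work, is the irredundancy claim in case \ref{caseB}. Following Ressayre's strategy, for each datum $(\delta, w, \wh w)$ satisfying the deformed-product condition, one must produce an explicit regular face of $\mathcal{C}(G \to \wh G)$ whose supporting hyperplane is precisely the corresponding inequality. The key notion is that of a \emph{well-covering pair}: a Schubert-type subvariety of $X$ on which the natural GIT quotient map for the relevant Levi subgroup is generically \'etale of degree one, which forces the inequality to be tight on a codimension-one locus while strict nearby. Establishing the bijection between well-covering pairs and pairs satisfying $\phi_\delta^\odot([\wh X_{\wh w}]) \odot_0 [X_w] = [X_e]$ requires the full compatibility of the Belkale--Kumar product with reduction to Levi subgroups, and the nonempty-interior hypothesis of case \ref{caseB} is precisely what makes these inequalities genuinely distinct.
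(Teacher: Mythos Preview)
The paper does not give its own proof of this theorem; it is stated as a result of Ressayre (with the deformed-pullback formulation from Ressayre--Richmond) and cited to \cite{Ress,RessRich}. Your sketch is a faithful high-level outline of Ressayre's actual argument: the Borel--Weil translation to GIT on $G/B\times\wh G/\wh B$, the Hilbert--Mumford/Kempf analysis to produce linear inequalities indexed by $(\delta,w,\wh w)$, the reduction to the finite set $\mathfrak{S}$ via the chamber structure on dominant one-parameter subgroups, the refinement to Levi-movable data through the Belkale--Kumar product and the deformed pullback $\phi_\delta^\odot$, and the irredundancy in case~(B) via well-covering pairs under the nonempty-interior hypothesis.

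Two small points. First, your quantifier order in the Hilbert--Mumford step is garbled: semistability of a \emph{fixed} point $x$ requires $\mu^{\mathcal L}(x,\delta)\ge 0$ for \emph{all} one-parameter subgroups $\delta$ (with the paper's sign convention, cf.\ Section~\ref{extraz}); the passage from ``some semistable point exists'' to a list of inequalities in $(\mu,\wh\mu)$ is not ``for each $\delta$ find an $x$'' but rather an analysis of when the unstable locus covers all of $X$, which is governed by which Schubert intersections $\phi_\delta(gX_w)\cap \wh g\wh X_{\wh w}$ are generically nonempty. Second, your description of the reduction to $\mathfrak{S}$ (``chambers with fixed parabolic type'') is correct in spirit for case~(A) but in case~(B) the condition singling out $\mathfrak{S}$ is a genuine arithmetic one on the weights of $\wh{\mathfrak g}/\mathfrak g$ (see Section~\ref{prelims}), not merely a chamber reduction; this is where the hypothesis that no ideal of $\mathfrak g$ is an ideal of $\wh{\mathfrak g}$ enters.
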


Thus for $\delta\in \mathfrak{S}$ and $w,\wh w$ satisfying (\ref{prod}), we may define the \emph{regular facet} $\mathcal{F}(w,\wh w,\delta)$ of $\mathcal{C}(G\hookrightarrow \wh G)$ to be 
$$
\mathcal{F}(w,\wh w,\delta) = \{\mu,\wh \mu\in \mathfrak{h}^*_{\Z,+}\times \wh{\mathfrak{h}}^*_{\Z,+}: \mu(w\dot \delta)+\wh \mu(\wh w\dot \delta)=0\} \cap \mathcal{C}(G\hookrightarrow \wh G),
$$
where $\mathfrak{h}^*_{\Z,+}$ denotes the set of dominant weights for $G$ w.r.t. $B$, and $\wh{\mathfrak{h}}^*_{\Z,+}$ sim. for $\wh G$ w.r.t. $\wh B$. It is a face of codimension one, not equal to one of the facets determining the dominant chamber.

In the sequel, we fix a regular facet 
of the cone and study its extremal rays. Of course there could be (a priori) other extremal rays of $\mathcal{C}(G\hookrightarrow \wh G)$. (In the case of $G\xrightarrow{\op{diag}}G\times G$, this was not so, see \cite[Lemma 37]{BKiers}.) However, these extraneous extremal rays are only of a certain type:

\begin{obs}\label{obviate}
If $(\mu,\wh \mu)$ gives an extremal ray of $\mathcal{C}(G\hookrightarrow \wh G)$ and does not belong to any regular facet, then $\mu=0$ and, up to scaling, $\wh \mu$ is a fundamental dominant weight.
\end{obs}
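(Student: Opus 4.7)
The plan is to argue by a codimension count on the active facet equalities, combined with the irreducibility of $V(N\mu)$ as a $G$-module. First I would recall from Theorem \ref{christus} that $\mathcal{C}(G\to \wh G)_\Q$ inside $\mathfrak{h}^*_{\Q,+}\times\wh{\mathfrak{h}}^*_{\Q,+}$ is cut out by the Ressayre inequalities together with the simple-coroot dominance inequalities $\mu(\alpha_i^\vee)\ge 0$ and $\wh\mu(\wh\alpha_j^\vee)\ge 0$; the facets coming from Ressayre inequalities are precisely the regular faces. By hypothesis $(\mu,\wh\mu)$ strictly satisfies every Ressayre inequality, so the extremal ray it generates is cut out (inside the ambient dominant chamber) entirely by equalities of dominance type.

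Working next in case \ref{caseB}, where the cone is full-dimensional in the $(r+\wh r)$-dimensional ambient space (with $r := \op{rank} G$ and $\wh r := \op{rank} \wh G$), I would observe that a one-dimensional face demands $r+\wh r-1$ active, linearly independent equalities. The $r+\wh r$ dominance equations are manifestly linearly independent---each depends on either $\mu$ or $\wh\mu$ alone, and the simple coroots form bases of $\mathfrak{h}_\Q$ and $\wh{\mathfrak{h}}_\Q$---so exactly one of them fails to be tight, leaving two possibilities: (i) $\mu=0$ with $\wh\mu$ proportional to some fundamental dominant weight $\wh\omega_j$, or (ii) $\wh\mu=0$ with $\mu$ proportional to some fundamental dominant weight $\omega_i$. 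Possibility (ii) is ruled out because membership in $\mathcal{C}(G\to \wh G)$ would require $V(N\omega_i)^G\ne 0$ for some $N\ge 1$, contradicting the irreducibility and nontriviality of $V(N\omega_i)$ as a $G$-module. Only possibility (i) survives, matching the conclusion.

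The hard part will be case \ref{caseA}, where the cone sits inside a proper linear subspace $L$ arising from the common ideal $\mathfrak{g}_0\subseteq \mathfrak{g}\cap\wh{\mathfrak{g}}$. The codimension count must be redone inside $L$, and one must check that the linear equations defining $L$ do not conspire with the dominance walls to produce a third class of extremal rays. I expect this to follow by decomposing $\mathfrak{g}$ and $\wh{\mathfrak{g}}$ along $\mathfrak{g}_0$ and reducing to case \ref{caseB} on the complementary factors, after which the preceding argument applies. One final remark: the observation only constrains the \emph{shape} of such extraneous rays; whether a given candidate $(0,c\wh\omega_j)$ actually lies in $\mathcal{C}(G\to \wh G)$ is a separate existence question depending on the embedding $G\subseteq \wh G$.
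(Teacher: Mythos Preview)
Your argument in case \ref{caseB} is correct and coincides with the paper's: the paper simply asserts ``then it is an extremal ray for the dominant cone $\mathfrak{h}^*_{\Q,+}\times\wh{\mathfrak{h}}^*_{\Q,+}$ itself'' and rules out $(\omega_i,0)$ exactly as you do, without spelling out the codimension count.

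Your concern about case \ref{caseA} is misplaced --- no decomposition along the common ideal is needed, and this is not ``the hard part.'' The implication \emph{extremal ray of $\mathcal{C}$ off every Ressayre hyperplane $\Rightarrow$ extremal ray of $D:=\mathfrak{h}^*_{\Q,+}\times\wh{\mathfrak{h}}^*_{\Q,+}$} holds with no dimension hypothesis on $\mathcal{C}$: if $v$ generates such a ray, every Ressayre inequality is strict at $v$, so a neighbourhood of $v$ in $D$ already lies in $\mathcal{C}$; the minimal face of $D$ containing $v$ must then be one-dimensional, since otherwise $v$ would be the midpoint of two distinct nearby points of $D$, hence of $\mathcal{C}$, contradicting extremality in $\mathcal{C}$. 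This replaces your codimension count and works uniformly. Alternatively, in case \ref{caseA} the hypothesis is vacuous: $\mathcal{C}$ surjects onto each dominant chamber under the two coordinate projections (every irreducible $\wh G$-module restricts nontrivially to $G$, and every $G$-irreducible occurs in the restriction of some $\wh G$-irreducible since a faithful $\wh G$-module restricts to a faithful $G$-module), so $\mathcal{C}$ lies in no dominance wall; being lower-dimensional, $\mathcal{C}$ must then lie in some Ressayre hyperplane, whence every point of $\mathcal{C}$ is already on a regular face.
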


We discuss these extraneous rays in Section \ref{extraz}, culminating in the following theorem, which decreases the required inequalities for verifying whether a candidate $(0,\wh \omega_j)$ is an extremal ray. Here $\mathfrak{T}$ is a finite set (defined precisely in \S12) of indivisible one-parameter subgroups containing $\mathfrak{S}$; moreover $\mathfrak{T} = \mathfrak{S}$ if $\op{Wt}_{\mathfrak{h}}(\wh{\mathfrak{g}}/\mathfrak{g}) = \op{Wt}_{\mathfrak{h}}(\wh{\mathfrak{g}})$. 
\begin{theorem}
The following are equivalent:
\begin{enumerate}[label=(\alph*)]
\item  the ray given by $(0,\wh \omega_j)$ is extremal;
\item $(0,\wh\omega_j)\in \mathcal{C}(G\hookrightarrow \wh G)$;
\item the inequality
$$
\wh \omega_j(\wh w \dot \delta)\le 0
$$
holds for every $\delta\in \mathfrak{T}$ and $\wh w\in \wh W$ such that $\phi_\delta^\odot[\wh X_{\wh w}] = [X_e]$. 
\end{enumerate}
\end{theorem}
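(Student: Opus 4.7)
The implication (a) $\Rightarrow$ (b) is immediate. For (b) $\Rightarrow$ (a), we use that $\wh\omega_j$ spans an edge of the dominant Weyl chamber of $\wh G$: if $(0,\wh\omega_j) = (\mu_1,\wh\mu_1) + (\mu_2,\wh\mu_2)$ with summands in $\mathcal{C}(G\to\wh G)_\Q$, the dominant $\mu_i$ sum to zero and hence both vanish, while the dominant $\wh\mu_i$ sum to $\wh\omega_j$ and must therefore each be a nonnegative multiple of $\wh\omega_j$, yielding extremality. For (b) $\Rightarrow$ (c): given any $(\delta, \wh w)$ with $\phi_\delta^\odot[\wh X_{\wh w}] = [X_e]$, take $w$ to represent the fundamental class (so $[X_w] = 1$); then (\ref{prod}) holds and the Ressayre inequality from Theorem \ref{christus}, specialized via $\mu = 0$, reads $\wh\omega_j(\wh w\dot\delta) \le 0$.

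The substantive direction is (c) $\Rightarrow$ (b). By Theorem \ref{christus} it suffices to verify $\wh\omega_j(\wh w\dot\delta) \le 0$ for every $(\delta, w, \wh w)$ satisfying (\ref{prod}). The guiding observation is that $s_i\wh\omega_j = \wh\omega_j - \delta_{ij}\alpha_i$ equals $\wh\omega_j$ whenever $i\ne j$, so that $\wh\omega_j((s_i\wh w)\dot\delta) = \wh\omega_j(\wh w\dot\delta)$ for $i\ne j$. Hence the inequality value depends only on the coset $\wh W_{P_j}\wh w$, where $\wh W_{P_j} := \langle s_i : i \ne j\rangle$, and we may replace $\wh w$ by the unique longest element $\wh w^*$ of this coset; by construction $\wh w^*$ satisfies the Bruhat-maximality condition (c)(ii) and gives the same inequality value.

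The main obstacle is the cohomological lifting: starting from the existence of some $w$ with $\phi_\delta^\odot[\wh X_{\wh w}] \odot_0 [X_w] = [X_e]$, one needs to deduce $\phi_\delta^\odot[\wh X_{\wh w^*}] = [X_e]$ for the enlarged representative $\wh w^*$, so that condition (c) applies directly. My plan would be to pass from $\wh w$ to $\wh w^*$ through successive left-multiplications by simple reflections $s_i$ ($i \ne j$) that increase length, tracking the effect on Schubert classes via divided-difference-type operators in the BK-deformed cohomology ring and correlating with a corresponding simplification of $[X_w]$; the hypothesis on the $\mathfrak{h}$-weights of $\wh{\mathfrak{g}}/\mathfrak{g}$ should enter to preclude degenerate pullback behaviour. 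Establishing this compatibility is the chief technical hurdle.
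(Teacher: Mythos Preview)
Your proofs of (a)$\Leftrightarrow$(b) and (b)$\Rightarrow$(c) are fine. The gap is in (c)$\Rightarrow$(b), and it is a real one: the ``cohomological lifting'' step you flag as the main obstacle simply does not hold in general. For an arbitrary $(\delta,w,\wh w)$ satisfying (\ref{prod}), the passage from $\wh w$ to the maximal coset representative $\wh w^*$ in $\wh W_{P_j}\wh w$ increases $\ell(\wh w)$ by some integer determined by the combinatorics of $\wh W_{P_j}$, whereas for $\phi_\delta^\odot[\wh X_{\wh w^*}]=[X_e]$ one would need $\ell(\wh w^*)-\ell(\wh w)=\codim(X_w,G/P)$, a completely unrelated quantity. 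There is no mechanism in BK-deformed cohomology that trades left-multiplication on $\wh w$ by reflections in $\wh W_{P_j}$ against the factor $[X_w]$ on the $G$-side; the two Weyl groups interact only through the restriction $\mathfrak{h}\hookrightarrow\wh{\mathfrak{h}}$, not through any divided-difference compatibility of this form.

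The paper's argument is entirely different: it proves the contrapositive of (c)$\Rightarrow$(b) by geometric invariant theory. If $(0,\wh\omega_j)\notin\mathcal{C}(G\to\wh G)$, then every point of $G/B\times\wh G/\wh B$ is unstable for $\mathcal{L}_0\boxtimes\mathcal{L}_{\wh\omega_j}$. Choosing a generic unstable point, one takes its Kempf maximally destabilizing one-parameter subgroup, then replaces it by an extremal ray $\chi$ of a chamber $\mathfrak{h}_{\Q,+}\cap\wh v\wh{\mathfrak{h}}_{\Q,+}$. The hypothesis $\op{Wt}_{\mathfrak{h}}(\wh{\mathfrak{g}}/\mathfrak{g})=\op{Wt}_{\mathfrak{h}}(\wh{\mathfrak{g}})$ is used exactly here, to show $\chi\in\mathfrak{S}$ rather than merely $\chi\in\mathfrak{T}$. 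Levi-movability of the resulting $(w,\wh w)$ is obtained via Ramanan--Ramanathan, and finally the uniqueness part of Kempf's theorem forces the Bruhat-maximality condition in (c) to hold for $\wh w$; the instability then exhibits a failed inequality $\wh\omega_j(\wh w\dot\chi)>0$. Thus the reduction to the ``small'' set in (c) is achieved not by cohomological manipulation but by the optimality properties of Kempf's OPS.
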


\subsection{Change of basis on a regular facet}\label{changingtime}

Suppose $\delta, w,\wh w$ are given as above satisfying (\ref{prod}); that is, $\delta, w, \wh w$ are the data of a regular facet $\mathcal{F}$. The theorems and formulas in the remainder of the paper are easier to describe if $P(\delta)$, $\wh P(\delta)$ are \emph{both} standard parabolics (we are only guaranteed $P(\delta)$ is). To accommodate this, we introduce a specific change of basis on $\wh{\mathfrak{h}}^*$ induced by an element of $\wh W$. Namely, let $\wh v\in \wh W$ satisfy:
\begin{enumerate}[label = (\alph*)]
\item[(H1)] $\wh v\delta$ is dominant \wrt $\wh B$;
\item[(H2)] $\wh v$ has minimal length (\wrt $\wh B$) among all elements satisfying (H1).
\end{enumerate}
Note that $\wh v\delta$ is uniquely determined by $\delta$.

\begin{proposition}\label{moveme}
Set $\wh B':=\wh v^{-1}\wh B\wh v$. Then
\begin{enumerate}[label=(\alph*)]
\item $\delta$ is dominant \wrt $\wh B'$; therefore $\wh B'\subseteq \wh P(\delta)$;
\item $\wh \mu$ is a dominant weight \wrt $\wh B$ $\iff$ $\wh v^{-1} \wh \mu$ is dominant \wrt $\wh B'$; therefore the set $\{\wh \omega_j':=\wh v^{-1} \wh \omega_j\}$ consists of the fundamental weights \wrt $\wh B'$;
\item $B\subseteq \wh B'$;
\item $\phi_\delta^\odot([\wh X'_{\wh v^{-1}\wh w}])\odot_0([X_w]) = [X_e]$ in $H^*(G/P;\odot_0)$, where $\wh X'_{\wh u}$ denotes the subvariety $\overline{\wh B' \wh u\wh P}\subset \wh G/\wh P$ for any $\wh u\in \wh W$.
\end{enumerate}

\end{proposition}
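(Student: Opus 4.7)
The plan is to verify (a)--(d) in sequence, with (a), (b), (d) essentially formal and (c) carrying the real content via a reflection-length argument. For (a), the positive roots of $\wh B'=\wh v^{-1}\wh B\wh v$ are exactly $\wh v^{-1}\wh\Phi^+$; the condition $\wh\beta(\delta)\ge 0$ for every $\wh\beta$ in this set is equivalent (under $\wh\beta=\wh v^{-1}\wh\alpha$) to $\wh\alpha(\wh v\delta)\ge 0$ for every $\wh\alpha\in\wh\Phi^+$, which is (H1). The inclusion $\wh B'\subseteq \wh P(\delta)$ then follows since $\wh H\subseteq \wh P(\delta)$ and each root subgroup $\wh U_{\wh\beta}$ of $\wh B'$ satisfies $\wh\beta(\delta)\ge 0$. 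For (b), the simple roots of $\wh B'$ are $\wh v^{-1}\wh\alpha_i$ with coroots $\wh v^{-1}\wh\alpha_i^\vee$; Weyl-equivariance of the natural pairing gives $(\wh v^{-1}\wh\mu)(\wh v^{-1}\wh\alpha_i^\vee)=\wh\mu(\wh\alpha_i^\vee)$, yielding both the dominance equivalence and, applied to $\wh\mu=\wh\omega_j$, the identification $\wh\omega_j'(\wh v^{-1}\wh\alpha_i^\vee)=\delta_{ij}$ that shows $\{\wh\omega_j'\}$ is the fundamental-weight basis for $\wh B'$.

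The main step is (c). Since $H\subseteq \wh H\subseteq \wh B'$, it suffices to show $U_\alpha\subseteq \wh B'$ for each positive root $\alpha$ of $G$, of which there are two cases as $\delta$ is $G$-dominant. If $\alpha(\delta)>0$, then $U_\alpha$ lies in the unipotent radical $U_P$ of $P(\delta)$, hence in $\wh U_{\wh P}$, which is contained in every Borel of $\wh G$ lying inside $\wh P(\delta)$; by (a), $\wh B'$ is such a Borel. If $\alpha(\delta)=0$, decompose $\mathfrak{g}_\alpha\subseteq \wh{\mathfrak{g}}$ into $\wh H$-weight spaces: each contributing weight $\wh\alpha$ restricts to $\alpha$ on $H$ (so $\wh\alpha(\delta)=0$), is a root of $\wh G$ (since $\alpha\ne 0$), and belongs to $\wh\Phi^+$ because $B\subseteq \wh B$. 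It then suffices to show $\wh v\wh\alpha\in \wh\Phi^+$ for each such $\wh\alpha$. Supposing instead $\wh v\wh\alpha<0$, let $\wh\beta:=-\wh v\wh\alpha\in\wh\Phi^+$; the standard reflection-length formula (valid for any positive root) gives $\ell(s_{\wh\beta}\wh v)<\ell(\wh v)$ because $\wh v^{-1}\wh\beta=-\wh\alpha<0$. Meanwhile $(\wh v\wh\alpha)(\wh v\delta)=\wh\alpha(\delta)=0$, so $s_{\wh\beta}$ fixes $\wh v\delta$, making $s_{\wh\beta}\wh v$ a strictly shorter element with $(s_{\wh\beta}\wh v)\delta=\wh v\delta$ still $\wh B$-dominant, contradicting (H2). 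This reflection-length maneuver, relying on both $\wh\alpha(\delta)=0$ and the minimality (H2), is the central obstacle of the proposition.

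Finally, for (d), left multiplication by $\wh v^{-1}$ on $\wh G/\wh P$ sends $\wh B\wh w\wh P$ to $\wh v^{-1}\wh B\wh v\cdot \wh v^{-1}\wh w\wh P=\wh B'\wh v^{-1}\wh w\wh P$, whence $\wh X'_{\wh v^{-1}\wh w}=\wh v^{-1}\cdot\wh X_{\wh w}$ as subvarieties. Since $\wh G$ is connected, this translation is homotopic to the identity and induces the identity on $H^*(\wh G/\wh P)$; hence $[\wh X'_{\wh v^{-1}\wh w}]=[\wh X_{\wh w}]$ as cohomology classes. Applying the linear map $\phi_\delta^\odot$ preserves this equality, and substituting into the hypothesis (\ref{prod}) yields (d).
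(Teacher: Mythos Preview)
Your arguments for (a), (b), and (c) are correct and match the paper's. For (c), your treatment of the case $\alpha(\delta)>0$ via the unipotent radical (using $U_\alpha\subseteq U_P\subseteq\wh U_{\wh P}\subseteq\wh B'$) is a mild variant of the paper's direct argument that $\langle\wh v\wh\gamma,\wh v\dot\delta\rangle>0$ forces $\wh v\wh\gamma\succ 0$; both are fine. In the case $\alpha(\delta)=0$, your reflection $s_{\wh\beta}\wh v$ with $\wh\beta=-\wh v\wh\alpha$ is exactly the paper's $\wh v s_{\wh\alpha}$, so the minimality contradiction is identical.

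There is a gap in your proof of (d). The deformed pullback $\phi_\delta^\odot$ is not merely the ordinary pullback: by definition it retains the coefficient $d_{\wh w}^u$ only when a numerical condition involving $\wh\rho$ holds. The whole purpose of part (d) is to verify that the hypothesis (\ref{prod}) persists \emph{after} replacing $\wh B$ by $\wh B'$, so the $\phi_\delta^\odot$ appearing in (d) is the one defined relative to $\wh B'$, with $\wh\rho$ replaced by $\wh\rho'=\wh v^{-1}\wh\rho$. Your sentence ``applying the linear map $\phi_\delta^\odot$ preserves this equality'' treats $\phi_\delta^\odot$ as a single Borel-independent map, which is not justified by the definition you have available. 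The paper closes this gap by checking the numerical $L$-movability identity explicitly: since $\wh\rho'=\wh v^{-1}\wh\rho$ and $(\wh v^{-1}\wh w)^{-1}\wh\rho'=\wh w^{-1}\wh\rho$, the required identity for $\wh B'$ reduces to the given one for $\wh B$. Alternatively, you could note geometrically that $(\wh v^{-1}\wh w)^{-1}\wh X'_{\wh v^{-1}\wh w}=\wh w^{-1}\wh v\cdot\wh v^{-1}\wh X_{\wh w}=\wh w^{-1}\wh X_{\wh w}$, so the Levi-movability criterion of \cite{RessRich}*{Proposition 2.3} is literally unchanged. Either observation completes the argument; as written, your proof omits it.
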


See Section \ref{prelims} for a short proof. 

{\bf Therefore we will always assume, in the remainder of this paper, that $P$ and $\wh P$ are both standard parabolics relative to the given pair of Borels $B\subseteq \wh B$.} 
For an example of changing bases, see Section \ref{illustrious}.

\subsection{Type I rays}

Suppose $\delta, w,\wh w$ satisfy (\ref{prod}), $\delta$ not necessarily in $\mathfrak{S}$. Define an associated universal intersection scheme 
$$
\mathcal{X} = \{(g,\wh g,z)\in G/B\times \wh G/\wh B\times \wh G/\wh P : z\in \phi_\delta(gX_w)\cap \wh g\wh X_{\wh w}\}.
$$
By the cup product assumption, $X_w$ and $\phi_{\delta}^{-1}(\wh X_{\wh w})$ generically meet in a single point. Indeed, the natural map $\pi:\mathcal{X}\to G/B\times \wh G/\wh B$ is birational \cite[Corollary 5.3]{BKR}. It may be possible, then, to construct $G$-invariant divisors on $G/B\times \wh G/\wh B$ (which may, via the Borel-Weil correspondence, give rise to extremal rays of $\mathcal{C}(G\hookrightarrow \wh G)_\Q$) by first constructing $G$-invariant divisors on $\mathcal{X}$. We now make this precise. 

Suppose either $v\xrightarrow{\alpha_\ell} w$ or $v\xrightarrow{\wh\alpha_\ell}\wh w$ for some $\ell$, where in either Weyl group we take $u\xrightarrow{\gamma}u'$ to mean $u' = s_\gamma u$ and $\ell(u')=\ell(u)+1$. Then define 
$$
\tilde D(v) = \{(g,\wh g,z)\in G/B\times \wh G/\wh B\times \wh G/\wh P : z\in \phi_\delta(gX_u)\cap \wh g\wh X_{\wh u}\},
$$
where $u=v, \wh u= \wh w$ or $u=w, \wh u = v$, depending on the case above. Let $D(v)$ be the image of $\tilde D(v)$ in $G/B\times \wh G/\wh B$. Our first main theorem concerns the properties of $D(v)$:

\begin{theorem}\label{divisors}
Set $D=D(v)$.
\begin{enumerate}[label = (\alph*)]
\item $D$ is a closed, codimension $1$ subvariety of $G/B\times \wh G/\wh B$.
\item $H^0(G/B\times \wh G/\wh B,\mathcal{O}(mD))^G$ is $1$-dimensional for all $m\ge 0$.
\item Writing $\mathcal{O}(D) = \mathcal{L}_\mu\boxtimes \mathcal{L}_{\wh \mu}$, $\Q_{\ge0}(\mu, \wh \mu)$ gives an extremal ray of $\mathcal{C}(G\hookrightarrow \wh G)_\Q$.
\item $(\mu,\wh \mu)$ lies on $\mathcal{F}(w,\wh w,\delta)$. 
\end{enumerate}
\end{theorem}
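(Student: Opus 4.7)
The plan is to prove (a) and (d) via a dimension count and a torus-weight calculation at a fixed point, use these to pin down the divisor class of $D$, then establish (b) as the main obstacle by leveraging the birationality of $\pi : \mathcal{X} \to G/B \times \wh G/\wh B$, and finally deduce (c) from (b) and the irreducibility of $D$.

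\textbf{Parts (a) and (d).} The covering relation $v \to w$ (or $v \to \wh w$) gives a codimension-one Schubert inclusion $X_v \subset X_w$ (or $\wh X_v \subset \wh X_{\wh w}$), so $\tilde D(v)$ is a closed subscheme of $\mathcal{X}$ of one smaller dimension; replacing a Schubert variety with a codimension-one subvariety in the incidence scheme reduces the total dimension by exactly one, giving $\dim \tilde D(v) = \dim(G/B) + \dim(\wh G/\wh B) - 1$. Since $\tilde D(v) \subset \mathcal{X}$ and the generic fiber of $\pi|_{\mathcal{X}}$ is a single point by \cite{BKR}*{Corollary 5.3}, the map $\tilde D(v) \to D$ is generically injective, so $D$ is a closed, irreducible, codimension-one subvariety of $G/B \times \wh G/\wh B$ (irreducibility coming from the irreducibility of $\tilde D(v)$, which fibers nicely over $\wh G/\wh P$). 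For (d), the $T$-fixed point $(w^{-1}B, \wh w^{-1}\wh B)$ is $\delta$-fixed and lies in the closure of $D$ by the Bruhat decomposition. Any defining $G$-invariant section $s_0$ of $\mathcal{O}(D)$ is in particular $\delta(\C^*)$-invariant, so its $\delta$-weight at every $\delta$-fixed point must vanish; computing this weight on the fiber of $\mathcal{L}_\mu \boxtimes \mathcal{L}_{\wh\mu}$ at $(w^{-1}B, \wh w^{-1}\wh B)$ yields $\mu(w\dot\delta) + \wh\mu(\wh w\dot\delta)$ (up to a standard sign convention), which must therefore equal zero.

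\textbf{Part (b), the main obstacle.} To show $H^0(G/B \times \wh G/\wh B, \mathcal{O}(mD))^G$ is one-dimensional for every $m \geq 0$, I would pull back sections via $\pi^*$ and analyze $\pi^*\mathcal{O}(D)$ relative to the Cartier divisor $\tilde D(v) \subset \mathcal{X}$. Concretely, one expects $\pi^* D = \tilde D(v) + E$ for an effective exceptional divisor $E$ with computable multiplicity, so that the rigidity of $G$-invariant sections reduces to a uniqueness statement on $\tilde D(v)$. I would prove this uniqueness by exploiting the transitive action of the Levi $L(\delta)$ on the relevant open stratum of $\tilde D(v)$, combined with the new case of the generalized Fulton conjecture established earlier in the paper; the latter forces fiberwise branching multiplicities to equal one, precluding any $G$-invariant section of $\mathcal{O}(mD)$ beyond scalar multiples of $s_0^m$. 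The hardest step will be controlling the contribution of the exceptional locus $E$ and promoting what is essentially a codimension-one rigidity to hold for all $m \geq 1$.

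\textbf{Part (c).} With (a) and (b) in hand, suppose $(\mu,\wh\mu) = (\mu_1,\wh\mu_1) + (\mu_2,\wh\mu_2)$ is a decomposition in $\mathcal{C}(G\to\wh G)_\Q$. Clearing denominators produces nonzero $G$-invariant sections $s_i$ of $\mathcal{L}_{m\mu_i}\boxtimes\mathcal{L}_{m\wh\mu_i}$ for some $m$, and the product $s_1 s_2$ is a $G$-invariant section of $\mathcal{L}_{m\mu}\boxtimes\mathcal{L}_{m\wh\mu}$, hence by (b) a scalar multiple of $s_0^m$. The divisor equation $(s_1) + (s_2) = mD$, combined with the irreducibility of $D$ from (a), forces each $(s_i)$ to be a non-negative integer multiple of $D$, and therefore each $(\mu_i,\wh\mu_i)$ is a non-negative rational multiple of $(\mu,\wh\mu)$, proving extremality.
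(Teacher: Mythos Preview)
Your argument for (a) has a genuine gap. From birationality of $\pi:\mathcal{X}\to G/B\times\wh G/\wh B$ you cannot conclude that $\tilde D(v)\to D$ is generically injective: $\tilde D(v)$ is a codimension-one subscheme of $\mathcal{X}$, and nothing you have said rules out that it lies entirely in the exceptional locus of $\pi$ and is contracted to something of codimension $\ge 2$. This is exactly the issue the paper addresses: it exhibits an explicit point of $\tilde D(v)$ lying in $\mathcal{Y}\setminus\mathcal{R}$ (where $\pi$ is an isomorphism), by starting from a generic point of $\mathcal{C}\setminus\mathcal{R}$, translating so that $z=e\wh P$, and then using that $s_\beta\in Q_w$ (resp.\ $\wh Q_{\wh w}$) to move from $w^{-1}$ to $v^{-1}$ without leaving $\mathcal{Y}\setminus\mathcal{R}$. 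Without this, (a) is unproved.

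Your argument for (d) has the key fact backwards. You claim $(w^{-1}B,\wh w^{-1}\wh B)$ lies in the closure of $D$; if it did, the canonical section $s_0$ would vanish there and the $\delta$-weight computation would tell you nothing. In fact this point is \emph{not} in $D$, and proving that is the content of the paper's Lemma~\ref{missedme}: one shows $\pi(\mathcal{C}\setminus\mathcal{R})\cap D(v)=\emptyset$, and the point $(w^{-1}B,\wh w^{-1}\wh B)$ is in the image of $\mathcal{C}\setminus\mathcal{R}$ (indeed of $\op{Fl}_L$). Once non-vanishing is established, your weight computation is essentially the paper's Lemma~\ref{florence}.

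For (b), the paper's route is cleaner than your sketch with $\pi^*D=\tilde D(v)+E$: rather than tracking exceptional multiplicities, it uses Lemma~\ref{missedme} to pull any $f\in H^0(G/B\times\wh G/\wh B,\mathcal{O}(mD))^G$, viewed as a function off $D$, back to a $G$-invariant function on $\mathcal{C}\setminus\mathcal{R}$, and then invokes the generalized Fulton conjecture in the form $H^0(\mathcal{C}\setminus\mathcal{R},\mathcal{O})^G\simeq\C$ (Proposition~\ref{bridge}). Your (c) is fine and is the standard argument behind the citation to \cite{B}.
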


Let $\vec\mu(D(v))$ denote the pair $\mu,\wh \mu$ induced by $D$. We also give an explicit formula for $\vec\mu(D(v))$ in the following basis. A basis for $\pic^{G\times \wh G}(G/B\times \wh G/\wh B)$ is given by the set
\begin{align}\label{aset}
\{\mathcal{L}_{\omega_i}\boxtimes \mathcal{O}\}\cup \{\mathcal{O}\boxtimes \mathcal{L}_{\wh \omega_j}\}\cup\{\mathcal{L}_{\chi_k}\boxtimes \mathcal{O}\},
\end{align}
where $\{\chi_k\}$ is any basis for the character group of $Z^0(G)$ (the identity component of the center).  Moreover, the restriction map
$$
\pic^{G\times \wh G}(G/B\times \wh G/\wh B) \to \pic^G(G/B\times \wh G/\wh B)
$$
induced by the diagonal homomorphism $G\to G\times \wh G$ is an isomorphism since every line bundle on $\wh G/\wh B$ comes with a canonical $\wh G$-linearization. Therefore the set (\ref{aset}) gives a basis for $\pic^G(G/B\times \wh G/\wh B)$.

\begin{theorem}\label{formulaONE}
Write $\mu = \chi + \sum_{k=1}^r c_k \omega_k$ and $\wh \mu = \sum_{k=1}^{\wh r} \wh c_k \wh \omega_k$ in the respective bases of fundamental weights, where $\chi$ is a character of $Z^0(G)$. 
\begin{enumerate}[label = (\alph*)]
\item Then $c_k$ is the intersection number $c$ in 
$$
\displaystyle\phi_{\delta}^*\left([\wh X_{\wh u}]\right)\cdot [X_{s_{\alpha_k}u}] = c[X_e]
$$
if $s_{\alpha_k}u\in W^P$ and is of length $\ell(u)+1$, and $0$ otherwise. Likewise, $\wh c_k$ is the intersection number $c$ in 
$$
\displaystyle\phi_{\delta}^*\left([\wh X_{s_{\wh \alpha_k}\wh u}]\right)\cdot [X_{u}] = c[X_e]
$$
if $s_{\alpha_k}\wh u\in \wh W^{\wh P}$ and is of length $\ell(\wh u)+1$, and $0$ otherwise. 

\item Furthermore, 
$$
\mu+\wh \mu|_T = \int_{G/P}[X_u]^T\cdot \phi_\delta^* [\wh X_{\wh u}]^{\wh T},
$$
which allows one to calculate $\chi$ once all the $c_k,\wh c_k$ are known. 
\end{enumerate}
\end{theorem}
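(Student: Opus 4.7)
The plan is to compute the coordinates $c_k$ and $\wh c_j$ by pairing $\mathcal{O}(D)$ with Schubert lines in each factor. Set $C_k := \overline{Bs_{\alpha_k}B}/B \subseteq G/B$ and $\wh C_j := \overline{\wh B s_{\wh\alpha_j}\wh B}/\wh B \subseteq \wh G/\wh B$; each is a $\mathbb{P}^1$. By standard Borel--Weil theory, $\deg(\mathcal{L}_\mu|_{C_k}) = \mu(\alpha_k^\vee) = c_k$ and $\deg(\mathcal{L}_{\wh\mu}|_{\wh C_j}) = \wh c_j$, while the restrictions of $\mathcal{L}_{\wh\mu}$ and $\mathcal{L}_\mu$ to a point are trivial. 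Hence, for any $\wh g\in \wh G/\wh B$ and any $g\in G/B$,
$$
c_k \;=\; \mathcal{O}(D)\cdot \bigl(C_k\times \{\wh g\}\bigr),
\qquad
\wh c_j \;=\; \mathcal{O}(D)\cdot \bigl(\{g\}\times \wh C_j\bigr).
$$

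Since the proof of Theorem \ref{divisors} establishes that $\pi:\tilde D(v)\to D(v)$ is birational, for generic $\wh g$ the projection formula gives $\mathcal{O}(D)\cdot (C_k\times\{\wh g\}) = [\tilde D(v)]\cdot (C_k\times\{\wh g\}\times \wh G/\wh P)$ in the triple product. Unwinding the definition of $\tilde D(v)$, this counts pairs $(g,x)\in C_k\times (G/P)$ with $x\in gX_u$ and $\phi_\delta(x)\in \wh g\wh X_{\wh u}$, where $(u,\wh u)=(v,\wh w)$ or $(w,v)$ according to which case of the statement applies. The key is then a sweep computation: if $s_{\alpha_k}u\in W^P$ and $\ell(s_{\alpha_k}u)=\ell(u)+1$, then $\bigcup_{g\in C_k}gX_u = X_{s_{\alpha_k}u}$, and the multiplication map $C_k\times X_u\to X_{s_{\alpha_k}u}$ is birational (it agrees on an open dense subset with the Bruhat multiplication $P_{\alpha_k}/B\times X_u\to X_{s_{\alpha_k}u}$). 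Hence the count equals the transverse intersection number of $[X_{s_{\alpha_k}u}]\cdot \phi_\delta^*[\wh X_{\wh u}]$ evaluated on $[X_e]$, which is precisely the coefficient $c$ in the statement. In all other cases the standard Schubert sweep lemma gives $\bigcup_{g\in C_k}gX_u = X_u$, of dimension $\ell(u)$, whereas $\codim \phi_\delta^*[\wh X_{\wh u}] = \ell(u)+1$ in $G/P$ (using $\ell(u)+\ell(\wh u) = \dim \wh G/\wh P - 1$); hence the intersection is empty for generic $\wh g$ and $c_k=0$.

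The computation of $\wh c_j$ is entirely symmetric: one sweeps $\bigcup_{\wh g\in \wh C_j}\wh g\wh X_{\wh u}=\wh X_{s_{\wh\alpha_j}\wh u}$ precisely when $s_{\wh\alpha_j}\wh u\in \wh W^{\wh P}$ has length $\ell(\wh u)+1$, producing the coefficient $c$ in $\phi_\delta^*[\wh X_{s_{\wh\alpha_j}\wh u}]\cdot [X_u] = c[X_e]$, and $\wh c_j=0$ otherwise. The principal technical subtlety is transversality: one must verify that the scheme-theoretic intersection of $\tilde D(v)$ with $C_k\times\{\wh g\}\times \wh G/\wh P$ is reduced and zero-dimensional for generic $\wh g$, so that the intersection-theoretic count matches the geometric one. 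This is handled by Kleiman-Bertini applied to the $\wh G$-action on $\wh G/\wh P$ (letting $\wh g$ vary) together with generic smoothness along the sweep image---essentially the same transversality input already needed to establish the birationality of $\pi$ on $\tilde D(v)$ in Theorem \ref{divisors}.
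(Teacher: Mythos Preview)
Your approach is correct and reaches the same formula, but it is organized differently from the paper's argument. The paper works entirely in cohomology: it writes $[\tilde D(v)] = p_1^*[S_u]\cup p_2^*\phi_\delta^*[\wh T_{\wh u}]$ for the universal incidence varieties $S_u\subset G/B\times G/P$ and $\wh T_{\wh u}\subset \wh G/\wh B\times \wh G/\wh P$, invokes a K\"unneth-type expansion of $[S_u]$ and $[\wh T_{\wh u}]$ from \cite{BKiers}*{\S 4.2} (whose $A^1$-components are exactly the Bott--Samelson sweep you carry out), and then reads off the pieces in $A^1(G/B)\otimes A^0(\wh G/\wh B)\otimes A^m(G/P)$ and $A^0(G/B)\otimes A^1(\wh G/\wh B)\otimes A^m(G/P)$ before applying $\pi_*$. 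You instead compute each coefficient $c_k$, $\wh c_j$ individually by pairing $[D]$ with the Schubert curves $C_k\times\{\wh g\}$ and $\{g\}\times \wh C_j$, lifting via the projection formula (legitimate since the paper records $\pi_*[\tilde D(v)]=[D(v)]$ after the proof of Theorem~\ref{divisors}(a)), and performing the sweep $P_{\alpha_k}X_u = X_{s_{\alpha_k}u}$ directly. The two are dual packagings of the same content: yours is more self-contained and geometric, at the cost of having to address transversality and the birationality of the one-step Bott--Samelson map explicitly; the paper's route lets the cohomological formalism and the cited lemma absorb those issues.
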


An extremal ray $\Q_{\ge0}(\mu,\wh \mu)$ of $\mathcal{F}_\Q$ is to be called ``type I'' if, for some simple root $\beta$ satisfying $v\xrightarrow{\beta} w$ (resp., $v\xrightarrow{\beta} \wh w$), $\mu(\beta^\vee)>0$ (resp., $\wh \mu(\beta^\vee)>0$). Thus the rays induced by $D(v)$ as above are type I (cf. Lemma \ref{ones}). 

\subsection{Type II rays}

Unsurprisingly, we call an extremal ray $\Q_{\ge0}(\mu,\wh \mu)$ of $\mathcal{F}_\Q$ ``type II'' if for every such $\beta$, $\mu(\beta^\vee)=0$ (resp., $\wh \mu(\beta^\vee)=0$). These vanishing equalities determine a sub-semigroup $\mathcal{F}_2$ inside $\mathcal{F}$ and a subcone $\mathcal{F}_{2,\Q}$ inside $\mathcal{F}_\Q$; the type II rays of $\mathcal{F}_\Q$ are by definition the extremal rays of $\mathcal{F}_{2,\Q}$. One of our theorems is that the rays $D(v)$, together with the type II rays, do indeed generate all of $\mathcal{F}$:

\begin{theorem}\label{1+2}
Let $\{\delta_1,\hdots,\delta_q\}$ be the collection of type I rays $\vec\mu(D(v))$. Then the addition map
$$
\prod_{b=1}^q \Z_{\ge 0} \delta_b \times \mathcal{F}_2 \to \mathcal{F}
$$
is an isomorphism of semigroups. Over $\Q$, it is an isomorphism of rational cones. 
\end{theorem}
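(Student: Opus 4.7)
The plan is to exploit the linear functionals picking out the ``covering coroots'' that cut $\mathcal{F}_2$ out of $\mathcal{F}$. To each type I ray $\delta_b = \vec\mu(D(v_b))$ arising from a cover $v_b \xrightarrow{\beta_b} w$ (resp.\ $v_b \xrightarrow{\beta_b} \wh w$), associate the pairing $\ell_b(\mu,\wh\mu) := \mu(\beta_b^\vee)$ (resp.\ $\wh\mu(\beta_b^\vee)$); by definition $\mathcal{F}_2 = \mathcal{F} \cap \bigcap_b \{\ell_b = 0\}$. The key input is a pair of positivity statements: $\ell_b(\delta_b) \geq 1$ — it equals the $[X_e]$-coefficient of an ordinary Schubert product that bounds the Belkale--Kumar product from below, and the latter equals $1$ by (\ref{prod}) — while $\ell_b(\delta_{b'}) \geq 0$ for all $b, b'$ because each $\delta_{b'}$ is dominant and $\beta_b^\vee$ is a simple coroot.

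For the rational cone isomorphism, surjectivity comes from the type I / II dichotomy on extremal rays: as $\mathcal{F}_\Q$ is the conical hull of its extremal rays and each such ray is either one of the $\Q_{\geq 0}\delta_b$ or lies in $\mathcal{F}_{2,\Q}$, one has $\mathcal{F}_\Q = \sum_b \Q_{\geq 0}\delta_b + \mathcal{F}_{2,\Q}$. Injectivity reduces to showing the $\delta_b$ are linearly independent modulo the linear span of $\mathcal{F}_2$: if $\sum(n_b - m_b)\delta_b = y - x \in \mathrm{span}(\mathcal{F}_2)$, then applying $\ell_{b'}$ (which vanishes on $\mathrm{span}(\mathcal{F}_2)$) yields the linear system $M(n - m) = 0$, where $M_{b,b'} = \ell_{b'}(\delta_b)$. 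Invertibility of $M$ then forces $n = m$ and hence $x = y$.

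For the semigroup version I would use a greedy integer subtraction: given a lattice point $(\mu,\wh\mu) \in \mathcal{F}$, while some $\ell_b(\mu,\wh\mu) > 0$, subtract $\delta_b$ and iterate. The potential $\sum_b \ell_b$ is a strictly decreasing nonneg integer, so the procedure terminates at a point of $\mathcal{F}_2$, and uniqueness of the decomposition follows from the rational case. The main obstacles are: (i) establishing invertibility of $M$, likely by combining the positivities above with a dimension count ($\dim \mathcal{F}_{2,\Q} = \dim \mathcal{F}_\Q - q$) or by triangulating $M$ using the vanishings $\phi_\delta^*([\wh X_{\wh w}]) \cdot [X_{s_{\beta_{b'}}v_b}] = 0$ from Theorem \ref{formulaONE} whenever $s_{\beta_{b'}}v_b \ne w$; and (ii) verifying that each greedy subtraction step preserves $\mathcal{F}$-membership — the defining equation of $\mathcal{F}$ is preserved by linearity and Theorem \ref{divisors}(d), but dominance on non-covering coroots and the other facet inequalities of $\mathcal{C}(G\to \wh G)$ demand care, very possibly requiring the sharper fact $\ell_b(\delta_b) = 1$ exactly.
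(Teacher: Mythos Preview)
Your structural outline is right, but there is a genuine gap at the point you yourself flag as obstacle (ii), and your rational surjectivity argument is circular.

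First, on the matrix $M$: the paper actually proves $M$ is the identity. Lemma~\ref{ones} gives $\ell_b(\delta_b)=1$ exactly, and Lemma~\ref{zeros} gives $\ell_{b'}(\delta_b)=0$ for $b'\ne b$; the latter is the combinatorial vanishing you allude to via Theorem~\ref{formulaONE}, proved by a short BGG diamond argument. So injectivity is immediate and no triangularization or dimension count is needed.

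Second, your rational surjectivity is circular. The type~I/type~II dichotomy only says every extremal ray of $\mathcal{F}_\Q$ either has some $\ell_b>0$ or lies in $\mathcal{F}_{2,\Q}$. It does \emph{not} say that a type~I extremal ray is one of the $\delta_b$; that is precisely a consequence of the theorem you are trying to prove. So you cannot use the extremal-ray decomposition here.

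Third, and this is the heart of the matter: your greedy subtraction needs ``$(\nu,\wh\nu)\in\mathcal{F}$ and $\ell_b(\nu,\wh\nu)>0$ $\Rightarrow$ $(\nu,\wh\nu)-\delta_b\in\mathcal{F}$'', and you offer no mechanism for this. Linearity and Theorem~\ref{divisors}(d) keep you on the hyperplane of $\mathcal{F}$, but staying inside the cone (dominance and all the other facet inequalities of $\mathcal{C}(G\to\wh G)$) is not formal. The paper proves it \emph{geometrically}: after scaling so that $H^0(G/B\times\wh G/\wh B,\mathcal{L}_{\nu}\boxtimes\mathcal{L}_{\wh\nu})^G\ne 0$, one shows any nonzero invariant section $s$ must vanish along $D(v_b)$. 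On the open locus of $D(v_b)$ where $\phi_\delta(gC_{v_b})\cap \wh g\,\wh C_{\wh w}\ne\emptyset$, nonvanishing of $s$ would force the GIT-type inequality $v_b^{-1}\nu(\dot\delta)+\wh w^{-1}\wh\nu(\dot\delta)\le 0$; but subtracting the face equation and computing
\[
v_b^{-1}\nu(\dot\delta)-w^{-1}\nu(\dot\delta)=\nu(\beta_b^\vee)\cdot (v_b^{-1}\beta_b)(\dot\delta)>0
\]
(using $\ell_b(\nu,\wh\nu)>0$, that $v_b^{-1}\beta_b\succ0$, and that it is not a root of $L$) gives a contradiction. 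Hence $s$ vanishes on $D(v_b)$, so it descends to a nonzero invariant section of $\mathcal{L}(-D(v_b))$, i.e.\ $(\nu,\wh\nu)-\delta_b\in\mathcal{F}$. With $M=\mathrm{Id}$, this step decreases $\ell_b$ by $1$ and fixes all other $\ell_{b'}$, so the process terminates in $\mathcal{F}_2$ and handles the scaling at the end. This geometric vanishing is the missing idea in your proposal.
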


We also give a formula for finding extremal rays of $\mathcal{F}_{2,\Q}$. Define a map $\text{Ind}:\mathfrak{h}_{L/\langle \delta\rangle}^*\times \wh {\mathfrak{h}}_{\wh L/\langle \delta\rangle}^*\to \mathfrak{h}^*\times \wh{\mathfrak{h}}^*$ as follows. For a pair $(\nu,\wh \nu)\in \mathfrak{h}_{L/\langle \delta\rangle}^*\times \wh {\mathfrak{h}}_{\wh L/\langle \delta\rangle}^*$, first pull back each of $\nu, \wh \nu$ to elements of $\mathfrak{h}^*$, $\wh{\mathfrak{h}}^*$, respectively, as characters vanishing on $\delta$. Denoting these elements again by $\nu, \wh \nu$, define
$$
\text{Ind}: (\nu, \wh \nu)\mapsto (w\nu, \wh w \wh \nu) - \sum_{v\xrightarrow{\alpha_\ell}w} w\nu(\alpha_\ell^\vee) \vec\mu(D(v)) - \sum_{v\xrightarrow{\wh\alpha_\ell}\wh w} \wh w\wh \nu(\wh \alpha_\ell^\vee) \vec\mu(D(v)). 
$$
We then prove 

\begin{theorem}\label{yadhtrib}
$\operatorname{Ind}$ restricts to a surjection of cones 
$$
\operatorname{Ind}:\mathcal{C}(L/\langle \delta\rangle\to \wh L/\langle \delta\rangle)_\Q \to \mathcal{F}_{2,\Q}.
$$
\end{theorem}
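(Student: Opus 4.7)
The plan is to prove two things: (i) $\operatorname{Ind}$ sends $\mathcal{C}(L^{ss}\to\wh L^{ss})_\Q$ into $\mathcal{F}_{2,\Q}$, and (ii) the restricted map is surjective. Part (i) further decomposes into a combinatorial verification of the linear equations cutting out $\mathcal{F}_2$, followed by a geometric (Borel--Weil / induction-of-sections) argument that the output lies in the saturated cone $\mathcal{C}(G\to\wh G)_\Q$.

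For the combinatorial check I will use Theorem \ref{formulaONE} to pin down the type I rays against simple coroots. For $v\xrightarrow{\beta}w$ on the $w$-side, the length condition $\ell(s_{\alpha_k}v) = \ell(v)+1$ combined with the Schubert-basis uniqueness of $[X_w]$ satisfying $\phi_\delta^*[\wh X_{\wh w}]\cdot[X_w] = [X_e]$ forces $\vec\mu(D(v))_1(\alpha_k^\vee) = \delta_{\alpha_k,\beta}$; for $v$ on the $\wh w$-side, the mismatch $\ell(s_\beta w) = \ell(w)-1$ makes the analogous coefficient vanish. Inserting these values into the definition of $\operatorname{Ind}$ yields $\operatorname{Ind}(\nu,\wh\nu)_1(\beta^\vee) = (w\nu)(\beta^\vee) - (w\nu)(\beta^\vee) = 0$ for $\beta \in I$, and symmetrically for $\wh I$. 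The face equation $\mu(w\dot\delta)+\wh\mu(\wh w\dot\delta) = 0$ holds because each type I ray already lies on $\mathcal{F}$ by Theorem \ref{divisors}(d), and $(w\nu)(w\dot\delta) + (\wh w\wh\nu)(\wh w\dot\delta) = \nu(\dot\delta)+\wh\nu(\dot\delta) = 0$, using that $\dot\delta$ is central in $\mathfrak{l}$ (resp.\ $\wh{\mathfrak{l}}$) and $\nu,\wh\nu$ were extended trivially on the centers.

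For the geometric step I will adapt the induction argument of \cite{BKiers}: given a nonzero $L^{ss}$-invariant section of the corresponding line bundle on $L^{ss}/B_{L^{ss}}\times \wh L^{ss}/\wh B_{\wh L^{ss}}$, promote it to a section of the pullback of $\mathcal{L}_\mu\boxtimes\mathcal{L}_{\wh\mu}$ on $\mathcal{X}$ by exploiting the fibration $\mathcal{X}\to\wh G/\wh P$ and the Levi-equivariant structure on generic fibers, then descend via the birational projection $\pi:\mathcal{X}\to G/B\times\wh G/\wh B$ from \cite{BKR}*{Corollary 5.3} to a $G$-invariant section of $\mathcal{L}_\mu\boxtimes\mathcal{L}_{\wh\mu}$. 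The twists by $w,\wh w$ and type I corrections in $\operatorname{Ind}$ are precisely the character shifts that compensate for the offset between the reference fiber (over $(wP,\wh w\wh P)$) and global sections. For surjectivity, given $(\mu,\wh\mu)\in\mathcal{F}_{2,\Q}$, define $\nu:=w^{-1}\mu$ restricted to $\mathfrak{h}_{L^{ss}}$ and $\wh\nu:=\wh w^{-1}\wh\mu$ restricted to $\wh{\mathfrak{h}}_{\wh L^{ss}}$; verify $L^{ss},\wh L^{ss}$-dominance from the defining vanishings of $\mathcal{F}_2$ and Bruhat-order properties of $w,\wh w$; check $\operatorname{Ind}(\nu,\wh\nu) = (\mu,\wh\mu)$ by reversing the computation above; and run the Borel--Weil argument in reverse, restricting a $G$-invariant section to a generic fiber of $\mathcal{X}\to\wh G/\wh P$, to conclude $(\nu,\wh\nu)\in\mathcal{C}(L^{ss}\to\wh L^{ss})_\Q$.

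The hardest step will be the geometric Borel--Weil correspondence on $\mathcal{X}$. Unlike the diagonal case of \cite{BKiers}, the asymmetric setup $G\to\wh G$ introduces independent Weyl elements $w,\wh w$ and two distinct flag varieties $G/P,\wh G/\wh P$; tracking how the pullback of $\mathcal{L}_\mu\boxtimes\mathcal{L}_{\wh\mu}$ to $\mathcal{X}$ decomposes as a Levi character along fibers will require careful bookkeeping, with the type I corrections arising precisely to compensate for normal-bundle twists along $X_w\subset G/P$ and $\wh X_{\wh w}\subset\wh G/\wh P$.
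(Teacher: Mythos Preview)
Your combinatorial checks are fine and match the paper: the vanishing of $\operatorname{Ind}(\nu,\wh\nu)$ on the type~I coroots follows from Lemmas~\ref{ones} and~\ref{zeros} exactly as you say, and the face equation holds because $\dot\delta$ is central in $\mathfrak{l}$ and $\wh{\mathfrak{l}}$. But the geometric steps, which carry all the weight, are where your proposal breaks down.

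For the forward direction, ``promote an $L^{ss}$-invariant section to $\mathcal{X}$ and descend via $\pi$'' hides the main obstruction: $\pi$ is only birational, so a section built on a generic fiber need not extend across the locus where $\pi$ contracts. The paper does not work on $\mathcal{X}$ at all for this; it passes to the quotient stacks $\wh{\mathcal{C}}'$ and $\op{Fl}_G'$ and proves (Section~\ref{zoom}) that $\pi':\mathcal{C}'\setminus\mathcal{R}'\hookrightarrow G/Q_w'\times\wh G/\wh Q_{\wh w}'$ is an open embedding with codimension~$\ge 2$ complement, which in turn relies on the Fulton-conjecture comparison of $\mathcal{Y}$ and $\mathcal{Z}$ (Proposition~\ref{Ayayay}). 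This is what lets line bundles and sections be transported between $\op{Fl}_G'$ and $\wh{\mathcal{C}}'\setminus\wh{\mathcal{R}}'$. The transfer between $\wh{\mathcal{C}}'$ and $\op{Fl}_L$ is then handled by the Levification procedure (Proposition~\ref{aGain}), not by fiber restriction. Your phrase ``normal-bundle twists'' gestures at this but does not supply the mechanism.

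For surjectivity, your proposed preimage $\nu=w^{-1}\mu|_{\mathfrak{h}_{L^{ss}}}$ is in fact the right one (it is exactly $\tilde i'^*(\mathcal{L}_\mu\boxtimes\mathcal{L}_{\wh\mu})$), but ``check $\operatorname{Ind}(\nu,\wh\nu)=(\mu,\wh\mu)$ by reversing the computation above'' is not a proof: the forward computation only verified vanishing on the type~I coroots, whereas you need agreement on \emph{all} coordinates, and those involve the nontrivial intersection numbers of Theorem~\ref{formulaONE}. The paper never attempts this direct check; instead it defines $\operatorname{Ind}$ geometrically as the composite $\pic_\Q(\op{Fl}_{L^{ss}})\to\pic_\Q^{\deg=0}(\op{Fl}_L)\to\pic_\Q^{\deg=0}(\op{Fl}_L\setminus\mathcal{R}_L)\to\pic_\Q^{\deg=0}(\wh{\mathcal{C}}'\setminus\wh{\mathcal{R}}')\to\pic_\Q^{\deg=0}(\op{Fl}_G')$, proves each arrow is an isomorphism or surjection on the $\pic^+$ level (Section~\ref{wellordering}), and only afterwards verifies that the composite agrees with the explicit formula (Section~\ref{chemII}) by comparing pullbacks on $\op{Fl}_L\setminus\mathcal{R}_L$ and using that any discrepancy would be supported on $\bigcup D(v)$ with coefficients forced to zero by Lemmas~\ref{ones} and~\ref{zeros}. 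Likewise, ``restrict a $G$-invariant section to a generic fiber'' does not obviously produce a nonzero $L^{ss}$-invariant section; the paper instead uses the diagram chase at the end of Section~\ref{wellordering} to show $H^0(\op{Fl}_G',\operatorname{Ind}(\mathcal{M}))\simeq H^0(\op{Fl}_L,\tilde i'^*\operatorname{Ind}(\mathcal{M}))$, again via Levification. The missing ingredient throughout your proposal is this stack-theoretic Levification and codimension-$\ge 2$ framework; without it (or an equivalent replacement), neither direction goes through.
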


In particular, every extremal ray of $\mathcal{F}_{2,\Q}$ is the image of an extremal ray of the lower-dimensional cone $\mathcal{C}(L/\langle \delta\rangle\to \wh L/\langle \delta\rangle)_\Q$. However, $\operatorname{Ind}$ may not be injective and also may not take all extremal rays to extremal rays. 

Lastly, we derive an identity relating $c = \dim (\ker \op{Ind})$ and $q$, the number of type I rays (see also \cite[Proposition 63]{BKiers}):
\begin{proposition}\label{63}
$c = q- |\wh \Delta| + | \Delta(\wh P)|. $
\end{proposition}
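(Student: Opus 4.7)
The plan is to apply rank-nullity to $\op{Ind}$, viewed as a linear map on $\mathfrak{h}^*_{L^{ss},\Q} \times \wh{\mathfrak{h}}^*_{\wh L^{ss},\Q}$, and then to compute the dimensions of its domain and image using the results just established. The domain has dimension $|\Delta(P)| + |\Delta(\wh P)|$, since these sets of simple roots index bases for the Cartans of the semisimple parts of the Levis. For the image, Theorem \ref{yadhtrib} identifies $\op{Ind}(\mathcal{C}(L^{ss} \to \wh L^{ss})_\Q) = \mathcal{F}_{2,\Q}$. Because $\op{Ind}$ is linear and the Levi-level cone $\mathcal{C}(L^{ss}\to\wh L^{ss})_\Q$ has full-dimensional linear span (nonempty interior in its ambient space), the image of the linear extension of $\op{Ind}$ equals $\mathcal{F}_{2,\Q} - \mathcal{F}_{2,\Q}$, i.e., the linear span of $\mathcal{F}_{2,\Q}$, whose dimension is $\dim \mathcal{F}_{2,\Q}$.

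I would then combine this with the semigroup decomposition from Theorem \ref{1+2}. Since $\mathcal{F}_\Q \cong \Q_{\ge 0}^q \times \mathcal{F}_{2,\Q}$ as rational cones, one has $\dim \mathcal{F}_{2,\Q} = \dim \mathcal{F}_\Q - q$. For $\mathcal{F}_\Q$ a regular facet in case \ref{caseB}, this equals $|\Delta| + |\wh\Delta| - 1 - q$. The last ingredient is that $\delta \in \mathfrak{S}$ forces $P(\delta)$ to be a maximal parabolic of $G$ (elements of $\mathfrak{S}$ correspond up to scaling to fundamental coweights), so $|\Delta(P)| = |\Delta| - 1$. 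Rank-nullity now gives
\[
c = \bigl(|\Delta(P)| + |\Delta(\wh P)|\bigr) - \bigl(|\Delta| + |\wh\Delta| - 1 - q\bigr) = q + |\Delta(\wh P)| - |\wh\Delta|,
\]
as desired.

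The main obstacle I anticipate is verifying cleanly that the linear extension of $\op{Ind}$ has image exactly the linear span of $\mathcal{F}_{2,\Q}$, rather than something strictly larger. This reduces to showing $\mathcal{C}(L^{ss} \to \wh L^{ss})_\Q$ is full-dimensional in its ambient vector space, which in turn follows from (or requires imposing) the Levi-level analogue of case \ref{caseB}. A secondary, less delicate point is confirming $\dim \mathcal{F}_\Q = |\Delta| + |\wh\Delta| - 1$ when the proposition is read beyond case \ref{caseB}; this is immediate in case \ref{caseB}, and the present argument is cleanest under that standing hypothesis.
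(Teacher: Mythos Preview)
Your rank--nullity strategy is essentially the paper's own argument: the paper uses the isomorphism $\pic_\Q^{\deg=0}(\op{Fl}_L\setminus\mathcal{R}_L)\simeq \pic_\Q^{\deg=0}(\op{Fl}_G')$ (which is what underlies $\op{Ind}$) and equates dimensions on both sides. However, your execution contains a genuine error. The claim that ``elements of $\mathfrak{S}$ correspond up to scaling to fundamental coweights'' is false for general $G\to\wh G$; that statement is specific to the diagonal embedding. Moreover, the proposition is stated (see Section~\ref{help}) for any $\delta$ satisfying~(\ref{prod}), with the explicit remark that $\delta$ need not be special. So $P(\delta)$ need not be maximal, and the correct codimension of $\mathcal{F}_\Q$ is Ressayre's $|\Delta\setminus\Delta(P)|$, not~$1$. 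Your two incorrect substitutions ($\dim\mathcal{F}_\Q = |\Delta|+|\wh\Delta|-1$ and $|\Delta(P)|=|\Delta|-1$) happen to cancel arithmetically, but the justification is unsound. Replacing both $1$'s by $|\Delta\setminus\Delta(P)|$ fixes the argument and yields the same identity.

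Your anticipated obstacle is also misdiagnosed. You worry that $\op{im}(\op{Ind})$ might strictly contain $\op{span}(\mathcal{F}_{2,\Q})$, and propose resolving this by imposing case~\ref{caseB} at the Levi level. But the paper explicitly notes (Remark after Theorem~\ref{yadhtrib}) that the Levi pair can fall under case~\ref{caseA}, so this hypothesis is not available. The clean fix is to bypass $\mathcal{F}_{2,\Q}$ entirely: by construction $\op{Ind}$ is the composite of isomorphisms with the restriction $\iota^*:\pic_\Q^{\deg=0}(\op{Fl}_L)\twoheadrightarrow\pic_\Q^{\deg=0}(\op{Fl}_L\setminus\mathcal{R}_L)$, so its image is exactly $\pic_\Q^{\deg=0}(\op{Fl}_G')$, whose dimension is $|\Delta|+|\wh\Delta|-q-|\Delta\setminus\Delta(P)|$ directly (the $q$ from the defining conditions of $Q_w',\wh Q_{\wh w}'$, the rest from Lemma~\ref{florence}). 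This is precisely what the paper does, avoiding any appeal to the interior of the Levi cone.
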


\subsection{Generalized Fulton's conjecture}

In fact, Theorem \ref{divisors}(b) follows almost immediately from the following result. For an arbitrary Schubert variety $X_w$, there is a maximal subgroup (a standard parabolic) $Q_w\subseteq G$ which stabilizes it; set $Y_w = Q_wwP\subseteq X_w$. Similarly define $\wh Q_{\wh w}, \wh Y_{\wh w}$. Analogous to $\mathcal{X}$, define $\mathcal{Y}$ by replacing $X_w$ with $Y_w$, $\wh X_{\wh w}$ with $\wh Y_{\wh w}$. Let $\mathcal{R}$ be the ramification divisor of the birational map $\pi:\mathcal{Y}\to G/B\times \wh G/\wh B$ (note that $\mathcal{Y}$ is smooth). 

\begin{theorem}\label{BFCi}
For every $n\ge 1$, 
$\dim H^0(\mathcal{Y},\mathcal{O}(n\mathcal{R}))^G = 1$.
\end{theorem}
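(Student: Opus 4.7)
The plan is to adapt the strategy of \cite{BKR}, which handles the diagonal case $G\hookrightarrow G\times G$, to the present embedded subgroup setting. The argument reduces to a normality / codimension-two extension principle on the smooth projective variety $X:=G/B\times \wh G/\wh B$. Since $\pi:\mathcal{Y}\to X$ is a $G$-equivariant birational morphism of smooth irreducible varieties, the ramification divisor $\mathcal{R}$ is effective, and its irreducible components lie in the non-iso locus of $\pi$. The constant section already supplies the lower bound $\dim H^0(\mathcal{Y},\mathcal{O}(n\mathcal{R}))^G\ge 1$, so the content is the matching upper bound.

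The key geometric claim I would aim to establish is: there exists an open $U'\subseteq X$ with $\op{codim}_X(X\setminus U')\ge 2$ such that $\pi$ restricts to an isomorphism $\pi^{-1}(U')\xrightarrow{\sim}U'$. Granted this, any $G$-invariant section $s\in H^0(\mathcal{Y},\mathcal{O}(n\mathcal{R}))^G$ restricts to a $G$-invariant regular function on $\pi^{-1}(U')\cong U'$; by normality of the smooth variety $X$ it extends uniquely to a regular function on $X$, which is $G$-invariant and therefore constant, since $X$ is projective and connected. This gives the bound $\dim H^0(\mathcal{Y},\mathcal{O}(n\mathcal{R}))^G \le 1$. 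The codimension claim in turn breaks into two ingredients: (i) every divisorial component of the non-iso locus of $\pi$ inside $\mathcal{Y}$ has image of codimension $\ge 2$ in $X$; (ii) the image of the ``non-proper boundary'' $\mathcal{X}\setminus \mathcal{Y}$ under the proper map $\pi_\mathcal{X}:\mathcal{X}\to X$ also has codimension $\ge 2$ in $X$.

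Ingredient (i) is essentially formal, following from $\pi$ being birational between smooth varieties: away from divisorial contractions $\pi$ is an isomorphism, and divisorial contractions necessarily map to codimension-$\ge 2$ subvarieties. Ingredient (ii) is where I expect the main obstacle, and is the piece that genuinely differs from the diagonal case of \cite{BKR}. The boundary $\mathcal{X}\setminus \mathcal{Y}$ is stratified by subvarieties corresponding to smaller Schubert cells lying in $X_w\setminus Y_w$ or $\wh X_{\wh w}\setminus \wh Y_{\wh w}$, and one must show each contributes a codimension-$\ge 2$ subvariety of $X$ under $\pi_\mathcal{X}$. Following a Kleiman-style transversality argument, this should reduce to checking that the cohomological intersection numbers obtained by replacing $w$ or $\wh w$ in (\ref{prod}) by a non-$Y$ predecessor all vanish in $H^*(G/P)$. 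The maximality of the stabilizing parabolics $Q_w$ and $\wh Q_{\wh w}$ among groups preserving the respective Schubert varieties, combined with the equality (\ref{prod}) on the point class $[X_e]$, should force these induced intersection numbers to vanish, completing the argument.
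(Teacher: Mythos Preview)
Your overall architecture is reasonable and, if ingredient (ii) could be established as you describe, the proof would go through. The gap is precisely in your proposed approach to (ii).

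You propose to control the codimension of $\pi_{\mathcal X}(\mathcal X\setminus\mathcal Y)$ by showing that the intersection numbers $\phi_\delta^*[\wh X_{\wh w}]\cdot[X_v]$ (and the $\wh v$-analogues) vanish for each ``non-$Y$ predecessor'' $v$. But for any $v$ with $\ell(v)<\ell(w)$ this product lands in cohomological degree strictly greater than $2\dim G/P$, so it vanishes automatically for degree reasons, independently of whether the reflection $\beta$ with $v\xrightarrow{\beta}w$ is simple or not. That vanishing therefore cannot distinguish the two behaviours that you need to separate: when $\beta$ is simple the stratum $\mathcal C_{v,\wh w}$ lies \emph{inside} $\mathcal Y$ and its image $D(v)$ is a genuine divisor in $X$ (this is exactly Theorem~\ref{divisors}(a)); when $\beta$ is not simple the stratum lies in $\mathcal X\setminus\mathcal Y$ and its image must have codimension $\ge 2$. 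A Kleiman transversality argument only tells you that the expected dimension of $\phi_\delta(gX_v)\cap \wh g\wh X_{\wh w}$ is negative for generic $(g,\wh g)$, i.e.\ that the image is a proper subvariety; it does not by itself bound the codimension above~$1$. The appeal to maximality of $Q_w$, $\wh Q_{\wh w}$ does not supply the missing information.

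What the paper actually does is different and is where the real content lies. Rather than bounding the image in $X$ directly, it shows (Proposition~\ref{proselyte}) that for non-simple $\beta$ the codimension-one boundary stratum $\mathcal C_{v,\wh w}$ is contained in the ramification divisor $\mathcal R$ itself. This is proved by a tangent-space argument using the $\delta$-grading of $T_{\dot e}(G/P)$ and $T_{\dot e}(\wh G/\wh P)$: Levi-movability forces a dimension identity (\ref{Little4}) at each graded level, while Theorem~\ref{reprod} (the key input from \cite{BKR}) produces a graded level at which $\dim V_j(v^{-1}Z_w)\ne\dim V_j(w^{-1}Z_w)$ precisely when $\beta$ is not simple, yielding a contradiction if the point were outside $\mathcal R$. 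Once $\mathcal Z\setminus\mathcal Y\subseteq A\cup\mathcal R$ with $\op{codim} A\ge 2$ (Proposition~\ref{Ayayay}), sections on $\mathcal Y$ extend across $A$ into $H^0(\mathcal Z,\mathcal O(m\mathcal R))$, which is one-dimensional by \cite{BKR}*{Proposition 3.1}. The filtration/tangent-space step is the essential idea your proposal is missing.
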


This has a representation-theoretic interpretation, thanks to the following isomorphism. Define weights $\chi_w = \rho - 2\rho_L+w^{-1}\rho$, $\chi_{\wh w} = \wh \rho - 2 \rho_{\wh L} + \wh w^{-1}\wh\rho$. Then 

\begin{theorem}\label{BFCii} For every $n\ge 1$, 
$$
H^0(\mathcal{Y},\mathcal{O}(n\mathcal{R}))^G\simeq \left[V(n(\chi_w-\chi_1))^*\otimes V(n\chi_{\wh w})^*\right]^{L}.
$$
\end{theorem}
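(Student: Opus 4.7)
The plan is to realize $\mathcal{Y}$ as a fiber product of homogeneous bundles, to compute the canonical bundle via this decomposition, and then to apply Borel-Weil on the fibers, adapting the strategy of \cite{BKR} to the embedded-subgroup setting.

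First, exhibit a fiber product structure. Set $\mathcal{Z}:=G\times^B Y_w$, a $Y_w$-bundle over $G/B$ (since $B\subseteq Q_w$ acts on $Y_w=Q_wwP/P$), and $\wh{\mathcal{Z}}:=\wh G\times^{\wh B}\wh Y_{\wh w}$. Each admits a natural evaluation map to $\wh G/\wh P$, through $\phi_\delta$ in the case of $\mathcal{Z}$. Then one identifies $\mathcal{Y}\cong \mathcal{Z}\times_{\wh G/\wh P}\wh{\mathcal{Z}}$, with the cup-product assumption (\ref{prod}) ensuring transversality on a dense open subset, hence smoothness of $\mathcal{Y}$ of the expected dimension.

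Next, compute the canonical bundle $\omega_{\mathcal{Y}}$. The relative dualizing sheaf of $\mathcal{Z}$ over $G/B$ restricts fiberwise to $\omega_{Y_w}$, which is the $L^{ss}$-equivariant line bundle associated to the character $\chi_w$ by a classical weight computation on the flag variety $Y_w=Q_w/(Q_w\cap wPw^{-1})$; similarly for $\wh Y_{\wh w}$ and $\chi_{\wh w}$. Combining this with $\mathcal{O}(\mathcal{R})=\omega_{\mathcal{Y}}\otimes \pi^*\omega_{G/B\times \wh G/\wh B}^{-1}$ and the canonical-bundle formula for the fiber product over a smooth base, one identifies the restriction of $\mathcal{O}(n\mathcal{R})$ to a fiber of the projection $\tau:\mathcal{Y}\to \wh G/\wh P$ with the $L^{ss}$-equivariant line bundle $\mathcal{L}_{n\chi_w}\boxtimes \mathcal{L}_{n\chi_{\wh w}}$ on $Y_w\times \wh Y_{\wh w}$.

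Third, apply the projection formula along $\tau$ together with Borel-Weil fiberwise. Borel-Weil on the flag variety $Y_w=Q_w/(Q_w\cap wPw^{-1})$ yields $H^0(Y_w,\mathcal{L}_{n\chi_w})\cong V(n\chi_w)^*$ as an $L^{ss}$-representation (using that $L^{ss}$ is a common Levi quotient of $Q_w\cap wPw^{-1}$ and $\wh Q_{\wh w}\cap \wh w\wh P\wh w^{-1}$), and analogously for $\wh Y_{\wh w}$. Passing to global sections over $\wh G/\wh P$ and then $G$-invariants reduces to $L^{ss}$-invariants of $V(n\chi_w)^*\otimes V(n\chi_{\wh w})^*$; both sides are one-dimensional by Theorem \ref{BFCi}, so the resulting natural map is an isomorphism.

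The main obstacle will be the canonical-bundle computation in the second step: because $\phi_\delta:G/P\to \wh G/\wh P$ is in general neither an open nor a closed immersion, the relative cotangent sequence for $\mathcal{Y}\hookrightarrow \mathcal{Z}\times \wh{\mathcal{Z}}$ requires care, and transversality from (\ref{prod}) must be invoked to verify that $\omega_{\mathcal{Y}}$ has the expected tensor-product form. A secondary subtlety is that one must check the character $\chi_w+\chi_{\wh w}$ is trivial on the center of $L$, ensuring the isomorphism genuinely descends to $L^{ss}$-invariants rather than just $L$-invariants.
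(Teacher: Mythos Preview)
Your outline has the right overall shape, but the third step contains a genuine error that the paper handles differently and with some care.

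The fiber of $\tau:\mathcal{Y}\to \wh G/\wh P$ over $e\wh P$ is \emph{not} $Y_w\times \wh Y_{\wh w}$. Concretely, a point $(\bar g,\overline{\wh g},e\wh P)$ lies in $\mathcal{Y}$ iff $g^{-1}P\in Y_w$ and $\wh g^{-1}\wh P\in \wh Y_{\wh w}$, so the fiber is $Pw^{-1}Q_w/B\times \wh P\wh w^{-1}\wh Q_{\wh w}/\wh B$ inside $G/B\times \wh G/\wh B$. The paper makes this explicit by first passing to $\mathcal{Y}'$ (replacing $B,\wh B$ by $Q_w,\wh Q_{\wh w}$) and then invoking Lemma~\ref{fibre-rich}, which identifies $\mathcal{Y}'\simeq G\times_P\mathcal{P}$ with $\mathcal{P}=P/(w^{-1}Q_ww\cap P)\times \wh P/(\wh w^{-1}\wh Q_{\wh w}\wh w\cap \wh P)$. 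The group acting naturally on this fiber is $P$, not $Q_w\times \wh Q_{\wh w}$, so your Borel--Weil computation on $Y_w$ is aimed at the wrong homogeneous space; the representation $V(n\chi_w)$ is for the Levi $L$ of $P$, and one must first land on an $L$-variety before invoking Borel--Weil.

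This leads to the more serious omission: after identifying $H^0(\mathcal{Y}',\mathcal{O}(n\mathcal{R}'))^G\simeq H^0(\mathcal{P},\mathcal{M}^{\otimes n})^P$, one still has $P$-invariants, not $L$-invariants. The reduction from $P$ to $L$ is not formal---it is exactly Lemma~\ref{BKunderstood} (equivalently Proposition~\ref{opendoor}), which uses the Levification $p\mapsto \lim_{t\to 0}\delta(t)p\delta(t)^{-1}$ and crucially requires $(\chi_w-\chi_1+\chi_{\wh w})(\dot\delta)=0$, i.e.\ the $L$-movability hypothesis encoded in~\eqref{prod}. Your proposal never invokes this, so there is no mechanism by which $G$-invariants become $L$-invariants rather than merely $P$-invariants.

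Finally, your closing sentence ``both sides are one-dimensional by Theorem~\ref{BFCi}'' is circular: Theorem~\ref{BFCi} gives only $\dim H^0(\mathcal{Y},\mathcal{O}(n\mathcal{R}))^G=1$. That the $L^{ss}$-invariant space is one-dimensional is precisely the content of Theorems~\ref{BFCi} and~\ref{BFCii} combined, so it cannot be used as input here. The paper instead establishes the isomorphism directly via the chain $H^0(\mathcal{Y})^G\simeq H^0(\mathcal{Y}')^G\simeq H^0(\mathcal{P})^P\simeq H^0(\mathfrak{L})^L$ with $\mathfrak{L}=L/(w^{-1}Q_ww\cap L)\times \wh L/(\wh w^{-1}\wh Q_{\wh w}\wh w\cap \wh L)$, followed by the easy $L$-versus-$L^{ss}$ comparison you mention.
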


Combined, Theorems \ref{BFCi} and \ref{BFCii} generalize Fulton's conjecture for Littlewood-Richardson coefficients, whose history we recall briefly: let $G = GL(r)$ and $\lambda, \mu$ be dominant weights for a maximal torus w.r.t. a chosen Borel subgroup. The Littlewood-Richardson coefficients $c_{\lambda, \mu}^\nu$ are defined by the decomposition of $G$-representations
$$
V(\lambda)\otimes V(\mu) = \bigoplus_{\nu} V(\nu)^{c_{\lambda,\mu}^\nu}.
$$
The original conjecture is
\begin{theorem}\label{origami}
If $c_{\lambda, \mu}^\nu = 1$, then $c_{n\lambda, n\mu}^{n\nu}=1$ for all $n\ge 1$.
\end{theorem}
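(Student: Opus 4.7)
The plan is to derive Theorem \ref{origami} by specializing Theorems \ref{BFCi} and \ref{BFCii} to the diagonal embedding $G = SL(r) \hookrightarrow G\times G$. Under this embedding, $c_{\lambda,\mu}^\nu = \dim(V(\lambda)\otimes V(\wh\mu))^G$ with $\wh\mu = (\mu, \nu^*)$, so Fulton's conjecture becomes the assertion that the invariant dimension along the ray $\Q_{\ge 0}(\lambda, \mu, \nu^*)$ of $\mathcal{C}(G\to G\times G)$ is constantly $1$.

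First, I would place $(\lambda, \mu, \nu^*)$ on a regular face. The assumption $c_{\lambda, \mu}^\nu = 1$, together with the classical Schubert interpretation of Littlewood-Richardson coefficients (Klyachko, Belkale), supplies a Grassmannian $G/P = \op{Gr}(k, r)$ and Weyl group elements $w, w_1, w_2 \in W^P$ with $[X_{w_1}]\cdot [X_{w_2}]\cdot [X_w] = [X_e]$ in $H^*(G/P)$; the triple $(\lambda, \mu, \nu^*)$ then lies on $\mathcal{F}(w, \wh w, \delta)$ with $\wh w = (w_1, w_2)$ and $\delta$ the cocharacter defining $P$. Since $G/P$ is cominuscule, the Belkale-Kumar product $\odot_0$ reduces to the ordinary cup product, so condition (\ref{prod}) holds.

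Next, apply Theorem \ref{1+2} to decompose $(\lambda, \mu, \nu^*) = \sum_b n_b \vec\mu(D(v_b)) + (\mu_{II}, \wh\mu_{II})$ as a sum of type I rays plus a type II piece in $\mathcal{F}_2$. The superadditivity inequality $c_{\lambda+\lambda', \mu+\mu'}^{\nu+\nu'} \ge c_{\lambda,\mu}^\nu \cdot c_{\lambda',\mu'}^{\nu'}$, combined with $c_{\lambda,\mu}^\nu = 1$, forces every summand of this decomposition to have $c$-value $1$. For each type I ray, Theorem \ref{divisors}(b) gives $\dim H^0(G/B\times \wh G/\wh B, \mathcal{O}(mD(v_b)))^G = 1$ for all $m$, hence $c = 1$ along all its integer scalings. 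For the type II part, Theorem \ref{yadhtrib} identifies it with an element of $\mathcal{C}(L^{ss}\to L^{ss}\times L^{ss}) = \mathcal{C}(L^{ss})$, a saturation cone for a Levi of strictly smaller rank, and one concludes by induction on rank. Assembling the pieces via superadditivity gives $c_{n\lambda, n\mu}^{n\nu}\ge 1$ for every $n$.

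The principal obstacle is the matching upper bound $c_{n\lambda, n\mu}^{n\nu}\le 1$, since superadditivity yields only a lower bound from the decomposition. Closing it requires the full force of Theorem \ref{BFCi}: the uniqueness of the $G$-invariant section of $\mathcal{O}(n\mathcal{R})$ on $\mathcal{Y}$, combined with Theorem \ref{BFCii}'s identification with $L^{ss}$-invariants, must descend cleanly through the birational map $\pi:\mathcal{Y}\to G/B\times \wh G/\wh B$ onto the Borel-Weil line bundle associated to $(n\lambda, n\mu, n\nu^*)$. Verifying that no extra multiplicities arise in this descent - particularly when combining contributions from the several type I rays and the type II Levi contribution - is the heart of the argument, and would go through a Luna-type slice calculation along the chosen regular face.
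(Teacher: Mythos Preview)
Your proposal has a genuine gap at the very first step. From $c_{\lambda,\mu}^\nu=1$ you cannot conclude that $(\lambda,\mu,\nu^*)$ lies on a regular face of $\mathcal{C}(G\to G\times G)$. Regular faces are the loci where a Horn-type inequality becomes an equality; having multiplicity $1$ is an entirely separate condition, and interior points of the cone frequently have multiplicity $1$ (already for $SL(2)$ every lattice point of the cone has multiplicity $1$, including all interior ones). The Schubert datum you extract from $c_{\lambda,\mu}^\nu=1$ does define a face, but there is no reason the triple $(\lambda,\mu,\nu^*)$ should sit on it. Consequently Theorem~\ref{1+2} is not available, and the type~I/type~II decomposition never gets off the ground. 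The ``Luna-type slice'' remark at the end does not repair this; it presupposes the same face membership.

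The paper does not give its own proof of Theorem~\ref{origami}; it cites \cite{KTW} and then explains how it is recovered from the generalized statement. That recovery is direct and bypasses the cone $\mathcal{C}$ entirely: one reinterprets $c_{\lambda,\mu}^\nu=1$ as the intersection number $[X_{w_1}]\cdot[X_{w_2}]\cdot[X_{w_3}]=[X_e]$ in $H^*(\op{Gr}(k,N))$ for suitable $k,N$ (this is the classical identification of Littlewood--Richardson coefficients with Grassmannian Schubert structure constants). That intersection hypothesis is exactly condition~(\ref{dgen}) with $d=1$ for the diagonal embedding $GL(N)\hookrightarrow GL(N)^2$, so Theorems~\ref{BFCi} and~\ref{BFCii} (equivalently \cite{BKR}) give $\dim[V_L(n\chi_{w_1})\otimes V_L(n\chi_{w_2})\otimes V_L(n\chi_{w_3})]^{L^{ss}}=1$ for all $n\ge1$. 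The final step is a type-$A$ computation identifying these $L^{ss}$-invariants with $c_{n\lambda,n\mu}^{n\nu}$: here $L^{ss}=SL(k)\times SL(N-k)$ and the weights $\chi_{w_i}$ restrict to the original partitions. The point is that $\lambda,\mu,\nu$ reappear as the \emph{Levi} weights $\chi_{w_i}$, not as coordinates of a point on the face $\mathcal{F}(w,\wh w,\delta)$.
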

It was first proven by Knutson, Tao, and Woodward in \cite{KTW}. 

The obvious extension to other groups fails, but the following generalization of Belkale, Kumar, and Ressayre \cite{BKR} holds, where the ``$c_{\lambda,\mu}^\nu=1$'' of Theorem \ref{origami} is reinterpreted as an intersection number:
\begin{theorem}\label{babble}
Let $G$ be any connected reductive group and $P$ any standard parabolic subgroup. For any $w_1, \hdots, w_s\in W^P$ such that 
$$
[X_{w_1}]\odot_0 \cdots \odot_0 [X_{w_s}] = 1[X_e]
$$
in $H^*(G/P; \odot_0)$, we have, for every $n\ge 1$,
$$
\dim[V(n(\chi_{w_1}-\chi_1))\otimes \cdots \otimes V(n\chi_{w_s})]^{L} = 1.
$$
\end{theorem}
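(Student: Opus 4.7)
The plan is to derive this theorem as a direct specialization of Theorems~\ref{BFCi} and \ref{BFCii}. Apply those theorems to the diagonal embedding $G \hookrightarrow \wh G := G^{s-1}$ together with the parabolic pair $P \subset G$ and $\wh P := P^{s-1} \subset \wh G$. Choose a $G$-dominant cocharacter $\delta$ with $P(\delta) = P$ and regard it as a cocharacter of $\wh H = H^{s-1}$ via the diagonal, so that $\wh P(\delta) = P^{s-1}$. An element $\wh w \in \wh W^{\wh P} = (W^P)^{s-1}$ is then a tuple $(w_2, \ldots, w_s)$ with $\wh X_{\wh w} = X_{w_2} \times \cdots \times X_{w_s}$, and the natural map $\phi_\delta : G/P \to (G/P)^{s-1}$ is the diagonal.

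The next step is to verify that in this diagonal setting the Ressayre--Richmond deformed pullback satisfies
$$
\phi_\delta^\odot \bigl([X_{w_2}]\boxtimes \cdots \boxtimes [X_{w_s}]\bigr) = [X_{w_2}] \odot_0 \cdots \odot_0 [X_{w_s}].
$$
This is the expected combination of Künneth on $(G/P)^{s-1}$ with the fact that pullback along a diagonal map is cup product, extended to the Belkale--Kumar deformation. Under this identification the hypothesis $[X_{w_1}]\odot_0\cdots\odot_0[X_{w_s}] = [X_e]$ becomes precisely the condition $\phi_\delta^\odot([\wh X_{\wh w}]) \odot_0 [X_{w_1}] = [X_e]$ required by Theorem~\ref{BFCi}, so Theorem~\ref{BFCi} yields $\dim H^0(\mathcal{Y}, \mathcal{O}(n\mathcal{R}))^G = 1$.

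To translate this into the desired representation-theoretic statement I invoke Theorem~\ref{BFCii}. In the diagonal setting $\wh L^{ss} = (L^{ss})^{s-1}$, and the shift weight decomposes as $\chi_{\wh w} = (\chi_{w_2}, \ldots, \chi_{w_s})$, giving an isomorphism of $\wh L^{ss}$-representations
$$
V(n\chi_{\wh w}) \cong V(n\chi_{w_2}) \otimes \cdots \otimes V(n\chi_{w_s}).
$$
Since $L^{ss}$ embeds diagonally in $\wh L^{ss}$, and since the dimension of $L^{ss}$-invariants is preserved by $V \mapsto V^*$ for semisimple $L^{ss}$ (using $V(\lambda)^* \cong V(-w_0\lambda)$ and self-duality of the trivial representation), combining with the dimension $1$ conclusion above yields
$$
\dim\bigl[V(n\chi_{w_1}) \otimes \cdots \otimes V(n\chi_{w_s})\bigr]^{L^{ss}} = 1,
$$
as desired.

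The main obstacle I expect is a careful verification of the deformed-pullback identity in the second step: one must check that the defining character-degree conditions of $\odot_0$ and the Ressayre--Richmond construction of $\phi_\delta^\odot$ are compatible with Cartesian products of flag varieties, so that the diagonal map genuinely realizes the iterated BK product. After that, the remaining work is bookkeeping with the weight $\chi_w = \rho - 2\rho_L + w^{-1}\rho$ and tensor duality.
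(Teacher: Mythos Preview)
Your proposal is correct and follows exactly the approach the paper indicates: the theorem is originally due to \cite{BKR}, and the paper explicitly notes that it is recovered from Theorems~\ref{BFCi} and~\ref{BFCii} by taking the diagonal embedding $G\to G^{s-1}$, which is precisely your specialization. Your identification of the deformed-pullback-equals-iterated-BK-product step as the one piece requiring care is accurate, and the dualization and decomposition $\chi_{\wh w}=(\chi_{w_2},\ldots,\chi_{w_s})$ are handled correctly.
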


Theorems \ref{BFCi}, \ref{BFCii} imply that (upon taking duals) for all $n\ge 1$, $\dim \left[V(n(\chi_w-\chi_1))\otimes V(n\chi_{\wh w})\right]^{L}=1$.  Thus we generalize Theorem \ref{babble} further to the setting of $G\subseteq \wh G$, and one
recovers it by considering the diagonal embedding $G\to \underbrace{G\times \cdots \times G}_{s-1}$. Many of the proofs are similar, but we highlight that the $x_P$-filtration on tangent spaces in \cite[\S 7]{BKR} is replaced by the more natural $\delta$-filtration in our setting; see Section \ref{highlighter}. The stabilizing parabolics $Q_w$ associated to Schubert varieties $X_w$ and the subvarieties $Y_w$ continue to play a crucial role. 

\subsection{Layout of the paper}

Because of its importance to the main results of this paper (the rays formulas), we will first establish the generalized Fulton conjecture (Theorems \ref{BFCi} and \ref{BFCii}) in Section \ref{nextgen}. We will then prove Theorem \ref{divisors} on the existence of the divisors giving rise to type I rays (Sections \ref{divI} and \ref{nextup}) and Theorem \ref{formulaONE} for the type I ray formulas (Section \ref{chemistry}) in succession. Next, we prove the decomposition Theorem \ref{1+2} (Section \ref{redox}) and the induction Theorem \ref{yadhtrib} (Sections \ref{zoom}, \ref{wellordering}, \ref{chemII}). Finally, we prove Proposition \ref{63} in Section \ref{help} and discuss the extraneous extremal rays in Section \ref{extraz}.

We end with a few examples in Section \ref{ampleXample}, some of which were first considered in \cite{BeS} or \cite{PR}; in general there is a wealth of branching situations $G\hookrightarrow \wh G$ one could consider. 

\subsection{Acknowledgements} The author is deeply indebted to the guidance of Prakash Belkale, who suggested the undertaking of this project and participated in many helpful discussions with the author. Furthermore the author thanks Shrawan Kumar for clarifying some aspects of the cone $\mathcal{C}(G\hookrightarrow \widehat G)$, David Anderson for lending his expertise in equivariant cohomology, and a referee for several insightful comments.

\section{Some preliminary comments on the cone $\mathcal{C}(G\hookrightarrow \wh G)$}\label{prelims}

Here we justify that changing basis on a regular facet $\mathcal{F}(w, \wh w,\delta)$ of $\mathcal{C}(G\hookrightarrow \wh G)$ is allowable by proving Proposition \ref{moveme}. 

\begin{proof}
Let $\wh \Delta$ denote the base for $\wh B$. Since $\wh v^{-1}\wh \Delta$ is the base for $\wh B'$, (a) and (b) follow immediately by definitions. 

As for (c), examine the embedding on the level of Lie algebras: $\mathfrak{b}\subseteq \wh{\mathfrak{b}}$ is an $\mathfrak{h}$-equivariant inclusion, so if $\gamma$ is a positive root for $B$, then $\displaystyle \mathfrak{g}_\gamma\subseteq \bigoplus_{\wh\gamma|_\mathfrak{h} = \gamma} \wh{\mathfrak{g}}_{\wh \gamma}$. Furthermore, the sum on the right is actually just over the roots $\wh \gamma$ which are positive for $\wh B$. We wish to show that any such $\wh\gamma$ on the RHS is actually positive \wrt $\wh B'$; equivalently, that $\wh v\wh \gamma$ is positive \wrt $\wh B$. 

To that end, consider the two possible cases: if $\langle \gamma, \dot\delta\rangle =\langle \wh \gamma, \dot \delta\rangle >0$, then $\langle \wh v\wh\gamma, \wh v \dot \delta\rangle >0$. Since $\wh v \delta$ is $\wh B$-dominant, we must have $\wh v \wh\gamma\succ 0$. On the other hand, if $\langle \gamma, \dot\delta \rangle = \langle \wh \gamma, \dot \delta \rangle = 0$, then $s_{\wh \gamma}\delta = \delta$. If $\wh v\wh \gamma \prec 0$, then $\wh v s_{\wh \gamma}$ has strictly smaller length than $\wh v$. 
But $\wh v s_{\wh \gamma}$ satisfies (H1) since $\wh v \delta = \wh v s_{\wh \gamma} \delta$, so this contradicts (H2). 

As for (d), there are two statements to prove (see \cite{RessRich} 
for more on the deformed pullback). We must show that $\phi_\delta^*([\wh X'_{\wh v^{-1}\wh w}])\cdot([X_w]) = [X_e]$ under the usual cup product, and secondly that 
$$
\langle \rho+w^{-1}\rho, \dot\delta\rangle - \langle 2\rho, \dot\delta\rangle + \langle \wh\rho'+\left(\wh v^{-1}\wh w\right)^{-1} \wh \rho',\dot\delta\rangle = 0,
$$
where $\rho$ is the half-sum of positive roots of $B$ and $\wh\rho'$ the same for $\wh B'$. 

The first follows immediately from the given product $\phi_\delta^*([\wh X_{\wh w}])\cdot([X_w]) = [X_e]$ and the
observation that $[\wh X_{\wh w}] = [\wh v \wh X'_{\wh v^{-1}\wh w}] = [\wh X'_{\wh v^{-1}\wh w}]$.
The second follows from the given identity
$$
\langle \rho+w^{-1}\rho, \dot\delta\rangle - \langle 2\rho, \dot\delta\rangle + \langle \wh v^{-1} \wh \rho+\wh w^{-1} \wh \rho,\dot\delta\rangle = 0
$$
and the observation that $\wh\rho = \wh v\wh \rho'$ (here $\wh \rho$ is the half-sum of positive roots of $\wh B$.)
\end{proof}

\section{Generalization of Fulton's conjecture for $G\subseteq \wh G$}\label{nextgen}

With all notation as in the introduction, in this section we prove Theorems \ref{BFCi} and \ref{BFCii}.  As an immediate corollary, we obtain a generalization of Fulton's conjecture for a pair of reductive groups $G\hookrightarrow \wh G$, one embedded in the other. We recall the following deformed pullback in cohomology from \cite{RessRich}. Let $\rho$ be half the sum of positive roots for $G$, and let $\wh \rho$ denote the same for $\wh G$. 

\begin{defi}
Let $\phi_\delta^*$ be the induced pullback in cohomology for an embedding $G/P(\delta)\to \wh G/\wh P(\delta)$. Then in the Schubert basis for $H^*(G/P)$, we may write 
$$
\phi_\delta^*
\left([\wh X_{\wh w}]\right)
=\sum_{w\in W^P} d_{\wh w}^w [X_w]
$$
for suitable integers $d_{\wh w}^w$. Define
$$
\phi_\delta^\odot
\left([\wh X_{\wh w}]\right)
=\sum_{w\in W^P} c_{\wh w}^w [X_w],
$$
where $c_{\wh w}^w = d_{\wh w}^w$ if $\langle \rho+w^{-1}\rho,\dot\delta\rangle - \langle \wh \rho +\wh w^{-1}\wh \rho,\dot \delta \rangle =0$ and $c_{\wh w}^w = 0$ otherwise. 
\end{defi}

There is another, equivalent, definition of this product which replaces the numerical requirement for $c_{\wh w}^w = d_{\wh w}^w$ with a geometric one, which is called \emph{Levi-movability} (L-movability for short):

\begin{proposition}
Suppose $w,\wh w$ satisfy $d_{\wh w}^w\ne 0$. Then $c_{\wh w}^w\ne 0$ if and only if for generic $(l,\wh l)\in L\times \wh L$, the vector space map 
$$
T_{\dot e}(G/P)\to \frac{T_{\dot e}(G/P)}{T_{\dot e}(l\bar w^{-1} X_{\bar w})}\oplus \frac{T_{\dot e}(\wh G/\wh P)}{T_{\dot e}(\wh l \wh w^{-1} \wh X_{\wh w})} 
$$
is an isomorphism, where $\bar w = w_0ww_0^P$ is the dual of $w\in W^P$. The latter condition is equivalent to the statement: generic $L\times \wh L$-translates of $\bar w^{-1}X_{\bar w}$ and $\wh w^{-1}\wh X_{\wh w}$ intersect transversally at $\dot e$.
\end{proposition}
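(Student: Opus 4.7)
The plan is to prove both directions using the $T_\delta := \delta(\C^*)$-equivariant structure of $\Phi$, closely paralleling the Belkale-Kumar characterization of Levi-movability in \cite{BK} and its extension to embedded flag varieties by Ressayre-Richmond \cite{RessRich}. First I would set up the equivariance: since $\delta$ lies in the common center $Z(L)\cap Z(\wh L)$ (by the defining property of the Levi subgroups as centralizers of $\delta$), the $L\times \wh L$-action commutes with $T_\delta$, the translated tangent subspaces $T_{\dot e}(l\bar w^{-1}X_{\bar w})$ and $T_{\dot e}(\wh l \wh w^{-1}\wh X_{\wh w})$ are $T_\delta$-stable, and $\Phi$ is a morphism of $T_\delta$-representations. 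The hypothesis $d_{\wh w}^w\ne 0$ forces $\op{codim}(X_w)=\op{codim}_{\wh G/\wh P}(\wh X_{\wh w})$, so domain and codomain have equal dimension and $\Phi$ is an isomorphism iff it is surjective.

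For the forward direction ($\Phi$ an isomorphism $\Rightarrow c_{\wh w}^w\ne 0$), a $T_\delta$-isomorphism must preserve total $\delta$-weight. Using the root space decomposition $T_{\dot e}(G/P)\cong \bigoplus_{\alpha\in \Phi^+\setminus \Phi^+_L}\mathfrak{g}_{-\alpha}$ together with the standard Chevalley-type identification of $T_{\dot e}(\bar w^{-1}X_{\bar w})$ (a sum of root spaces indexed by a subset of $\Phi^+\setminus \Phi^+_L$ whose roots sum to $\rho - \bar w^{-1}\rho$), the total $\delta$-weight of $V_1$ simplifies to $\langle w^{-1}\rho - \rho,\dot\delta\rangle$ after applying $\bar w = w_0 w w_0^P$ and the $W_L$-invariance of $\dot\delta$; analogously $V_2$ contributes $-\langle \wh\rho + \wh w^{-1}\wh\rho,\dot\delta\rangle$. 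Setting the sum equal to $-\langle 2\rho,\dot\delta\rangle$ (the total weight on $T_{\dot e}(G/P)$) and rearranging yields exactly the numerical identity $\langle \rho + w^{-1}\rho,\dot\delta\rangle = \langle \wh\rho + \wh w^{-1}\wh\rho,\dot\delta\rangle$ defining $c_{\wh w}^w = d_{\wh w}^w$.

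For the reverse direction, assume the numerical condition together with $d_{\wh w}^w\ne 0$. Kleiman transversality applied to the $L\times \wh L$-action on $G/P\times \wh G/\wh P$, combined with the pullback along $\phi_\delta$, gives a nonempty set of transversally-intersecting generic translates somewhere in $G/P$. To localize transversality to the base point $\dot e$, I would use the $\delta$-flow: $\delta(t)$ contracts the big Schubert cells of $G/P$ and $\wh G/\wh P$ onto $\dot e$ and $\wh{\dot e}$ as $t\to 0$, and these contractions are compatible under $\phi_\delta$. The numerical condition is precisely what guarantees that the $T_\delta$-characters of $V_1$ and $V_2$ match weight-by-weight (not merely in total), so that generic transversality persists under the degeneration and produces an isomorphism $\Phi$ at $\dot e$ for generic $(l,\wh l)$.

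The main obstacle is the refinement from total-weight matching to full character matching on each $T_\delta$-weight subspace, and the accompanying localization of transversality to $\dot e$. Since $\wh L$ acts on $\wh G/\wh P$ but not on $G/P$, Kleiman's theorem must be invoked on the graph of $\phi_\delta$ inside $G/P\times \wh G/\wh P$, and the $\delta$-flow degeneration tracked on both factors simultaneously; this parallels the $\delta$-filtration argument developed in Section \ref{highlighter} of the paper (which itself replaces the $x_P$-filtration of \cite{BKR} from the single-flag case).
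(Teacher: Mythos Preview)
The paper does not prove this proposition at all; it simply cites \cite{RessRich}*{Proposition 2.3}. Your sketch is therefore more ambitious than what the paper supplies, and it is worth assessing on its own merits.

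Your forward direction is correct: for $(l,\wh l)\in L\times \wh L$ the subspaces $T_{\dot e}(l\bar w^{-1}X_{\bar w})$ and $T_{\dot e}(\wh l\wh w^{-1}\wh X_{\wh w})$ are $T_\delta$-stable, so $\Phi$ is a $T_\delta$-equivariant map; if it is an isomorphism the $T_\delta$-characters of source and target agree, and your weight bookkeeping correctly recovers the numerical identity defining $c_{\wh w}^w=d_{\wh w}^w$.

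Your reverse direction contains a genuine gap. You write that ``the numerical condition is precisely what guarantees that the $T_\delta$-characters \ldots\ match weight-by-weight (not merely in total)''; this is false as stated. The numerical condition is a single scalar equation and by itself says nothing about individual weight spaces. The missing ingredient is the family of \emph{inequalities} coming from $d_{\wh w}^w\ne 0$: for generic $(p,\wh p)\in P\times \wh P$ the map $\Phi_{p,\wh p}$ is an isomorphism, it preserves the $\delta$-\emph{filtration} (not the grading, since $U,\wh U$ only respect the filtration), and restricting to each step $\mathcal{F}_j$ gives an injection, hence
\[
f(j):=\dim\mathcal{F}_j(\text{target})-\dim\mathcal{F}_j(\text{source})\ge 0\quad\text{for every }j.
\]
An Abel summation shows that the numerical condition is equivalent to $\sum_{j} f(j)=0$; combined with $f(j)\ge 0$ this forces $f(j)=0$ for all $j$. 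Then $\Phi_{p,\wh p}|_{\mathcal{F}_j}$ is an injection between spaces of equal dimension, hence an isomorphism, so the associated graded map $\op{gr}\Phi_{p,\wh p}$ is an isomorphism. Finally $\op{gr}\Phi_{p,\wh p}=\Phi_{l,\wh l}$ where $l,\wh l$ are the Levi components of $p,\wh p$ (the unipotent radicals act trivially on the associated graded), which gives the desired isomorphism for generic $(l,\wh l)$. Your $\delta$-flow picture is the geometric shadow of this, but the argument does not go through without first extracting the inequalities $f(j)\ge 0$ from $d_{\wh w}^w\ne 0$; ``persistence of transversality under degeneration'' is exactly what fails when those inequalities are strict somewhere.
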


\begin{proof}
This is \cite[Proposition 2.3]{RessRich}.
\end{proof}

Suppose $w,\wh w, \delta$ satisfy 
\begin{align}\label{dgen}
\phi_\delta^\odot \left([\wh X_{\wh w}]\right)\odot_0 [X_w] = d[X_e]
\end{align}
for some $d>0$; we do not necessarily require in this section that $\delta\in \mathfrak{S}$. As always, we assume $\wh P(\delta)$ is standard. We also assume that $w, \wh w$ are minimal length coset representatives in $W/W_\delta$, $\wh W/\wh W_\delta$.

\begin{theorem}[Generalization of Fulton's conjecture]
If $d=1$ in (\ref{dgen}), then for any $n\ge 1$, 
$$
\dim \left(V_L(n(\chi_w-\chi_1))\otimes V_{\wh L}(n\wh \chi_{\wh w})\right)^{L} = 1.$$
\end{theorem}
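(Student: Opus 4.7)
The plan is to derive the theorem directly from Theorems \ref{BFCi} and \ref{BFCii} stated earlier in the introduction, followed by a short duality argument for the semisimple group $L^{ss}$. Both prerequisite theorems take as their hypothesis exactly the cup-product relation with $d=1$, so they apply verbatim to the data $(w,\wh w,\delta)$ of the present statement.

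First I would invoke Theorem \ref{BFCi}: under the given cup-product hypothesis,
\[
\dim H^0(\mathcal{Y},\mathcal{O}(n\mathcal{R}))^G = 1 \quad\text{for every } n\ge 1,
\]
where $\mathcal{Y}$ and $\mathcal{R}$ are the universal intersection scheme and its ramification divisor attached to $(w,\wh w,\delta)$. Next I would apply Theorem \ref{BFCii} to replace the space on the left by a representation-theoretic one:
\[
H^0(\mathcal{Y},\mathcal{O}(n\mathcal{R}))^G \simeq \bigl[V_L(n\chi_w)^*\otimes V_{\wh L}(n\chi_{\wh w})^*\bigr]^{L^{ss}}.
\]
Combining the two identities gives $\dim \bigl[V_L(n\chi_w)^*\otimes V_{\wh L}(n\chi_{\wh w})^*\bigr]^{L^{ss}} = 1$.

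It remains to transfer across duals. For any finite-dimensional representation $V$ of a semisimple group $H$, the evaluation pairing $V\otimes V^*\to\C$ restricts by complete reducibility to a perfect pairing between $V^{H}$ and $(V^*)^{H}$, so $\dim V^{H}=\dim (V^*)^{H}$. Applied to $V = V_L(n\chi_w)\otimes V_{\wh L}(n\chi_{\wh w})$ viewed as an $L^{ss}$-module, this gives
\[
\dim \bigl[V_L(n\chi_w)\otimes V_{\wh L}(n\chi_{\wh w})\bigr]^{L^{ss}} \;=\; \dim \bigl[V_L(n\chi_w)^*\otimes V_{\wh L}(n\chi_{\wh w})^*\bigr]^{L^{ss}} \;=\; 1,
\]
which is the assertion.

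Thus the statement is essentially formal given Theorems \ref{BFCi} and \ref{BFCii}; the only point that demands care in the deduction is checking that the isomorphism of Theorem \ref{BFCii} is $L^{ss}$-equivariant, so that $G$-invariants on the source genuinely match $L^{ss}$-invariants on the target. The main obstacle in the overall program is instead proving Theorem \ref{BFCi}, which requires a careful analysis of the ramification divisor of the birational morphism $\pi:\mathcal{Y}\to G/B\times \wh G/\wh B$. As the introduction indicates, this is where the $x_P$-filtration on tangent spaces used in \cite{BKR} must be replaced by the more natural $\delta$-filtration suited to the $G\subseteq \wh G$ context.
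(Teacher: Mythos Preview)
Your deduction is correct and is exactly the route the paper takes: the introduction explicitly says that Theorems \ref{BFCi} and \ref{BFCii} together imply the generalized Fulton conjecture ``taking duals,'' and the body of Section \ref{nextgen} is devoted to proving those two theorems rather than giving a separate argument for this corollary. One small remark: your caveat about checking that the isomorphism of Theorem \ref{BFCii} is $L^{ss}$-equivariant is unnecessary, since that theorem already asserts an isomorphism between the two invariant spaces (with $G$-invariants on one side and $L^{ss}$-invariants on the other), so there is nothing further to verify at that step.
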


\subsection{Geometric setup}

Define the universal intersection scheme
$$
\mathcal{X} = \{(g,\wh g,z)\in G/B\times \wh G/\wh B\times \wh G/\wh P : z\in \phi_\delta(gX_w)\cap \wh g\wh X_{\wh w}\};
$$
the scheme structure is given as in \cite[\S 5]{BKR}. For a Schubert variety $X_w$, let $Q_w\subset G$ be its stabilizer. Let $Z_w$ denote its smooth locus, $Y_w$ the orbit $Q_wwP$, and $C_w$ the Schubert cell $BwP$. Observe that 
$$
X_w\supseteq Z_w\supseteq Y_w\supseteq C_w.
$$
Define analogous spaces $\wh Z_{\wh w}, \wh Y_{\wh w}, \wh C_{\wh w}$ for the $\wh G$-context. Then by replacing $X_w, \wh X_{\wh w}$ in the definition of $\mathcal{X}$ with the corresponding pairs of subvarieties, we define open subvarieties
$$
\mathcal{X}\supseteq \mathcal{Z}\supseteq\mathcal{Y}\supseteq \mathcal{C}.
$$
We record various properties of these spaces in the following lemma:
\begin{lemma}\label{five2}
\begin{enumerate}[label=(\alph*)]
\item Each of $\mathcal{X}, \mathcal{Z}, \mathcal{Y}, \mathcal{C}$ is irreducible.
\item $\mathcal{Z}, \mathcal{Y}, \mathcal{C}$ are all smooth.
\item $\mathcal{X}\setminus\mathcal{Z}$ is codimension $\ge2$ inside $\mathcal{X}$.
\end{enumerate}
\end{lemma}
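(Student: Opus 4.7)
My plan closely follows the approach of \cite{BKR}*{\S 5} in the diagonal case, adapted to the embedding $\phi_\delta\colon G/P\hookrightarrow \wh G/\wh P$, which is a closed immersion since $P=G\cap \wh P$. The organizing tool is the $G$-equivariant projection
$$
\pi\colon \mathcal{X}\to G/P,\qquad (g,\wh g,z)\mapsto \phi_\delta^{-1}(z),
$$
well-defined because $z\in \phi_\delta(gX_w)\subset \phi_\delta(G/P)$; this restricts to maps from each of $\mathcal{Z},\mathcal{Y},\mathcal{C}$. The fiber over $p\in G/P$ is the product $F_p\times \wh F_{\phi_\delta(p)}$, where $F_p=\{g\in G/B: p\in gX_w\}$ and $\wh F_{\wh p}=\{\wh g\in \wh G/\wh B:\wh p\in \wh g\wh X_{\wh w}\}$; for $\mathcal{Z},\mathcal{Y},\mathcal{C}$ one uses $Z_w,Y_w,C_w$ (and their $\wh w$-counterparts) in place of $X_w,\wh X_{\wh w}$. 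Because $\pi$ is $G$-equivariant over the homogeneous base $G/P$, it is locally trivial.

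For (a), a short calculation identifies $F_{eP}$ with $\overline{Pw^{-1}B}/B\subset G/B$, the closure of a single $P$-orbit (namely the orbit of $w^{-1}B/B$ under left multiplication); this is irreducible, and similarly $\wh F_{e\wh P}$ is irreducible. The fibers for $\mathcal{Z},\mathcal{Y},\mathcal{C}$ are nonempty open subvarieties of these and thus still irreducible. Local triviality over an irreducible base then forces each of $\mathcal{X},\mathcal{Z},\mathcal{Y},\mathcal{C}$ to be irreducible.

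For (b), I would exploit that the map $G\to G/P,\ \tilde g\mapsto \tilde g^{-1} eP$, is a submersion, so the preimage of the smooth subvariety $Z_w\subset G/P$ is smooth in $G$; quotienting by the free right $B$-action gives smoothness of $F_{eP}^Z\subset G/B$. The same argument applies to $F_{eP}^Y$ (using that $Y_w$ is a $Q_w$-orbit, hence smooth) and to $F_{eP}^C$ (using that $C_w$ is an affine cell), and on the $\wh w$-side. A locally trivial fibration with smooth fibers over a smooth base is smooth, so $\mathcal{Z},\mathcal{Y},\mathcal{C}$ are smooth.

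For (c), the key input is the normality of Schubert varieties (Ramanathan's theorem), which gives $\codim_{X_w}(X_w\setminus Z_w)\geq 2$ and similarly on the $\wh w$-side. Since $G\to G/P$ is a submersion preserving codimensions of preimages, $\codim_{F_{eP}}(F_{eP}\setminus F_{eP}^Z)\geq 2$, and likewise $\codim_{\wh F_{e\wh P}}(\wh F_{e\wh P}\setminus \wh F_{e\wh P}^Z)\geq 2$. The fiber of $\mathcal{X}\setminus \mathcal{Z}$ over $p$ is contained in the union $(F_p\setminus F_p^Z)\times \wh F_{\phi_\delta(p)}\,\cup\, F_p\times (\wh F_{\phi_\delta(p)}\setminus \wh F^Z_{\phi_\delta(p)})$, each piece of codimension $\geq 2$ in the fiber; hence $\mathcal{X}\setminus \mathcal{Z}$ has codimension $\geq 2$ in $\mathcal{X}$. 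The main obstacle will be carefully verifying the fiber structure and the local-triviality / submersion arguments via $G$-equivariance; once these are in place, Ramanathan's normality theorem provides the deepest external input.
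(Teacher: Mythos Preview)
Your proposal is correct and is exactly the approach the paper has in mind: the paper simply writes ``The proofs of these statements are identical to those of \cite{BKR}*{Lemma 5.2}, so we omit them here,'' and what you have sketched is precisely that argument adapted to the $G\subset\wh G$ setting (project to the $G/P$-factor via $\phi_\delta^{-1}$, use $G$-equivariance for local triviality, identify the fiber over $eP$ with $\overline{Pw^{-1}B}/B\times\overline{\wh P\wh w^{-1}\wh B}/\wh B$, and invoke Ramanathan's normality theorem for the codimension estimate).
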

The proofs of these statements are identical to those of \cite[Lemma 5.2]{BKR}, so we omit them here.

Assume $d=1$ in (\ref{dgen}). Then $\pi: \mathcal{Z}\to G/B\times \wh G/\wh B$ is a birational morphism of smooth varieties, $\pi$ fails to be injective exactly where the map on tangent planes is not an isomorphism. We use $\mathcal{R}$ to denote the associated \emph{ramification divisor}, and may use the symbol $\mathcal{R}$ to mean analogous divisors $\mathcal{R}\cap \mathcal{Y}$ and $\mathcal{R}\cap \mathcal{C}$, depending on the context. 

The proof of Theorem \ref{BFCi} relies on the following crucial geometric result of \cite[Proposition 3.1]{BKR}, which we recall without proof:

\begin{proposition}
Suppose $\pi:X\to Y$ is a regular birational morphism of smooth irreducible varieties with $Y$ projective, and suppose $\bar X$ is an irreducible projective scheme containing $X$ as an open subscheme such that
\begin{enumerate}[label=(\alph*)]
\item the codimension of $\bar X\setminus X$ in $\bar X$ is at least $2$, and
\item $\pi$ extends to a regular map $\bar \pi: \bar X\to Y$.
\end{enumerate}
Set $R$ to be the ramification divisor of $\pi$. Then 
$$
\dim H^0(X,\mathcal{O}(nR))=1
$$
for every $n\ge 1$.
\end{proposition}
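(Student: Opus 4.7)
The plan is to show that every section $s\in H^0(X,\mathcal{O}(nR))$ is a scalar multiple of the canonical section $1\in H^0(X,\mathcal{O}_X)\hookrightarrow H^0(X,\mathcal{O}(nR))$ (the inclusion coming from $nR\ge 0$). Concretely, I view $s$ as a rational function $f$ on $X$ with $\operatorname{div}(f)+nR\ge 0$, push $f$ forward across the birational map $\pi$, extend the result to all of $Y$ using Hartogs, and then conclude via the projectivity of $Y$ that $f$ is a constant. The upper bound $\dim\le 1$ combined with the canonical section yields equality.

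To localize, I use the standard fact that for a birational morphism $\pi$ of smooth irreducible varieties, the support of the ramification divisor $R$ is contained in the exceptional locus: each prime divisor appearing in $R$ is contracted by $\pi$ in the sense that its image in $Y$ has codimension at least $2$. Let $U\subseteq X$ be the locus where $\pi$ is an isomorphism onto $V:=\pi(U)\subseteq Y$. Since $f$ has at worst poles along $|R|$, $f|_U$ is regular and descends via $\pi|_U\colon U\xrightarrow{\sim} V$ to a regular function $g\in\mathcal{O}_Y(V)$.

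The heart of the argument is then to verify that $Y\setminus V$ has codimension $\ge 2$ in $Y$, so that Hartogs may be applied. Decompose
\[
Y\setminus V \;=\;(Y\setminus \pi(X))\ \cup\ (\pi(X)\setminus V).
\]
The second piece lies in $\pi(X\setminus U)$, the image of the exceptional divisors, which has codimension $\ge 2$ in $Y$ by the fact invoked above. For the first piece, the hypotheses on $\bar X$ are essential: since $\bar X$ is projective and $\bar\pi$ is regular (hypothesis (b)), $\bar\pi(\bar X)$ is closed and contains the dense set $\pi(X)$, so $\bar\pi$ is surjective. Therefore $Y\setminus\pi(X)\subseteq\bar\pi(\bar X\setminus X)$, and by hypothesis (a), $\dim\bar\pi(\bar X\setminus X)\le\dim(\bar X\setminus X)\le \dim\bar X-2=\dim Y-2$, so this piece is also codimension $\ge 2$.

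With the codimension bound in hand, Hartogs (applied to the smooth, hence normal, variety $Y$) extends $g$ uniquely to $\tilde g\in H^0(Y,\mathcal{O}_Y)$; projectivity and irreducibility of $Y$ then force $\tilde g$ to be a constant $c$. Pulling back along $\pi$, $f$ agrees with $c$ on the dense open $U$, so $f=c$ on all of $X$ by irreducibility. This gives $\dim H^0(X,\mathcal{O}(nR))\le 1$, and equality follows from the canonical section. I expect the main obstacle to be the codimension estimate on $Y\setminus V$—this is the only place where the assumptions on $\bar X$ enter, and without them $\pi(X)$ could omit a divisor of $Y$, in which case Hartogs would fail and nontrivial sections could arise. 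The remaining steps (containment of $|R|$ in the exceptional locus, the Hartogs extension, and the constancy on a projective base) are essentially standard.
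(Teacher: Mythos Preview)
The paper does not give its own proof of this proposition; it is quoted from \cite{BKR}*{Proposition 3.1} and explicitly ``recall[ed] without proof.'' Your argument is the natural one and is correct in outline.

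The only step that deserves a further word is the claim that $\pi(X\setminus U)$ has codimension $\ge 2$ in $Y$. You justify this by invoking the earlier fact that each prime divisor appearing in $R$ is contracted; but what you literally need is that each divisorial component of the exceptional locus $X\setminus U$ is contracted. These coincide once one checks $X\setminus U=|R|$: the restriction $\pi|_{X\setminus |R|}$ is \'etale (by construction of $R$), birational, and has smooth (hence normal) target, so it is an open immersion by Zariski's Main Theorem; thus $X\setminus |R|\subseteq U$ and therefore $X\setminus U\subseteq |R|$, while the reverse inclusion you already noted. With that identification in hand, your codimension estimate on $Y\setminus V$ and the remainder of the argument (Hartogs on the smooth $Y$, constancy of global functions on the projective $Y$, and density of $U$ in $X$) go through cleanly.
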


When applied to our context, we obtain the following result:
\begin{corollary}Suppose equation (\ref{dgen}) holds with $d=1$. Then 
for every integer $n\ge 1$,
$\dim H^0(\mathcal{Z}, \mathcal{O}(n\mathcal{R}))=1$.
\end{corollary}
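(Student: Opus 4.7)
The plan is to apply the cited proposition to the geometric setup directly, taking $X = \mathcal{Z}$, $\bar{X} = \mathcal{X}$, and $Y = G/B \times \wh G/\wh B$, with $\pi$ and $\bar\pi$ both given by projection to the first two factors of the triple product $G/B \times \wh G/\wh B \times \wh G/\wh P$. Once the hypotheses are verified, the conclusion $\dim H^0(\mathcal{Z},\mathcal{O}(n\mathcal{R})) = 1$ is immediate from the proposition applied to $\pi:\mathcal{Z} \to G/B \times \wh G/\wh B$, whose ramification divisor is precisely $\mathcal{R}$ by definition.

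First I would check the smoothness and irreducibility conditions: $Y = G/B \times \wh G/\wh B$ is visibly smooth, irreducible, and projective, while Lemma \ref{five2}(a),(b) gives that $\mathcal{Z}$ is smooth and irreducible and that $\mathcal{X}$ is irreducible. The variety $\mathcal{X}$ is projective because it is closed in the projective variety $G/B\times \wh G/\wh B\times \wh G/\wh P$ (being cut out by the Schubert incidence conditions), and $\mathcal{Z}$ sits inside $\mathcal{X}$ as an open subscheme by construction. Next I would verify the birationality of $\pi:\mathcal{Z} \to G/B \times \wh G/\wh B$: the assumption $d=1$ in (\ref{dgen}) means that over a dense open subset of $G/B\times \wh G/\wh B$ the fiber consists of a single reduced point, namely the unique intersection point of $\phi_\delta(gX_w)$ and $\wh g\wh X_{\wh w}$; this intersection lies in the smooth loci $Z_w$ and $\wh Z_{\wh w}$ on a dense open, so the fiber lies in $\mathcal{Z}$ and $\pi$ is birational.

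For the two conditions on $\bar X = \mathcal{X}$: condition (a) is exactly Lemma \ref{five2}(c), and condition (b) is immediate since the projection from the triple product to its first two factors is regular on all of $\mathcal{X}$ and restricts to $\pi$ on $\mathcal{Z}$. With every hypothesis satisfied, the proposition applies verbatim and yields the claim. I do not anticipate any genuine obstacle here, since the setup of $\mathcal{X}\supseteq \mathcal{Z}$ in Lemma \ref{five2} was engineered precisely so that this proposition applies; the only mildly substantive point to make explicit is the projectivity of $\mathcal{X}$ and the fact that $d=1$ forces the generic fiber of $\pi$ to be a single reduced point lying in the smooth locus $\mathcal{Z}$, so that the term ``ramification divisor'' is unambiguously defined on $\mathcal{Z}$.
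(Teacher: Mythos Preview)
Your proposal is correct and follows essentially the same approach as the paper: apply the cited proposition with $X=\mathcal{Z}$, $\bar X=\mathcal{X}$, $Y=G/B\times\wh G/\wh B$, using Lemma~\ref{five2} to verify the hypotheses. You are simply more explicit than the paper in checking projectivity of $\mathcal{X}$ and birationality of $\pi$ under the $d=1$ assumption, but the argument is the same.
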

\begin{proof}
In the setting of the proposition, take $X = \mathcal{Z}$, $Y = G/B\times \wh G/\wh B$, and $\pi:\mathcal{Z}\to G/B\times \wh G/\wh B$ the projection map. Here $\mathcal{X}$ plays the role of $\bar X$. By Lemma \ref{five2}, $\mathcal{Z}\subseteq \mathcal{X}$ is an open subscheme whose complement has codimension $\ge2$. 
\end{proof}

\subsection{Comparison of $\mathcal{Y}$ and $\mathcal{Z}$ and proof of Theorem \ref{BFCi}}
Theorem \ref{BFCi} is a statement about sections on $\mathcal{Y}$, and our previous corollary pertains to $\mathcal{Z}$, so we connect the two here, thereby proving the theorem.

\begin{proposition}\label{Ayayay}
There exists a subvariety $A\subset \mathcal{Z}$ such that 
$\codim(A,\mathcal{Z})\ge2$ and $\mathcal{Z}\setminus \mathcal{Y}\subseteq A\cup \mathcal{R}$.
\end{proposition}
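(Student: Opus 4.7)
The plan is to stratify $\mathcal{Z}\setminus\mathcal{Y}$ by the Schubert cell type of the intersection point $z$ inside $X_w$ and $\wh X_{\wh w}$, and to argue that each stratum is either of codimension $\ge 2$ in $\mathcal{Z}$ (and thus absorbable into $A$) or of codimension $1$ and contained in the ramification divisor $\mathcal{R}$. This parallels \cite{BKR}*{\S 5} for the diagonal case $G\to G\times G$.

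First I would decompose
$$
\mathcal{Z}\setminus\mathcal{Y}=\mathcal{Z}_1\cup\mathcal{Z}_2,\quad \mathcal{Z}_1:=\{(g,\wh g,z)\in\mathcal{Z}:g^{-1}\phi_\delta^{-1}(z)\in Z_w\setminus Y_w\},
$$
with $\mathcal{Z}_2$ defined symmetrically; by symmetry it suffices to handle $\mathcal{Z}_1$. I would stratify $Z_w\setminus Y_w$ by Schubert cells $BuP$ (equivalently $u\in W^P$ with $BuP\subseteq Z_w\setminus Y_w$) and let $\mathcal{Z}_1^u\subseteq\mathcal{Z}_1$ be the corresponding locally closed stratum.

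Second, a fiber calculation along the projection $p_3:\mathcal{Z}\to\wh G/\wh P$, $(g,\wh g,z)\mapsto z$, yields $\dim\mathcal{Z}=\dim G/B+\dim\wh G/\wh B$, using the degree condition $\ell(w)+\ell(\wh w)=\dim\wh G/\wh P$ forced by $\phi_\delta^*[\wh X_{\wh w}]\cdot[X_w]=[X_e]$. Repeating the calculation for $\mathcal{Z}_1^u$ simply swaps $\ell(w)$ for $\ell(u)$, yielding $\codim_{\mathcal{Z}}(\mathcal{Z}_1^u)=\ell(w)-\ell(u)\ge 1$. The strata with $\ell(w)-\ell(u)\ge 2$ I would collect into $A_1$; what remains is the codimension-$1$ case, namely $u\lessdot w$ (Bruhat cover) with $uP\in Z_w\setminus Y_w$.

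Third---and this is the heart of the argument---I would show $\mathcal{Z}_1^u\subseteq \mathcal{R}$ for each such $u$. Using the incidence description of $\mathcal{Z}$, one has
$$
\ker d\pi\big|_{(g,\wh g,z)} \cong T_z\phi_\delta(g X_w)\cap T_z(\wh g\wh X_{\wh w}),
$$
so the task is to produce a nonzero element of this intersection at a single point of $\mathcal{Z}_1^u$ (sufficient by irreducibility of $\mathcal{Z}_1^u$ and closedness of $\mathcal{R}$). The mechanism: although $X_w$ is smooth at $uP$ so $T_{uP}X_w$ attains its expected dimension $\ell(w)$, the hypothesis $uP\notin Y_w=Q_w\cdot wP$ means the infinitesimal $Q_w$-action at $uP$ spans only a proper subspace of $T_{uP}X_w$; pushing the ``missing'' direction forward through $d\phi_\delta$ and combining with the deformation theory of the moving intersection of $\phi_\delta(gX_w)$ with $\wh g\wh X_{\wh w}$ produces a common tangent vector in both $T_z\phi_\delta(gX_w)$ and $T_z\wh g\wh X_{\wh w}$. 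Setting $A:=A_1\cup A_2$ (with $A_2$ defined symmetrically) then completes the proof.

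The main obstacle is precisely this last tangent-space construction in the codimension-$1$ case. The subtlety absent from \cite{BKR} is that one factor projection is replaced by the embedding $\phi_\delta$; the required compatibility of the tangent-space decompositions on the two sides of $\phi_\delta$ should follow from the Levi-level identities $P(\delta)=\wh P(\delta)\cap G$ and $L(\delta)=\wh L(\delta)\cap G$, reducing the check to a calculation on the respective Lie algebras weight by weight with respect to $\delta$.
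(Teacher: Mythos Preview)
Your overall architecture---stratify $\mathcal{Z}\setminus\mathcal{Y}$ by Schubert cell type, absorb the codimension $\ge 2$ strata into $A$, and show the codimension $1$ strata lie in $\mathcal{R}$---is exactly the paper's plan. The gap is entirely in the third step, and it is substantial.

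First, a logical error: ``produce a nonzero element of this intersection at a single point of $\mathcal{Z}_1^u$ (sufficient by irreducibility of $\mathcal{Z}_1^u$ and closedness of $\mathcal{R}$)'' is backwards. Finding one point in $\mathcal{R}$ shows only that the stratum meets $\mathcal{R}$, not that it is contained in $\mathcal{R}$; since $\mathcal{Z}\setminus\mathcal{R}$ is open, you must rule out \emph{every} point of the stratum lying outside $\mathcal{R}$. The paper accordingly argues by contradiction: assume some $\mathfrak{a}\in\mathcal{C}_{v,\wh v}$ has $d\pi_{\mathfrak{a}}$ an isomorphism.

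Second, and more seriously, the mechanism you sketch---``the infinitesimal $Q_w$-action at $uP$ spans only a proper subspace of $T_{uP}X_w$; pushing the missing direction forward \ldots\ produces a common tangent vector''---is not an argument. There is no reason the ``missing'' direction should land inside $T_{\wh x}(\wh g\wh Z_{\wh w})$. The paper's proof requires two ingredients you have not invoked:
\begin{enumerate}
\item \emph{Levi-movability.} The hypothesis is $\phi_\delta^\odot[\wh X_{\wh w}]\odot_0[X_w]=[X_e]$, not merely the cup-product statement. This yields, for generic $(l,\wh l)\in L\times\wh L$, an isomorphism on \emph{each} graded piece $V_j$ of the $\delta$-grading on $T_{\dot e}(G/P)$, hence the exact equality
\[
\dim\mathcal{F}_j=\dim\mathcal{F}_j-\dim\mathcal{F}_j(w^{-1}Z_w)+\dim\wh{\mathcal{F}}_j-\dim\wh{\mathcal{F}}_j(\wh w^{-1}\wh Z_{\wh w})
\]
for every filtration level $\mathcal{F}_j$.
\item \emph{The tangent-space jump} (Theorem~7.4 of \cite{BKR}, restated here as Theorem~\ref{reprod}): if $u\xrightarrow{\beta}w$ with $\beta$ \emph{not simple}, then for some $j$ one has $\dim V_j(u^{-1}Z_w)\ne\dim V_j(w^{-1}Z_w)$. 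This is the precise quantitative replacement for your ``proper subspace'' intuition, and it is a nontrivial input.
\end{enumerate}
Combining these with the $P$-stability of each $\mathcal{F}_j$ and the injectivity of the bottom map in the fibre-product diagram (Lemma~\ref{pipes}) at a hypothetical point outside $\mathcal{R}$ yields the contradiction $\dim\mathcal{F}_{j_0}<\dim\mathcal{F}_{j_0}$.

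Your closing remark about ``weight by weight with respect to $\delta$'' is gesturing at exactly the right tool---that \emph{is} the $\delta$-filtration---but you have relegated to an afterthought what is in fact the entire engine of the proof.
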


\begin{proof}
A point $(g,\wh g,z)\in \mathcal{Z}\setminus \mathcal{Y}$ if and only if $z\in \phi_\delta(gZ_w)\cap \wh g \wh Z_{\wh w}$ but $z\not\in \phi_{\delta}(gY_w)\cap \wh g\wh Y_{\wh w}$. That is, $z\in \phi_\delta(gC_v)\cap \wh g\wh C_{\wh v}$ for some $v,\wh v\in W^P\times \wh W^{\wh P}$ such that $C_v\not\subseteq Y_w$ or $\wh C_{\wh v}\not\subseteq \wh Y_{\wh w}$, but $C_v\subseteq Z_w$ and $\wh C_{\wh v}\subseteq \wh Z_{\wh w}$. In other words,
$$
\mathcal{Z}\setminus\mathcal{Y} = \bigsqcup_{\begin{array}{c}(vP,\wh v\wh P)\in Z_w\times \wh Z_{\wh w} \\ (vP,\wh v\wh P)\not\in Y_{w}\times \wh Y_{\wh w}\end{array}} 
\underbrace{\left((G\times_BC_v)\times (\wh G\times_{\wh B}\wh C_{\wh v})\right)\times_{G/P\times \wh G/\wh P} \wh G/\wh P.}_{\text{\normalsize $=:\mathcal{C}_{v,\wh v}$}}
$$

By inspection, the codimension of $\mathcal{C}_{v, \wh v}$ inside $\mathcal{Z}$ is equal to $\codim(C_v,Z_w)+\codim(\wh C_{\wh v},\wh Z_{\wh w})$. Therefore, if we show that the codimension $1$ cells $\mathcal{C}_{v, \wh v}$ that are disjoint from $\mathcal{Y}$ are contained in $\mathcal{R}$, we may take $A$ to be the disjoint union of the remaining cells in the above expression and the result will follow. 

To that end, we observe that (given $(vP,\wh v\wh P)\in Z_w\times \wh Z_{\wh w}$) $\codim(C_v,Z_w)+\codim(\wh C_{\wh v},\wh Z_{\wh w}) = 1$ if and only if 
\begin{enumerate}[label=(C\arabic*)]
\item $v\xrightarrow{\beta}w$ and $\wh v = \wh w$ for some root $\beta\in \Phi^+$ or 
\item $v=w$ and $\wh v\xrightarrow{\beta} \wh w$ for some root $\beta\in \wh \Phi^+$
\end{enumerate}
(these are obviously mutually exclusive). Furthermore, if $\beta$ is a simple root in either (C1) or (C2), then $\mathcal{C}_{v,\wh v}\subset \mathcal{Y}$ by \cite[Proposition 7.2]{BKR} (since then $C_v\subset Y_w$ in case (C1) or $\wh C_{v}\subset \wh Y_{\wh w}$ in case (C2)). So the result follows from 
\begin{proposition}\label{proselyte}
If $v,\wh v$ satisfy either (C1) or (C2) with $\beta$ not simple, then $\mathcal{C}_{v,\wh v}$ is contained in $\mathcal{R}$. 
\end{proposition}
The proof is the content of the next subsection.
\end{proof}

\begin{proof}[Proof of Theorem \ref{BFCi}]
The key here is that, for each $n$, $H^0(\mathcal{Y}, \mathcal{O}(n\mathcal{R}))$ includes into $H^0(\mathcal{Z}, \mathcal{O}(m(n)\mathcal{R}))$, where $m(n)\ge n$ is an integer depending on $n$. This is because functions on $\mathcal{Y}$ with poles to prescribed orders along $\mathcal{R}$ may be uniquely extended across the subvariety $A$ from Proposition \ref{Ayayay} to functions on $\mathcal{Z}$, possibly with greater order poles along $\mathcal{R}$. 

Therefore we have the inclusions 
$$
\C \hookrightarrow H^0(\mathcal{Y}, \mathcal{O}(n\mathcal{R}))^G\hookrightarrow H^0(\mathcal{Y}, \mathcal{O}(n\mathcal{R}))\hookrightarrow H^0(\mathcal{Z},\mathcal{O}(m(n)\mathcal{R})) \simeq \C
$$
for each $n$, and the result follows. 
\end{proof}

\subsection{Tangent space analysis}\label{highlighter}

This section is devoted to the proof of Proposition \ref{proselyte}; it may be read independently of the rest of the paper. 

The following lemma is proved in \cite[Lemma 7.3]{BKR}:
\begin{lemma}
Suppose $v\xrightarrow{\beta}w\in W^P$. As $T$-modules, 
$$
T_{\dot v}(X_w) \simeq \left(\bigoplus_{\gamma\in \Phi^+\cap v\Phi^-} \mathfrak{g}_\gamma\right)\oplus \mathfrak{g}_{-\beta}.
$$
Equivalently, as $T$-modules, 
$$
T_{\dot e}(v^{-1}X_w) \simeq \left(\bigoplus_{\gamma\in v^{-1}\Phi^+\cap \Phi^-} \mathfrak{g}_\gamma\right)\oplus \mathfrak{g}_{-v^{-1}\beta}.
$$
\end{lemma}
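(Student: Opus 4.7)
The plan is to exhibit each summand of the right-hand side as a canonical subspace of $T_{\dot v}(X_w)$ and then upgrade the resulting inclusion to an equality by showing that $X_w$ is smooth at $\dot v$. First, I would parameterize the Bruhat cell $C_v = BvP/P$ isomorphically by the unipotent subgroup $U_v := U \cap vU^-v^{-1}$ via $u \mapsto u\dot v$; this gives an $H$-equivariant identification
\[
T_{\dot v}(C_v) \;\cong\; \operatorname{Lie}(U_v) \;=\; \bigoplus_{\gamma \in R^+\cap vR^-} \mathfrak{g}_\gamma.
\]
Since $\dot v$ lies in the open cell $C_v$ of $X_v \subseteq X_w$, this realizes the first summand inside $T_{\dot v}(X_w)$.

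Next, I would produce the direction $\mathfrak{g}_{-\beta}$ from the $SL_2$-subgroup $SL_2^\beta$ attached to the positive root $\beta$, with standard Borel $B^\beta = T_\beta U_\beta$. The hypothesis $\ell(s_\beta v) = \ell(v) + 1$ is equivalent to $v^{-1}\beta \in R^+$, which forces $U_\beta$ to fix $\dot v$. Consequently the orbit map $SL_2^\beta \to G/P$, $g \mapsto g\dot v$, descends to a morphism $\iota: \mathbb{P}^1 \cong SL_2^\beta/B^\beta \to G/P$ sending the identity coset to $\dot v$ and the other torus-fixed point to $\dot w$. Since $w, v \in W^P$ are distinct, $\dot w \ne \dot v$, so $\iota$ is a closed immersion; closedness of $X_w$ and $\dot w \in X_w$ ensure that the whole image lies inside $X_w$. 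Standard $SL_2$-theory identifies the tangent space of this $\mathbb{P}^1$ at $\dot v$ with $\mathfrak{g}_{-\beta}$. The $H$-weights of the two pieces are disjoint (positive roots versus $-\beta$), so the resulting inclusion
\[
\left(\bigoplus_{\gamma \in R^+\cap vR^-} \mathfrak{g}_\gamma\right) \oplus \mathfrak{g}_{-\beta} \;\subseteq\; T_{\dot v}(X_w)
\]
is automatically a direct sum, of total dimension $\ell(v)+1 = \ell(w) = \dim X_w$.

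For the reverse inequality $\dim T_{\dot v}(X_w) \le \dim X_w$ — equivalently, smoothness of $X_w$ at $\dot v$ — I would argue as follows: the singular locus of $X_w$ is closed and $B$-stable, hence a union of Schubert subvarieties $X_u$ with $u < w$ in $W^P$. If $\dot v$ were singular, then $B\dot v = C_v$ would consist entirely of singular points, and by taking closures so would $X_v$; but $X_w$ is normal (Ramanathan), so its singular locus has codimension at least two, contradicting $\codim(X_v, X_w) = 1$. The equivalent formulation at $\dot e$ then follows by translating via $v^{-1}: X_w \to v^{-1}X_w$, which sends the $\gamma$-weight subspace at $\dot v$ to the $v^{-1}\gamma$-weight subspace at $\dot e$. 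The main obstacle is this last smoothness step; the normality-plus-codimension argument is what makes it go through uniformly across all Lie types, bypassing any type-dependent rational-smoothness subtleties.
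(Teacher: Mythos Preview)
The paper does not give its own proof here; it cites \cite{BKR}*{Lemma 7.3}, and your argument is the standard one underlying that result: Bruhat-cell parametrization for the positive-root summand, the $SL_2^\beta$-orbit for the $\mathfrak{g}_{-\beta}$ direction, and normality of Schubert varieties to force smoothness in codimension one.

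One sentence needs repair: ``closedness of $X_w$ and $\dot w \in X_w$ ensure that the whole image lies inside $X_w$'' is not a valid inference as stated --- a closed set containing one point of a curve need not contain the curve. The easy fix is to use the Bruhat decomposition of $SL_2^\beta$: the image of $\iota$ is $\{\dot v\}\cup U_\beta s_\beta\dot v = \{\dot v\}\cup U_\beta\dot w$, and since $U_\beta\subset B$ the dense piece $U_\beta\dot w$ already lies in $C_w\subset X_w$; \emph{now} closedness of $X_w$ gives the conclusion. You also do not actually need $\iota$ to be a closed immersion, only that $d\iota$ is nonzero at the point over $\dot v$, and this follows because $U_{-\beta}$ does not fix $\dot v$ (equivalently $-v^{-1}\beta\in R^-\setminus R_{\mathfrak l}^-$, which you implicitly verified when checking $v,w$ are distinct in $W^P$). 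With that adjustment the proof is complete.
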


As a direct sum of $T$-eigenspaces, 
$$
T_{\dot e} (G/P)= \bigoplus_{\beta\in \Phi^+\setminus \Phi_\mathfrak{l}^+} T_{\dot e}(G/P)_{-\beta}.
$$
Define, for any $j\in \Z$, 
$$
V_j:=\bigoplus_{\begin{array}{c}\beta\in \Phi^+\setminus \Phi_\mathfrak{l}^+\\ \beta(\dot\delta)=j\end{array}} T_{\dot e}(G/P)_{-\beta}.
$$

Note that $V_j = (0)$ if $j\le 0$ or $j>m_0 := \max_\beta\{\beta(\dot\delta)\}$. Define $V_j(Z):=V_j\cap T_{\dot e}(Z)$ for any $T$-stable subvariety of $G/P$ containing $\dot e$.  Then 
$$
T_{\dot e}(Z)= \bigoplus_{j} V_j(Z)
$$
as $T$-modules. Let $\langle \delta\rangle = \im \delta$. If $Z$ is only $\langle \delta\rangle$-stable, the above decomposition is a valid $\langle \delta\rangle$-module decomposition. 

Recall the following important theorem from \cite[Theorem 7.4]{BKR} (see also \cite[Proposition 3]{RDist}). Although the original statement uses a different filtration $V_j$ than that given by $\delta$, the same proof goes through unchanged (just replace $x_P$ with $\dot \delta$ everywhere). 
\begin{theorem}\label{reprod}
Given that $u\xrightarrow{\beta}w\in W^P$ and $\beta$ is not simple, there exists $j$ such that $\dim V_j(u^{-1}Z_w)\ne \dim V_j(w^{-1}Z_w)$. 
\end{theorem}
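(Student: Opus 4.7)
The plan is to compare the multisets of $\dot\delta$-weights carried by $T_{\dot e}(u^{-1}Z_w)$ and $T_{\dot e}(w^{-1}Z_w)$; this is the BKR strategy the author alludes to, with $\dot\delta$ playing the role of $x_P$. First, translating the preceding lemma from $\dot v$ to $\dot e$ via $v^{-1}$, and writing $\Phi_v := \{\alpha \in R^+ : v\alpha \in R^-\}$ for the inversion set, one has the $H$-module decompositions
$$
T_{\dot e}(u^{-1}Z_w) \simeq \Bigl(\bigoplus_{\gamma \in -\Phi_u}\mathfrak{g}_\gamma\Bigr) \oplus \mathfrak{g}_{-u^{-1}\beta}, \qquad T_{\dot e}(w^{-1}Z_w) \simeq \bigoplus_{\gamma \in -\Phi_w}\mathfrak{g}_\gamma.
$$
Since $w = s_\beta u$ and $\ell(w) = \ell(u)+1$, the root $u^{-1}\beta$ is positive and lies in $\Phi_w \setminus \Phi_u$, so both decompositions have total dimension $\ell(w)$. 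Consequently $\dim V_j(u^{-1}Z_w)$ (resp.\ $\dim V_j(w^{-1}Z_w)$) counts the number of $\gamma$ in $\Phi_u \cup \{u^{-1}\beta\}$ (resp.\ $\Phi_w$) with $\gamma(\dot\delta) = j$, and the theorem reduces to showing that these two multisets of $\dot\delta$-values differ when $\beta$ is non-simple.

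To separate the multisets I would compare their first moments. Applying the classical identity $\sum_{\gamma \in \Phi_v}\gamma = \rho - v^{-1}\rho$ to $v = u$ and $v = w$, together with the reflection formula $s_\beta\rho = \rho - (\rho,\beta^\vee)\beta$, a brief calculation gives
$$
\sum_{\gamma \in \Phi_u \cup \{u^{-1}\beta\}}\gamma \;-\; \sum_{\gamma \in \Phi_w}\gamma \;=\; \bigl(1 - (\rho, \beta^\vee)\bigr)\, u^{-1}\beta.
$$
Pairing with $\dot\delta$ converts the left-hand side into the difference of the two first moments, so it remains to show that the right-hand side is nonzero upon evaluation on $\dot\delta$.

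For this, two observations suffice. First, $u^{-1}\beta \notin R^+_{\mathfrak{l}(\delta)}$: otherwise $\mathfrak{g}_{-u^{-1}\beta}$ would sit inside $\mathfrak{p}(\delta)$ and could not be a summand of $T_{\dot e}(G/P) = \mathfrak{g}/\mathfrak{p}(\delta)$; hence $(u^{-1}\beta)(\dot\delta) > 0$. Second, expanding $\beta^\vee = \sum_i n_i\gamma_i^\vee$ in simple coroots yields $(\rho, \beta^\vee) = \sum_i n_i = \operatorname{ht}(\beta^\vee)$, which equals $1$ precisely when $\beta^\vee$, equivalently $\beta$, is simple. The non-simplicity hypothesis on $\beta$ therefore gives $(\rho,\beta^\vee) \ge 2$, so $1 - (\rho,\beta^\vee) \ne 0$, and the two first moments disagree, producing some $j$ with $\dim V_j(u^{-1}Z_w) \ne \dim V_j(w^{-1}Z_w)$.

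The main obstacle is a small bookkeeping point: for the multiset interpretation of $\dim V_j$ to be valid, no root appearing in $\Phi_u$, $\Phi_w$, or $\{u^{-1}\beta\}$ may lie in $R^+_{\mathfrak{l}(\delta)}$ (which would drop into $V_0 = 0$ and spoil the moment calculation). This is automatic, however, from $u, w \in W^P$ (whence $u, w$ send $R^+_{\mathfrak{l}(\delta)}$ into $R^+$, so $\Phi_u \cap R^+_{\mathfrak{l}(\delta)} = \Phi_w \cap R^+_{\mathfrak{l}(\delta)} = \emptyset$) together with the tangent-space formula. With that verified, the entire argument collapses to the one-line root identity above, substantiating the author's remark that the BKR proof carries over unchanged after the substitution $x_P \rightsquigarrow \dot\delta$.
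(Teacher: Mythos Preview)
Your proof is correct and is essentially the BKR argument the paper defers to, carried out with $\dot\delta$ in place of $x_P$: one compares the weighted sums $\sum_j j\dim V_j(\cdot)$ on the two sides, reduces via $\sum_{\gamma\in\Phi_v}\gamma=\rho-v^{-1}\rho$ to the single term $\bigl((\rho,\beta^\vee)-1\bigr)(u^{-1}\beta)(\dot\delta)$, and observes this is nonzero because $\beta$ non-simple forces $(\rho,\beta^\vee)\ge 2$ while $u^{-1}\beta\in\Phi_w\subseteq R^+\setminus R_{\mathfrak l}^+$ forces $(u^{-1}\beta)(\dot\delta)>0$. The bookkeeping check that $\Phi_u,\Phi_w\subseteq R^+\setminus R_{\mathfrak l}^+$ (from $u,w\in W^P$) is exactly the point that makes the moment interpretation valid, and you have handled it.
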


In exact parallel, 
$$
T_{\dot e}(\wh G/\wh P) = \bigoplus_{\wh \beta\in \wh \Phi^+\setminus \wh \Phi_{\wh {\mathfrak{l}}}^+} T_{\dot e}(\wh G/\wh P)_{-\wh \beta},
$$
and one may define 
$$
\wh V_j:=\bigoplus_{\begin{array}{c}\wh \beta\in \wh \Phi^+\setminus \wh \Phi_{\wh{\mathfrak{l}}}^+\\ \wh{\beta}(\dot\delta)=j\end{array}} T_{\dot e}(\wh G/\wh P)_{-\wh \beta}.
$$
Analogously, if $\wh u\xrightarrow{\wh \beta} \wh w\in \wh W^{\wh P}$ and $\wh \beta$ is not simple, there exists a $j$ such that $\dim V_j(\wh u^{-1}\wh Z_{\wh w})\ne \dim V_j(\wh w^{-1}\wh Z_{\wh w})$. 

Because $d\phi_\delta: T_{\dot e}(G/P) \hookrightarrow T_{\dot e}(\wh G/\wh P)$ is a $T$-equivariant inclusion, it follows that for any $\beta\in \Phi$, the restriction of $d\phi_\delta$ satisfies
$$
d\phi_\delta: T_{\dot e}(G/P)_{\beta} \hookrightarrow \bigoplus_{\wh{\beta}\big|_\mathfrak{h} \equiv \beta} T_{\dot e}(\wh G/\wh P)_{\wh \beta}.
$$
In particular, then, $d\phi_\delta: V_j\hookrightarrow \wh V_j$ for each $j\in \Z$. 

The gradings $V_j$, $\wh V_j$ give rise to filtrations $\mathcal{F}_j$, $\wh{\mathcal{F}}_j$ of $T_{\dot e}(G/P)$, $T_{\dot e}(\wh G/\wh P)$, respectively. With respect to the adjoint $P$-action on $T_{\dot e}(G/P)$ (resp., $\wh P$ on $\wh T_{\dot e}(\wh G/\wh P)$), each $\mathcal{F}_j$ is $P$-stable (resp., each $\wh{\mathcal{F}}_j$ is $\wh P$-stable). 
Let $\mathcal{F}_j(Z)$, $\wh{\mathcal{F}}_j(Z)$ mean the induced filtrations of any $T_{\dot e}(Z)$. 

Now we introduce a lemma similar in spirit to \cite[Lemma 4.2]{BKR}. The following setup is essentially the same. Let $Y\subset X$ be irreducible smooth varieties, $Y$ locally closed in $X$. Suppose $X$ has a transitive action by a connected linear algebraic group $G$, and suppose $H$ is an algebraic subgroup fixing $Y$. For any $y\in Y$, define $\phi_y:G\to X$ by $g\mapsto gy$. Then for any $g\in G$, there is an induced tangent space map 
$$
d\phi_{(g,y)}: T_gG \to T_{gy}X.
$$
Because $Y$ is $H$-stable, there is an induced map 
$$
\Phi_{(g,y)}:T_{\bar g}(G/H)\to T_{gy}X/T_{gy}(gY).
$$
One easily checks that $\Phi_{(g,y)} = \Phi_{(gh,h^{-1}y)}$ if $h\in H$, so for each equivalence class $[g,y]\in G\times_H Y$ the map $\Phi_{[g,y]}$ is well-defined. The transitivity of the $G$-action implies that the maps $\Phi_{[g,y]}$ are surjective. 

 Suppose $\mathfrak a = [g,z],[\wh g, \wh z]\in \mathcal{Z}$. Define $x=gz, \wh x = \wh g\wh z$. In particular, $\wh x = \phi_\delta(x)$. Consider the following diagram of maps of tangent spaces

\begin{center}
\begin{equation}\label{TAN}
\begin{tikzcd}
T_{\mathfrak a}\mathcal{Z}  \arrow[r, "d\pi"]  \arrow[dd, "d \hat m"']& T_g(G/B) \oplus T_{\wh g}(\wh G/\wh B) \arrow[dd, "\Psi_{[g,z]} \times \Psi_{[\wh g,\wh z]}"] \\\\
T_x(G/P) \arrow[r, ] & \dfrac{T_x(G/P)}{T_x(gZ_w)}\oplus \dfrac{T_{\wh x}(\wh G/\wh P)}{T_{\wh x}(\wh g\wh Z_{\wh w})},
\end{tikzcd}
\end{equation}
\end{center}
where the bottom horizontal map is the canonical projection in the first factor and $d\phi_{\delta}$ followed by the canonical projection in the second factor. 

\begin{lemma}\label{pipes}
Diagram (\ref{TAN}) commutes. In fact, it is a fibre-product diagram. 
\end{lemma}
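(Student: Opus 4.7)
The strategy is to identify $\mathcal{Z}$ with the scheme-theoretic fiber product
$$
\mathcal{Z} \;\simeq\; (G \times_B Z_w) \times_{\wh G/\wh P} (\wh G \times_{\wh B} \wh Z_{\wh w}),
$$
under which $\mathfrak{a}$ corresponds to $([g,z],[\wh g,\wh z])$, and then to exploit the fact that tangent spaces to fiber products of schemes are fiber products of tangent spaces (by the universal property applied to $\op{Spec} k[\epsilon]/(\epsilon^2)$).  Here the map $\mu := \phi_\delta\circ \mu' : G\times_B Z_w \to \wh G/\wh P$ factors through multiplication $\mu' : [g,z] \mapsto gz$ in $G/P$, and $\hat m = \mu'\circ \op{pr}_1$ under this identification; each factor carries the tangent short exact sequence
$$
0 \to T_z(Z_w) \to T_{[g,z]}(G\times_B Z_w) \xrightarrow{d\op{pr}_{G/B}} T_g(G/B) \to 0,
$$
and likewise for the hatted factor, giving
$$
T_\mathfrak{a}\mathcal{Z} \;=\; T_{[g,z]}(G\times_B Z_w) \times_{T_{\phi_\delta(x)}(\wh G/\wh P)} T_{[\wh g,\wh z]}(\wh G\times_{\wh B}\wh Z_{\wh w}).
$$

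For commutativity I would take $\vec v \in T_\mathfrak{a}\mathcal{Z}$ represented by a curve $(g(t),z(t),\wh g(t),\wh z(t))$ with $\phi_\delta(g(t)z(t))=\wh g(t)\wh z(t)$, set $\xi = \dot g(0),\ \eta = \dot{\wh g}(0)$, and differentiate $x(t)=g(t)z(t)$ to obtain $d\hat m(\vec v) = \xi\cdot z + g\cdot \dot z(0)$.  Since $g\cdot\dot z(0)\in T_x(gZ_w)$, its image modulo $T_x(gZ_w)$ equals $\xi\cdot z \equiv \Psi_{[g,z]}(\xi)$ by the very definition of $\Psi$.  Analogously $d\phi_\delta(d\hat m(\vec v)) = \eta\cdot\wh z + \wh g\cdot \dot{\wh z}(0)$ projects to $\Psi_{[\wh g,\wh z]}(\eta)$ modulo $T_{\wh x}(\wh g\wh Z_{\wh w})$, matching the right-then-down route in both coordinates.

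For the fiber-product assertion I would verify injectivity and surjectivity of the induced map from $T_\mathfrak{a}\mathcal{Z}$ into the categorical fiber product in the diagram.  \emph{Injectivity}: if $d\pi(\vec v)=0=d\hat m(\vec v)$, write $\vec v=(\vec u,\vec{\wh u})$ under the decomposition above; then $d\op{pr}_{G/B}(\vec u)=0$ places $\vec u \in T_z(Z_w)$ via the sequence, and $g\cdot \vec u = d\mu'(\vec u) = d\hat m(\vec v) = 0$ forces $\vec u = 0$ since $g\cdot$ is an isomorphism $T_z(Z_w)\to T_x(gZ_w)$; the hatted component vanishes by the symmetric argument once one notes $d\wh\mu(\vec{\wh u}) = d\mu(\vec u) = 0$.  \emph{Surjectivity}: given a compatible triple $(\xi,\eta,\zeta)$, the relation $\Psi_{[g,z]}(\xi)\equiv \zeta \pmod{T_x(gZ_w)}$ allows one to solve $\zeta - \xi\cdot z = g\cdot \omega$ for a unique $\omega\in T_z(Z_w)$, producing $\vec u \in T_{[g,z]}(G\times_B Z_w)$ with $d\op{pr}_{G/B}(\vec u)=\xi$ and $d\mu'(\vec u)=\zeta$; a parallel construction gives $\vec{\wh u}$ with $d\op{pr}_{\wh G/\wh B}(\vec{\wh u})=\eta$ and $d\wh\mu(\vec{\wh u}) = d\phi_\delta(\zeta)$, so $d\mu(\vec u) = d\phi_\delta(\zeta) = d\wh\mu(\vec{\wh u})$ puts $(\vec u,\vec{\wh u})\in T_\mathfrak{a}\mathcal{Z}$ as the required preimage.

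The main obstacle is the initial identification of $\mathcal{Z}$, which is defined as an incidence subscheme of $G/B\times \wh G/\wh B\times \wh G/\wh P$, with the scheme-theoretic fiber product of the two flag-variety bundles: while the bijection on points is immediate, one must verify that the natural morphism of scheme structures is an isomorphism, e.g.\ by matching local defining equations in the two presentations.  Once that identification is in place, everything else is a formal tangent-space calculation using only the bundle short exact sequences and the definition of $\Psi$.
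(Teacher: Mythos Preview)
Your argument is correct and the commutativity step is essentially identical to the paper's.  The difference lies in how the fibre-product assertion is established.  The paper first shows the inclusion $T_{\mathfrak a}\mathcal{Z}\hookrightarrow$ (fibre product) by explicitly recovering the curve $z(t)=g(t)^{-1}x(t)$, $\wh z(t)=\wh g(t)^{-1}\phi_\delta(x(t))$ from the data $(g(t),\wh g(t),x(t))$, and then closes the argument with a global dimension count for $\mathcal{Z}$.  You instead decompose $T_{\mathfrak a}\mathcal{Z}$ as the tangent fibre product of the two bundle factors over $T_{\wh x}(\wh G/\wh P)$ and verify injectivity and surjectivity by hand using the short exact sequences $0\to T_z(Z_w)\to T_{[g,z]}(G\times_B Z_w)\to T_g(G/B)\to 0$ and its hatted analogue.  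Your route is a bit more linear-algebraic and avoids invoking the dimension formula for $\mathcal{Z}$; the paper's dimension count is quicker once that formula is in hand.  The ``main obstacle'' you flag---identifying the incidence scheme $\mathcal{Z}$ with the scheme-theoretic fibre product---is exactly how the scheme structure on $\mathcal{X}$ (hence $\mathcal{Z}$) is defined in \cite{BKR}*{\S 5}, so there is no genuine gap there.
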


\begin{proof}
An arbitrary curve through $\mathfrak{a}$ in $\mathcal{Z}$ may be expressed as $\left([g(t),z(t)],[\wh g(t), \wh z(t)]\right)$, where $g(0)=g$, etc. The image under $d\pi$ of this curve's initial velocity is the initial velocity of $\left(\overline {g(t)},\overline{ \wh{ g(t)}}\right)$. Its further image under $\Psi_{[g,z]} \times \Psi_{[\wh g,\wh z]}$ is the pair of projections in the respective quotients of the initial velocities of $g(t)z(t)$ and $\wh g(t) \wh z(t)$. Note that $\wh g(t) \wh z(t) = \phi_\delta(g(t)z(t))$ for all $t$. Therefore the curve's image via the down and across compositions agree and the diagram commutes. 

That $T_{\mathfrak a}\mathcal{Z}$ is a subspace of (i.e., includes into) the fibre-product is clear since, for a curve $\left(\overline {g(t)},\overline{ \wh{ g(t)}}\right)$ through $(g,\wh g)$ in $T_g(G/B)\oplus T_{\wh g}(\wh G/\wh B)$ and corresponding $x(t)$ through $x$ in $G/P$, the curve \\$([g(t),z(t)],[\wh g(t), \wh z(t)])$ can be uniquely recovered via $z(t):=g(t)^{-1}x(t)$, $\wh z(t):=\wh g(t)^{-1}\phi_\delta(x(t))$. 

Counting dimensions, 
\begin{align*}
\dim \mathcal{Z} &= \dim G/P + \left(\dim (G\times_B Z_w) + \dim(\wh G\times_{\wh B} \wh Z_{\wh w}) \right) - \left(\dim G/P + \dim \wh G/ \wh P \right)\\
&= \dim G/P + \dim G/B + \dim Z_w + \dim \wh G/\wh B +\dim \wh Z_{\wh w} \\&\hspace{0.5in}- \dim G/P -\dim \wh G/\wh P \\
&= \dim G/P + \left(\dim G/B + \dim \wh G/\wh B\right)  - \left( \dim G/P - \dim Z_w\right) \\&\hspace{0.5in}- \left(\dim \wh G/\wh P - \dim \wh Z_{\wh w}\right),
\end{align*}
so $T_{\mathfrak a}\mathcal{Z}$ has the correct dimension and the result follows. 
\end{proof}

Now we come to the desired result. 

\begin{proof}[Proof of Proposition \ref{proselyte}]
Assume, for the sake of contradiction, that there exist $v, \wh v$ satisfying either (C1) or (C2) with $\beta$ not simple, and that there exists $\mathfrak{a} = \left([g,z],[\wh g, \wh z]\right)\in \mathcal{C}_{v, \wh v}\cap \mathcal{Z}\setminus \mathcal{R}$. Set $x = gz, \wh x = \wh g\wh z$; note $\wh x = \phi_\delta(x)$. By left $G$-translation, assume $x = \dot eP$ (this is possible since $\mathcal{C}_{v, \wh v},\mathcal{Z}, \mathcal{R}$ are all $G$-invariant.)

By $\mathfrak{a}\not\in \mathcal{R}$, $d\pi$ is an isomorphism, so 
$$
T_{\dot e}(G/P)\simeq \frac{T_{\dot e}(G/P)}{T_{\dot e}(gZ_w)}\oplus \frac{T_{\dot e}(\wh G/\wh P)}{T_{\dot e}(\wh g\wh Z_{\wh w})}
$$
by Lemma \ref{pipes}. Because $\mathfrak{a}\in \mathcal{C}_{v, \wh v}$, write $eP = gz = gbvP$ for suitable $b\in B$, and $e\wh P = \wh g\wh z = \wh g\wh b\wh v\wh P$ for some $\wh b\in \wh B$. So write $g = pv^{-1}b^{-1}, \wh g = \wh p\wh v^{-1}\wh b^{-1}$ for suitable $p\in P, \wh p\in \wh P$. So $T_{\dot e}(gZ_w) = T_{\dot e}(pv^{-1}Z_w)$ and $T_{\dot e}(\wh g\wh Z_{\wh w}) = T_{\dot e}(\wh p\wh v^{-1}\wh Z_{\wh w})$. 

Observe that 
$$
\mathcal{F}_j\to \frac{\mathcal{F}_j}{\mathcal{F}_j(pv^{-1}Z_w)}\oplus \frac{\wh{\mathcal{F}}_j}{\wh{\mathcal{F}}_j(\wh p\wh v^{-1}\wh Z_{\wh w})}
$$
is therefore injective for each $j$, so 
\begin{align}\label{Little1}
\dim \mathcal{F}_j\le \dim \mathcal{F}_j - \dim \mathcal{F}_j(pv^{-1}Z_w) + \dim\wh{\mathcal{F}}_j - \dim\wh{\mathcal{F}}_j(\wh p\wh v^{-1}\wh Z_{\wh w}).
\end{align}
Furthermore, \begin{align*}\dim \mathcal{F}_j(pv^{-1}Z_w) = \dim T_{\dot e}(pv^{-1}Z_w)\cap \mathcal{F}_j &= \dim \op{Ad}_p\left(T_{\dot e}(v^{-1}Z_w)\cap \mathcal{F}_j\right) \\&= \dim \mathcal{F}_j(v^{-1}Z_w)\end{align*} since $\op{Ad}_p(\mathcal{F}_j) = \mathcal{F}_j$. Likewise, $\dim \wh{\mathcal{F}}_j(\wh p\wh v^{-1}\wh Z_{\wh w}) = \dim \wh{\mathcal{F}}_j(\wh v^{-1}\wh Z_{\wh w})$. 

Now, the argument of \cite[Eq. (38) and paragraph preceding it]{BKR} shows that for each $j$ the inequalities 
\begin{align}\label{Little2}
\dim \mathcal{F}_j(w^{-1}Z_w)\le \dim \mathcal{F}_j(v^{-1}Z_w)~~\text{ and }~~\dim \wh{\mathcal{F}}_j(\wh w^{-1}\wh Z_{\wh w})\le \dim \wh{\mathcal{F}}_j(\wh v^{-1}\wh Z_{\wh w})
\end{align}
hold in general. Furthermore, by Theorem \ref{reprod}, there exists a $j=j_0$ such that 
\begin{align}\label{Little3}
\dim \mathcal{F}_j(w^{-1}Z_w)\ne \dim \mathcal{F}_j(v^{-1}Z_w)~~\text{ or }~~\dim \wh{\mathcal{F}}_j(\wh w^{-1}\wh Z_{\wh w})\ne \dim \wh{\mathcal{F}}_j(\wh v^{-1}\wh Z_{\wh w}),
\end{align}
depending on whether (C1) or (C2) holds. 

On the other hand, by 
$L$-movability, 
$$
\psi: T_{\dot e}(G/P)\to \frac{T_{\dot e}(G/P)}{T_{\dot e}(l w^{-1}X_w)}\oplus \frac{T_{\dot e}(\wh G/\wh P)}{T_{\dot e}(\wh l \wh w^{-1}\wh X_{\wh w})}
$$
is an isomorphism for generic $l,\wh l\in L\times \wh L$.

The latter decomposes (since $lw^{-1}X_w$, $\wh l\wh w^{-1}\wh X_{\wh w}$ are $\langle \delta\rangle$-stable) as 
$$
\left(\bigoplus_{j=1}^{m_0} \frac{V_j(G/P)}{V_j(lw^{-1}X_w)}\right)\oplus 
\left(\bigoplus_{j=1}^{m_0} \frac{\wh V_j(\wh G/\wh P)}{\wh V_j(\wh l\wh w^{-1}\wh X_{\wh w})}\right),
$$
and $\psi$ preserves $T$-weight spaces with the same $\delta$ action, so for each $j$ we must have 
$$
V_j(G/P)\simeq \frac{V_j(G/P)}{V_j(lw^{-1}X_w)}\oplus 
 \frac{\wh V_j(\wh G/\wh P)}{\wh V_j(\wh l\wh w^{-1}\wh X_{\wh w})}.
$$

Therefore 
\begin{align}\label{Little4}
\dim\mathcal{F}_j = \dim\mathcal{F}_j - \dim\mathcal{F}_j(lw^{-1}X_w)+\dim \wh{\mathcal{F}}_j - \dim \wh{\mathcal{F}}_j(\wh l\wh w^{-1}\wh X_{\wh w})
\end{align}
for each $j$, and the same holds without $l, \wh l$ by $P$-stability of $\mathcal{F}_j$ (sim. for $\wh{\mathcal{F}}_j$).

Finally, with $j=j_0$,
\begin{align*}
\dim \mathcal{F}_j&\le \dim \mathcal{F}_j-\dim \mathcal{F}_j(v^{-1}Z_w)+\dim \wh{\mathcal{F}}_j - \dim \wh{\mathcal{F}}_j(\wh v^{-1}\wh Z_{\wh w})~~\text{  by (\ref{Little1})}\\
&< \dim \mathcal{F}_j-\dim \mathcal{F}_j(w^{-1}Z_w)+\dim \wh{\mathcal{F}}_j - \dim \wh{\mathcal{F}}_j(\wh w^{-1}\wh Z_{\wh w})~~\text{  by (\ref{Little2}), (\ref{Little3})}\\
&= \dim \mathcal{F}_j~~\text{  by (\ref{Little4})},
\end{align*}
a contradiction.
\end{proof}

\subsection{Relation to representation theory for $L$}

The scheme $\mathcal{Y}$ is vitally important thanks to Theorem \ref{BFCi}. However, our first step in proving Theorem \ref{BFCii} is to exchange $\mathcal{Y}$ and $\mathcal{R}$ for a related pair of varieties. 

Define 
$$
\mathcal{Y}' := \left( (G\times_{Q_w} Y_w)\times (\wh G\times_{\wh Q_{\wh w}} \wh Y_{\wh w})\right)\times_{G/P\times \wh G/\wh P} \wh G/\wh P; 
$$
set-theoretically, 
$$
\mathcal{Y}' = \{(g,\wh g,z)\in G/Q_w\times \wh G/\wh Q_{\wh w}\times \wh G/\wh P: z\in \phi_\delta(gY_w)\cap \wh g\wh Y_{\wh w}\}.
$$

The surjections $G\times_B Y_w\to G\times_{Q_w} Y_w$ and $\wh G\times_{\wh B} \wh Y_{\wh w} \to \wh G\times_{\wh Q_{\wh w}} \wh Y_{\wh w}$ give rise to the surjective morphism $\mathcal{Y} \to \mathcal{Y'}$. In fact, the following diagram is a fibre diagram:
\begin{center}
\begin{tikzcd}
\mathcal{Y}  \arrow[rr, "\tilde p"] \arrow[dd, "\pi"]& & \mathcal{Y}' \arrow[dd, "\pi'"] \\\\ 
G/B\times \wh G/\wh B \arrow[rr] &  &  G/Q_w \times \wh G/\wh Q_{\wh w}.
\end{tikzcd}
\end{center}

Furthermore, $\pi'$ is a dominant morphism. By \cite[Lemma 4.1]{BKR}, for each $n\ge 1$, 
$$
H^0(\mathcal{Y}, \mathcal{O}(n\mathcal{R}))\simeq H^0(\mathcal{Y}',\mathcal{O}(n\mathcal{R}'))
$$
as $G$-modules, where $\mathcal{R}'$ is the ramification divisor of $\pi'$. 

There is a helpful equivalent description of $\mathcal{Y}'$, thanks to the following lemma (the proof is straightforward). 

\begin{lemma}\label{fibre-rich} Define
$$
\mathcal{P}:=  \left(P/w^{-1}Q_ww\cap P \times \wh P/\wh w^{-1}\wh Q_{\wh w}\wh w\cap \wh P\right).
$$
Then $\psi: G\times_P \mathcal{P}\to \mathcal{Y'}$ given by $[g,\bar p, \overline{\wh p}]\mapsto ([gpw^{-1},wP],[g\wh p\wh w^{-1},\wh w\wh P],g\wh P)$ is an isomorphism.
\end{lemma}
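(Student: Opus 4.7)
The plan is to verify that $\psi$ is a well-defined $G$-equivariant morphism with image in $\mathcal{Y}'$, construct an explicit inverse, and conclude via a dimension/smoothness argument.

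First I would verify well-definedness of $\psi$. Under the $P$-action $(g,\bar p,\overline{\wh p})\cdot q=(gq,\overline{q^{-1}p},\overline{q^{-1}\wh p})$ on $G\times\mathcal{P}$, each of the components $gpw^{-1}$, $g\wh p\wh w^{-1}$, and $g\wh P$ is unchanged (using $P\subseteq \wh P$ for the last). For independence from coset representatives, the key observation is that for $r\in w^{-1}Q_ww\cap P$, the element $wrw^{-1}$ lies in $Q_w$, so $[gprw^{-1},wP]=[gpw^{-1}(wrw^{-1}),wP]=[gpw^{-1},wr^{-1}P]=[gpw^{-1},wP]$ in $G\times_{Q_w}Y_w$ since $r^{-1}\in P$; the hatted check is parallel. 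I would then verify the image satisfies the fibre-product condition defining $\mathcal{Y}'$ by computing $(gpw^{-1})\cdot wP=gP$ and $\phi_\delta(gP)=g\wh P$, matching the third coordinate, and symmetrically $(g\wh p\wh w^{-1})\cdot \wh w\wh P=g\wh P$.

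Next I would construct the set-theoretic inverse. Given $(hQ_w,\wh h\wh Q_{\wh w},z\wh P)\in \mathcal{Y}'$, the conditions $z\wh P\in \phi_\delta(hY_w)$ and $z\wh P\in \wh h\wh Y_{\wh w}$ allow me to choose $g\in G$ with $g\wh P=z\wh P$ and (necessarily) $gP\in hY_w$, and to write $g=hqwp=\wh h\wh q\wh w\wh p$ with $q\in Q_w$, $p\in P$, $\wh q\in \wh Q_{\wh w}$, $\wh p\in \wh P$. The inverse would send this point to $[g,\overline{p^{-1}},\overline{\wh p^{-1}}]$. I would then verify independence of choices: different lifts $g$ of $z\wh P$ with the same image in $G/P$ differ by right-multiplication by $P$, which is absorbed into the $P$-action on $G\times_P\mathcal{P}$; different decompositions $g=hq_iwp_i$ yield $p_2p_1^{-1}\in w^{-1}Q_ww\cap P$, so $\overline{p_1^{-1}}=\overline{p_2^{-1}}$ in $P/(w^{-1}Q_ww\cap P)$; the hatted check is symmetric. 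A direct set-theoretic computation then shows $\psi$ and this candidate inverse are mutually inverse.

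To conclude that $\psi$ is an isomorphism of schemes, I would note that both source and target are smooth and irreducible, and compare dimensions: using $\dim(w^{-1}Q_ww\cap P)=\dim Q_w-\dim Y_w$ (and the analogous hatted identity), together with the surjection $\mathcal{Y}\to\mathcal{Y}'$ with fibres $Q_w/B\times \wh Q_{\wh w}/\wh B$, one obtains $\dim(G\times_P\mathcal{P})=\dim G-\dim Q_w+\dim Y_w+\dim \wh P-\dim \wh Q_{\wh w}+\dim \wh Y_{\wh w}=\dim \mathcal{Y}'$. Since $\psi$ is a bijective morphism of smooth irreducible complex varieties of equal dimension, it is an isomorphism by Zariski's Main Theorem. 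The main obstacle is purely bookkeeping: one must juggle the two parallel subgroup structures $(P,Q_w,w)$ and $(\wh P,\wh Q_{\wh w},\wh w)$ while tracking coset conventions, but no substantive difficulty is expected, consistent with the paper's assertion that the proof is straightforward.
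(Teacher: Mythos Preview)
Your proposal is correct and follows essentially the same approach as the paper's (omitted) proof: verify well-definedness by tracking the $P$-action and the conjugation $w(\cdot)w^{-1}$ into $Q_w$, then establish bijectivity by exhibiting an explicit preimage. Your additional care in invoking Zariski's Main Theorem after a dimension count is a reasonable way to upgrade set-theoretic bijectivity to an isomorphism of varieties, and is consistent with the paper's assertion that the argument is straightforward.
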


We will now relate $\mathcal{O}(\mathcal{R}')$ to a line bundle on $\mathcal{P}$ and then to the representation theory of $L$. First let us recall some well-known properties of the Borel construction of line bundles:

\begin{proposition}\label{4therecord}
Let $R$ be a reductive algebraic group with $B$ be a Borel subgroup of $R$. Suppose $R'$ is a subgroup of $R$ satisfying $B\subseteq R'$.
\begin{enumerate}[label=(\alph*)]
\item For any character $\chi:R'\to \C^*$, $\mathcal{L}_\chi:= R\times_{R'} \C_{-\chi}$ is a line bundle on $R/R'$.
\item The pullback map induces an isomorphism 
$$H^0(R/R',\mathcal{L}_{\chi})\simeq H^0(R/B,\mathcal{L}_\chi).
$$
\end{enumerate}
\end{proposition}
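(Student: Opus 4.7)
For part (a), the plan is to invoke the standard theory of associated bundles. Since $R'$ is a closed algebraic subgroup of the reductive group $R$, the quotient map $R \to R/R'$ is a principal $R'$-bundle in the étale (equivalently, Zariski-locally trivial) sense. Given any algebraic representation of $R'$, the associated bundle construction produces a vector bundle on $R/R'$ whose rank equals the dimension of the representation. Applying this to the one-dimensional $R'$-representation $\mathbb{C}_{-\chi}$ (i.e., $R'$ acts on $\mathbb{C}$ by $r' \cdot z = \chi(r')^{-1} z$), we obtain the line bundle $\mathcal{L}_\chi := R \times_{R'} \mathbb{C}_{-\chi}$.

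For part (b), I will use the projection $\pi: R/B \to R/R'$ induced by $B \subseteq R'$. This map is proper and smooth (it is a fibration in flag varieties), with each fiber isomorphic to $R'/B$. The plan has three main steps. First, identify the pullback $\pi^* \mathcal{L}_\chi$ (built from the character $\chi$ of $R'$) with the line bundle $\mathcal{L}_\chi$ on $R/B$ (built from the restricted character $\chi|_B$); this is a direct diagram-chase using the principal bundle descriptions $R \to R/B$ and $R \to R/R'$. Second, show that $\pi_* \mathcal{O}_{R/B} = \mathcal{O}_{R/R'}$: since the fibers $R'/B$ are connected projective varieties (flag varieties for the reductive quotient of $R'$), $H^0(R'/B, \mathcal{O}) = \mathbb{C}$, and by proper base change (or by the fact that $\pi$ is a locally trivial fibration with rational, connected fibers) this global statement on fibers promotes to the claim about $\pi_* \mathcal{O}_{R/B}$. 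Third, apply the projection formula:
\[
\pi_* \pi^* \mathcal{L}_\chi \cong \mathcal{L}_\chi \otimes \pi_* \mathcal{O}_{R/B} \cong \mathcal{L}_\chi.
\]
Taking global sections and using that $\pi$ is proper (so $H^0(R/B, \pi^* \mathcal{L}_\chi) = H^0(R/R', \pi_* \pi^* \mathcal{L}_\chi)$), the isomorphism on sections follows.

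The main obstacle, to the extent there is one, is the vanishing $R^0\pi_* \mathcal{O}_{R/B} = \mathcal{O}_{R/R'}$, i.e., the statement that the scalar functions on the fibers are just constants. This is standard but deserves the brief justification above (connectedness and properness of the fiber $R'/B$). Everything else is formal manipulation of principal bundles, associated bundles, and the projection formula, so no serious new input is required.
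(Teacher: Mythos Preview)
Your argument is correct and takes a genuinely different route from the paper. The paper works directly with the description of sections as equivariant functions $f:R\to\C$ satisfying $f(rb)=\chi(b)f(r)$, and shows surjectivity of the pullback by hand: since $R'$ is parabolic, it is generated by $B$ together with the negative root subgroups $U_{-\alpha}$ it contains; for each such $\alpha$ the rank-one subgroup $R_\alpha$ generated by $U_{\pm\alpha}$ has $\chi|_{R_\alpha}$ trivial (unipotents force this), and then the induced map $\mathbb P^1\simeq R_\alpha/(B\cap R_\alpha)\to\C$ must be constant, giving $f(ru)=f(r)$ for $u\in U_{-\alpha}$. Your approach instead packages everything into $\pi_*\mathcal{O}_{R/B}=\mathcal{O}_{R/R'}$ (fibers $R'/B$ are connected projective) plus the projection formula. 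Your version is cleaner and more conceptual, and generalizes immediately to any locally free sheaf in place of $\mathcal{L}_\chi$; the paper's argument is more elementary and makes visible \emph{why} the extra equivariance is automatic, namely that $\chi$ is already forced to be trivial on the semisimple part of the Levi of $R'$. One small remark: your parenthetical ``\'etale (equivalently, Zariski-locally trivial)'' is fine here because $R'$ is parabolic, but is not true for arbitrary closed subgroups.
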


We need a couple more preparatory lemmas. The following is stated in \cite[\S 6]{BKR}, but a proof is included here for the reader's convenience. 

\begin{lemma}\label{extinction}
The torus weight $\chi_w:T\to \C^*$ extends to a character of $w^{-1}Q_ww\cap P$. Likewise, $\chi_{\wh w}$ extends to a character of $\wh w^{-1}\wh Q_{\wh w}\wh w\cap \wh P$.
\end{lemma}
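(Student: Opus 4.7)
The plan is to realize $\chi_w$ as the $H$-weight of a product of two naturally-constructed characters of $R_w := w^{-1}Q_w w\cap P$. First I would observe that since $Q_w$ preserves $X_w$, the isotropy of $\dot w := wP$ in $Q_w$ is $Q_w\cap wPw^{-1}$, and conjugation by $w$ gives an isomorphism $c_w\colon R_w \xrightarrow{\sim} Q_w\cap wPw^{-1}$, $x\mapsto wxw^{-1}$. Any character of $Q_w\cap wPw^{-1}$ then yields, via $c_w$, a character of $R_w$.

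Next I would assemble two characters of $R_w$. Let $\theta_1$ be the restriction to $R_w\subseteq P$ of the character of $P$ given by the determinant of the adjoint action on $\mathfrak{n}_P$; this has $H$-weight $2\rho^L := 2(\rho - \rho_L)$. Let $\theta_2$ be the pullback by $c_w$ of the determinant of the natural (isotropy) action of $Q_w\cap wPw^{-1}$ on $T_{\dot w}(X_w)$. To compute $\theta_2|_H$, I would identify $T_{\dot w}(X_w)\simeq \mathfrak{b}/(\mathfrak{b}\cap \operatorname{Ad}_w\mathfrak{p})$ as $H$-modules. Since $w\in W^P$ forces $wR_\mathfrak{l}^+\subseteq R^+$ (and hence $wR_\mathfrak{l}^-\subseteq R^-$), the $H$-weights of $\mathfrak{b}\cap \operatorname{Ad}_w\mathfrak{p}$ inside $\mathfrak{b}$ are precisely $R^+\cap wR^+$, so the $H$-weights of $T_{\dot w}(X_w)$ are $R^+\cap wR^-$. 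The standard identity $\rho - w\rho = \sum_{\alpha\in R^+\cap wR^-}\alpha$ then identifies the weight of the $Q_w\cap wPw^{-1}$-character on $\det T_{\dot w}(X_w)$ as $\rho - w\rho$; pulling back along $c_w$ (which acts on $H$ as the Weyl element $w$) twists this by $w^{-1}$, yielding $\theta_2|_H$ of weight $w^{-1}\rho - \rho$.

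Combining, $\theta_1\theta_2$ is a character of $R_w$ whose restriction to $H$ has weight
$$
(2\rho - 2\rho_L) + (w^{-1}\rho - \rho) \;=\; \rho + w^{-1}\rho - 2\rho_L \;=\; \chi_w,
$$
so this product provides the sought extension. The assertion for $\chi_{\wh w}$ follows by applying the identical construction to the quadruple $(\wh G, \wh P, \wh Q_{\wh w}, \wh w)$.

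The main obstacle I anticipate is the bookkeeping in the $\theta_2$ calculation: verifying that $w\in W^P$ is precisely what eliminates the $wR_\mathfrak{l}^-$ contribution in $\operatorname{Ad}_w\mathfrak{p}\cap \mathfrak{b}$, and correctly tracking the direction of the Weyl twist — namely that pullback of a character of $H$-weight $\mu$ along $h\mapsto whw^{-1}$ produces a character of weight $w^{-1}\mu$ rather than $w\mu$.
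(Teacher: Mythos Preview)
Your proof is correct and takes a genuinely different route from the paper's. The paper argues combinatorially: it sets $\chi_w$ to be trivial on every root subgroup of $R_w:=w^{-1}Q_ww\cap P$, then checks consistency by showing $\chi_w(\alpha^\vee)=0$ whenever both $U_{\pm\alpha}\subseteq R_w$. After reducing to simple roots $\alpha$ of $L$ with $U_{-\alpha}\subseteq w^{-1}Q_ww$, it uses the explicit description $\Delta(Q_w)=\Delta\cap w(R_\mathfrak{l}^+\sqcup R^-)$ to force $w\alpha\in\Delta$, whence $\chi_w(\alpha^\vee)=\rho(\alpha^\vee)+\rho(w\alpha^\vee)-2\rho_L(\alpha^\vee)=1+1-2=0$.

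Your approach instead \emph{constructs} the extension as a product of two determinant characters, $\theta_1=\det(\mathrm{Ad}\,|\,\mathfrak{n}_P)$ restricted from $P$ and $\theta_2=c_w^*\det(T_{\dot w}X_w)$ pulled back from the isotropy $Q_w\cap wPw^{-1}$. This is more conceptual and in fact anticipates the paper's later geometry: the splitting $\chi_w=\theta_1+\theta_2$ is exactly $\chi_w=\chi_1+(\chi_w-\chi_1)$, and $\chi_w-\chi_1=w^{-1}\rho-\rho$ is precisely the weight appearing in the line bundle $\mathcal{M}$ of Proposition~\ref{sigh}. The paper's proof is more self-contained (no tangent-space identifications needed), but yours explains \emph{why} $\chi_w$ extends rather than merely verifying that it does. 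Your bookkeeping on the Weyl twist and the role of $w\in W^P$ is handled correctly.
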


\begin{proof}
The second statement is simply the application of the first to a different group, so we prove the first statement. We na\"ively define $\chi_w:w^{-1}Q_ww\cap P\to \C^*$ by setting $\chi_w(u)=1$ for all $u\in U_\alpha$, $U_\alpha$ a root subgroup of $w^{-1}Q_ww\cap P$ (we have no choice in this as such $u$ are unipotent). Then $\chi_w$ will be well-defined if, 
\begin{align}\label{extension}
\text{whenever $U_\alpha, U_{-\alpha}$ are both root subgroups, $\chi_w(\alpha^\vee)=0$}\end{align}
(on the algebra level). 

We first make a reduction: $U_{\pm \alpha}\subseteq w^{-1}Q_ww\cap P$ implies $\alpha$ is actually a root for $L$. So we may restrict our attention to root subgroups of $w^{-1}Q_ww\cap L$. Note that $w^{-1}Q_ww\cap L\supseteq B_L$, so $w^{-1}Q_ww\cap L$ is a standard parabolic of $L$. Therefore it suffices to check (\ref{extension}) only for simple roots $\alpha$ of $L$. 

This is fairly straightforward: if $-\alpha$ is a root for $w^{-1}Q_ww$, then $-w\alpha$ is (a) a negative root and (b) a root for $Q_w$. Therefore $-w\alpha$ can be expressed as a negative sum of simple roots for $Q_w$:
$$
-w\alpha = \sum -n_i\beta_i,
$$
where the $n_i\ge 0$ and $\{\beta_i\} = \Delta(Q_w) = \Delta\cap w(\Phi_{\mathfrak{l}}^+\sqcup \Phi^-)$. Rearranging, one obtains
$$
\alpha+\sum_{w^{-1}\beta_i\prec 0} n_i(-w^{-1}\beta_i) = \sum_{w^{-1}\beta_i\in \Phi_{\mathfrak{l}}^+} n_iw^{-1}\beta_i.
$$
Now, each $w^{-1}\beta_i$ on the LHS cannot be an element of $\Phi_{\mathfrak{l}}^-$ by the length-minimality of $w$ in its coset. Therefore if the LHS has any $n_i>0$, we reach a contradiction because the LHS is a sum of positive roots (for $G$), some of which are not roots for $L$, but the RHS is a sum of positive roots for $L$. So 
$$
\alpha = \sum_{w^{-1}\beta_i\in \Phi_{\mathfrak{l}}^+} n_iw^{-1}\beta_i.
$$
Because $\alpha$ is a simple root for $L$, each $n_i=0$ above except for some $n_j=1$ and $\alpha = w^{-1}\beta_j$ is simple. Therefore 
\begin{align*}
\chi_w(\alpha^\vee)&=\rho(\alpha^\vee)+w^{-1}\rho(w^{-1}\beta_j^\vee)-2\rho_L(\alpha^\vee)\\
&=1+\rho(\beta_j^\vee)-2\\
&=0.
\end{align*}
\end{proof}

\begin{lemma}\label{BKunderstood}
Suppose $\mu, \wh \mu$ are dominant weights of $T, \wh T$ such that $(\mu+\wh \mu)(\dot \delta)=0$. Then the pullback map
$$
H^0(P/B_L\times \wh P/\wh B_{\wh L},\mathcal{L}(\mu)\boxtimes \mathcal{L}(\wh \mu))^P \to H^0(L/B_L\times \wh L/\wh B_{\wh L},\mathcal{L}(\mu)\boxtimes \mathcal{L}(\wh \mu))^L
$$
is an isomorphism.
\end{lemma}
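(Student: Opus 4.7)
The plan is to rewrite both sides of the alleged isomorphism as explicit spaces of equivariant functions on groups, and then use a one-parameter $\delta$-conjugation to compare them. A $P$-invariant section of $\mathcal{L}(\mu)\boxtimes\mathcal{L}(\wh\mu)$ on $P/B_L\times\wh P/\wh B_{\wh L}$ corresponds to an algebraic function $F:P\times\wh P\to\C$ satisfying the right equivariance $F(pb,\wh p\wh b)=\mu(b)^{-1}\wh\mu(\wh b)^{-1}F(p,\wh p)$ (for $b\in B_L$, $\wh b\in \wh B_{\wh L}$) together with the diagonal $P$-invariance $F(gp,g\wh p)=F(p,\wh p)$. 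Setting $g=p^{-1}$ shows $F$ is determined by $F_1(\wh q):=F(e,\wh q)$, and a short check shows that the resulting bijection identifies $P$-invariant sections with algebraic functions $F_1:\wh P\to\C$ satisfying
\begin{align*}
F_1(b\,\wh q\,\wh b')=\mu(b)\,\wh\mu(\wh b')^{-1}F_1(\wh q),\qquad b\in B_L,\ \wh b'\in\wh B_{\wh L}.
\end{align*}
The same recipe identifies $L$-invariant sections on $L/B_L\times \wh L/\wh B_{\wh L}$ with equivariant functions $G_1:\wh L\to\C$, and the pullback map becomes the restriction $F_1\mapsto F_1|_{\wh L}$.

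Next I would run the $\delta$-conjugation calculation that uses the hypothesis. For $t\in\C^\times$ and $\wh q\in\wh P$, applying the left $B_L$-equivariance with $b=\delta(t)\in H\subseteq B_L$ and the right $\wh B_{\wh L}$-equivariance with $\wh b'=\delta(t)^{-1}\in\wh H\subseteq \wh B_{\wh L}$ gives
\begin{align*}
F_1\bigl(\delta(t)\,\wh q\,\delta(t)^{-1}\bigr)=\mu(\delta(t))\,\wh\mu(\delta(t))\,F_1(\wh q)=t^{(\mu+\wh\mu)(\dot\delta)}F_1(\wh q)=F_1(\wh q),
\end{align*}
where the last step invokes $(\mu+\wh\mu)(\dot\delta)=0$. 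Because $\wh P=\wh P(\delta)=\wh L\ltimes U_{\wh P}$, the map $\wh q\mapsto\delta(t)\,\wh q\,\delta(t)^{-1}$ extends regularly to $t=0$ with limit the Levi projection $\pi_{\wh L}(\wh q)\in\wh L$, i.e.\ the unique $\wh l\in\wh L$ with $\wh q\in\wh l\cdot U_{\wh P}$. Taking $t\to 0$ in the displayed identity yields $F_1(\wh q)=F_1(\pi_{\wh L}(\wh q))$, so every such $F_1$ is the pullback of its restriction to $\wh L$ along the Levi projection.

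This immediately produces the inverse of the restriction map and establishes the isomorphism. Injectivity: if $F_1|_{\wh L}=0$ then $F_1=F_1|_{\wh L}\circ\pi_{\wh L}=0$. Surjectivity: given an admissible $G_1$ on $\wh L$, set $F_1:=G_1\circ\pi_{\wh L}$; this is algebraic, and the required $B_L$-left and $\wh B_{\wh L}$-right equivariance for $F_1$ follows from the identity $\pi_{\wh L}(b\,\wh q\,\wh b')=b\,\pi_{\wh L}(\wh q)\,\wh b'$, which uses only that $B_L\subseteq \wh L$, $\wh B_{\wh L}\subseteq \wh L$, and $\wh L$ normalizes $U_{\wh P}$.

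The substantive step is the $\delta$-conjugation argument: the hypothesis $(\mu+\wh\mu)(\dot\delta)=0$ is precisely what turns $F_1$ into an invariant of the contracting $\C^\times$-action by conjugation, forcing it to factor through the Levi projection. Everything else (the passage from $P$-equivariant sections to functions on $\wh P$, the algebraicity of the extension, and the equivariance bookkeeping) is standard semi-direct product algebra and should not be a serious obstacle.
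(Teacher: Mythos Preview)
Your proof is correct and uses essentially the same mechanism as the paper. The paper defers this lemma to Proposition~\ref{opendoor}, whose proof is precisely the Levification argument you carry out: conjugation by $\delta(t)$ contracts $\wh P$ onto $\wh L$ as $t\to 0$, and the hypothesis $(\mu+\wh\mu)(\dot\delta)=0$ (equivalently, triviality of the $Z^0(L)$-action on fibres) forces sections to be constant along this flow, hence to factor through the Levi projection. The only difference is packaging: the paper works with principal-bundle stacks and the family $\psi_t(p)=\delta(t)p\delta(t)^{-1}$, whereas you unwind everything to equivariant functions $F_1:\wh P\to\C$; the underlying idea is identical.
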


\begin{proof}
This is just a restatement of Proposition \ref{opendoor}, which will be proved below. 
\end{proof}

Finally we prove Theorem \ref{BFCii}. 

\begin{proposition}\label{sigh}
Suppose $\phi_{\delta}^\odot \left([\wh X_{\wh w}]\right) \odot_0 [X_w] = d[X_e] \in H^*(G/P)$ for some $d>0$.
Then 
$$
H^0(\mathcal{Y},\mathcal{O}(nR)|_{\mathcal{Y}})^G\simeq \left(V_L (n(\chi_{w}-\chi_1))^*\otimes V_{\wh L} (n\wh \chi_{\wh w})^*\right)^L
$$
\end{proposition}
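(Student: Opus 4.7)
The plan is to identify the left-hand side as $L$-invariant sections of an explicit line bundle on $L/B_L\times \wh L/\wh B_{\wh L}$, and then to invoke Borel--Weil. Using the already-established isomorphism $H^0(\mathcal{Y},\mathcal{O}(n\mathcal{R}))^G\simeq H^0(\mathcal{Y}',\mathcal{O}(n\mathcal{R}'))^G$, I first transfer the question to $\mathcal{Y}'$. Lemma \ref{fibre-rich} gives $\mathcal{Y}'\simeq G\times_P \mathcal{P}$, under which the $G$-equivariant line bundle $\mathcal{O}(n\mathcal{R}')$ corresponds to a $P$-equivariant line bundle $\mathcal{L}_n$ on the fiber $\mathcal{P}$; the associated-bundle formalism immediately yields
$$
H^0(\mathcal{Y}',\mathcal{O}(n\mathcal{R}'))^G \simeq H^0(\mathcal{P},\mathcal{L}_n)^P.
$$

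The heart of the argument is the explicit identification of $\mathcal{L}_n$. Since $\mathcal{R}'$ is a ramification divisor, $\mathcal{O}(n\mathcal{R}')\simeq K_{\mathcal{Y}'}^{\otimes n}\otimes (\pi')^*K_{G/Q_w\times \wh G/\wh Q_{\wh w}}^{-n}$, and because $\mathcal{Y}'\to G/P$ is a smooth fibration with fiber $\mathcal{P}$, the restriction of $\mathcal{L}_n$ to the fiber is determined by a pair of characters --- one of $w^{-1}Q_ww\cap P$, one of $\wh w^{-1}\wh Q_{\wh w}\wh w\cap \wh P$. The central claim is that these are precisely $n(\chi_w-\chi_1)$ and $n\chi_{\wh w}$, each extended via Lemma \ref{extinction}. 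The twist by $-\chi_1=-(2\rho-2\rho_L)$ on the $G$-factor accounts for the determinant character of $\mathfrak{g}/\mathfrak{p}$, coming from the relative canonical bundle of $G\times_P \mathcal{P}\to G/P$; the $\wh G$-factor is not similarly twisted since the ambient fibration is only over $G/P$.

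Once $\mathcal{L}_n$ has been identified, the remainder is a chain of standard reductions. Proposition \ref{4therecord}(b) applied to each factor of $\mathcal{P}$ reduces the stabilizers to the Borels of the Levi parts, producing an isomorphism with $H^0(P/B_L\times \wh P/\wh B_{\wh L},\mathcal{L}(n(\chi_w-\chi_1))\boxtimes \mathcal{L}(n\chi_{\wh w}))^P$. Lemma \ref{BKunderstood} then identifies this with $H^0(L/B_L\times \wh L/\wh B_{\wh L},\mathcal{L}(n(\chi_w-\chi_1))\boxtimes \mathcal{L}(n\chi_{\wh w}))^L$; its numerical hypothesis $(\mu+\wh\mu)(\dot\delta)=0$ follows from the defining equality $\langle \rho+w^{-1}\rho,\dot\delta\rangle=\langle \wh\rho+\wh w^{-1}\wh\rho,\dot\delta\rangle$ for the deformed pullback $\phi_\delta^\odot$ combined with $\rho_L(\dot\delta)=\rho_{\wh L}(\dot\delta)=0$ (and $\chi_1(\dot\delta)=2\rho(\dot\delta)$). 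Borel--Weil finishes the identification with $\left(V_L(n(\chi_w-\chi_1))^*\otimes V_{\wh L}(n\wh\chi_{\wh w})^*\right)^L$.

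The principal obstacle is the line-bundle identification in the second paragraph --- in particular, rigorously tracking the $-\chi_1$ twist. This parallels \cite{BKR}*{Proposition 6.4} but is genuinely asymmetric between the two sides because the fibration $\mathcal{Y}'\to G/P$ makes the $G$-factor into the base of a bundle while the $\wh G$-factor enters only through the $H$-equivariant inclusion $d\phi_\delta$ of tangent spaces. The $\delta$-filtration developed in Section \ref{highlighter} provides the correct framework for the explicit Jacobian computation needed to extract the characters of the stabilizer subgroups.
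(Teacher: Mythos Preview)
Your proposal is correct and follows essentially the same route as the paper: pass to $\mathcal{Y}'$, use Lemma \ref{fibre-rich} to write $\mathcal{Y}'\simeq G\times_P\mathcal{P}$, identify the induced $P$-equivariant line bundle on $\mathcal{P}$ as $\mathcal{L}_P(\chi_w-\chi_1)\boxtimes\mathcal{L}_{\wh P}(\chi_{\wh w})$, and then descend from $P$ to $L$ via Proposition \ref{4therecord}(b) and Lemma \ref{BKunderstood}.

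Two small remarks. First, the paper does not invoke the canonical-bundle formula $\mathcal{O}(\mathcal{R}')\simeq K_{\mathcal{Y}'}\otimes(\pi')^*K^{-1}$; instead it identifies $\mathcal{O}(\mathcal{R}')$ directly as the determinant line of the bundle map
$$
\mathcal{P}\times T^P\longrightarrow \left(P\times_{w^{-1}Q_ww\cap P}(T^P/T_w)\right)\oplus\left(\wh P\times_{\wh w^{-1}\wh Q_{\wh w}\wh w\cap \wh P}(T^{\wh P}/T_{\wh w})\right),
$$
using the trivialization trick of \cite{BK}*{Definition 5} (cf.\ \cite{BK}*{Lemma 6}, \cite{BKR}*{Proposition 6.2}). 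This is where the asymmetric twist by $-\chi_1$ really comes from: it is the character of $P$ on $\det(T^P)^{-1}$, the source of the map being $T_{\dot e}(G/P)$ rather than $T_{\dot e}(\wh G/\wh P)$. Your canonical-bundle framing is equivalent but requires more bookkeeping. Second, the $\delta$-filtration of Section \ref{highlighter} plays no role here; it is used only in the proof of Proposition \ref{proselyte}.
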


\begin{proof}
Let $T^P = T_{\dot e}(G/P)$, $T^{\wh P} = T_{\dot e}(\wh G/\wh P)$, $T_w = T_{\dot e}(w^{-1}X_w)$, and $T_{\wh w} = T_{\dot e}(\wh w\wh X_{\wh w})$. 

For a point $(g,p,\wh p)\in G\times_P\mathcal{P}$, set $\mathfrak{a} = \psi([g,p,\wh p])$. We have the diagram 
\begin{center}
\begin{tikzcd}
T_{(g,p,\wh p)}(G\times_P\mathcal{P}) \arrow[r, "\sim", "d\psi"'] & T_{\mathfrak{a}}\mathcal{Y}'  \arrow[r, "d\pi"]  \arrow[dd, "d \hat m"']& T_{gpw^{-1}}(G/Q_w) \oplus T_{g\wh p\wh w^{-1}}(\wh G/\wh Q_{\wh w}) \arrow[dd, ] \\\\
& T_{gP}(G/P) \arrow[r, ] & \dfrac{T_{gP}(G/P)}{T_{gP}(gpw^{-1}Y_w)}\oplus \dfrac{T_{g\wh P}(\wh G/\wh P)}{T_{g\wh P}(g\wh p\wh w^{-1}\wh Y_{\wh w})},
\end{tikzcd}
\end{center}
which is a fibre-product diagram for the same reason as (\ref{TAN}). 

There are $P$-equivariant isomorphisms 
$$
P/w^{-1}Q_ww\cap P \times T^P\simeq P\times_{w^{-1}Q_ww\cap P} T^P
$$
and 
$$
\wh P/\wh w^{-1}\wh Q_{\wh w}\wh w\cap \wh P \times T^{\wh P}\simeq \wh P\times_{\wh w^{-1}\wh Q_{\wh w}\wh w\cap \wh P} T^{\wh P}
$$
given by $(\bar p,v)\mapsto (p,p^{-1}v)$ in both cases, cf. \cite[Definition 5]{BK}. Therefore there exist maps 
$$
\mathcal{P}\times T^P \to P/w^{-1}Q_ww\cap P \times T^P\simeq P\times_{w^{-1}Q_ww\cap P} T^P \to P\times_{w^{-1}Q_ww\cap P} (T^P/T_w)
$$
and 
\begin{align*}
\mathcal{P}\times T^P\to \frac{\wh P}{\wh w^{-1}\wh Q_{\wh w}\wh w\cap \wh P \times T^{P}} &\hookrightarrow \frac{\wh P}{\wh w^{-1}\wh Q_{\wh w}\wh w\cap \wh P \times T^{\wh P}}\simeq \wh P\times_{\wh w^{-1}\wh Q_{\wh w}\wh w\cap \wh P} T^{\wh P}\\&\to \wh P\times_{\wh w^{-1}\wh Q_{\wh w}\wh w\cap \wh P} (T^{\wh P}/T_{\wh w}).
\end{align*}

The map between fibres of the bundle map 
$$
G\times_P (\mathcal{P}\times T^P) \to G\times_P (P\times_{w^{-1}Q_ww\cap P} (T^P/T_w))\oplus G\times_P (\wh P\times_{\wh w^{-1}\wh Q_{\wh w}\wh w\cap \wh P} (T^{\wh P}/T_{\wh w}))
$$
over a point $(g,p,\wh p)\in G\times_P \mathcal{P}$ is readily identified with the map 
$$
T_{gP}(G/P)\to \dfrac{T_{gP}(G/P)}{T_{gP}(gpw^{-1}Y_w)}\oplus \dfrac{T_{g\wh P}(\wh G/\wh P)}{T_{g\wh P}(g\wh p\wh w^{-1}\wh Y_{\wh w})};
$$
therefore 
the ramification divisor $\psi^{-1}(R')$ in $G\times_P\mathcal{P}$ is the same as the ramification divisor of the bundle map 
$$
G\times_P (\mathcal{P}\times T^P) \to G\times_P (P\times_{w^{-1}Q_ww\cap P} (T^P/T_w))\oplus G\times_P (\wh P\times_{\wh w^{-1}\wh Q_{\wh w}\wh w\cap \wh P} (T^{\wh P}/T_{\wh w}))
$$
over $G\times_P \mathcal{P}$. Setting 
$$
\mathcal{M} = \mathcal{L}_P(\chi_w - \chi_1)\boxtimes \mathcal{L}_{\wh P}(\chi_{\wh w}),
$$
a line bundle over $\mathcal{P}$ (by Lemma \ref{extinction}), we conclude (cf. the discussion surrounding \cite[Lemma 6]{BK} and \cite[Proposition 6.2]{BKR}) that $\mathcal{O}(\phi^{-1}(R))$ is $G$-isomorphic to $G\times_P\mathcal{M}$ as line bundles over $G\times_P \mathcal{P}$. 

Therefore for any $n$,
\begin{align*}
H^0(\mathcal{Y},\mathcal{O}(nR))^G&\simeq H^0(\mathcal{Y}',\mathcal{O}(nR'))^G\\
&\simeq H^0(G\times_P\mathcal{P},G\times_P\mathcal{M}^{\otimes n})^G\\
&\simeq H^0(\mathcal{P},\mathcal{M}^{\otimes n})^P.
\end{align*}

Finally, set $\mathfrak{L} = L/(w^{-1}Q_ww\cap L) \times \wh L/(\wh w^{-1}\wh Q_{\wh w}\wh w\cap \wh L)$. Then, by Lemma \ref{BKunderstood} and Proposition \ref{4therecord}(b) (see also \cite[Theorem 15, Remark 31(a)]{BK}), it also holds that 
$$
H^0(\mathcal{P},\mathcal{M}^{\otimes n})^P\simeq H^0(\mathfrak{L},(\mathcal{M}|_\mathfrak{L})^{\otimes n})^L,
$$
from which the result follows. 
\end{proof}

\subsection{Interlude}

We will need the ``$\mathcal{C}$ version of Theorem \ref{BFCi}'' in the next section, so this subsection serves as the bridge between the generalized Fulton's conjecture and the type I rays. The proof of the following lemma is straightforward and ommitted; compare with Lemma \ref{fibre-rich}. 

\begin{lemma}\label{newx}
$
\mathcal{C} \simeq G\times_P \left(P/w^{-1}Bw\cap P \times \wh P/\wh w^{-1}\wh B\wh w\cap \wh P\right).
$
\end{lemma}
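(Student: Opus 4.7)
The plan is to mimic the proof of Lemma \ref{fibre-rich} almost verbatim, substituting the Borel subgroups $B, \wh B$ for the stabilizing parabolics $Q_w, \wh Q_{\wh w}$ and the Schubert cells $C_w = BwP$, $\wh C_{\wh w} = \wh B\wh w\wh P$ for the orbits $Y_w, \wh Y_{\wh w}$. Specifically, I would define
$$
\psi : G\times_P \left(P/(w^{-1}Bw\cap P)\times \wh P/(\wh w^{-1}\wh B\wh w\cap \wh P)\right) \longrightarrow \mathcal{C}, \qquad [g,\bar p, \overline{\wh p}]\mapsto (gpw^{-1}B,\ g\wh p\wh w^{-1}\wh B,\ g\wh P),
$$
and show it is an isomorphism.

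\textbf{Well-definedness.} First, independence of the $G\times_P$-representative uses that $P\subseteq \wh P$: replacing $(g,p,\wh p)$ by $(g\tilde p,\tilde p^{-1}p,\tilde p^{-1}\wh p)$ for $\tilde p\in P$ leaves each of the three output coordinates unchanged. Second, independence of the coset $\bar p\in P/(w^{-1}Bw\cap P)$: if $q\in w^{-1}Bw\cap P$, then $gpqw^{-1}B = gpw^{-1}(wqw^{-1})B = gpw^{-1}B$ because $wqw^{-1}\in B$; similarly on the $\wh G$ side for $\overline{\wh p}$. Finally, the image lies in $\mathcal{C}$: since $p\in P\subseteq \wh P$ and $\wh p\in\wh P$,
$$
\phi_\delta\bigl((gpw^{-1})\cdot wP\bigr) = gp\wh P = g\wh P, \qquad (g\wh p\wh w^{-1})\cdot \wh w\wh P = g\wh P,
$$
exhibiting $g\wh P\in \phi_\delta(gpw^{-1}\cdot C_w)\cap g\wh p\wh w^{-1}\cdot \wh C_{\wh w}$.

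\textbf{Surjectivity.} Take $(g_1B,\wh g_1\wh B,z)\in \mathcal{C}$. By definition $z = g_1 bw\wh P = \wh g_1\wh b\wh w\wh P$ for some $b\in B$, $\wh b\in \wh B$. Set $g:= g_1 bw$ and $p:= e$, so that $gpw^{-1}B = g_1 bB = g_1 B$ and $g\wh P = z$. Then $(g_1 bw)^{-1}\wh g_1\wh b\wh w$ lies in $G\cap \wh P = \wh P$ by the equality $g_1 bw\wh P = \wh g_1\wh b\wh w\wh P$; call this element $\wh p$. A direct computation gives $g\wh p\wh w^{-1}\wh B = \wh g_1\wh b\wh B = \wh g_1\wh B$, so $\psi([g,\bar e,\overline{\wh p}]) = (g_1B,\wh g_1\wh B,z)$.

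\textbf{Injectivity.} If $\psi([g,\bar p,\overline{\wh p}]) = \psi([g',\bar p',\overline{\wh p'}])$, then from the third coordinate $g^{-1}g'\in G\cap \wh P = P$; call this $\pi_0$. Using the equivalence relation to set $g' = g$ (absorbing $\pi_0$ into the other factors), the first coordinate forces $p^{-1}\pi_0 p' \in w^{-1}Bw$, and since this element already lies in $P$, it lies in $w^{-1}Bw\cap P$, giving $\overline{\pi_0 p'} = \bar p$. The analogous argument on the second coordinate gives $\overline{\pi_0 \wh p'} = \overline{\wh p}$, so the two triples agree. I expect no serious obstacle: the only subtlety is the systematic use of $P = G\cap \wh P$ (resp.\ $B = G\cap \wh B$ after conjugation, though only $P\subseteq \wh P$ is needed here) to move between the $G$ and $\wh G$ worlds, exactly as in Lemma \ref{fibre-rich}.
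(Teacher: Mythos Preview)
Your proof is correct and follows exactly the route the paper intends: the paper omits the argument and refers back to Lemma \ref{fibre-rich}, replacing $Q_w,\wh Q_{\wh w}$ by $B,\wh B$ and $Y_w,\wh Y_{\wh w}$ by $C_w,\wh C_{\wh w}$, which is precisely what you do. One slip to fix in the surjectivity step: you write that $(g_1bw)^{-1}\wh g_1\wh b\wh w$ ``lies in $G\cap \wh P=\wh P$,'' but this element need not lie in $G$ at all, and $G\cap\wh P=P$, not $\wh P$; what you need (and what the coset equality $g_1bw\wh P=\wh g_1\wh b\wh w\wh P$ actually gives) is simply that it lies in $\wh P$, which is all that is required to set $\wh p$.
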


\begin{proposition}\label{bridge} For all $n\ge 1$,
$H^0(\mathcal{C},\mathcal{O}(n\mathcal{R}))^G\simeq \C$.
\end{proposition}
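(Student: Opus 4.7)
The plan is to mirror the proof of Proposition \ref{sigh}, with $B$ and $\wh B$ playing the role of $Q_w$ and $\wh Q_{\wh w}$, and use Lemma \ref{newx} in place of Lemma \ref{fibre-rich}. First, one carries out the identical tangent-space / fibre-product analysis used in Proposition \ref{sigh}: the key diagram (\ref{TAN}) has an evident analog in which $Y_w, \wh Y_{\wh w}$ are replaced by $C_w, \wh C_{\wh w}$ and the Levi-movability computation identifying the ramification locus goes through verbatim. Under the isomorphism $\mathcal{C} \cong G\times_P \mathcal{P}_0$ of Lemma \ref{newx}, where $\mathcal{P}_0 := P/(w^{-1}Bw\cap P)\times \wh P/(\wh w^{-1}\wh B\wh w\cap \wh P)$, the restricted line bundle $\mathcal{O}(n\mathcal{R})|_\mathcal{C}$ is $G$-equivariantly isomorphic to the pull-back of $\mathcal{M}_0^{\otimes n}$, with $\mathcal{M}_0 := \mathcal{L}_P(\chi_w-\chi_1)\boxtimes \mathcal{L}_{\wh P}(\chi_{\wh w})$. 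The analog of Lemma \ref{extinction} is in fact automatic here: since $w^{-1}Bw$ is itself a Borel of $G$, no opposite pair $\{U_\alpha,U_{-\alpha}\}$ is simultaneously contained in $w^{-1}Bw\cap P$, so the characters $\chi_w,\chi_{\wh w}$ extend uniquely and trivially along unipotent directions with no coroot compatibility to verify. The upshot is the $G$-equivariant identification
$$
H^0(\mathcal{C}, \mathcal{O}(n\mathcal{R}))^G \cong H^0(\mathcal{P}_0, \mathcal{M}_0^{\otimes n})^P.
$$

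Next, the lower bound $\dim H^0(\mathcal{C}, \mathcal{O}(n\mathcal{R}))^G \ge 1$ is immediate: the one-dimensional space $H^0(\mathcal{Y}, \mathcal{O}(n\mathcal{R}))^G$ from Theorem \ref{BFCi} restricts nonzero to the dense open $\mathcal{C}\subseteq \mathcal{Y}$.

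For the upper bound, one continues to follow the chain of isomorphisms at the end of the proof of Proposition \ref{sigh}. A brief root-theoretic computation shows $w^{-1}Bw\cap P\subseteq B$ (its roots are exactly the positive roots kept positive by $w$), and similarly $\wh w^{-1}\wh B\wh w\cap \wh P \subseteq \wh B$; combining this with the Levi decomposition $B = B_L \ltimes U_P$ and the fact that $\chi_w,\chi_{\wh w}$ factor through $H$ (trivial on unipotents), Proposition \ref{4therecord}(b) and Lemma \ref{BKunderstood} apply \emph{mutatis mutandis} to restrict $P$-invariants on $\mathcal{P}_0$ down to $L^{ss}$-invariants on the product of complete flag varieties $L/B_L\times \wh L/\wh B_{\wh L}$. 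This produces the same identification
$$
H^0(\mathcal{P}_0, \mathcal{M}_0^{\otimes n})^P \cong \left[V_L(n\chi_w)^*\otimes V_{\wh L}(n\chi_{\wh w})^*\right]^{L^{ss}}
$$
that appears in Theorem \ref{BFCii}. By Theorem \ref{BFCi} this space is one-dimensional, completing the proof.

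The main obstacle is the adaptation of Lemma \ref{BKunderstood} and Proposition \ref{4therecord}(b) to the subgroups $w^{-1}Bw\cap P$ and $\wh w^{-1}\wh B\wh w\cap \wh P$, which are \emph{contained in} $B$ and $\wh B$ rather than containing them; the naive projection formula along the surjection $\mathcal{P}_0 \twoheadrightarrow \mathcal{P}$ of Lemma \ref{fibre-rich} fails because the fibres are affine and positive-dimensional. The fix is to exploit that the characters vanish on unipotents so as to pass first from $P$- to $B$-invariants (discarding the $U_P$-directions in the fibres) before reducing to the $L^{ss}$-equivariant Borel-Weil identification.
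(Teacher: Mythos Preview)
Your overall strategy is the same as the paper's: identify $\mathcal{O}(n\mathcal{R})|_{\mathcal{C}}$ with $G\times_P(f_2^*\mathcal{M})^{\otimes n}$ on $G\times_P\mathcal{P}_0$, then reduce the resulting $P$-invariant sections to the Levi and invoke Theorems \ref{BFCi} and \ref{BFCii}. The identification step is fine (though the paper does it more cheaply by pulling back through $\mathcal{C}\hookrightarrow\mathcal{Y}\to\mathcal{Y}'$ rather than redoing the tangent-space analysis).

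The gap is in your reduction step. You correctly observe that $w^{-1}Bw\cap P\subseteq B$ and that the naive pushforward along $\mathcal{P}_0\twoheadrightarrow\mathcal{P}$ fails because the fibres are affine. But your proposed fix --- ``pass first from $P$- to $B$-invariants'' using $w^{-1}Bw\cap P\subseteq B$ --- runs into exactly the same problem: the fibres of $P/(w^{-1}Bw\cap P)\to P/B$ are again positive-dimensional affine spaces, so sections on $\mathcal{P}_0$ are not controlled by sections on $P/B\times\wh P/\wh B$. Nothing in your sketch forces a $P$-invariant section on $\mathcal{P}_0$ to be constant along these unipotent directions.

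The paper exploits the \emph{other} containment, $B_L\subseteq w^{-1}Bw\cap P$ (recalled earlier from \cite{BKiers}*{Lemma 3.2}). This gives surjections
\[
P/B_L\times\wh P/\wh B_{\wh L}\;\xrightarrow{\;f_1\;}\;\mathcal{P}_0\;\xrightarrow{\;f_2\;}\;\mathcal{P},
\]
and now Proposition~\ref{4therecord}(b) applies in the correct direction: pullback of sections along $f_1$ and along $f=f_2\circ f_1$ are isomorphisms, hence so is pullback along $f_2$. This collapses $H^0(\mathcal{P}_0,(f_2^*\mathcal{M})^{\otimes n})^P$ directly onto $H^0(\mathcal{P},\mathcal{M}^{\otimes n})^P\simeq\C$ from Proposition~\ref{sigh}, with no separate lower-bound argument needed.
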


\begin{proof}
The idea of the proof is to exchange $\mathcal{Y}'$ (see end of proof of Proposition \ref{sigh}) for $\mathcal{C}$, which we hope is manageable since they both appear as $G\times_P(\text{ a homogeneous $P\times \wh P$-variety })$. 

Consider the maps 
\begin{center}
\begin{tikzcd}
P/B_L\times \wh P/\wh B_{\wh L} \arrow[r,"f_1"] \arrow[dr,  "f"'] & P/w^{-1}Bw\cap P \times \wh P/\wh w^{-1}\wh B\wh w\cap \wh P  \arrow[d,"f_2"]  \\
 & \mathcal{P},
\end{tikzcd}
\end{center}
where $\mathcal{P}$ is as in Lemma \ref{fibre-rich}; all arrows are the natural surjections (we are using that $wB_Lw^{-1}\subseteq B$ and $\wh w\wh B_{\wh L}\wh w^{-1}\subseteq \wh B$).  Take $\mathcal{M}$ as in Proposition \ref{sigh}. Then by Proposition \ref{4therecord}(b), all arrows in 
\begin{center}
\begin{tikzcd}
H^0(P/B_L\times \wh P/\wh B_{\wh L},(f^*\mathcal{M})^{\otimes n})  & H^0(P/w^{-1}Bw\cap P \times \wh P/\wh w^{-1}\wh B\wh w\cap \wh P,(f_2^*\mathcal{M})^{\otimes n})  \arrow[l,"f_1^*"']  \\
& H^0(\mathcal{P},\mathcal{M}^{\otimes n}),  \arrow[u,"f_2^*"'] \arrow[ul, "f*"] 
\end{tikzcd}
\end{center}
are $P$-equivariant isomorphisms. The bottom vector space has $P$-invariants $\simeq \C$ for any $n\ge 1$ by Proposition \ref{sigh}. Finally, by the commutativity of the following diagram: 
\begin{center}
\begin{tikzcd}
 & \mathcal{Y} \arrow[dr,"\bar p"] &  \\
\mathcal{C} \arrow[ur, "\iota"] \arrow[rr,"\text{id}\times f_2"]& & \mathcal{Y}', 
\end{tikzcd}
\end{center}
we ascertain that (for any $n\ge 1$)
$$
\mathcal{O}(n\mathcal{R}|_\mathcal{C}) = \iota^*\mathcal{O}(n\mathcal{R}) \simeq \iota^*\bar p^* \mathcal{O}(n\mathcal{R}') \simeq (\text{id}\times f_2)^*(G\times_P\mathcal{M}^{\otimes n}) = G\times_P (f_2^*\mathcal{M})^{\otimes n}.
$$
Therefore 
\begin{align*}
H^0(\mathcal{C},\mathcal{O}(n\mathcal{R}))^G&\simeq H^0(\mathcal{C},G\times_P(f_2^*\mathcal{M})^{\otimes n})^G\\
&\simeq H^0(P/w^{-1}Bw\cap P \times \wh P/\wh w^{-1}\wh B\wh w\cap \wh P,(f_2^*\mathcal{M})^{\otimes n})^P\\
&\simeq \C  
\end{align*}
for any $n\ge 1$.
\end{proof}

\section{Type I extremal rays}\label{divI}

In this section we introduce the divisors $D(v)\subset G/B\times \wh G/\wh B$ whose associated line bundles, via the Borel-Weil theorem, give generators $(\mu, \wh \mu)$ of certain extremal rays on a given regular facet. 

Suppose $w,\wh w, \delta$ satisfy (\ref{prod}); in fact $\delta\in \mathfrak{S}$ is not necessary. We assume, as always, that $\wh P(\delta)$ is a standard parabolic. We also assume $w,\wh w$ are minimal-length representatives in their cosets inside $W/W_\delta, \wh W/\wh W_\delta$. Let $\mathcal{X}\supset\mathcal{Z}\supset\mathcal{Y}\supset\mathcal{C}$, as well as $\mathcal{R}$, be as in Section \ref{nextgen}.

As in the introduction, suppose either $v\xrightarrow{\beta}w$ or $v\xrightarrow{\beta} \wh w$ for some simple root $\beta$ (for the appropriate root system). In the first case, set $u=v,\wh u = \wh w$. Otherwise in the second, set $u=w,\wh u = v$. Define
$$
\tilde D(v):=\{(g,\wh g,z)\in G/B\times \wh G/\wh B\times \wh G/\wh P:z\in \phi_\delta(gX_u)\cap \wh g\wh X_{\wh u}\}
$$
and set $D(v) = \pi(\tilde D(v))$, the projection onto $G/B\times \wh G/\wh B$. Although it is clear that $\tilde D(v)$ is codimension one inside $\mathcal{X}$, we must argue that $D(v)$ is codimension one inside $G/B\times \wh G/\wh B$, which we prove now:

\subsection{Proof of Theorem \ref{divisors}(a)}
The result will follow from 
\begin{lemma}
$\tilde D(v) \cap \mathcal{Y}$ is not contained in $\mathcal{R}$.
\end{lemma}

Indeed, this prevents $\tilde D(v)$ from being contained in $\mathcal{R}$ and thus being contracted to a codimension $\ge2$ subvariety of $G/B\times \wh G/\wh B$. 

\begin{proof}
Take any point $(g,\wh g, z)\in \mathcal{C}-\mathcal{R}$. Then 
$$
z\in \phi_\delta(g C_w)\cap \wh g \wh C_{\wh w}\subseteq \phi_\delta(g X_w)\cap \wh g \wh X_{\wh w}.
$$
By the tangent space requirement (away from $\mathcal{R}$), the preimage of $(g,\wh g)\in G/B\times \wh G/\wh B$ under $\pi$ is $1$-dimensional, and contains $(g,\wh g,z)$. By Zariski's main theorem, this preimage is also connected. Therefore we conclude 
$$
\phi_\delta(g C_w)\cap \wh g \wh C_{\wh w}= \phi_\delta(g X_w)\cap \wh g \wh X_{\wh w}=\{z\},
$$
a single point. Now, $z = \phi_\delta(xP)$ for some $xP\in gBwP$. 
Given $x\wh P = gbw\wh P = \wh g\wh b\wh w\wh P$ for suitable $b,\wh b$, we may replace $gb$, $\wh g\wh b$ with $g$, $\wh g$ without changing the cosets $gB$, $\wh g\wh B$. Furthermore, we may as well assume $x = gw$. Then for suitable $\wh p\in \wh P$, 
$$
x = gw = \wh g\wh w \wh p.
$$

As both $\mathcal{C}$ and $\mathcal{R}$ are (diagonal) $G$-invariant, we may translate by $(gw)^{-1}$ to obtain $(w^{-1}, \wh p^{-1} \wh w^{-1}, e\wh P)\in \mathcal{C}-\mathcal{R}$. Observe that 
\begin{align*}
\{e\wh P\} = \phi_\delta(w^{-1} C_w)\cap \wh p^{-1} \wh w^{-1}\wh C_{\wh w} &\subseteq  \phi_\delta(w^{-1} Y_w)\cap \wh p^{-1} \wh w^{-1}\wh Y_{\wh w}\\& \subseteq  \phi_\delta(w^{-1} X_w)\cap \wh p^{-1} \wh w^{-1}\wh X_{\wh w} = \{e\wh P\},
\end{align*}
so equalities hold all around. 

In case $v\xrightarrow{\beta} w$, we have $s_\beta \in Q_w$ and thus $s_\beta Y_w = Y_w$. Now $w^{-1} = v^{-1} s_\beta$, so 
$$
\{e\wh P\} = \phi_\delta(v^{-1}Y_w)\cap \wh p^{-1} \wh w^{-1}\wh Y_{\wh w}
$$
and therefore $(v^{-1}, \wh p^{-1} \wh w^{-1}, e\wh P)\in \mathcal{Y}- \mathcal{R}$. This point also lies in $\tilde D(v)$ since $e\wh P$ is included in both $v^{-1}Bv\wh P$ and $\wh p^{-1} \wh w^{-1} \wh B\wh w\wh P$. 

In the other case, $s_\beta\in Q_{\wh w}$ and $s_\beta \wh Y_{\wh w} = \wh Y_{\wh w}$. Again $\wh w^{-1} = v^{-1} s_\beta$, so 
$$
\{e\wh P\} = \phi_\delta(w^{-1}Y_w)\cap \wh p^{-1} v^{-1}\wh Y_{\wh w}
$$
and $(w^{-1}, \wh p^{-1} v^{-1}, e\wh P)\in \mathcal{Y}- \mathcal{R}$. This point also lies in $\tilde D(v)$ since $e\wh P$ is included in both $w^{-1}Bw\wh P$ and $\wh p^{-1} v^{-1} \wh Bv\wh P$. 

We conclude that, in either case, $\tilde D(v) \cap \mathcal{Y}-\mathcal{R}\ne \emptyset$. 
\end{proof}

Like in \cite[Corollary 15]{BKiers}, the above proof lets us also conclude that $\pi_*(\tilde D(v)) = D(v)$ as divisors. 

\subsection{Proof of Theorem \ref{divisors}(b)}

Recall that by Proposition \ref{bridge},
$$
H^0(\mathcal{C}-\mathcal{R}, \mathcal{O})^G \simeq \C.
$$
We relate $G$-invariant functions on $\mathcal{C}-\mathcal{R}$ with those on $G/B\times \wh G/\wh B$ away from $D(v)$ by means of 

\begin{lemma}\label{missedme}
$\pi(\mathcal{C}-\mathcal{R})\subseteq G/B\times \wh G/\wh B - D(v)$.
\end{lemma}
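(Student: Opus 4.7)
The plan is to take an arbitrary point in $\pi(\mathcal{C}-\mathcal{R})$, lift it to $\mathcal{C}-\mathcal{R}$, and then use the rigidity of intersections away from the ramification locus together with the disjointness between a Schubert cell $C_w$ and strictly smaller Schubert varieties $X_v$ (and analogously on the $\wh G$-side) to rule out membership in $D(v)$.

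More precisely, suppose $(g,\wh g)\in \pi(\mathcal{C}-\mathcal{R})$, so there exists $z$ with $(g,\wh g,z)\in \mathcal{C}-\mathcal{R}$. First I would recall from the proof of Theorem \ref{divisors}(a) that, because $(g,\wh g,z)\notin \mathcal{R}$, the map $\pi$ has $1$-dimensional, connected fibre there, and in fact
\[
\phi_\delta(gC_w)\cap \wh g\wh C_{\wh w}=\phi_\delta(gX_w)\cap \wh g\wh X_{\wh w}=\{z\}.
\]
Now assume for contradiction that $(g,\wh g)\in D(v)$; then there exists $z'$ with $(g,\wh g,z')\in \tilde D(v)$, i.e.\ $z'\in \phi_\delta(gX_u)\cap \wh g\wh X_{\wh u}$. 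Since $v\to w$ (resp.\ $v\to \wh w$) in the Bruhat order forces $X_v\subseteq X_w$ (resp.\ $\wh X_v\subseteq \wh X_{\wh w}$), we get in either case
\[
z'\in \phi_\delta(gX_w)\cap \wh g\wh X_{\wh w}=\{z\},
\]
so $z'=z$.

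The final step is to derive a contradiction. In case $u=v$, $\wh u=\wh w$, we have $z\in \phi_\delta(gC_w)$ and $z\in \phi_\delta(gX_v)$. Since $\phi_\delta:G/P\to \wh G/\wh P$ is injective (because $G\cap \wh P=P$), this forces the point of $G/P$ lying above $z$ to belong to both $gC_w$ and $gX_v$, i.e.\ $C_w\cap X_v\neq \emptyset$; but $v<w$ implies $X_v\subseteq X_w\setminus C_w$, a contradiction. In case $u=w$, $\wh u=v$, we analogously get $z\in \wh g\wh C_{\wh w}\cap \wh g\wh X_v$, so $\wh C_{\wh w}\cap \wh X_v\neq \emptyset$, again contradicting $v<\wh w$ in the Bruhat order of $\wh W$.

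The main (in fact only) subtlety is making sure the ``intersection is a single point'' step from the proof of Theorem \ref{divisors}(a) is available in this form; everything else is a direct Bruhat-cell disjointness argument. I expect the writeup to be quite short once that is cited cleanly.
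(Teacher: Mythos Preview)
Your proof is correct and essentially identical to the paper's own argument: both assume a point of $\mathcal{C}-\mathcal{R}$ lands in $D(v)$, invoke the single-point equality $\phi_\delta(gC_w)\cap \wh g\wh C_{\wh w}=\phi_\delta(gX_w)\cap \wh g\wh X_{\wh w}=\{z\}$ established in the proof of Theorem~\ref{divisors}(a), force $z'=z$, and then contradict disjointness of $C_w$ and $X_v$ (or the $\wh G$-analogue). The only cosmetic difference is that you make the injectivity of $\phi_\delta$ (from $G\cap \wh P=P$) explicit, whereas the paper uses it implicitly when passing from ``$gX_v$ disjoint from $gC_w$'' to ``$z\ne z'$''.
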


\begin{proof}
Assume $(g,\wh g)\in D(v)$ is in the image of $\mathcal{C}-\mathcal{R}$. Then there exists a unique $z$ such that 
$$
\{z\} = \phi_\delta(gC_w)\cap \wh g \wh C_{\wh w} = \phi_\delta(gX_w)\cap \wh g\wh X_{\wh w}
$$
and there exists a $z'$ such that 
$$
z'\in \phi_\delta(gX_v)\cap \wh g\wh X_{\wh w},
$$
or the analogous statement for $v\xrightarrow{\beta}\wh w$. 
Of course, $gX_v\subset gX_w$, so $z'\in \phi_\delta(gX_w)\cap \wh g\wh X_{\wh w}$ implies $z = z'$. However, $gX_v$ is disjoint from $gC_w$, which shows $z\ne z'$, a contradiction. A similar contradiction arises in the other case.
\end{proof}

We come now to the proof of Theorem \ref{divisors}(b): Any $f\in H^0(G/B\times \wh G/\wh B,\mathcal{O}(mD(v)))^G$, viewed as a $G$-invariant function on $G/B\times \wh G/\wh B-D(v)$, can be pulled back to a $G$-invariant function on $\mathcal{C}-\mathcal{R}$ via $\pi$. Now $H^0(\mathcal{C}-\mathcal{R}, \mathcal{O})^G$ consists only of constant functions by Proposition \ref{bridge}. Therefore $f\circ \pi$ is constant, and $f$ is constant on $\pi(\mathcal{C}-\mathcal{R})$. By the birationality of $\pi$, $\pi(\mathcal{C}-\mathcal{R})$ is a dense open subset of $G/B\times \wh G/\wh B$, hence also of $G/B\times \wh G/\wh B-D(v)$. Therefore $f$ itself is actually constant. We conclude that $H^0(G/B\times \wh G/\wh B,\mathcal{O}(mD(v)))^G$ is $1$-dimensional for all $m$. 

\subsection{Proof of Theorem \ref{divisors}(c)}

This statement follows from part (b) exactly as in \cite[Lemma 2.1]{B}.

\section{Parameter stacks for type I rays}\label{nextup}

In this section we introduce some of the core geometry of the paper, using quotient stacks to describe a Levification procedure and prove Proposition \ref{opendoor}, and we prove Theorem \ref{divisors}(d). 

\subsection{Review of principal $G$-spaces}

\begin{defi}
For us, a \emph{principal $G$-space $E$} is a variety endowed with a simply transitive right $G$-action. 
%with a right $G$ action such that for any $x\in E$, the map $G\to E$ given by $g\mapsto xg$ is an isomorphism. 

If $\phi:G\to H$ is a morphism of linear algebraic groups, then 
$$
E\times_G H = \{(e,h)\in E\times H\}/(e,h)\sim(eg,\phi(g)^{-1}h)
$$
is naturally a principal $H$-space. 
\end{defi}

We also define the notion of \emph{relative position}. 

\begin{lemma}
Let $E$ be a principal $G$-space and $B\subseteq P\subseteq G$ as usual. Let $\bar g\in E/B, \bar z\in E/P$. Then there is a unique $w\in W^P$ such that there exist $b\in B, p\in P$ satisfying
$$
z = gbwp^{-1}.
$$
\end{lemma}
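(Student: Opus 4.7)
The plan is to reduce this directly to the Bruhat decomposition of $G$ itself, leveraging the fact that a principal $G$-space admits a unique element of $G$ translating any given point to any other.

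First, I would pick arbitrary lifts $g \in E$ of $\bar g \in E/B$ and $z \in E/P$ of $\bar z$. By the defining property of a principal $G$-space, there exists a unique element $h \in G$ such that $z = gh$. Next, invoke the Bruhat decomposition $G = \bigsqcup_{w \in W^P} BwP$: there is a unique $w \in W^P$ and (non-unique) elements $b \in B$, $p \in P$ such that $h = bwp^{-1}$, giving $z = g b w p^{-1}$ as required.

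The only thing left is to verify that $w$ does not depend on the choice of lifts, which is what makes it an invariant of the pair $(\bar g, \bar z)$. If we replace $g$ by $g b_0$ for some $b_0 \in B$ and $z$ by $z p_0$ for some $p_0 \in P$, then the new element $h'$ satisfying $z p_0 = (g b_0) h'$ is $h' = b_0^{-1} h p_0$. Since $b_0^{-1} (BwP) p_0 = BwP$, the new $h'$ still lies in the same Bruhat double coset, so its associated $w \in W^P$ coincides with the original. This establishes both existence and uniqueness of $w$.

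There is no real obstacle here; the statement is essentially a repackaging of the Bruhat decomposition for the ``twisted'' setting of a principal $G$-space, and the only content is the compatibility check under changes of representatives, which is immediate from the fact that $B \cdot BwP \cdot P = BwP$.
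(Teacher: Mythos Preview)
Your proof is correct and follows essentially the same approach as the paper's own proof: both reduce to the Bruhat decomposition by choosing lifts, noting the unique $h \in G$ with $z = gh$, and then checking that the resulting $w \in W^P$ is independent of the chosen representatives. Your version is slightly more explicit about the compatibility check under change of lifts, but the argument is the same.
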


\begin{proof}
There is a unique $y\in G$ so that $gy=z$. Any $y\in G$ is expressible as $bwp^{-1}$ for some $b\in B, p\in P$, $w\in W$; furthermore, the choice of $w$ is unique to $y$. Thus $z = gbwp^{-1}$ as prescribed. Furthermore, the choices of $g,z$ as representatives for $\bar g, \bar z$ do not affect $w$, given that $b,p^{-1}$ are free to change accordingly. 
\end{proof}

We define the \emph{relative position} $[\bar g, \bar z]\in W^P$ to be $w$ as above. 

\subsection{Introduction of universal intersection stacks}

We introduce the following stacks, similar in nature to those of \cite[\S 3.4]{BKiers}. 

\begin{enumerate}[label=$\bullet$]
\item Let $\op{Fl}_G$ parametrize principal $G$-spaces $E$ together with $\bar g\in E/B$, $\overline{\wh g}\in (E\times_G \wh G)/\wh B$ (in families over a scheme $X$, it parametrizes principal $G$-bundles $E$ over $X$ locally trivial in the fppf topology, together with sections $\bar{g}\in E/B$ and $\overline{\wh g}\in (E\times_G \wh G)/\wh B$). 

Fixing $x\in E$, $g = xh$ and $\wh g = (x,\wh h)$ defines elements $\bar h\in G/B$ and $\overline{\wh h} \in \wh G/\wh B$. Changing representatives for $\bar g$ and $\overline{\wh g}$ does not change $\bar h$ and $\overline{\wh h}$. Changing $x$ to $x\tilde g$ for $\tilde g\in G$ changes $\bar h, \overline{\wh h}$ to $\overline{\tilde g^{-1} h}$ and $\overline{\tilde g^{-1}\wh h}$. Thus as stacks, 
$$
\op{Fl}_G = \left[\left(G/B\times \wh G/\wh B\right)/G\right],
$$
where the RHS is the quotient stack with right $G$-action given by left multiplication by $g^{-1}$. 

\item Similarly, set $\op{Fl}_L = \left[\left(L/B_L\times \wh L/\wh B_{\wh L}\right)/L\right]$, which parametrizes principal $L$-spaces $F$ together with $\bar q \in F/B_L$ and $\overline{\wh q}\in (F\times_L \wh L)/\wh B_{\wh L}$. 

\item Let $\wh{\mathcal{C}}$ be the stack parametrizing principal $G$-spaces $E$, elements $\bar g\in E/B$, $\overline{\wh g}\in (E\times_G \wh G)/\wh B$, and an element $\bar z\in E/P$ satisfying 
$$
[\bar g, \bar z] = w ~~\text{  and  }~~ \left[\overline{\wh g}, \overline{(z,e)}\right] = \wh w.
$$
Then, similar to above, $\wh{\mathcal{C}} = \left[\mathcal{C}/G\right]$. 

\end{enumerate}

Observe that there is a natural map $\pi:\wh{\mathcal{C}}\to \op{Fl}_G$ induced by the $G$-equivariant morphism $\pi:\mathcal{C}\to G/B\times \wh G/\wh B$. 

The following lemma will help us identify maps between $\wh{\mathcal{C}}$ and $\op{Fl}_L$. 

\begin{lemma}\label{alt}
The stack $\wh{\mathcal{C}}$ parametrizes principal $P$-spaces $E'$ together with elements $\bar y\in E'/(w^{-1}Bw\cap P)$ and $\overline{\wh y}\in (E'\times_P\wh P)/(\wh w^{-1}\wh B\wh w\cap \wh P)$. 
\end{lemma}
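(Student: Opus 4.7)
The plan is to combine two ingredients: the $G$-equivariant isomorphism of Lemma \ref{newx}, which presents $\mathcal{C}$ as $G \times_P Y$ for
$$Y := P/(w^{-1}Bw\cap P) \times \wh P/(\wh w^{-1}\wh B\wh w\cap \wh P),$$
and the standard stack identification $[(G\times_P Y)/G] \simeq [Y/P]$. Together these give $\wh{\mathcal{C}} = [\mathcal{C}/G] \simeq [Y/P]$, and by the usual dictionary $[Y/P]$ parametrizes principal $P$-spaces $E'$ equipped with a section of the associated bundle $E' \times_P Y$. Since $Y$ is a product and $\wh P/(\wh w^{-1}\wh B\wh w\cap \wh P)$ carries a $P$-action through the inclusion $P\hookrightarrow \wh P$, such a section breaks into a pair $(\bar y, \overline{\wh y})$ with $\bar y \in E'/(w^{-1}Bw\cap P)$ and $\overline{\wh y}\in (E'\times_P \wh P)/(\wh w^{-1}\wh B\wh w\cap \wh P)$, exactly as in the statement.

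Next I would spell out the two functors explicitly to make the equivalence transparent. Given $(E, \bar g, \overline{\wh g}, \bar z)$ as in the original definition of $\wh{\mathcal{C}}$, take $E' \subseteq E$ to be the preimage of $\bar z$ under $E\to E/P$; it is a principal $P$-space. The relative-position condition $[\bar g, \bar z] = w$ allows one to choose a representative $g\in E$ of $\bar g$ so that $y := gw\in E'$; the class $\bar y := y \bmod (w^{-1}Bw\cap P)$ is then well-defined, because the residual ambiguity $g \mapsto gb$ preserves membership of $y$ in $E'$ exactly when $b\in wPw^{-1}\cap B$, and then $y$ is modified by $w^{-1}bw\in w^{-1}Bw\cap P$. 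The analogous construction inside $E\times_G\wh G$ produces $\overline{\wh y}$ from $\overline{\wh g}$, using that $E'\times_P\wh P$ is precisely the preimage of $\overline{(z,e)}$ inside $(E\times_G\wh G)/\wh P$. In the reverse direction, starting from $(E',\bar y, \overline{\wh y})$ one sets $E := E'\times_P G$, declares $\bar z\in E/P$ to be the image of any $y\in E'$, and defines $\bar g := \overline{yw^{-1}}\in E/B$ and $\overline{\wh g}$ analogously.

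The main obstacle is verifying that these two functors are mutually inverse, which boils down to tracking how the ambiguities in choosing lifts match on either side. On underlying sets this is essentially tautological from Lemma \ref{newx}; the one genuine point to check is that the single principal $P$-space $E'$ correctly supports both sections $\bar y$ and $\overline{\wh y}$ simultaneously. This works because $E'$ is determined by the common datum $\bar z\in E/P$, and the second factor of $Y$ is a $P$-equivariant $\wh P$-homogeneous space, so the bundle $E'\times_P\wh P$ is the natural home for $\overline{\wh y}$. With this identification, the transition formulas agree with the explicit map $\eta$ from the proof of Lemma \ref{newx}, and the lemma follows.
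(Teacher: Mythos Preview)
Your proposal is correct and takes essentially the same approach as the paper: the paper's proof is the single sentence ``This is simply a reformulation of Lemma \ref{newx},'' and you have spelled out precisely what that reformulation entails via the standard identification $[(G\times_P Y)/G]\simeq [Y/P]$ together with the splitting of $E'\times_P Y$ into the pair $(\bar y,\overline{\wh y})$. Your explicit description of the mutually inverse functors in the second and third paragraphs is more detail than the paper provides, but it is accurate and matches the content of Lemma \ref{newx}.
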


\begin{proof}
This is simply a reformulation of Lemma \ref{newx}.
\end{proof}

The equivalent description of $\wh{\mathcal{C}}$ given by Lemma \ref{alt} allows us to use the inclusion $L\to P$ and projection $P\to P/U = L$ maps to define maps $\op{Fl}_L \to \wh{\mathcal{C}}$ and $\wh{\mathcal{C}}\to \op{Fl}_L$, respectively. We describe these maps now. 

First recall (cf. \cite[Lemma 19]{BKiers}) that $B_L\subset w^{-1}Bw\cap P$ and that, if $\phi:P\to L$ is the quotient map, $\phi(w^{-1}Bw\cap P) = B_L$. Thus if $F$ is a principal $L$-space with $\bar q\in F/B_L$ and $\overline{\wh q}\in (F\times_L \wh L)/\wh B_{\wh L}$, the $P$-space $F\times_L P$ and elements $\overline{(q,e)}\in (F\times_L P)/(w^{-1}Bw \cap P)$ and $\overline{(\wh q,e)}\in (F\times_L \wh P)/(\wh w^{-1}\wh B\wh w\cap \wh P)$ are well-defined. 

Conversely, if $E'$ is a principal $P$-space with $\bar y\in E'/(w^{-1}Bw\cap P)$ and $\overline{\wh y}\in (E'\times_P \wh P)/(\wh w^{-1}\wh B\wh w\cap \wh P)$, the $L$-space $E'\times_P L$ and elements 
$\overline{(y,e)}\in (E'\times_P L)/B_L$
and $\overline{(\wh y, e)} \in (E'\times_P \wh L)/\wh B_{\wh L}$
are well-defined. Thus we have maps $i:\op{Fl}_L\to \wh{\mathcal{C}}$ and $\tau:\wh{\mathcal{C}}\to \op{Fl}_L$, and these evidently satisfy $\tau\circ i = \op{id}_{\op{Fl}_L}$. 

\subsection{The main diagram of stacks}
There are natural maps of stacks $\mathcal{C}\to  \wh{\mathcal{C}}$ and $G/B\times \wh G/\wh B\to \op{Fl}_G$, and these commute with the relevant maps $\pi$. Introducing the map $\tilde i = \pi\circ i$, we present the following useful diagram of stacks:

\begin{center}
\begin{tikzcd}
\mathcal{C} \arrow[d,"\pi"]  \arrow[r] & \wh{\mathcal{C}} \arrow[d, "\pi"] \arrow[dr, "\tau"'] & \\
G/B\times \wh G/\wh B \arrow[r] & \op{Fl}_G & \op{Fl}_L \arrow[l, "\tilde i"] \arrow[ul, bend right=20, "i"']
\end{tikzcd}
\end{center}

\subsection{Line bundles on $\wh{\mathcal{C}}$ and $\op{Fl}_L$ are related (Levification)}

The following definition is adapted from \cite[Definition 24]{BKiers}. Whereas in \cite{BKiers} it was true that $Z(L) = Z(\wh L)\cap L$, this need not be the case at present. Instead we find it best to work with $\langle\delta\rangle = \im(\delta)$, which is of course still contained in both $Z(L)$ and $Z(\wh L)$. 

\begin{defi}
Let $\mathcal{M}$ be a line bundle on $\op{Fl}_L$, viewed as an $L$-equivariant line bundle on $L/B_L\times \wh L/\wh B_{\wh L}$. Then $\C^*$ acts on each fibre of $\mathcal{M}$ via $\delta$. Because the group of characters of $\C^*$ is discrete, the map 
$$
X:=L/B_L \times \wh L/\wh B_{\wh L} \to \op{Hom}(\langle\delta\rangle,\C^*)
$$
is constant ($X$ is connected). Thus $\mathcal{M}$ gives rise to a single $\gamma_\mathcal{M}: \langle\delta\rangle\to \C^*$, and $\gamma_\mathcal{M}$ can be defined even if $\mathcal{M}$ is only defined over a connected subset of $X$ (for example, any Zariski open subset, given irreducibility of $X$). 
\end{defi}

The following proposition generalizes \cite[Proposition 25]{BKiers}:
\begin{proposition}\label{opendoor}
Let $U$ be a non-empty open substack of $\op{Fl}_L$, $\mathcal{L}$ a line bundle on $\tau^{-1}(U)$ and $\mathcal{M} = i^*\mathcal{L}$, a line bundle on $U$. Then 
\begin{enumerate}[label=(\alph*)]
\item $\mathcal{L} = \tau^*\mathcal{M}$. This shows $\tau^*:\pic(U)\to \pic(\tau^{-1}(U))$ is an isomorphism with inverse $i^*$.
\item If $\gamma_\mathcal{M}$ is trivial, then $H^0(\tau^{-1}(U),\mathcal{L})\to H^0(U,\mathcal{M})$ is an isomorphism. 
\end{enumerate}
\end{proposition}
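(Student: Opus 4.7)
The plan is to identify $\tau$ with an affine-space bundle arising from the Levi decomposition $P = L\ltimes U_P$ (and its $\wh P$-analogue), then reduce both assertions to standard facts about line bundles on $\mathbb{A}^N$. Using the presentations $\wh{\mathcal{C}} = [\mathcal{P}_0/P]$ (from Lemmas \ref{newx} and \ref{alt}) with
$$
\mathcal{P}_0 := P/(w^{-1}Bw\cap P)\times \wh P/(\wh w^{-1}\wh B\wh w\cap \wh P),
$$
and $\op{Fl}_L = [(L/B_L\times \wh L/\wh B_{\wh L})/L]$, the morphism $\tau$ descends from the $P$-equivariant morphism $\mathcal{P}_0\to L/B_L\times \wh L/\wh B_{\wh L}$ given by projecting modulo the unipotent radicals $U_P$ and $\wh U_{\wh P}$. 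By \cite{BKiers}*{Lemma 3.2} and its $\wh P$-analogue, $w^{-1}Bw\cap P \supseteq B_L$ with unipotent kernel $U_w := U_P\cap w^{-1}Bw$, and similarly for the $\wh P$-side. Thus the atlas-level morphism $\mathcal{P}_0\to L/B_L\times \wh L/\wh B_{\wh L}$ is a Zariski-locally trivial affine-space bundle with fibre $U_P/U_w\times \wh U_{\wh P}/\wh U_{\wh w}\simeq \mathbb{A}^N$, on which $Z^0(L)$ (in particular $\dot\delta$) acts linearly through the root-space weights of $\mathfrak{u}_P,\wh{\mathfrak{u}}_{\wh P}$, all of which are nonzero.

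For part (a), I would argue by descent. A line bundle $\mathcal{L}$ on $\tau^{-1}(U)$ pulls back to a $P$-equivariant line bundle $\widetilde{\mathcal{L}}$ on the preimage $\widetilde U\subseteq \mathcal{P}_0$ of the atlas for $U$. Since $\pic(\mathbb{A}^N)=0$, the restriction of $\widetilde{\mathcal{L}}$ to every fibre of $\widetilde U\to (L/B_L\times \wh L/\wh B_{\wh L})$-atlas is trivial, so by the seesaw principle $\widetilde{\mathcal{L}}$ descends to a line bundle $\mathcal{M}'$ on the atlas of $U$. The unipotent radical $U_P$ acts trivially on this descended bundle (since unipotent groups have no nontrivial characters and the fibres are $U_P$-stable), so $\mathcal{M}'$ inherits an $L$-equivariant structure, giving a line bundle on $U$. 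Applying $i^*$ and using $\tau\circ i = \op{id}_{\op{Fl}_L}$ yields $\mathcal{M}' = i^*\tau^*\mathcal{M}' = i^*\mathcal{L} = \mathcal{M}$, proving $\tau^*\mathcal{M}\simeq \mathcal{L}$ and that $\tau^*, i^*$ are mutually inverse on Picard groups.

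For part (b), working on the atlas and using the projection formula, sections of $\tau^*\mathcal{M}$ correspond to sections of $\mathcal{M}\otimes \tau_*\mathcal{O}_{\widetilde U}$. The sheaf $\tau_*\mathcal{O}$ is locally the coordinate ring of the fibre $\mathbb{A}^N$, which decomposes under $Z^0(L)$ into nonzero $\dot\delta$-weight spaces plus the line of constants. Thus the only $Z^0(L)$-invariant (equivalently, fibrewise-constant) sections of $\mathcal{M}\otimes \tau_*\mathcal{O}$ are the sections of $\mathcal{M}$. Since $\gamma_\mathcal{M}$ is assumed trivial, $L$-equivariant sections of $\mathcal{M}\otimes \tau_*\mathcal{O}$ coincide with $L$-equivariant sections of $\mathcal{M}$, giving the claimed isomorphism $H^0(\tau^{-1}(U),\mathcal{L})\simeq H^0(U,\mathcal{M})$.

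The principal obstacle is the stacky descent in part (a), specifically verifying that the $P$-equivariance of $\widetilde{\mathcal{L}}$ descends compatibly to an $L$-equivariance through the unipotent quotient. The argument closely parallels the diagonal case \cite{BKiers}*{Proposition 3.8}; the genuinely new input is confirming that the $\wh P$-side contributes an analogous affine-space fibre with analogous weight behaviour, which follows since $\wh U_{\wh P}$ is defined by the same one-parameter subgroup $\delta$.
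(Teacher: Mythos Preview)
Your argument is essentially correct but takes a different route from the paper. The paper reverses your order: it proves (b) first, via the Levification family $\psi_t(p)=\delta(t)p\delta(t)^{-1}$. A section over any point of $\tau^{-1}(U)$ propagates along the family $(E_t,\bar y_t,\overline{\wh y}_t)$ by $P$-equivariance, and the hypothesis that $\gamma_\mathcal{M}$ is trivial ensures it extends without zeros or poles to the limit $t=0$, which lies in the image of $i$; this gives injectivity of $i^*$, and surjectivity comes from extending a section at the limit outward. Part (a) is then deduced by applying (b) to $\mathcal{L}' := \mathcal{L}\otimes(\tau^*\mathcal{M})^{-1}$, whose restriction $\mathcal{M}'=i^*\mathcal{L}'$ is $\mathcal{O}_U$ (hence $\gamma_{\mathcal{M}'}$ trivial), and using the resulting nowhere-vanishing section of $\mathcal{L}'$ to conclude $\mathcal{L}'\simeq\mathcal{O}$.

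Your affine-bundle approach is morally the same mechanism---the $\delta$-weight decomposition of $\tau_*\mathcal{O}$ is the infinitesimal shadow of the $\psi_t$-contraction---but packaged differently. The advantage of the paper's ordering is that it sidesteps exactly the obstacle you flag: by proving (b) first and deducing (a) from it, one never has to verify directly that a $P$-equivariant structure on $\pi^*\mathcal{M}'$ descends to an $L$-equivariant structure on $\mathcal{M}'$. Your version of that descent does go through, but the key missing sentence is that invertible regular functions on the fibre $\mathbb{A}^N$ are constant, so the $P$-linearization cocycle on $\pi^*\mathcal{M}'$ is automatically pulled back from $V$, after which it factors through $L$ since $U_P$ has no nontrivial characters. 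That is precisely the step the paper's reordering is designed to avoid.
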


Before embarking on the proof, we set up the generalized setting for Levification (cf. \cite[\S 3.6]{BKiers}); here the role of $t^{x_L}$ will be played by $\delta(t)$. 

\begin{defi}
Define a family of maps $\psi: \wh P\times \C^*\to \wh P$ by $\psi_t(p) = \delta(t)p\delta(t)^{-1}$ for $t\in \C^*$. 
\end{defi}

We record several straightforward facts about the $\psi_t$. 

\begin{lemma}
Each $\psi_t$ is the identity on $\wh L$, and of course $\psi_1$ is the identity on $\wh P$. In the limit, $\psi_0 = \lim_{t\to 0}\psi_t$ exists and equals the quotient map $\wh P\to \wh L$. Similarly, the restriction $\psi_t:P\to P$ is the identity on $L\subset P$ and in the limit $\psi_0:P\to L$ is the standard quotient map again. The diagrams 
$$
\begin{array}{ccc}
\begin{tikzcd}
P \arrow[hookrightarrow, r] \arrow[d,"\psi_t"']&  \wh P \arrow[d,"\psi_t"]\\
P \arrow[hookrightarrow, r]& \wh P
\end{tikzcd}
& \text{ and }& 
\begin{tikzcd}
P \arrow[hookrightarrow, r] \arrow[d,"\psi_0"']&  \wh P \arrow[d,"\psi_0"]\\
L \arrow[hookrightarrow, r]& \wh L
\end{tikzcd}
\end{array}
$$
commute. 
\end{lemma}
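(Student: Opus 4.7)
The plan is to verify each claim of the lemma in turn, relying only on the Levi decomposition of the parabolics and the root-weight behavior of conjugation by the cocharacter $\lambda$.

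First I would verify the easy identities: since by hypothesis $\lambda(t) \in Z^0(\wh L)$, the element $\lambda(t)$ commutes with every element of $\wh L$, giving $\psi_t|_{\wh L} = \operatorname{id}$ for every $t \in \C^*$. The fact that $\psi_1 = \operatorname{id}_{\wh P}$ is immediate because $\lambda(1) = e$. The corresponding assertions on $P$ and $L$ follow from these by restriction, using that $P = \wh P \cap G$ and $L = \wh L \cap G$ and that the Ad-action preserves $\mathfrak{g} \subseteq \wh{\mathfrak{g}}$.

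Next, to handle the limit $\psi_0 = \lim_{t\to 0}\psi_t$, I would pass to the Lie algebra and exploit the Levi decomposition $\wh{\mathfrak{p}} = \wh{\mathfrak{l}} \oplus \wh{\mathfrak{u}}$. By the definition of $\wh P(\delta)$, the unipotent radical decomposes as $\wh{\mathfrak{u}} = \bigoplus_{\wh\alpha(\dot\lambda)>0} \wh{\mathfrak{g}}_{\wh\alpha}$, while $\wh{\mathfrak{l}}$ is exactly the weight-$0$ part. For a root vector $X_{\wh\alpha}$ with $\wh\alpha(\dot\lambda) = n > 0$, one has $\operatorname{Ad}_{\lambda(t)} X_{\wh\alpha} = t^n X_{\wh\alpha}$, which vanishes as $t\to 0$; on $\wh{\mathfrak{l}}$ the conjugation is already trivial. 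Exponentiating and writing a general element of $\wh P$ uniquely as $l u$ with $l \in \wh L, u \in \wh U$, this gives $\psi_t(lu) = l \cdot \psi_t(u) \to l$ as $t \to 0$. So the limit morphism $\psi_0$ exists as a morphism of algebraic groups and is precisely the projection $\wh P \to \wh L$ along the unipotent radical. The analogous statement for $P \to L$ is obtained by the same argument applied to the $\mathfrak{h}$-weight decomposition of $\mathfrak{p}$, or simply by restriction.

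Finally, the commutativity of both diagrams is essentially tautological: the maps $\psi_t$ (resp.\ $\psi_0$) on $P$ and $\wh P$ are defined by the same formula (or the same limit of the same formula) using the single element $\lambda(t) \in Z^0(L) \subseteq Z^0(\wh L)$, so the two vertical arrows agree on the subgroup $P \subseteq \wh P$ (resp.\ carry $P$ into $L$ via the appropriate projection). There is no real obstacle in this lemma; the only point requiring any care is ensuring that $\psi_0$ exists as a morphism rather than merely a set map, and this is where the dominance of $\lambda$ (equivalently, the positivity of all $\wh\alpha(\dot\lambda)$ on $\wh{\mathfrak{u}}$) enters decisively.
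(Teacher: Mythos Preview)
Your proof is correct and is exactly the sort of verification the paper has in mind; the paper itself omits the proof entirely, labeling the claims ``straightforward facts'' and moving on. One minor quibble: in your last paragraph you write $\lambda(t) \in Z^0(L) \subseteq Z^0(\wh L)$, but the inclusion $Z^0(L) \subseteq Z^0(\wh L)$ is not valid in general (an element central in $L = \wh L \cap G$ need not be central in $\wh L$); what is true, and what you actually use, is that this particular $\lambda$ lands in both centers because $L$ and $\wh L$ are by construction the centralizers of $\lambda$ in $G$ and $\wh G$ respectively.
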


\begin{defi}
Now given a principal $P$-space $E'$ and elements $\bar y\in E'/(w^{-1}Bw\cap P)$ and $\overline{\wh y}\in (E'\times_P \wh P)/(\wh w^{-1}\wh B\wh w\cap \wh P)$, define for each $t\in \A^1$ the principal $\psi_t(P)$-space $E_t = E'\times_{\psi_t} P$, together with elements $\bar y_t = \overline{(y,e)}\in E_t/(w^{-1}Bw\cap P)$ and $\overline{\wh y}_t = \overline{(\wh y,e)}\in ((E'\times_P\wh P)\times_{\psi_t}\wh P)/(\wh w^{-1}\wh B\wh w\cap \wh P) = (E'\times_{\psi_t} \wh P)/(\wh w^{-1}\wh B\wh w\cap \wh P)$. 

\end{defi}

\begin{proof}[Proof of Proposition \ref{opendoor}]
We will actually prove (b) first and use it for (a). 

(b) Any section of $\mathcal{L}$ over a point $(E',\bar y,\overline{\wh y})$ in $\tau^{-1}(U)$ extends uniquely to each $(E_t,\bar y_t, \overline{\wh y}_t)$ by the $P$-equivariance. By the triviality of the action of $\delta(t)$ on the fibre above the limit point $(E_0, \bar y_0, \overline{\wh y}_0)$, this section can be extended uniquely without zeros or poles to $(E_0, \bar y_0, \overline{\wh y}_0) = i^{-1}((E',\bar y, \overline{\wh y}))$. This shows the injectivity of the pullback map 
$$
i^*:H^0(\tau^{-1}(U),\mathcal{L}) \to H^0(U,\mathcal{M}).
$$
Surjectivity follows by extending any section at $(E_0, \bar y_0, \overline{\wh y}_0)$ to all $E_t$ as in \cite[Lemma 8.6]{B}. 

(a) Take $\mathcal{L}' = \mathcal{L}\otimes (\tau^*\mathcal{M})^{-1}$, and set $\mathcal{M}' = i^* (\mathcal{L}')$. Observe that 
$$
\mathcal{M}' = i^*(\mathcal{L})\otimes \left((\tau\circ i)^*\mathcal{M}\right)^{-1} = \mathcal{M}\otimes \mathcal{M}^{-1}
$$
is actually just $\mathcal{O}_U$. Consequently, $\gamma_{\mathcal{M}'}$ is trivial and (2) applies: 
$$
H^0(\tau^{-1}(U), \mathcal{L}') \simeq H^0(U, \mathcal{O}_U).
$$
The latter contains a nowhere-vanishing section, the constant function $1$, so say $i^*(\sigma) =1$ by the isomorphism. If $\sigma$ itself vanishes anywhere on $\tau^{-1}(U)$, it must not vanish on $\op{im}(i)$ since $1$ does not vanish on $U$.  But any vanishing of $\sigma$ elsewhere can be propagated to $\op{im}(i)$ by Levification, which cannot be. So $i^*(\sigma)$ is a nowhere-vanishing section of $\mathcal{L}'$. We conclude that $\mathcal{L}'$ is trivial, which gives the result. 
\end{proof}

\subsection{Proof of Theorem \ref{divisors}(d)}

We next introduce the following analogue of \cite[Lemma 29]{BKiers}: 

\begin{lemma}\label{florence}
Suppose $\mathcal{L} = \mathcal{L}_{\mu}\boxtimes \mathcal{L}_{\wh \mu}$ is in $\pic(\operatorname{Fl}_G)$, and let $\mathcal{M}$ denote its pullback to $\operatorname{Fl}_L$. Then the following are equivalent: 
\begin{enumerate}[label=(\alph*)]
\item The equality 
$$
 w^{-1}\mu(\dot\delta)+\wh w^{-1}\wh \mu(\dot\delta)=0
$$
holds. 
\item $\gamma_\mathcal{M}: \langle\delta\rangle\to \C^*$ is trivial. 
\end{enumerate}
\end{lemma}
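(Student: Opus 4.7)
The plan is to translate $\tilde i = \pi \circ i$ into an explicit $L$-equivariant map on the underlying quotient spaces, then compute $\sigma^*\mathcal{L}$ directly with the Borel construction, from which $\gamma_\mathcal{M}$ can be read off as a character of $Z^0(L)$.

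First I would unwind the stacks. A point $(\bar l,\overline{\wh l})\in L/B_L\times \wh L/\wh B_{\wh L}$ represents $(L,\bar l,\overline{\wh l})\in \operatorname{Fl}_L$. Under $i$ (using the description of $\wh{\mathcal{C}}$ in Lemma \ref{alt}), this becomes the principal $P$-space $P$ equipped with $\bar l\in P/(w^{-1}Bw\cap P)$ and $\overline{\wh l}\in \wh P/(\wh w^{-1}\wh B\wh w\cap \wh P)$. Applying $\pi$ via the isomorphism $\mathcal{C}\simeq G\times_P\mathcal{P}$ from Lemma \ref{newx} (taking $g=e$), this in turn corresponds to the $\mathcal{C}$-point $(lw^{-1}B,\wh l\wh w^{-1}\wh B,e\wh P)$. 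Forgetting the $\wh G/\wh P$-coordinate, $\tilde i$ is represented by the $L$-equivariant map
\[
\sigma: L/B_L\times \wh L/\wh B_{\wh L}\longrightarrow G/B\times \wh G/\wh B,\qquad (\bar l,\overline{\wh l})\mapsto (\overline{lw^{-1}},\overline{\wh l\wh w^{-1}}).
\]
Well-definedness uses the standard facts that $wB_Lw^{-1}\subseteq B$ and $\wh w\wh B_{\wh L}\wh w^{-1}\subseteq \wh B$ for minimal-length coset representatives.

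Second I would compute $\mathcal{M}=\sigma^*\mathcal{L}$. Realizing $\mathcal{L}_\mu=G\times_B\C_{-\mu}$ and interpreting a local section as a function $f:G\to \C$ with $f(gb)=\mu(b)f(g)$, the pullback $\tilde f(l):=f(lw^{-1})$ satisfies $\tilde f(lb)=f(lw^{-1}\cdot wbw^{-1})=\mu(wbw^{-1})\tilde f(l)$ for $b\in B_L$; thus $\tilde f$ transforms by the character $(w^{-1}\mu)|_{B_L}$, where $(w^{-1}\mu)(h):=\mu(whw^{-1})$. Performing the analogous computation for the $\wh L$-factor yields
\[
\mathcal{M}\;\simeq\;\mathcal{L}_{w^{-1}\mu}\boxtimes \mathcal{L}_{\wh w^{-1}\wh \mu}
\]
as $L$-equivariant line bundles on $L/B_L\times \wh L/\wh B_{\wh L}$.

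Third I would read off $\gamma_\mathcal{M}$. Because $Z^0(L)\subseteq B_L$ and (via $H\subseteq \wh H$) $Z^0(L)\subseteq \wh H\subseteq \wh B_{\wh L}$, the subgroup $Z^0(L)$ fixes every point of $L/B_L\times \wh L/\wh B_{\wh L}$ and acts on the fiber of $\mathcal{M}$ over the basepoint by the character $z\mapsto (w^{-1}\mu+\wh w^{-1}\wh \mu)(z)$ (up to an overall sign from the Borel construction, immaterial for triviality). This character is trivial if and only if its differential vanishes on $\operatorname{Lie}Z^0(L)$. Since $\operatorname{Lie}Z^0(L)$ is the common kernel of the simple roots in $\Delta(P)$, it equals $\operatorname{span}\{x_k:\alpha_k\notin \Delta(P)\}$ in the basis dual to the simple roots. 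Hence $\gamma_\mathcal{M}$ is trivial iff $(w^{-1}\mu+\wh w^{-1}\wh \mu)(x_k)=0$ for every $k$ with $\alpha_k\notin \Delta(P)$, which is precisely condition (a).

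The main obstacle is the first step: correctly tracing the stack-theoretic definitions of $i$ and $\pi$ (through Lemmas \ref{alt} and \ref{newx}) to produce the explicit map $\sigma$. Once that concrete description is in hand, the remainder is a routine character calculation.
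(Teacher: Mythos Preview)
Your approach is essentially the same as the paper's: both identify the map $\sigma:(\bar l,\overline{\wh l})\mapsto(\overline{lw^{-1}},\overline{\wh l\wh w^{-1}})$ representing $\tilde i$, pull back $\mathcal{L}$, and read off the $Z^0(L)$-character on fibres using that $\operatorname{Lie}Z^0(L)=\operatorname{span}\{x_k:\alpha_k\notin\Delta(P)\}$. The paper does the fibre computation directly by pushing a generator $\epsilon(s)=s^{N_kx_k}$ through $qw^{-1}$ and $\wh q\wh w^{-1}$, while you first identify $\mathcal{M}\simeq\mathcal{L}_{w^{-1}\mu}\boxtimes\mathcal{L}_{\wh w^{-1}\wh\mu}$ and then evaluate; these are the same calculation.

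One small correction: your justification that ``$Z^0(L)\subseteq B_L$ and $Z^0(L)\subseteq\wh B_{\wh L}$, hence $Z^0(L)$ fixes every point'' is not the right reason. Containment in a Borel only fixes the \emph{basepoint} under left translation; what makes $Z^0(L)$ fix all of $L/B_L$ is that it is central in $L$. For the $\wh L/\wh B_{\wh L}$ factor one likewise needs $\epsilon(s)$ to commute with $\wh q\in\wh L$, which is exactly what the paper uses when it writes $\epsilon(s)\cdot\wh q\wh w^{-1}=\wh q\wh w^{-1}(\wh w.\epsilon)(s)$. So your argument and the paper's rely on the same fact here; just state it as centrality rather than as containment in the Borel.
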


\begin{proof}
The map $L/B_L \times \wh L/\wh B_{\wh L}\to G/B\times \wh G/\wh B$ given by $(\bar q, \overline{\wh q})\mapsto (\overline{qw^{-1}}, \overline{\wh q\wh w^{-1}})$ is well-defined and gives rise to the map on stacks. The fibre of $\mathcal{M}$, viewed as an $L$-equivariant bundle on $L/B_L\times \wh L/\wh B_{\wh L}$, over an arbitrary $(\bar q, \overline{\wh q})$ is simply the fibre 
$$
\mathcal{L}_{(\overline{qw^{-1}},\overline{\wh q\wh w^{-1}})}= \{(qw^{-1},t),(\wh q\wh w^{-1}, \wh t)\mid t,\wh t\in \C\}.
$$
The action of $\delta(s)$ on this line is by 
\begin{align*}
\delta(s).\left( (qw^{-1},t),(\wh q\wh w^{-1}, \wh t)\right)&=\left((qw^{-1}(w.\delta)(s),t),(\wh q\wh w^{-1}(\wh w.\delta)(s),\wh t)\right)\\
&=\left( (qw^{-1},s^{\mu(w\dot\delta)}t),(\wh q\wh w^{-1}, s^{\wh \mu(\wh w\dot\delta)}\wh t)\right)\\
&=s^{\mu(w \dot\delta)+\wh \mu(\wh w \dot\delta)}\left( (qw^{-1},t),(\wh q\wh w^{-1}, \wh t)\right);
\end{align*}
therefore the exponent on $s$ is $0$ (i.e., (a) holds) if and only if the action of $\langle\delta\rangle$ is trivial on any/each fibre (it is constant). 
\end{proof}

Now if there exists a nonzero section $s\in H^0(\operatorname{Fl}_L, \mathcal{M})$, the equivalent conditions above must hold, since the action of $\delta(t)$ will at least be trivial everywhere that the section does not vanish. In this case, the equality 
\begin{align}\label{egalite}
 w^{-1}\mu(\dot \delta)+\wh w^{-1}\wh \mu(\dot \delta)=0
\end{align}
holds. 

With $D(v)$ as before, we note that the section $1$ of $\mathcal{O}(D(v))$ does not vanish when pulled back to $\op{Fl}_L$ since the image of $\op{Fl}_L\to \op{Fl}_G$ misses $D(v)$ (see Lemma \ref{missedme}). Therefore $\tilde i^*\mathcal{O}(D(v))$ satisfies the conditions of the lemma and (\ref{egalite}) holds for $(\mu, \wh\mu)$ such that $\mathcal{O}(D(v)) = \mathcal{L}_{\mu}\boxtimes \mathcal{L}_{\wh \mu}$. That is, $\vec\mu(D(v))$ lies on $\mathcal{F}$. 

\section{Formula for type I rays}\label{chemistry}
In this section we will identify the class of $\mathcal{O}(D(v))$ inside $\pic^G(G/B\times \wh G/\wh B)$. To do so, we first consider its image in $\pic(G/B\times \wh G/\wh B)$, which we identify with its class $[D(v)]$ in the Chow group $A^1(G/B\times \wh G/\wh B)$. 

Second, P. Belkale has suggested the following insightful equivariant technique to solve for the missing ``piece'' of $[D(v)]^G$. Note that $\pic^G(G/B\times \wh G/\wh B)\simeq A^1_G(G/B\times \wh G/\wh B)\simeq A^1_T(\wh G/\wh B)$ comes equipped with a map $\Delta^*$ to $A^1_G(G/B)\simeq A^1_T(pt)$ induced by the diagonal embedding. All told, the composition 
$$
\pic^G(G/B\times \wh G/\wh B) \simeq A^1_T(\wh G/\wh B) \xrightarrow{\phi_\delta^*} A^1_T(G/B)\xrightarrow{\Delta^*} A^1_T(pt) \simeq \mathfrak{h}^*
$$
sends the class of $\mathcal{L}_{\mu}\boxtimes \mathcal{L}_{\wh\mu}$ to the character $\mu+\wh \mu|_T$. 

Say $\mathcal{O}(D(v)) = \mathcal{L}_{\mu'+\chi}\boxtimes \mathcal{L}_{\wh\mu}\in \pic^G(G/B\times \wh G/\wh B)$, where $\mu'$ is in the span of the fundamental weights $\omega_i$ and $\chi$ is a character of $Z^0(G)$ (i.e., vanishes on all simple coroots). Then its image in $\pic(G/B\times \wh G/\wh B)$ is $\mathcal{L}_{\mu'}\boxtimes \mathcal{L}_{\wh \mu}$. Therefore, having first determined $\mu'$ and $\wh \mu$, we solve for $\chi$ by calculating $\Delta^*\phi_\delta^*([D(v)]^G) = \mu+\wh\mu|_T$ (which is manageable) and subtracting $\mu'+\wh \mu|_T$. 

\subsection{Intersection theory setup}
We first determine $[D(v)]$ inside of 
$$
A^1(G/B\times \wh G/\wh B) = \left[A^1(G/B)\otimes A^0(\wh G/\wh B)\right] \oplus \left[A^0(G/B)\otimes A^1(\wh G/\wh B)\right].
$$ 
Since $[D(v)] = \pi_*([\tilde D(v)])$, it suffices to find the components of $[\tilde D(v)]$ in 
$$
A^1(G/B)\otimes A^0(\wh G/\wh B)\otimes A^m(G/P)~~\text{  and  }~~A^0(G/B)\otimes A^1(\wh G/\wh B)\otimes A^m(G/P),
$$
where $m = \dim (G/P)$.

Now $\tilde D(v)$ is, scheme-theoretically, the transverse intersection of 
$$
S'_u:= \{\bar g, \overline{\wh g}, \bar z\in G/B\times \wh G/\wh B\times G/P \mid \bar z\in gX_u\}
$$
and 
$$
\wh S'_{\wh u}:= \{\bar g, \overline{\wh g}, \bar z\in G/B\times \wh G/\wh B\times G/P \mid \phi_\delta(\bar z)\in \wh g\wh X_{\wh u}\},
$$
which are the inverse images of 
$$
S_u:= \{\bar g, \bar z\in G/B\times G/P \mid \bar z\in gX_u\}
$$
and 
$$
\wh S_{\wh u}:= \{\overline{\wh g}, \bar z\in \wh G/\wh B\times G/P \mid \phi_\delta(\bar z)\in \wh g\wh X_{\wh u}\}
$$
under the standard projections. 

Note that $S_u$ has dimension $\dim (G/B)+\ell(u)$, since $S_u\simeq G\times_B X_u$ via $(\bar g, \bar z)\mapsto (g,\overline{g^{-1}z})$. Likewise $\wh T_{\wh u}$ has dimension $\wh G/\wh B + \ell(\wh u)$, where 
$$
\wh T_{\wh u}:= \{\overline{\wh g}, \bar z\in \wh G/\wh B\times \wh G/\wh P \mid \bar z\in \wh g\wh X_{\wh u}\}.
$$ 

Similarly, 
\begin{lemma}
$\dim (\wh S_{\wh u}) = \ell(\wh u) + \dim(G/P) - \dim(\wh G/\wh P)$.
\end{lemma}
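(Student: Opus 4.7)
The plan is to realize $\wh S_{\wh u}$ as a pullback under a closed immersion and then use $\wh G$-equivariance to compute dimensions, paralleling the $S_u\simeq G\times_B X_u$ description used for the analogous statement about $S_u$ just above.

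Since $P=\wh P\cap G$, the induced map $\phi_\delta:G/P\hookrightarrow \wh G/\wh P$ is a closed immersion. Unfolding the definition, $\wh S_{\wh u}$ is precisely the Cartesian square
$$
\wh S_{\wh u} \;=\; \wh T_{\wh u}\times_{\wh G/\wh P} G/P,
$$
where the map $\wh T_{\wh u}\to \wh G/\wh P$ is the second projection. To finish, I would analyze this second projection $\pi:\wh T_{\wh u}\to \wh G/\wh P$ via the standard identification $\wh T_{\wh u}\simeq \wh G\times_{\wh B}\wh X_{\wh u}$ (sending $(\wh g\wh B,\wh g\wh x)\mapsto[\wh g,\wh x]$). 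Under this isomorphism, $\pi$ is identified with $[\wh g,\wh x]\mapsto \wh g\wh x$, which is $\wh G$-equivariant and surjective. By $\wh G$-homogeneity of the target, $\pi$ is equidimensional, and the fiber-dimension theorem gives the fiber dimension as
$$
\dim\wh T_{\wh u}-\dim(\wh G/\wh P) \;=\; \ell(\wh u)+\dim(\wh G/\wh B)-\dim(\wh G/\wh P),
$$
using the value of $\dim\wh T_{\wh u}$ recorded just before the lemma.

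Pulling $\pi$ back along the closed immersion $\phi_\delta$ preserves both equidimensionality and the fiber dimension, so every component of $\wh S_{\wh u}=\pi^{-1}(\phi_\delta(G/P))$ has dimension $\dim(G/P)$ plus this fiber dimension. Irreducibility of $\wh S_{\wh u}$ — needed for this component dimension to equal $\dim \wh S_{\wh u}$ itself — I would establish by noting that the first projection $\wh S_{\wh u}\to \wh G/\wh B$ is $G$-equivariant with generic fiber a Schubert-type intersection $\phi_\delta^{-1}(\wh g\wh X_{\wh u})\subset G/P$ of the expected dimension, giving $\wh S_{\wh u}$ as a (generically) $\wh g\wh X_{\wh u}$-bundle over $\wh G/\wh B$ analogous to the description $S_u\simeq G\times_B X_u$.

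The main obstacle is verifying the equidimensionality of $\pi$ and the irreducibility of the pullback; once these are secured through $\wh G$-equivariance, the fiber-dimension theorem, and the closed-immersion property of $\phi_\delta$, the rest is routine arithmetic that yields the stated formula.
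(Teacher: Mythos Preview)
Your approach is essentially the paper's: both identify $\wh S_{\wh u}$ with the fiber product $\wh T_{\wh u}\times_{\wh G/\wh P} G/P$ (the paper writes this as the intersection $\wh T_{\wh u}\cap(\wh G/\wh B\times\phi_\delta(G/P))$ inside $\wh G/\wh B\times\wh G/\wh P$) and use the $\wh G$-equivariance of $\pi:\wh T_{\wh u}\to\wh G/\wh P$ to get the expected dimension. The paper simply asserts ``proper intersection'' and adds codimensions; your equidimensionality argument is exactly what justifies that assertion, so if anything you are filling in a step the paper leaves implicit.

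One unnecessary detour: your claim that irreducibility of $\wh S_{\wh u}$ is ``needed for this component dimension to equal $\dim\wh S_{\wh u}$'' is false---if every irreducible component has dimension $d$, then $\dim\wh S_{\wh u}=d$ by definition, regardless of how many components there are. More to the point, since $\pi$ is $\wh G$-equivariant over the homogeneous space $\wh G/\wh P$, it is a Zariski-locally trivial fiber bundle (pull back a local trivialization of the principal bundle $\wh G\to\wh G/\wh P$), and hence so is its base change along the closed immersion $\phi_\delta$; this gives $\dim\wh S_{\wh u}=\dim(G/P)+\dim(\pi^{-1}(e\wh P))$ directly. Your proposed route to irreducibility via the first projection $\wh S_{\wh u}\to\wh G/\wh B$ would not work as stated anyway, since the fibers $\phi_\delta^{-1}(\wh g\wh X_{\wh u})$ genuinely vary with $\wh g$ and do not form a bundle.
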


\begin{proof}
There is an inclusion $\iota: \wh S_{\wh u}\to \wh T_{\wh u}$ induced by $\phi_\delta$, and we have a proper
intersection
$$
\wh T_{\wh u} \cap \left(\wh G/\wh B\times \phi_\delta(G/P)\right) = \iota(\wh S_{\wh u}).
$$
From this we deduce 
\begin{align*}
\codim(\iota(\wh S_{\wh u}))  &= \codim(\wh T_{\wh u}) + \codim\left(\wh G/\wh B\times \phi_\delta(G/P)\right)\\
&=\dim (\wh G/\wh P)-\ell(\wh u) + \dim(\wh G/\wh P) - \dim (G/P),
\end{align*}
which implies 
$
\dim(\wh S_{\wh u})=\dim (\iota(\wh S_{\wh u})) = \ell(\wh u) + \dim(G/P) - \dim(\wh G/\wh P).
$
\end{proof}

Let $\wh m = \dim(\wh G/\wh P)$. 
Writing $[S_u] = \sum_{j}s_j$, for 
$s_j \in A^{j}(G/B)\otimes A^{m - \ell(u) - j}(G/P)$, and $[\wh S_{\wh u}] = \sum_{k} \wh s_k$, for $\wh s_k\in A^{k}(\wh G/\wh B)\otimes A^{\wh m-\ell(\wh u) - k}(G/P)$, we see that $[S'_u]\cdot [\wh S'_{\wh u}]=p_{13}^*[S_u]\cdot p_{23}^*[\wh S_{\wh u}]$ is supported in 
$$
\bigoplus_{j,k} A^j(G/B)\otimes A^k(\wh G/\wh B)\otimes A^{m+\wh m-\ell(u)-\ell(\wh u) - j - k} (G/P),
$$
and whereas $\ell(u)+\ell(\wh u) = \wh m-1$, we are only interested in the terms where $j+k=1$. 

Applying \cite[\S 4.2]{BKiers}, we have 
\begin{lemma}
$$
[S_u] = 1\otimes [X_u] + \sum_{\ell} \mathcal{L}_{\omega_\ell} \otimes \beta_\ell+\sum_{j\ge 2}s_j,
$$
where $\beta_\ell = [X_{s_{\alpha_\ell}u}]$ if $u\xrightarrow{\alpha_\ell}s_{\alpha_\ell}u\in W^P$ and $\beta_\ell = 0$ otherwise. 

Likewise, 
$$
[\wh T_{\wh u}] = 1\otimes [\wh X_{\wh u}] + \sum_{\ell} \mathcal{L}_{\wh \omega_\ell} \otimes \wh \beta_\ell+\sum_{k\ge 2}\wh t_k,
$$
where $\wh \beta_\ell = [X_{s_{\wh \alpha_\ell}\wh u}]$ if $\wh u\xrightarrow{\wh \alpha_\ell}s_{\wh \alpha_\ell}\wh u\in W^{\wh P}$ and $\wh \beta_\ell = 0$ otherwise, and where $\wh t_k\in A^k(\wh G/\wh B)\otimes A^{\wh m -\ell(\wh u)-k}(\wh G/\wh P)$ pulls back to $\wh s_k$. 
\end{lemma}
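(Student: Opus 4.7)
The plan is to follow \cite{BKiers}*{\S 4.2} essentially verbatim, adapting its proof (which treats $S_u\subset G/B\times G/B$) to the partial-flag setting $S_u\subset G/B\times G/P$; the computation for $[\wh T_{\wh u}]$ is the same argument applied to $\wh G$. First I would observe that $S_u$ is isomorphic to $G\times^B X_u$ via the $G$-equivariant map $G\times^B(G/P)\to G/B\times G/P$, $[g,zP]\mapsto (gB,gzP)$, realizing $S_u$ as a locally trivial $X_u$-bundle over $G/B$ of dimension $\dim G/B+\ell(u)$. The analogous identification $\wh T_{\wh u}\simeq \wh G\times^{\wh B}\wh X_{\wh u}$ holds in the hatted setting.

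To extract the stated terms of the K\"unneth decomposition of $[S_u]$ in $A^*(G/B)\otimes A^*(G/P)$, I would pair $[S_u]$ against dual Schubert classes. The coefficient of $1\otimes[X_u]$ equals $\int_{G/B\times G/P}[S_u]\cdot([\mathrm{pt}]\otimes[X_u^\vee])$: the first factor slices $S_u$ to a single fiber of $S_u\to G/B$, which is a translate of $X_u$ in $G/P$, and pairing $[X_u]$ with its dual Schubert class $[X_u^\vee]$ yields $1$. For the codimension-$1$ components on $G/B$, the class $\mathcal{L}_{\omega_\ell}\in A^1(G/B)$ is Poincar\'e-dual to the Schubert class $[X_{s_\ell}^{G/B}]$, so the coefficient of $\mathcal{L}_{\omega_\ell}\otimes[X_w]$ for $w\in W^P$ with $\ell(w)=\ell(u)+1$ equals $\int[S_u]\cdot([X_{s_\ell}^{G/B}]\otimes[X_w^\vee])$.

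This last intersection number is computed by restricting $S_u\to G/B$ to $X_{s_\ell}^{G/B}$ and then pushing the restricted bundle forward under the second projection to $G/P$. The resulting image is the closure $\overline{Bs_{\alpha_\ell}B\cdot uP}/P\subset G/P$. Standard Bruhat-theoretic facts now determine this image: when $u\xrightarrow{\alpha_\ell}s_{\alpha_\ell}u\in W^P$, the image is exactly $X_{s_{\alpha_\ell}u}$ with the map from the $(\ell(u)+1)$-dimensional total space generically one-to-one, contributing $[X_{s_{\alpha_\ell}u}]$ to $\beta_\ell$ with multiplicity $1$; in every other case the image has dimension $\le\ell(u)$ and the pushforward vanishes, so $\beta_\ell=0$. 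The only real obstacle is the Bruhat-order bookkeeping for when $s_{\alpha_\ell}u$ remains in $W^P$ with length $\ell(u)+1$, which is carried out explicitly in \cite{BKiers}*{\S 4.2} and transfers unchanged to our setting.
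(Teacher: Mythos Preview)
Your proposal is correct and is exactly the paper's approach: the paper simply cites \cite{BKiers}*{\S 4.2} for this lemma, and what you have written is a faithful unpacking of that argument, adapted from $G/B\times G/B$ to $G/B\times G/P$ (and analogously for $\wh G$). One small clarification: when $\ell(s_{\alpha_\ell}u)=\ell(u)+1$ but $s_{\alpha_\ell}u\notin W^P$, the image of the restricted bundle in $G/P$ is still $X_u$ (since $s_{\alpha_\ell}uP=uP$ in that case), hence of dimension $\ell(u)$, and the pushforward of the $(\ell(u)+1)$-dimensional cycle vanishes---this is the case distinction you allude to at the end, and it is indeed handled in \cite{BKiers}*{\S 4.2}.
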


\subsection{Proof of Theorem \ref{formulaONE}(a)}

Finally we may calculate
\begin{align*}
[\tilde D(v)] &= [S']\cdot [\wh S'] 
= p_{13}^*[S_u]\cdot p_{23}^*[\wh S_{\wh u}]
= p_{13}^*[S_u]\cdot p_{23}^*\phi_\delta^*[\wh T_{\wh u}]\\
&= \sum_{\wh u\xrightarrow{\wh \alpha_\ell}s_{\wh \alpha_\ell}\wh u\in W^{\wh P}} 1\otimes \mathcal{L}_{\wh \omega_k} \otimes \left([X_u] \cdot \phi_\delta^*([\wh X_{s_{\wh \alpha_\ell}\wh u}])\right)\\
&+ 
\sum_{u\xrightarrow{\alpha_\ell}s_{\alpha_\ell}u\in W^P} \mathcal{L}_{\omega_\ell}\otimes 1\otimes \left([X_{s_{\alpha_\ell} u}]\cdot \phi_\delta^*([\wh X_{\wh u}])\right)\\
&+ \sum_{j+k>1} p_{13}^*(s_j)\cdot p_{23}^*(\wh s_k).
\end{align*}

The result follows from taking $\pi_*$ of both sides, since each term of the third sum belongs to some $A^j(G/B)\otimes A^k(\wh G/\wh B)\otimes A^{n}(G/P)$ with $n=m+1-j-k<m$ and thus is sent to $0$.

\subsection{Proof of Theorem \ref{formulaONE}(b)}

Consider the following commutative diagram. 

\begin{center}
\begin{tikzcd}
G/B\times G/P \arrow[d,"p_1"] \arrow[rr,"\Delta\times\op{id}"] & & G/B\times \wh G/\wh B\times G/P \arrow[d,"\pi"] \arrow[rr,"\phi_\delta\circ p_{23}"] \arrow[drr,"p_{13}"]& & \wh G/\wh B\times \wh G/\wh P \\
G/B \arrow[rr,"\Delta"]& & G/B\times \wh G/\wh B & & G/B\times G/P
\end{tikzcd}
\end{center}

Since $S_u$ and $\wh T_{\wh u}$ are $G$- and $\wh G$-stable, we have an equivariant version of the previous convolution calculation: 
 $$
 [D(v)]^G = \pi_*(p_{13}^*[S_u]^G \cdot p_{23}^* \phi_\delta^* [\wh T_{\wh u}]^{\wh G}),
$$
where the pullback induced by $\phi_\delta^*$ now includes the restriction of the group from $\wh G$ to $G$. 
In the cartesian square, we have $\Delta^*\pi_* = p_{1,*}(\Delta\times \op{id})^*$. Furthermore, $p_{13}\circ(\Delta\times \op{id}) = \op{id}$ and $\phi_\delta\circ p_{23}\circ (\Delta\times \op{id}) = \phi\times\phi_\delta$, where $\phi$ is the embedding $G/B\to \wh G/\wh B$. 
So we have 
$$
\Delta^* [D(v)]^G = p_{1,*}\left([S_u]^G\cdot (\phi\times \phi_\delta)^*[\wh T_{\wh u}]^{\wh G}\right).
$$
Typically, $p_{1,*}$ is denoted by $\int_{G/B}$. Under the identification $A^1_G(G/B\times G/P)\simeq A^1_T(G/P)$, the class $[S_u]^G = [\overline{G(e,u)}]^G$ corresponds to $[X_u]^T$ (see \cite[\S 6.6]{Brion}, wherein the argument for $G/B$ applies also for $G/P$). Likewise, the class $[\wh T_{\wh u}]^{\wh G} = [\wh X_{\wh u}]^{\wh T}$, and the $G$-equivariant pullback $(\phi\times \phi_\delta)^*$ becomes the $T$-equivariant pullback $\phi_\delta^*$, understood as first restricting $A^*_{\wh T}(\wh G/\wh P)\to A^*_T(\wh G/\wh P)$. Therefore we have 
$$
\mu+\wh \mu_T = \Delta^*[D(v)]^G = \int_{G/B} [X_u]^T\cdot \phi_\delta^*[\wh X_{\wh u}]^{\wh T}
$$
as desired. 

\section{Decomposition of $\mathcal{F}$ into subcones}\label{redox}

Having found all possible type I rays $\vec \mu(D(v))$ on $\mathcal{F}$, there may (and generally will) be more extremal rays of $\mathcal{F}$; these will span some proper subcone, which is easily identified after the following lemmas. 

\begin{lemma}\label{ones}
Let $(\mu, \wh \mu) = \vec\mu(D(v))$ be a type I ray corresponding to the data $v\xrightarrow{\alpha_\ell}w$ (resp., $v\xrightarrow{\wh \alpha_\ell}\wh w$). Then $\mu(\alpha_\ell^\vee)=1$ (resp., $\wh \mu(\wh\alpha_{\ell}^\vee)=1$). 
\end{lemma}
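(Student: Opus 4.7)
The plan is to apply Theorem \ref{formulaONE} and extract the value from the hypothesis $\phi_\delta^\odot([\wh X_{\wh w}]) \odot_0 [X_w] = [X_e]$. Since $\omega_k(\alpha_\ell^\vee) = \delta_{k\ell}$, the equality $\mu(\alpha_\ell^\vee) = 1$ (resp.\ $\wh\mu(\wh\alpha_\ell^\vee) = 1$) is just the statement $c_\ell = 1$ (resp.\ $\wh c_\ell = 1$) in the notation of Theorem \ref{formulaONE}.

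In the first case $v \xrightarrow{\alpha_\ell} w$, Theorem \ref{formulaONE} applies with $u=v$, $\wh u=\wh w$ (the length and coset conditions on $s_{\alpha_\ell}v = w$ are automatic from the hypothesis on $v\xrightarrow{\alpha_\ell}w$) and identifies $c_\ell$ as the coefficient of $[X_e]$ in the classical product $\phi_\delta^*([\wh X_{\wh w}]) \cdot [X_w]$. In the second case $v \xrightarrow{\wh\alpha_\ell}\wh w$, symmetrically $u = w$, $\wh u = v$, and $\wh c_\ell$ is the coefficient of $[X_e]$ in $\phi_\delta^*([\wh X_{s_{\wh\alpha_\ell}v}]) \cdot [X_w] = \phi_\delta^*([\wh X_{\wh w}]) \cdot [X_w]$. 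So both cases reduce to the single claim
$$
\phi_\delta^*([\wh X_{\wh w}]) \cdot [X_w] = [X_e] \quad \text{in } H^*(G/P).
$$

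To establish this, I would expand $\phi_\delta^\odot([\wh X_{\wh w}]) = \sum_{w'} c_{\wh w}^{w'} [X_{w'}]$, noting that each $c_{\wh w}^{w'}$ is either equal to the classical $d_{\wh w}^{w'}$ or zero, by definition of $\phi_\delta^\odot$. The codimension count $\ell(w) + \ell(\wh w) = \dim(\wh G/\wh P)$ holds, so only terms $w'$ with $\ell(w') + \ell(w) = \dim(G/P)$ can contribute to the $[X_e]$-component of the product, and among these only the Poincar\'e dual $w' = \bar w$ contributes nonzero (in either $\cdot$ or $\odot_0$). The hypothesis $\phi_\delta^\odot([\wh X_{\wh w}]) \odot_0 [X_w] = [X_e]$ therefore pins down $c_{\wh w}^{\bar w} = 1$, hence $d_{\wh w}^{\bar w} = 1$; and the classical product then yields $\phi_\delta^*([\wh X_{\wh w}]) \cdot [X_w] = d_{\wh w}^{\bar w}[X_e] = [X_e]$, as needed.

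The only real obstacle is correctly tracking the interplay of the deformed pullback $\phi_\delta^\odot$, the Belkale--Kumar product $\odot_0$, and classical Poincar\'e duality in $H^*(G/P)$---in particular, the observation that nonzero deformed coefficients always coincide with their classical counterparts. Granting this, the proof reduces to a single line of bookkeeping.
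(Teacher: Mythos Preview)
Your proof is correct and follows the same route as the paper's one-line argument: apply Theorem~\ref{formulaONE} with $(u,\wh u)$ as given, reducing to the implication
\[
\phi_\delta^\odot([\wh X_{\wh w}])\odot_0[X_w]=[X_e]\ \Longrightarrow\ \phi_\delta^*([\wh X_{\wh w}])\cdot[X_w]=[X_e],
\]
which holds because the deformed coefficients $c_{\wh w}^{w'}$ are either the classical $d_{\wh w}^{w'}$ or zero, and only the Poincar\'e-dual term $w'=\bar w$ contributes to $[X_e]$ in either product. Your expansion via the Schubert basis and Poincar\'e duality is exactly the content the paper compresses into the word ``obvious.''
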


\begin{proof}
Obvious from $[X_u]\odot_0 \phi_\delta^{\odot}([\wh X_{\wh u}]) = [X_e] \implies [X_u]\cdot \phi_\delta^*([\wh X_{\wh u}]) = [X_e]$.
\end{proof}

\begin{lemma}\label{zeros}
Let $(\mu, \wh \mu) = \vec\mu(D(v))$ be a type I ray corresponding to the data $v\xrightarrow{\alpha_\ell}w$ or $v\xrightarrow{\wh \alpha_\ell}\wh w$. If $v'\xrightarrow{\alpha_{\ell'}}w$ (resp., $v'\xrightarrow{\wh \alpha_{\ell'}}\wh w$) is a distinct datum defining another type I ray, then we have 
$\mu(\alpha_{\ell'}^\vee)=0$ (resp., $\wh \mu(\wh \alpha_{\ell'}^\vee) = 0$).
\end{lemma}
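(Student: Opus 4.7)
The plan is to invoke Theorem~\ref{formulaONE} and show that the length hypothesis in the formula for $c_{\ell'}$ (or $\wh c_{\ell'}$) fails. Consider the first case first; the second is symmetric. By Theorem~\ref{formulaONE}, with $u = v$ and $\wh u = \wh w$, the value $\mu(\alpha_{\ell'}^\vee) = c_{\ell'}$ is defined to be an intersection number only if $s_{\alpha_{\ell'}} v \in W^P$ and $\ell(s_{\alpha_{\ell'}} v) = \ell(v)+1$, and is $0$ otherwise. I plan to show the length condition fails.

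By the standard Weyl length criterion ($\ell(s_\alpha u) = \ell(u)+1$ iff $u^{-1}\alpha \succ 0$), this reduces to showing $v^{-1}\alpha_{\ell'} \prec 0$. Since $v = s_{\alpha_\ell} w$, I would compute
$$
v^{-1}\alpha_{\ell'} = w^{-1} s_{\alpha_\ell}\alpha_{\ell'} = w^{-1}\alpha_{\ell'} - \langle \alpha_{\ell'}, \alpha_\ell^\vee\rangle\, w^{-1}\alpha_\ell.
$$
The hypotheses that $v = s_{\alpha_\ell} w$ and $v' = s_{\alpha_{\ell'}} w$ both have length $\ell(w)-1$ give, again by the length criterion, $w^{-1}\alpha_\ell, w^{-1}\alpha_{\ell'} \prec 0$. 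Because ``distinct datum'' forces $\alpha_\ell \neq \alpha_{\ell'}$ as simple roots, the Cartan entry $\langle \alpha_{\ell'}, \alpha_\ell^\vee\rangle$ is non-positive, so the right-hand side is a sum of non-negative multiples of negative roots. Since it is a nonzero root (the Weyl group permutes roots), it must itself be a negative root.

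This forces $\ell(s_{\alpha_{\ell'}} v) = \ell(v) - 1$, so the length condition in Theorem~\ref{formulaONE} fails and $c_{\ell'} = 0$ by definition; that is, $\mu(\alpha_{\ell'}^\vee) = 0$. The ``resp.'' case is the same root-system argument applied inside $\wh W$ (with $\wh u = v$, $u = w$, and $\wh\alpha_\ell, \wh\alpha_{\ell'}$ in place of $\alpha_\ell, \alpha_{\ell'}$); no modification is needed beyond relabeling. I anticipate no substantial obstacle: once one has identified the correct hypothesis in Theorem~\ref{formulaONE} to falsify, the verification is a short root-system calculation using only the length criterion and non-positivity of off-diagonal Cartan entries.
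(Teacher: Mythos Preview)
Your argument for the ``same-side'' cases (where $D(v)$ and the other datum both live on the $G$ side, or both on the $\wh G$ side) is correct and has the same spirit as the paper's: both show that the length hypothesis $\ell(s_{\alpha_{\ell'}}u)=\ell(u)+1$ in Theorem~\ref{formulaONE} fails. The paper invokes \cite{BGG}*{Lemma 2.4} (the diamond lemma in Bruhat order) to deduce $s_{\alpha_{\ell'}}v\xrightarrow{\alpha_{\ell'}}v$, while you reach the equivalent conclusion $v^{-1}\alpha_{\ell'}\prec 0$ by a direct root calculation using non-positivity of off-diagonal Cartan entries. Both routes are valid and of comparable length.

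However, the lemma has four cases, not two: the datum defining $D(v)$ can be on the $G$ side or the $\wh G$ side, and \emph{independently} the other datum can be on either side. Your proof addresses only the two diagonal cases (both on the $G$ side, or both on the $\wh G$ side). The two cross cases are easier but still need a sentence. For instance, if $D(v)$ comes from $v\xrightarrow{\alpha_\ell}w$ (so $u=v$, $\wh u=\wh w$) while the other datum is $v'\xrightarrow{\wh\alpha_{\ell'}}\wh w$, then the formula for $\wh c_{\ell'}$ asks whether $\ell(s_{\wh\alpha_{\ell'}}\wh u)=\ell(\wh u)+1$; but $\wh u=\wh w$, and the datum $v'\xrightarrow{\wh\alpha_{\ell'}}\wh w$ says precisely that $\ell(s_{\wh\alpha_{\ell'}}\wh w)=\ell(\wh w)-1$, so $\wh c_{\ell'}=0$ immediately. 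The paper does include this cross case explicitly; you should add it (and its mirror) for completeness.
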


\begin{proof}
Suppose $D(v)$ comes from the data $v\xrightarrow{\alpha_\ell}w$; the other case will follow similarly. 

If $v'\xrightarrow{\wh \alpha_{\ell'}}\wh w$, then it is not the case that $\wh u \xrightarrow{\wh \alpha_{\ell'}} s_{\wh \alpha_{\ell'}}\wh u$ since $\wh u = \wh w$; therefore $\wh c_{\ell'}=0$. Otherwise, $v'\xrightarrow{\alpha_{\ell'}}w$ for $\ell'\ne \ell$. Then, as in \cite[Lemma 2.4]{BGG}, 

\begin{center}
\begin{tikzcd}
v' \arrow[dr, "\alpha_{\ell'}"] & & & & v' \\
 & w = s_{\alpha_{\ell'}}v' & \implies & s_{\alpha_{\ell'}}v \arrow[ur, "s_{\alpha_{\ell'}}\alpha_\ell"] \arrow[dr, "\alpha_{\ell'}"'] & \\
 v \arrow[ur, "\alpha_\ell"'] & & & & v
\end{tikzcd}
\end{center}
and in particular $s_{\alpha_{\ell'}}v \xrightarrow{\alpha_{\ell'}}v$. This prevents $u\xrightarrow{\alpha_{\ell'}} s_{\alpha_{\ell'}}u$ since $u=v$, and $c_{\ell'}=0$. 
\end{proof}

Set $\mathcal{F}_2\subseteq\mathcal{F}$ to be the set 
\begin{align}\label{fide}
\mathcal{F}_2 = \{(\mu,\wh \mu)\in \mathcal{F} \mid \mu(\alpha_{\ell}^\vee)=0 ~\forall v\xrightarrow{\alpha_\ell} w \text{ and } \wh\mu(\wh\alpha_{\ell}^\vee)=0 ~\forall v\xrightarrow{\wh\alpha_{\ell}}\wh w\}.
\end{align}

Likewise define $\mathcal{F}_{2,\Q}\subseteq \mathcal{F}_\Q$.  Evidently $\mathcal{F}_2$ is a subsemigroup of $\mathcal{F}$ and contains none of the rays $\vec\mu(D(v))$, by Lemma \ref{ones}. Furthermore, the rays $\vec\mu(D(v))$ are linearly independent by Lemma \ref{zeros}; each has some coordinate equal to $1$ where all others equal $0$. We therefore have a natural injection of semigroups
$$
\prod \Z_{\ge0} \vec\mu(D(v))\times \mathcal{F}_2 \hookrightarrow \mathcal{F}.
$$

We now prove Theorem \ref{1+2}: 
\begin{proposition}
The preceding map is also a surjection. 
\end{proposition}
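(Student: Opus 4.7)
[Proof plan for the surjectivity of $\prod \Z_{\ge 0}\vec\mu(D(v))\times \mathcal{F}_2\to \mathcal{F}$]
Given $(\mu,\wh\mu)\in \mathcal{F}$, the plan is to subtract off the ``type I part'' explicitly. For each simple arrow giving a type I ray, set
$$
n_v:=\begin{cases} \mu(\alpha_\ell^\vee) & \text{if } v\xrightarrow{\alpha_\ell}w,\\ \wh\mu(\wh\alpha_\ell^\vee) & \text{if } v\xrightarrow{\wh\alpha_\ell}\wh w,\end{cases}
$$
which are non-negative integers by dominance of $(\mu,\wh\mu)$, and define $(\mu',\wh\mu'):=(\mu,\wh\mu)-\sum_v n_v\vec\mu(D(v))$. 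I claim this is the element of $\mathcal{F}_2$ we need. The defining equation of the face is preserved because each summand $\vec\mu(D(v))$ lies on $\mathcal{F}$ by Theorem~\ref{divisors}(d). The vanishing conditions (\ref{fide}) hold by direct computation: for any arrow $v_0\xrightarrow{\alpha_{\ell_0}}w$, writing $\mu_v$ for the first component of $\vec\mu(D(v))$, Lemma~\ref{ones} gives $\mu_{v_0}(\alpha_{\ell_0}^\vee)=1$ while Lemma~\ref{zeros} gives $\mu_v(\alpha_{\ell_0}^\vee)=0$ for $v\ne v_0$, so $\mu'(\alpha_{\ell_0}^\vee)=\mu(\alpha_{\ell_0}^\vee)-n_{v_0}=0$; the $\wh w$-arrows are symmetric.

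What remains, and where the real work lies, is to show $(\mu',\wh\mu')\in \mathcal{F}$, i.e. that $\mu'$ and $\wh\mu'$ are dominant and that $(\mu',\wh\mu')$ actually lies in the cone $\mathcal{C}(G\to\wh G)$. I would handle this via Borel--Weil: after replacing $(\mu,\wh\mu)$ by $(N\mu,N\wh\mu)$ for some $N\gg 0$, choose a nonzero $G$-invariant section $s\in H^0(G/B\times \wh G/\wh B,\mathcal{L}_{N\mu}\boxtimes \mathcal{L}_{N\wh\mu})^G$; by Theorem~\ref{divisors}(b) each $D(v)$ carries a $G$-invariant defining section $\sigma_v$, unique up to scalar. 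The crux of the proof is:
\begin{quote}
\emph{Key vanishing claim:} For every $v$, the section $s$ vanishes along $D(v)$ to order at least $Nn_v$.
\end{quote}
Granted the claim, $s/\prod_v \sigma_v^{Nn_v}$ is a genuine (not merely rational) nonzero $G$-invariant section of $\mathcal{L}_{N\mu'}\boxtimes\mathcal{L}_{N\wh\mu'}$; its existence simultaneously forces $(\mu',\wh\mu')$ to be a dominant pair (for the line bundle to be defined on $G/B\times \wh G/\wh B$ in the usual sense) and to lie in $\mathcal{C}(G\to\wh G)$, finishing the surjectivity.

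The main obstacle is the key vanishing claim, which I would approach using the stack apparatus of Section~\ref{nextup}. By $G$-invariance, it suffices to compute the vanishing order at a generic point of each divisor $D(v)$, and since distinct $D(v)$'s live over linearly independent coroot directions (again by Lemmas~\ref{ones}--\ref{zeros}), the orders may be verified one at a time. Pull $s$ back to $\mathcal{C}$ along $\pi$, descend to $\wh{\mathcal{C}}$, and then restrict to $\op{Fl}_L$ via the section $\tilde i$; the Levification result (Proposition~\ref{opendoor}) together with the triviality of $\gamma_{\tilde i^*\mathcal{L}_\mu\boxtimes \mathcal{L}_{\wh\mu}}$ (this is precisely the $\mathcal{F}$-equation, via Lemma~\ref{florence}) turns the problem into an equivariant calculation on the Levi flag stack. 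There the vanishing order along the divisor corresponding to $\tilde D(v)$ is read off as the weight $\mu(\alpha_\ell^\vee)$ (resp.\ $\wh\mu(\wh\alpha_\ell^\vee)$) of a simple $L$- or $\wh L$-root subgroup, matching the prescribed $n_v$. Summing the orders over $v$ yields the claim, and the proposition follows.
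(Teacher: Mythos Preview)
Your overall strategy is right and matches the paper's: peel off the type I rays by showing that a nonzero $G$-invariant section must vanish along each $D(v)$ sufficiently often. The setup with $n_v$ and the check that $(\mu',\wh\mu')$ satisfies the $\mathcal{F}_2$ vanishing conditions via Lemmas~\ref{ones} and~\ref{zeros} is fine.

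The genuine gap is in your proposed proof of the key vanishing claim. Your Levification route cannot work as described, for a simple geometric reason: the divisor $\tilde D(v)$ is \emph{disjoint} from $\mathcal{C}$. Indeed, on $\mathcal{C}$ one has $z\in \phi_\delta(gC_w)\cap \wh g\wh C_{\wh w}$, whereas on $\tilde D(v)$ (say for $v\xrightarrow{\alpha_\ell}w$) one has $z\in\phi_\delta(gX_v)$, and $C_w\cap X_v=\emptyset$. Consequently the image of $\op{Fl}_L$ in $\op{Fl}_G$ misses $D(v)$ entirely (this is exactly Lemma~\ref{missedme}), so restricting $s$ there cannot detect, let alone compute, the vanishing order along $D(v)$. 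The phrase ``the divisor corresponding to $\tilde D(v)$'' on $\op{Fl}_L$ has no referent.

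What the paper does instead is a direct Hilbert--Mumford/GIT argument, carried out iteratively rather than all at once. If $\nu(\alpha_\ell^\vee)>0$, pick a generic $(g,\wh g)\in D(v)$ with $\phi_\delta(gC_v)\cap \wh g\wh C_{\wh w}\ne\emptyset$. If $s$ did not vanish there, semistability (as in \cite{BKiers}*{\S 5.2}) would force
\[
v^{-1}\nu(\dot\delta)+\wh w^{-1}\wh\nu(\dot\delta)\le 0.
\]
But subtracting the face equality $w^{-1}\nu(\dot\delta)+\wh w^{-1}\wh\nu(\dot\delta)=0$ and using $w=s_{\alpha_\ell}v$ gives
\[
v^{-1}\nu(\dot\delta)-w^{-1}\nu(\dot\delta)=\nu(\alpha_\ell^\vee)\,(v^{-1}\alpha_\ell)(\dot\delta)>0,
\]
since $v^{-1}\alpha_\ell\succ 0$ and is not a root of $L$ (else $w(v^{-1}\alpha_\ell)=-\alpha_\ell$ would contradict $w\in W^P$). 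So $s$ vanishes on $D(v)$, yielding a nonzero invariant section of $\mathcal{L}(-D(v))$; the new weight pair is again dominant and on $\mathcal{F}$, with $\nu(\alpha_\ell^\vee)$ dropped by $1$. Repeating until all relevant coroot pairings are zero lands in $\mathcal{F}_2$, and a short divisibility check handles the initial scaling by $N$. This is the missing ingredient you need to replace your Levification paragraph.
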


\begin{proof}
Let $(\nu, \wh \nu)\in \mathcal{F}\setminus \mathcal{F}_2$, and, possibly scaling by $N$ assume $H^0(G/B\times \wh G/\wh B,\mathcal{L})^G\ne (0)$, where $\mathcal{L} = L_{\nu}\boxtimes L_{\wh \nu}$. Being outside of $\mathcal{F}_2$, it holds that $\nu(\alpha_\ell^\vee)> 0$ (or $\wh \nu(\wh\alpha_\ell^\vee)> 0$, the proof will be analogous) for some $\ell$ giving a type I datum. 

Choose a nonzero $G$-invariant section $s\in H^0(G/B\times \wh G/\wh B,\mathcal{L})$. For any point $(\bar g, \overline{\wh g})\in D(v)$, where $v\xrightarrow{\alpha_\ell} w$, we have
$$
\phi_\delta(gX_v)\cap \wh g\wh X_{\wh w} \ne \emptyset.
$$
In an open subset of $D(v)$, then, we actually have 
$$
\phi_\delta(gC_v)\cap \wh g\wh C_{\wh w} \ne \emptyset,
$$
and we now choose $g,\wh g$ to be such. Assume for contradiction that $s$ does not vanish at $(g,\wh g)$. Then by some standard invariant theory, we must have 
\begin{align}\label{funk}
v^{-1}\nu(\dot\delta)+\wh w^{-1}\wh \nu(\dot\delta)\le 0
\end{align}
(see \cite[\S 5.2]{BKiers}). I claim this cannot be. 

Indeed, $w^{-1}\nu(\delta) + \wh w^{-1}\wh \nu(\delta)=0$ by definition of $\mathcal{F}$. Furthermore, 
\begin{align*}
v^{-1}\nu(\dot\delta) - w^{-1}\nu(\dot\delta)&=v^{-1}(\nu-s_{\alpha_\ell}\nu)(\dot\delta)\\
&=v^{-1}(\nu(\alpha_\ell^\vee)\alpha_\ell)(\dot\delta)\\
&=\nu(\alpha_\ell^\vee)\cdot v^{-1}(\alpha_\ell)(\dot\delta),
\end{align*}
and we know $\nu(\alpha_\ell^\vee)$ to be a positive integer by assumption. Since $\ell(v)<\ell(s_{\alpha_\ell}v)$, $v^{-1}\alpha_\ell$ is a positive root (\cite[Corollary 2.3]{BGG}). This gives $(v^{-1}\alpha_\ell)(\dot\delta)\ge 0$; equality would hold only if $v^{-1}\alpha_\ell$ were in the root system for $L$. However, $wv^{-1}\alpha_\ell = s_{\alpha_\ell}\alpha_\ell  = -\alpha_\ell\prec 0$ and $w\in W^P$ means $v^{-1}\alpha_\ell$ can't be in the root system for $L$ (see \cite[\S 2.5]{BL}). 

Therefore $0<v^{-1}\nu(\dot\delta)-w^{-1}\nu(\dot\delta) = v^{-1}\nu(\dot\delta)+\wh w^{-1}\wh \nu(\dot\delta)$, which violates inequality (\ref{funk}). We conclude that $s$ vanishes on an open subset of $D(v)$, in which case $s$ vanishes totally on $D(v)$. This implies that $s$ induces a nonzero invariant section of $\mathcal{L}(-D(v))$. 

If $(\nu', \wh \nu')$ represents $\mathcal{L}(-D(v))$, then $(\nu',\wh \nu')\in \mathcal{F}$ and has $\nu'(\alpha_\ell) = \nu(\alpha_\ell)-1$. Furthermore, $(\nu', \wh \nu')$ agrees with $(\nu, \wh \nu)$ on all other relevant $\alpha_{\ell'}^\vee$ or $\wh \alpha_{\ell'}^\vee$. This style of reduction may be continued, then, to reach an element $\mathcal{L}'\in \mathcal{F}_2$ in finitely many steps, whose difference from $\mathcal{L}$ is in the span of the type I rays. 

Now, if we did indeed need to scale $(\nu, \wh \nu)$ by $N$ at the beginning, each of the subtracted $\vec{\mu}(D(v))$ must have been subtracted a multiple of $N$ times. That is, the resulting element of $\mathcal{F}_2$ has coefficients each divisible by $N$; thus we can scale back down to an element of $\mathcal{F}_2$ as desired. 
\end{proof}

\section{More stacks and the geometry of $\mathcal{F}_2$}\label{zoom}

In this section, we identify the cone $\mathcal{F}_2$ as a rational semigroup of line bundles on a new stack. This will allow us to relate to the cone $\mathcal{C}(L/\langle \delta\rangle\subset \wh L/\langle \delta\rangle)$. We begin by introducing a new pair of stacks. 

\subsection{The stack $\op{Fl}_G'$}

Recall that $\mathcal{F}_2$ is contained in the subspace of $\mathfrak{h}^*\times \wh{\mathfrak{h}}^*$ cut out by the vanishing conditions 
$$
v\xrightarrow{\beta} w \implies \mu(\beta^\vee)=0
$$
and 
$$
v\xrightarrow{\wh \beta} \wh w\implies \wh \mu(\wh \beta^\vee)=0,
$$
where $\beta$ and $\wh \beta$ are simple roots in their respective root systems. Therefore if $(\mu, \wh \mu)\in \mathcal{F}_2$, the line bundle $\mathcal{L}_\mu$ on $G/B$ descends naturally to $G/Q_w'$, where $Q_w'$ is the standard parabolic given by 
$$
\Delta(Q_w') = \Delta\cap w\Phi^-.
$$
Similarly, the line bundle $\mathcal{L}_{\wh \mu}$ on $\wh G/\wh B$ descends to $\wh G/\wh Q_{\wh w}'$, where $\Delta(\wh Q_{\wh w}') = \wh \Delta\cap \wh w\wh \Phi^-$. Conversely, any $G$-linearized line bundle on $G/Q_w' \times \wh G/\wh Q_{\wh w}'$ gives $(\mu, \wh \mu)$ satisfying these vanishing conditions. 

Now, $Q_w'\subseteq Q_w$ and $\wh Q_{\wh w}'\subseteq \wh Q_{\wh w}$; this follows from examining $\Delta(Q_w)$, $\Delta(\wh Q_{\wh w})$ as in \cite[Lemma 7.1]{BKR}.

They also satisfy 
$$
B_L\subseteq w^{-1}Q_w' w~~\text{  and  }~~\wh B_{\wh L}\subseteq \wh w^{-1}\wh Q_{\wh w}'\wh w;
$$
this is clear since $B$ and $\wh B$ satisfy this. Therefore the map 
$$
L/B_L\times \wh L/\wh B_{\wh L} \to G/Q_w'\times \wh G/\wh Q_{\wh w}'
$$
given by $(\bar q,\overline{\wh q})\mapsto (\overline{qw^{-1}}, \overline{\wh q\wh w^{-1}})$ is well-defined and factors through the projection $G/B\times \wh G/\wh B\to G/Q_w'\times \wh G/\wh Q_{\wh w}'$. On the level of stacks, one easily checks that this induces a map (factoring through $\op{Fl}_G$)
$$
\tilde i':\op{Fl}_L\to \op{Fl}_G',
$$
where $\op{Fl}_G'$ is the quotient stack $\left(G/Q_w'\times \wh G/\wh Q_{\wh w}'\right)/G$.

Let us record a few definitions.

\begin{defi}
For any stack $X$, we set 
\begin{align*}
\pic^+(X) &= \textnormal{the semigroup of line bundles with non-zero global sections;}\\
\pic^+_\Q(X) &= \{\mathcal{L}\in \pic(X)\otimes \Q \mid \mathcal{L}^{\otimes N}\in \pic^+(X) \textnormal{ for some $N>0$}\}.
\end{align*}
Furthermore, we set 
\begin{align*}
\pic^{\deg=0}(\op{Fl}_G')
\end{align*}
to be the subgroup consisting of line bundles whose pullback to $\op{Fl}_L$ have trivial $\delta$-action on fibres.
\end{defi}

In light of these definitions, we have the following identification. 
\begin{lemma}
As rational cones, 
$$
\pic^{+,\deg=0}_\Q(\op{Fl}_G')\simeq \mathcal{F}_{2,\Q}.
$$
\end{lemma}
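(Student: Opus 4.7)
The plan is to identify $\pic^{+,\deg=0}_\Q(\op{Fl}_G')$ with $\mathcal{F}_{2,\Q}$ by unpacking each layer of conditions on the Picard group in three steps.

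First, I would identify $\pic(\op{Fl}_G')$ with pairs of integral weights. Since $\op{Fl}_G' = [(G/Q_w' \times \wh G/\wh Q_{\wh w}')/G]$, its Picard group consists of $G$-equivariant line bundles on $G/Q_w' \times \wh G/\wh Q_{\wh w}'$, parametrized by pairs $(\mu, \wh\mu)$ of characters of $Q_w', \wh Q_{\wh w}'$. A character of $Q_w'$ is an $H$-character vanishing on the coroots $\alpha^\vee$ for $\alpha \in \Delta(Q_w')$, and since $\alpha \in \Delta(Q_w')$ iff $s_\alpha w < w$ iff $\alpha = \alpha_\ell$ for some $v \xrightarrow{\alpha_\ell} w$, these Picard-level constraints match the vanishing conditions defining $\mathcal{F}_2$ in (\ref{fide}) exactly.

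Second, I would identify $\pic^+_\Q$. By Proposition \ref{4therecord}(b) applied factor by factor, the pullback induces an isomorphism $H^0(G/Q_w' \times \wh G/\wh Q_{\wh w}', \mathcal{L}_\mu \boxtimes \mathcal{L}_{\wh\mu})^G \simeq H^0(G/B \times \wh G/\wh B, \mathcal{L}_\mu \boxtimes \mathcal{L}_{\wh\mu})^G$, and by Borel-Weil the latter is $(V_G(\mu)^* \otimes V_{\wh G}(\wh\mu)^*)^G$. Rational nonvanishing of this is exactly the condition $(\mu, \wh\mu) \in \mathcal{C}(G \to \wh G)_\Q$, so $\pic^+_\Q(\op{Fl}_G')$ is identified with the subcone of $\mathcal{C}(G \to \wh G)_\Q$ cut out by the vanishing conditions from the previous step.

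Third, I would apply Lemma \ref{florence} to the pullback $\tilde i'^*(\mathcal{L}_\mu \boxtimes \mathcal{L}_{\wh\mu})$ along $\tilde i': \op{Fl}_L \to \op{Fl}_G'$ (which factors through $\op{Fl}_G$): triviality of the associated $\gamma_\mathcal{M}$ is equivalent to $(w^{-1}\mu + \wh w^{-1}\wh\mu)(x_k) = 0$ for every $k$ with $\alpha_k \notin \Delta(P)$. Since $\dot\delta = \sum a_k x_k$ over these $k$ with $a_k > 0$, this system implies the face equation $(w^{-1}\mu + \wh w^{-1}\wh\mu)(\dot\delta) = 0$, giving the forward inclusion $\pic^{+,\deg=0}_\Q \subseteq \mathcal{F}_{2,\Q}$. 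The main obstacle is the converse: for $(\mu, \wh\mu) \in \mathcal{F}_{2,\Q}$ one knows only the single $\dot\delta$-equation and must deduce each individual $x_k$-equation. I expect this to follow by restricting the functionals $(\mu, \wh\mu) \mapsto (w^{-1}\mu + \wh w^{-1}\wh\mu)(x_k)$ to the subspace of weights cut out by the vanishing conditions, and showing, via reduced expressions for $w$ and $\wh w$ adapted to $\Delta(Q_w'), \Delta(\wh Q_{\wh w}')$, that the family of functionals (as $k$ ranges over $\{k : \alpha_k \notin \Delta(P)\}$) collapses to a one-dimensional span captured by the $\dot\delta$-direction. This collapse ensures the single face equation forces all of the degree-zero equations simultaneously, completing the identification.
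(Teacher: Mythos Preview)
Your first two steps are correct and match the paper's implicit reasoning: line bundles on $\op{Fl}_G'$ correspond to pairs $(\mu,\wh\mu)$ satisfying the vanishing conditions in (\ref{fide}), and the ``$+$'' condition is exactly membership in $\mathcal{C}(G\to\wh G)_\Q$ by Borel--Weil together with Proposition~\ref{4therecord}(b).

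The gap is in step three. You are right that Lemma~\ref{florence} gives the equivalence of $\deg=0$ with the full system $(w^{-1}\mu+\wh w^{-1}\wh\mu)(x_k)=0$ for all $k$ with $\alpha_k\notin\Delta(P)$, and that this system implies the single face equation. But your proposed converse via a ``collapse'' of these functionals on the $\pic(\op{Fl}_G')$ subspace is false in general: the dimension count in the proof of Proposition~\ref{63} shows that the $\deg=0$ conditions cut out a subspace of codimension exactly $|\Delta\setminus\Delta(P)|$ inside $\pic_\Q(\op{Fl}_G')$, so whenever $|\Delta\setminus\Delta(P)|>1$ the functionals are genuinely independent there and do not collapse to the $\dot\delta$-direction. (Concretely, if $w=e$ then no vanishing conditions constrain $\mu$, and $f_k(\mu,0)=\mu(x_k)$ already separates the $f_k$.) The single face equation, as a \emph{linear} condition, is strictly weaker than $\deg=0$.

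What makes the converse true is the nonlinear ``$+$'' condition, via a GIT limit argument the paper suppresses behind ``cf.\ Lemma~\ref{florence}''. For $(\mu,\wh\mu)\in\mathcal{F}_\Q$, pick (after scaling) a generic semistable point $(\bar g,\overline{\wh g})$ for $\mathcal{L}_\mu\boxtimes\mathcal{L}_{\wh\mu}$, and choose $f$ with $fP\in gC_w$ and $f\wh P\in\wh g\wh C_{\wh w}$ (possible generically by the intersection hypothesis). The face equality gives $\mu^{\mathcal{L}}((\bar g,\overline{\wh g}),f\delta f^{-1})=0$, so the limit point $x_0=\lim_{t\to 0}f\delta(t)f^{-1}\cdot(\bar g,\overline{\wh g})=(flw^{-1}B,\,f\wh l\wh w^{-1}\wh B)$ with $l\in L$, $\wh l\in\wh L$ is again semistable. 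Now $fZ^0(L)f^{-1}$ stabilizes $x_0$, and semistability forces its action on the fibre of $\mathcal{L}$ over $x_0$ to be trivial; by the fibre computation in the proof of Lemma~\ref{florence} this is precisely the full system of $x_k$-equations, i.e.\ $\deg=0$.
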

\begin{proof}
The only unmentioned aspect so far is that $\deg =0$ exactly characterizes the facet equality defining $\mathcal{F}$; cf. Lemma \ref{florence}.
\end{proof}

\subsection{The stack $\wh{\mathcal{C}}'$}

We now introduce $\wh{\mathcal{C}}'$ and see how it interacts with $\op{Fl}_G'$. 

\begin{defi}
Let $\mathcal{X}'$ be the universal intersection scheme given set-theoretically by 
$$
\mathcal{X}' = \{(\bar g, \overline{\wh g},\bar z)\in G/Q_w' \times \wh G/\wh Q_{\wh w}' \times G/P \mid\phi_\delta(\bar z) \in \phi_\delta(gX_w)\cap \wh g\wh X_{\wh w}\}.
$$
\end{defi}

Note that this definition is valid since $Q_w'\subseteq Q_w$, which stabilizes $X_w$, and the same for their analogues w.r.t. $\wh G$. Now, set $C_w' = Q_w'wP$ and $\wh C_{\wh w}' = \wh Q_{\wh w}'\wh w\wh P$. By replacing $X_w, \wh X_{\wh w}$ with $Z_w, \wh Z_{\wh w}$ and with $C_w', \wh C_{\wh w}'$, respectively, we similarly define (open) intersection subloci $\mathcal{Z}'\supseteq\mathcal{C}'$. 

Set $\wh{\mathcal{C}}'$ to be the stack $\wh{\mathcal{C}}'/G$, which parametrizes principal $G$-spaces $E$ with elements $\bar g\in E/Q_w'$, $\overline{\wh g}\in (E\times_G \wh G)/\wh Q_{\wh w}'$, and $\bar z \in E/P$ such that $z\in gC_w'$ and $(z,e)\in \wh g\wh C_{\wh w}'$. Equivalently, as before, it parametrizes principal $P$-spaces $E'$ together with elements $\bar y\in E'/(w^{-1}Q_w' w\cap P)$ and $\overline{\wh y} \in (E'\times_P \wh P)/(\wh w^{-1}\wh Q_{\wh w}'\wh w\cap \wh P)$. 

The natural projection $\pi': \mathcal{X}' \to G/Q_w'\times \wh G/\wh Q_{\wh w}'$ is birational, and we use $\mathcal{R}'$ to denote the ramification locus inside $\mathcal{Z}'$ (or $\mathcal{C}'$). Our new diagram of stacks is 

\begin{center}
\begin{tikzcd}
\mathcal{C}' \arrow[d,"\pi'"]  \arrow[r] & \wh{\mathcal{C}}' \arrow[d, "\pi'"] \arrow[dr, "\tau'"'] & \\
G/Q_w'\times \wh G/\wh Q_{\wh w}' \arrow[r] & \op{Fl}_G' & \op{Fl}_L \arrow[l, "\tilde i'"] \arrow[ul, bend right=20, "i'"']
\end{tikzcd}
\end{center}

We prove the analogue of \cite[Lemma 42]{BKiers}. 

\begin{lemma}
The closed subvariety $\overline{\pi'(\mathcal{X}'\setminus \mathcal{C}')}$ is of codimension $\ge2$ inside $G/Q_w'\times \wh G/\wh Q_{\wh w}'$. 
\end{lemma}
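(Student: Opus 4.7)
The argument will follow the template of Proposition \ref{Ayayay}, strengthened since a codimension-$1$ subvariety of $\mathcal{X}'$ need not project to a codimension-$\ge 2$ subvariety of $G/Q_w'\times \wh G/\wh Q_{\wh w}'$. First I would stratify $\mathcal{X}'\setminus \mathcal{C}'$ by the $(Q_w',\wh Q_{\wh w}')$-orbit structure on $X_w\times \wh X_{\wh w}$. Since $Q_w'\subseteq Q_w$ stabilizes $X_w$, the Schubert cells regroup into $Q_w'$-orbits with unique open orbit $C_w'=Q_w' wP$ (symmetrically for $\wh Q_{\wh w}'$), yielding
$$
\mathcal{X}'\setminus \mathcal{C}' = \bigsqcup_{(O,\wh O)\ne (C_w',\wh C_{\wh w}')} \mathcal{C}'_{O,\wh O}, \qquad \codim(\mathcal{C}'_{O,\wh O},\mathcal{X}') = \codim(O,X_w)+\codim(\wh O,\wh X_{\wh w}).
$$

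Because $\pi'$ is birational and $\dim \mathcal{X}' = \dim(G/Q_w'\times \wh G/\wh Q_{\wh w}')$, pieces of codimension $\ge 2$ in $\mathcal{X}'$ are automatically dealt with. It remains to examine codim-$1$ pieces, which by the formula above must have $(O,\wh O) = (O,\wh C_{\wh w}')$ (or the symmetric case) with $\codim(O,X_w)=1$; then $\overline{O}\subseteq X_w$ is a Schubert divisor $\overline{C_v}$ with $v\xrightarrow{\beta}w$ for some positive root $\beta$. Using $\Delta(Q_w')=\Delta\cap w\Phi^-$ and the standard identity $w^{-1}\beta\in \Phi^-$ whenever $v\xrightarrow{\beta}w$, I would verify that simple $\beta$ implies $s_\beta\in W_{Q_w'}$, hence $C_v\subseteq C_w'$, contradicting $O\ne C_w'$; so $\beta$ must be non-simple.

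For non-simple $\beta$, I would show that $\pi'|_{\mathcal{C}'_{O,\wh C_{\wh w}'}}$ is not generically finite, so its image has codimension $\ge 2$ in the target. This is the content of (the analogue of) Proposition \ref{proselyte}: applying the $\delta$-filtration tangent-space analysis of Section \ref{highlighter} to the fibre-product diagram (\ref{TAN}) with $G/Q_w'\times \wh G/\wh Q_{\wh w}'$ in place of $G/B\times \wh G/\wh B$, Theorem \ref{reprod} combined with the Levi-movability of $(w,\wh w,\delta)$ yields the same contradictory inequality chain as in (\ref{Little1})--(\ref{Little4}) at a generic point of $\mathcal{C}'_{O,\wh C_{\wh w}'}\cap \mathcal{Z}'$, unless $d\pi'$ fails to be injective there. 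Hence generic fibres of $\pi'$ restricted to this piece are positive-dimensional, delivering the required codimension drop.

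The main obstacle will be that last step: confirming that the $\delta$-filtration argument carries over cleanly. The argument depends only on the $\delta$-grading of $T_{\dot e}(G/P)$, $T_{\dot e}(\wh G/\wh P)$, on $P$- and $\wh P$-stability of the filtered subspaces, and on Levi-movability, none of which is sensitive to replacing $B,\wh B$ by $Q_w',\wh Q_{\wh w}'$; so the adaptation should be routine. A minor side issue to address is that a codim-$1$ piece might fail to meet $\mathcal{Z}'$, but in that case it is contained in $\mathcal{X}'\setminus \mathcal{Z}'$, which is codim $\ge 2$ in $\mathcal{X}'$ by the obvious analogue of Lemma \ref{five2}(c).
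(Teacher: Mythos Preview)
Your approach is correct in outline and would work, but it takes a different route from the paper's proof. The paper does \emph{not} redo the tangent-space analysis at the $Q_w'$-level. Instead it exploits the fibre-product square
\[
\begin{tikzcd}
\mathcal{X} \arrow[r,"\tilde\phi"] \arrow[d,"\pi"] & \mathcal{X}' \arrow[d,"\pi'"] \\
G/B\times \wh G/\wh B \arrow[r,"\phi"] & G/Q_w'\times \wh G/\wh Q_{\wh w}'
\end{tikzcd}
\]
to pull the problem back to the $B$-level, where everything is already known. Since $\phi$ is a smooth fibre bundle, codimension is preserved under $\phi^{-1}$, and one is reduced to bounding the codimension of $\pi(\tilde\phi^{-1}(\mathcal{X}'\setminus\mathcal{C}'))$. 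The paper then observes that $\tilde\phi^{-1}(\mathcal{C}')\subseteq\mathcal{Y}$ with complement of codimension $\ge 2$ in $\mathcal{Y}$ (using that $Y_w\setminus C_w'$ has codimension $\ge 2$, cf.\ \cite{BKiers}*{Lemma 6.4}), so it suffices to show $\pi(\mathcal{X}\setminus\mathcal{Y})$ has codimension $\ge 2$; this is immediate from $\mathcal{X}\setminus\mathcal{Y}\subseteq(\mathcal{X}\setminus\mathcal{Z})\cup\mathcal{R}\cup A$ via Proposition~\ref{Ayayay} and Lemma~\ref{five2}(c). This is substantially shorter than re-running the $\delta$-filtration argument.

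Your proposal is viable but has one imprecise step: the inference ``$d\pi'$ fails to be injective $\Rightarrow$ generic fibres of $\pi'$ on this piece are positive-dimensional'' is not automatic. What the $\delta$-filtration contradiction actually gives you is that the codimension-$1$ stratum meets $\mathcal{Z}'$ only inside $\mathcal{R}'$; to finish you should invoke that $\pi':\mathcal{X}'\to G/Q_w'\times\wh G/\wh Q_{\wh w}'$ is a proper birational morphism to a smooth (hence normal) variety, so the locus in the target over which $\pi'$ is not an isomorphism has codimension $\ge 2$, and $\pi'(\mathcal{R}')$ sits inside that locus. With that fix, your argument goes through.
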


\begin{proof}
$\mathcal{X}$ is a fibre-product of other spaces in question:
\begin{center}
\begin{tikzcd}
\mathcal{X} \arrow[r,"\tilde\phi"] \arrow[dd,"\pi"]& \mathcal{X}' \arrow[dd,"\pi'"]\\ \\
G/B\times \wh G/\wh B \arrow[r,"\phi"]& G/Q_w' \times \wh G/\wh Q_{\wh w}'
\end{tikzcd}
\end{center}

One easily checks that $\phi$ is a smooth fibre bundle over a smooth base; the fibres are $Q_w'/B\times \wh Q_{\wh w}'/\wh B$. Thus $\phi^{-1}(\pi'(\mathcal{X}'\setminus\mathcal{C}'))$ has the same codimension as $\pi'(\mathcal{X}'\setminus\mathcal{C}')$ (and as the latter's closure). 

The argument of \cite[Remark 8]{BKiers} is still valid in this case, and we have $\phi^{-1}(\pi'(\mathcal{X}'\setminus \mathcal{C}')) = \pi(\tilde\phi^{-1}(\mathcal{X}'\setminus \mathcal{C}'))$. Let us examine $\tilde \phi^{-1}(\mathcal{X}'\setminus\mathcal{C}')$, or, rather, $\tilde \phi^{-1}(\mathcal{C}')$. 

If $(\bar g,\overline{\wh g}, \bar z)$ maps into $\mathcal{C}'$, then $\phi_\delta(z)\in \phi_\delta(gC_w')\cap \wh g\wh C_{\wh w}'\subseteq \phi_\delta(gY_w)\cap \wh g\wh Y_{\wh w}$. That is, $\tilde\phi^{-1}(\mathcal{C}')\subseteq\mathcal{Y}$; furthermore, the codimension of the complement of $C_w'$ inside $Y_w$ is $\ge 2$ (see \cite[Lemma 41]{BKiers}), and the same holds for the associated $\wh G$ spaces. We conclude that $\tilde \phi^{-1}(\mathcal{C}')$ has complement codimension $\ge 2$ inside $\mathcal{Y}$. Thus if we could show $\pi(\mathcal{X}\setminus \mathcal{Y})$ has codimension $\ge 2$, we would have the desired result. 

For this, we recall that $\mathcal{Z}\setminus\mathcal{Y}\subset \mathcal{R}\cup A$ for some codimension $\ge 2$ $A\subset \mathcal{X}$. We find that
$$
\mathcal{X}\setminus \mathcal{Y} \subseteq \mathcal{X}\setminus \mathcal{Z} \cup \mathcal{Z}\setminus \mathcal{Y} \subseteq \mathcal{X}\setminus \mathcal{Z}\cup \mathcal{R}\cup A,
$$
and everything on the right is mapped to codimension $\ge 2$ in $G/B\times \wh G/\wh B$ under $\pi$. This completes the argument.
\end{proof}

\begin{corollary}
Let $\mathcal{R}'$ be the ramification locus of $\pi':\mathcal{C}'\to G/Q_w'\times \wh G/\wh Q_{\wh w}'$. Then, restricted to $\mathcal{C}'\setminus \mathcal{R}'$, $\pi'$ is an open embedding whose image has complement of codimension $\ge 2$. 
\end{corollary}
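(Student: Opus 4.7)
The plan is to establish the two assertions — that $\pi'|_{\mathcal{C}'\setminus \mathcal{R}'}$ is an open embedding and that its image has complement of codimension at least two — by combining the preceding lemma with Zariski's main theorem, after a preliminary check that $\pi'$ has the right global properties.

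First I would observe that $\pi':\mathcal{X}'\to G/Q_w'\times \wh G/\wh Q_{\wh w}'$ is proper (it is the restriction to a closed subscheme of the projection off the projective factor $G/P$) and birational (either by a direct dimension count as in \cite{BKR}*{Corollary 5.3}, or by transport along the smooth surjection $G/B\times \wh G/\wh B\to G/Q_w'\times \wh G/\wh Q_{\wh w}'$ used in the preceding lemma). Likewise $\mathcal{X}'$ is irreducible and $\mathcal{C}'$ is smooth and irreducible, by repeating the proof of Lemma \ref{five2} with $Q_w',\wh Q_{\wh w}'$ in place of $B,\wh B$. On $\mathcal{C}'\setminus \mathcal{R}'$ the differential $d\pi'$ is an isomorphism by the very definition of the ramification locus, and since source and target are smooth of the same dimension this means $\pi'|_{\mathcal{C}'\setminus\mathcal{R}'}$ is \'etale. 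An \'etale morphism has locally constant degree, and on the irreducible locus $\mathcal{C}'\setminus \mathcal{R}'$ this degree must equal $1$ by birationality; hence $\pi'|_{\mathcal{C}'\setminus\mathcal{R}'}$ is universally injective, and \'etale plus universally injective is precisely open immersion.

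For the codimension estimate on the complement of the image, set $W:=\overline{\pi'(\mathcal{X}'\setminus\mathcal{C}')}$ and $U:=(G/Q_w'\times \wh G/\wh Q_{\wh w}')\setminus W$; by the preceding lemma the complement of $U$ has codimension $\ge 2$. Any point of $\mathcal{X}'$ mapping into $U$ necessarily lies in $\mathcal{C}'$, so $\pi'^{-1}(U)\subseteq \mathcal{C}'$ and the restriction $\pi'^{-1}(U)\to U$ is proper and birational with smooth (hence normal) target. Zariski's main theorem then yields a closed $E\subseteq U$ of codimension $\ge 2$ in $U$ such that $\pi'$ is an isomorphism above $U\setminus E$. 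Isomorphism implies \'etale, so $\pi'^{-1}(U\setminus E)\subseteq \mathcal{C}'\setminus \mathcal{R}'$, whence $U\setminus E\subseteq \pi'(\mathcal{C}'\setminus \mathcal{R}')$; the complement of the image is thus contained in $W\cup E$, a set of codimension $\ge 2$.

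The principal obstacle I anticipate is merely bookkeeping: one has to verify that the smoothness/irreducibility assertions of Lemma \ref{five2} and the birationality of the universal intersection scheme from \cite{BKR}*{Corollary 5.3} carry over to the primed setting. Once those are in hand, the argument is a clean combination of the previous lemma with the standard \'etale-birational-is-open-immersion principle and Zariski's main theorem.
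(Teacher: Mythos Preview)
Your proof is correct and is exactly the argument the paper leaves implicit (the Corollary is stated without proof, as a direct consequence of the preceding lemma together with standard birational geometry). One minor imprecision worth tightening: the assertion that ``an \'etale morphism has locally constant degree'' is only true for \emph{finite} \'etale morphisms, and $\pi'|_{\mathcal{C}'\setminus\mathcal{R}'}$ is not finite. The injectivity you need follows more cleanly from Zariski's main theorem applied to the proper birational map $\pi':\mathcal{X}'\to G/Q_w'\times\wh G/\wh Q_{\wh w}'$ with normal target (which you already invoke in the second half): its fibers are connected, and any $x\in\mathcal{C}'\setminus\mathcal{R}'$ is an isolated point of its fiber by \'etaleness, hence equals the entire fiber. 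Equivalently, the quasi-finite form of Zariski's main theorem factors the separated \'etale map $\pi'|_{\mathcal{C}'\setminus\mathcal{R}'}$ as an open immersion followed by a finite map, and the finite part is birational onto a normal variety, hence an isomorphism.
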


This allows us to conclude: 
\begin{proposition}
$\pi'^*$ induces an isomorphism $\pic(\op{Fl}_G')\simeq \pic(\wh{\mathcal{C}}'\setminus \wh{\mathcal{R}}')$. 
\end{proposition}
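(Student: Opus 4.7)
The plan is to descend the previous corollary from the level of varieties to the level of stacks and then invoke the standard codimension-two extension principle for Picard groups of smooth spaces.

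First, I would observe that since $\mathcal{C}', \mathcal{R}',$ and $G/Q_w'\times \wh G/\wh Q_{\wh w}'$ all carry compatible $G$-actions and the morphism $\pi':\mathcal{C}'\to G/Q_w'\times \wh G/\wh Q_{\wh w}'$ is $G$-equivariant, the preceding corollary (an open embedding with complement of codimension $\ge 2$) passes to the $G$-quotients. That is, after quotienting, the induced map
\[
\pi': \wh{\mathcal{C}}'\setminus \wh{\mathcal{R}}' \hookrightarrow \op{Fl}_G'
\]
is still an open embedding of smooth Artin stacks whose complement has codimension $\ge 2$ (taking the quotient by a smooth group action preserves smoothness and preserves codimensions of invariant closed substacks).

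Next I would invoke the codimension-two extension principle: if $X$ is a smooth (irreducible) Artin stack and $U\subseteq X$ is an open substack whose complement has codimension $\ge 2$, then the restriction $\pic(X)\to \pic(U)$ is an isomorphism. On a smooth atlas this is Hartogs' theorem for divisors (any Weil divisor on $U$ extends uniquely across the codimension-$\ge 2$ complement to a Weil divisor on $X$, and on a smooth space Weil and Cartier divisors coincide), and the statement is local in the smooth topology so it lifts to the stack $\op{Fl}_G'$. Applied to our situation, restriction along the open embedding above produces the desired isomorphism, and this restriction is literally $\pi'^*$.

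The only potentially delicate point is the smoothness and codimension claims on the stack level, but since $\op{Fl}_G'$ is the quotient of a smooth variety by the free enough action of the smooth affine group $G$ and since codimensions are computed on smooth presentations, there is no real obstacle. Thus $\pi'^*:\pic(\op{Fl}_G')\to \pic(\wh{\mathcal{C}}'\setminus \wh{\mathcal{R}}')$ is an isomorphism, as claimed.
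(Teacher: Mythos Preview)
Your argument is correct and is essentially the intended one: the preceding corollary gives an open embedding $\mathcal{C}'\setminus\mathcal{R}'\hookrightarrow G/Q_w'\times\wh G/\wh Q_{\wh w}'$ with complement of codimension $\ge 2$, everything is $G$-equivariant, and the codimension-two extension principle for Picard groups on smooth stacks finishes the job. The paper does not write out its own proof here---it simply cites \cite{BKiers}*{Corollary 8.1}---so your explicit write-up is a faithful unpacking of that reference. One cosmetic remark: the smoothness of $\op{Fl}_G'$ does not require any freeness of the $G$-action; a quotient stack $[X/G]$ with $X$ and $G$ smooth is automatically smooth, so you can drop the ``free enough'' caveat.
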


\begin{proof}
Identical to that of \cite[Corollary 50]{BKiers}. 
\end{proof}

\subsection{Connection with the Levi subgroup}

The family of maps $\psi_t:P\to P$ and Levification procedure carry forward to the present case, and Proposition \ref{opendoor} has the following analogue (the proof is the same): 

\begin{proposition}\label{aGain}
For any non-empty open substack $U$ of $\op{Fl}_L$ and any line bundle $\mathcal{L}$ on $\tau'^{-1}(U)$, setting $\mathcal{M} = i'^*\mathcal{L}$, we have 
\begin{enumerate}[label=(\alph*)]
\item $\mathcal{L} = \tau'^*\mathcal{M}$. Thus as before, $\tau'^*$ and $i'^*$ give inverse isomorphisms $\pic(U)\simeq \pic(\tau'^{-1}(U))$. 
\item In the case that $\gamma_\mathcal{M}$ is trivial, 
$$
i'^*:H^0(\tau'^{-1}(U), \mathcal{L}) \to H^0(U,\mathcal{M})
$$
is also an isomorphism. 
\end{enumerate}
\end{proposition}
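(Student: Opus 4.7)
The plan is to mirror the proof of Proposition \ref{opendoor} verbatim, with the standard parabolics $w^{-1}Bw\cap P$ and $\wh w^{-1}\wh B\wh w\cap \wh P$ replaced by $w^{-1}Q_w' w\cap P$ and $\wh w^{-1}\wh Q_{\wh w}'\wh w\cap \wh P$. As before, I would prove (b) first and then derive (a) as a formal consequence.

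For (b), given $(E',\bar y,\overline{\wh y})\in \tau'^{-1}(U)$, I would build the Levification family $(E_t',\bar y_t,\overline{\wh y}_t)_{t\in \A^1}$ using the one-parameter family $\psi_t = \op{Ad}_{\lambda(t)}$ acting on $\wh P$ (where $\lambda(t)\in Z^0(\wh L)$ is the cocharacter used earlier for $\wh{\mathcal{C}}$). At $t=1$ this recovers the given point, and in the limit $t\to 0$ it lands in $\op{im}(i')$ because $\psi_0$ is the quotient $\wh P\to \wh L$, which factors the new subgroups through $B_L$ and $\wh B_{\wh L}$ just as in the unprimed case. A section of $\mathcal{L}$ at $(E',\bar y,\overline{\wh y})$ thus extends uniquely along the family by $\wh P$-equivariance; triviality of $\gamma_\mathcal{M}$ guarantees trivial $Z^0(L)$-action on the fibre above the limit, so the section extends holomorphically without zeros or poles across $t=0$ to the $i'$-image. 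This gives injectivity of $i'^*$, and running the family in reverse (extending a given $\mathcal{M}$-section on $\op{im}(i')\cap \tau'^{-1}(U)$ back out along $\psi_t$) gives surjectivity.

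For (a), I would bootstrap from (b) exactly as in Proposition \ref{opendoor}. Set $\mathcal{L}' = \mathcal{L}\otimes (\tau'^*\mathcal{M})^{-1}$; then
\[
i'^*\mathcal{L}' = \mathcal{M}\otimes (\tau'\circ i')^*\mathcal{M}^{-1} = \mathcal{M}\otimes \mathcal{M}^{-1} = \mathcal{O}_U,
\]
which has trivial $\gamma$-character, so (b) produces a section $\sigma$ of $\mathcal{L}'$ over $\tau'^{-1}(U)$ pulling back to the constant $1$ on $U$. Since $\sigma$ is non-vanishing on $\op{im}(i')$, and Levification propagates any putative zero of $\sigma$ back to $\op{im}(i')$, $\sigma$ is nowhere vanishing. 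Hence $\mathcal{L}'$ is trivial and $\mathcal{L}\simeq \tau'^*\mathcal{M}$, which establishes that $\tau'^*$ and $i'^*$ are mutually inverse on Pic.

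The one genuine verification, and the only place where the primed case could conceivably differ, is that the Levification family is well-defined on $\wh{\mathcal{C}}'$: one needs $\psi_t$ to restrict to an automorphism of $w^{-1}Q_w'w\cap P$ and of $\wh w^{-1}\wh Q_{\wh w}'\wh w\cap \wh P$ for $t\ne 0$, and to a quotient onto $B_L$, $\wh B_{\wh L}$ at $t=0$. Since $\lambda(t)\in Z^0(\wh L)$ acts on each root subgroup $U_\gamma$ by the scalar $t^{\gamma(\dot\lambda)}$ and both subgroups are products of root subgroups containing $B_L$ (resp.\ $\wh B_{\wh L}$) together with pieces of the unipotent radical, this is routine from the root-space description of $Q_w'$, $\wh Q_{\wh w}'$; I expect it to go through verbatim from the unprimed proof.
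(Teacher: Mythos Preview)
Your proposal is correct and matches the paper's own treatment exactly: the paper simply states that ``the family of maps $\psi_t:P\to P$ and Levification procedure carry forward to the present case'' and that ``the proof is the same'' as Proposition~\ref{opendoor}. Your outline faithfully spells out that transfer, including the order (b) then (a), and the verification you flag at the end---that $\psi_t$ preserves $w^{-1}Q_w'w\cap P$ for $t\ne 0$ and collapses it onto $B_L$ at $t=0$---is routine from the root-subgroup description (the extra negative-root pieces of $Q_w'$ become, after conjugation by $w^{-1}$, root subgroups lying in the unipotent radical $U$, hence killed by $\psi_0$).
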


Let $\wh{\mathcal{R}}'$ be the locus of $(E',\bar y, \overline{\wh y})\in \wh{\mathcal{C}}'$ whose determinant lines of 
$$
E'\times_PT_{\dot e}(G/P)\to \dfrac{E'\times_PT_{\dot e}(G/P)}{\{y\}\times T_{\dot e}(w^{-1}C_w')} \oplus \dfrac{E'\times_PT_{\dot e}(\wh G/\wh P)}{\{\wh y\} \times T_{\dot e}(\wh w^{-1} \wh C_{\wh w}')}
$$
vanish. Set $\mathcal{R}_L$ to be the inverse image of $\wh{\mathcal{R}}'$ under $i'$; consequently $i'^*\mathcal{O}(\wh{\mathcal{R}}') = \mathcal{O}(\mathcal{R}_L)$. 

\begin{lemma}\label{aoe}
For $\mathcal{M} = \mathcal{O}(\mathcal{R}_L)$, $\gamma_\mathcal{M}$ is trivial. 
\end{lemma}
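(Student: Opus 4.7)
The plan is to exhibit a nonzero global section of $\mathcal{M}=\mathcal{O}(\mathcal{R}_L)$; by the argument used immediately after Lemma \ref{florence}, this is enough to force $\gamma_\mathcal{M}$ to be trivial, since the $Z^0(L)$-action on a fibre must be trivial wherever a section is nonvanishing. A natural candidate is the pullback $\theta:=i'^*\Theta$, where $\Theta\in H^0(\wh{\mathcal{C}}',\mathcal{O}(\wh{\mathcal{R}}'))$ is the canonical determinant section whose value at $(E',\bar y,\overline{\wh y})$ is the determinant of
$$
E'\times_P T_{\dot e}(G/P)\to\frac{E'\times_P T_{\dot e}(G/P)}{\{y\}\times T_{\dot e}(w^{-1}C_w')}\oplus\frac{E'\times_P T_{\dot e}(\wh G/\wh P)}{\{\wh y\}\times T_{\dot e}(\wh w^{-1}\wh C_{\wh w}')}.
$$
After applying $i'$ to a point $(\bar l,\overline{\wh l})\in L/B_L\times\wh L/\wh B_{\wh L}$, the fibre data reduce (via the identifications $E'=P$ and $\{y\}\times v\leftrightarrow lv$, and similarly for the $\wh G$-side) to the linear map
$$
T_{\dot e}(G/P)\to\frac{T_{\dot e}(G/P)}{T_{\dot e}(lw^{-1}C_w')}\oplus\frac{T_{\dot e}(\wh G/\wh P)}{T_{\dot e}(\wh l\wh w^{-1}\wh C_{\wh w}')}.
$$

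The key observation is that the open Schubert cell $C_w=BwP$ lies in $C_w'=Q_w'wP$ (because $B\subseteq Q_w'$) and is dense open in $X_w$, giving a chain $C_w\subseteq C_w'\subseteq X_w$ in which $C_w$ is open in each term. It follows that $T_{wP}(C_w')=T_{wP}(X_w)$, hence $T_{\dot e}(w^{-1}C_w')=T_{\dot e}(w^{-1}X_w)$, and the analogous equality holds for $\wh w$. Since $l\in L\subseteq P$ fixes $\dot e$, applying $l_*$ gives $T_{\dot e}(lw^{-1}C_w')=T_{\dot e}(lw^{-1}X_w)$, and similarly on the $\wh G$-side. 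Therefore the map above coincides with the Levi-movability map
$$
T_{\dot e}(G/P)\to\frac{T_{\dot e}(G/P)}{T_{\dot e}(lw^{-1}X_w)}\oplus\frac{T_{\dot e}(\wh G/\wh P)}{T_{\dot e}(\wh l\wh w^{-1}\wh X_{\wh w})},
$$
which is an isomorphism for generic $(l,\wh l)$ thanks to the Levi-movability of the pair $(w,\wh w)$ supplied by the cup product assumption (\ref{prod}) together with \cite{RessRich}*{Proposition 2.3}. Hence $\theta$ does not vanish at the generic point of $\op{Fl}_L$, so it is a nonzero global section of $\mathcal{M}$, and the lemma follows.

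The main obstacle is really just the tangent-space identification $T_{\dot e}(w^{-1}C_w')=T_{\dot e}(w^{-1}X_w)$: since $C_w'$ is a priori a proper subvariety of $X_w$, one must check that this substitution does not shrink the relevant tangent space. This is handled cleanly by the sandwich $C_w\subseteq C_w'\subseteq X_w$ with $C_w$ open in $X_w$, so no input beyond what was used for the analogous lemma in the $\wh{\mathcal{C}}$ context is required.
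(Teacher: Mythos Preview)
Your proof is correct and follows essentially the same approach as the paper: both argue that the canonical determinant (``$\theta$-section'') is nonzero at the generic point by invoking Levi-movability of $(w,\wh w)$ via \cite{RessRich}*{Proposition 2.3}, which forces $\gamma_\mathcal{M}$ to be trivial. The only difference is that you make explicit the tangent-space identification $T_{\dot e}(w^{-1}C_w')=T_{\dot e}(w^{-1}X_w)$ via the sandwich $C_w\subseteq C_w'\subseteq X_w$, which the paper leaves implicit.
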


\begin{proof}
Let $\left(\bar l, ~\widebar{\wh l}~\right)\in L/B_L\times \wh L/\wh B_{\wh L}$ be arbitrary. The fibre of $\mathcal{M}$ over this point is the determinant line of 
$$
T_{\dot e}(G/P)\to \dfrac{T_{\dot e}(G/P)}{T_{\dot e}(lw^{-1}C_w')}\oplus \dfrac{T_{\dot e}(\wh G/\wh P)}{T_{\dot e}(\wh l\wh w^{-1}\wh C_{\wh w}')}.
$$
The nonzero deformed pullback product (\ref{prod}) implies that the pair $(w, \wh w)$ is Levi-movable by \cite[Proposition 2.3]{RessRich}. Therefore the above map is an isomorphism (hence nonzero determinant line) for generic $\left(\bar l, ~\widebar{\wh l}~\right)$. Thus the natural $\theta$-section gives a nonzero global section of $\mathcal{M}$; this forces $\gamma_\mathcal{M}$ to be trivial. 
\end{proof}

We wish to prove the following proposition, which will be needed to define the induction map in the next section. 

\begin{proposition}\label{crick}
$\pic(\op{Fl}_L\setminus \mathcal{R}_L)\simeq \pic(\wh{\mathcal{C}}'\setminus \wh{\mathcal{R}}')$.
\end{proposition}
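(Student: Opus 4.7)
The plan is to reduce the claim to a direct application of Proposition \ref{aGain}(a) applied to the open substack $U = \op{Fl}_L\setminus \mathcal{R}_L$. For this to work we need the preimage identity $\tau'^{-1}(U) = \wh{\mathcal{C}}'\setminus \wh{\mathcal{R}}'$, equivalently
$$
\wh{\mathcal{R}}' = \tau'^{-1}(\mathcal{R}_L),
$$
and this is the content of the main (only nontrivial) step.

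First I would set $\mathcal{M}:=i'^*\mathcal{O}(\wh{\mathcal{R}}') = \mathcal{O}(\mathcal{R}_L)$. By Lemma \ref{aoe}, $\gamma_{\mathcal{M}}$ is trivial, so Proposition \ref{aGain}(a) gives $\mathcal{O}(\wh{\mathcal{R}}') = \tau'^*\mathcal{O}(\mathcal{R}_L)$ on all of $\wh{\mathcal{C}}'$, and Proposition \ref{aGain}(b) gives an isomorphism
$$
i'^*: H^0(\wh{\mathcal{C}}',\mathcal{O}(\wh{\mathcal{R}}')) \xrightarrow{\sim} H^0(\op{Fl}_L,\mathcal{O}(\mathcal{R}_L)).
$$
Now choose $\sigma\in H^0(\wh{\mathcal{C}}',\mathcal{O}(\wh{\mathcal{R}}'))$ cutting out $\wh{\mathcal{R}}'$ scheme-theoretically. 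Its pullback $i'^*\sigma$ then cuts out $\mathcal{R}_L$ by definition of $\mathcal{R}_L$. Because $\tau'\circ i' = \op{id}$, the section $\tau'^*(i'^*\sigma)$ has image $i'^*\sigma$ under $i'^*$, so by the injectivity of $i'^*$ above we conclude $\sigma = \tau'^*(i'^*\sigma)$. The vanishing locus of the right-hand side is $\tau'^{-1}(\mathcal{R}_L)$, so indeed $\wh{\mathcal{R}}' = \tau'^{-1}(\mathcal{R}_L)$.

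Having established this, take $U = \op{Fl}_L\setminus \mathcal{R}_L$, a non-empty open substack of $\op{Fl}_L$. Then $\tau'^{-1}(U) = \wh{\mathcal{C}}'\setminus \wh{\mathcal{R}}'$, and Proposition \ref{aGain}(a) produces inverse isomorphisms $\tau'^*$ and $i'^*$ between $\pic(U) = \pic(\op{Fl}_L\setminus \mathcal{R}_L)$ and $\pic(\tau'^{-1}(U)) = \pic(\wh{\mathcal{C}}'\setminus \wh{\mathcal{R}}')$, which is the claimed isomorphism.

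The main obstacle is the first step, i.e.\ proving the identification $\wh{\mathcal{R}}' = \tau'^{-1}(\mathcal{R}_L)$. The naive inclusion $\mathcal{R}_L \subseteq i'^{-1}(\wh{\mathcal{R}}')$ is immediate from definitions, but going from this to equality of the (a priori larger) divisor $\wh{\mathcal{R}}'$ and its candidate $\tau'^{-1}(\mathcal{R}_L)$ is what requires the Levification input: we use both parts of Proposition \ref{aGain} (equality of line bundles and equality of spaces of sections) together with the triviality of $\gamma_{\mathcal{O}(\mathcal{R}_L)}$ from Lemma \ref{aoe}. Everything else is formal from $\tau'\circ i' = \op{id}_{\op{Fl}_L}$.
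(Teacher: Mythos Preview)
Your proposal is correct and follows essentially the same approach as the paper: reduce to showing $\wh{\mathcal{R}}' = \tau'^{-1}(\mathcal{R}_L)$ by choosing a section $\sigma$ cutting out $\wh{\mathcal{R}}'$, using Lemma \ref{aoe} and Proposition \ref{aGain}(b) to get injectivity of $i'^*$ on sections and hence $\sigma = \tau'^*(i'^*\sigma)$, then applying Proposition \ref{aGain}(a) with $U = \op{Fl}_L\setminus \mathcal{R}_L$. The only difference is that you spell out a bit more detail (e.g.\ explicitly noting $\mathcal{O}(\wh{\mathcal{R}}') = \tau'^*\mathcal{O}(\mathcal{R}_L)$, which the paper leaves implicit).
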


\begin{proof}
The statement follows from Proposition \ref{aGain}(a), provided we show that $\wh{\mathcal{R}}' = \tau'^{-1}\mathcal{R}_L$. Indeed, choose a section $\sigma \in H^0(\wh{\mathcal{C}}',\mathcal{O}(\wh{\mathcal{R}}'))$ which vanishes exactly on $\wh{\mathcal{R}}'$. Then $i'^*\sigma$ vanishes exactly on $\mathcal{R}_L$. We have 
$$
i'^*(\tau'^*(i'^*(\sigma))) = (\tau'\circ i')^*i'^*(\sigma) = i'^*\sigma,
$$
and $i'^*$ is injective (since by Lemma \ref{aoe} and Proposition \ref{aGain}(b)), so $\sigma = \tau'^*(i'^*(\sigma))$, which vanishes exactly on $\tau^{-1}\mathcal{R}_L$. 
\end{proof}

\subsection{Reduction from $L$ to $L/\langle\delta\rangle$}

Let $L_\delta$ be the quotient group $L/\langle \delta \rangle$ and $B_{\delta}$ denote the image of $B_L$ under the surjective homomorphism $L\to L_\delta$; it is a Borel subgroup for the latter reductive group.  We define $\wh L_\delta$ and $\wh B_\delta$ the same way. The natural $L$-equivariant morphism of flag varieties
$$
L/B_{L}\times \wh L/\wh B_{\wh L}\to L_\delta/B_\delta\times \wh L_\delta/\wh B_{\delta}
$$
is an isomorphism. 

Further, this induces a morphism of stacks
$
\epsilon: \op{Fl}_{L}  \to \op{Fl}_{L_\delta}.
$
Our next lemma records the essential relationship between line bundles on $\op{Fl}_{L}$ and $\op{Fl}_{L_\delta}$, but first we make the following definition. 
\begin{defi}
Let 
$
\pic^{\deg =0}(\op{Fl}_L)
$
denote the subgroup of $\pic(\op{Fl}_L)$ with trivial $\delta$-action on the fibres.
\end{defi}

\begin{lemma}\label{interview}
\begin{enumerate}[label=(\alph*)]
\item $\epsilon^*: \pic(\op{Fl}_{L_\delta}) \to \pic(\op{Fl}_L)$ is injective, with image equal to $\pic^{\op{deg}=0}(\op{Fl}_L)$.
\item The preceding isomorphism restricts to 
$$
\pic_\Q^{+}(\op{Fl}_{L_\delta})\xrightarrow{\sim} \pic_\Q^{+,\op{deg}=0}(\op{Fl}_{L}),
$$
which we shall call $\epsilon_+^*$. 
\end{enumerate}
\end{lemma}

\begin{proof}
\begin{enumerate}[label=(\alph*)]
\item Every line bundle $\mathcal{L}$ on $X_\delta = L_\delta/B_\delta\times \wh L_\delta/\wh B_{\delta}$ which is $L_\delta$-linearized is naturally $L$-linearized via $\psi: L\to L_\delta$. Moreover, the image of $\delta$ lies in the kernel of $\psi$; hence the action of $\delta$ is trivial on fibres. In the other direction, any $L$-linearization of a line bundle $\mathcal{L}$ on $X_\delta$ which has trivial $\delta$-action descends naturally to an $L_\delta$-linearization. 
\item As long as the $\delta$-action is trivial, $L$-equivariant global sections are the same as $L_\delta$-equivariant global sections. 
\end{enumerate}
\end{proof}

\section{Induction and type II rays}\label{wellordering}

Here we give an alternate definition of the map $\op{Ind}$ of Theorem \ref{yadhtrib}; in the next section we will show that they are the same. 

\begin{defi}Define the induction map by the composition 

\begin{center}
\begin{tikzcd}
\op{Ind}: \pic_\Q(\op{Fl}_{L_\delta}) \arrow[r, "\epsilon^*", "\sim"'] & ~\pic_\Q^{\deg =0}(\op{Fl}_L) \arrow[r, twoheadrightarrow, "\iota^*"] & ~\pic_\Q^{\deg=0}(\op{Fl}_L\setminus\mathcal{R}_L) \\
 ~ \arrow[r, "\tau'^*", "\sim"'] & ~\pic_\Q^{\deg =0}(\wh{\mathcal{C}}'\setminus\wh{\mathcal{R}}') \arrow[r, "(\pi^*)^{-1}", "\sim"'] & ~\pic_\Q^{\deg=0}(\op{Fl}_G'). &
\end{tikzcd}
\end{center}
\end{defi}

All maps are isomorphisms or surjections as indicated except possibly that $\iota^*$ is surjective; this follows exactly as in \cite[Lemma 54]{BKiers}. 

Recall that $\mathcal{F}_{2,\Q}$ is $\pic_\Q^{+,\deg=0}(\op{Fl}_G')$ and $\mathcal{C}(L_\delta \subset \wh L_\delta)_\Q$ is $\pic_\Q^+(\op{Fl}_{L_\delta})$.  What we need now is 
\begin{proposition}
The map $\op{Ind}$ restricts to a well-defined surjection 
\begin{center}
\begin{tikzcd}
\op{Ind}: \pic_\Q^+(\op{Fl}_{L_\delta}) \arrow[r, twoheadrightarrow] & ~\pic_\Q^{+,\deg=0}(\op{Fl}_G').
\end{tikzcd}
\end{center}
\end{proposition}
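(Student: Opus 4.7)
The plan is to trace sections through the composition defining $\op{Ind}$, using the key isomorphism results already established. The proof splits cleanly into showing (i) well-definedness (if $\overline{\mathcal{L}}\in \pic_\Q^+(\op{Fl}_{L^{ss}})$, then $\op{Ind}(\overline{\mathcal{L}})\in \pic_\Q^{+,\deg=0}(\op{Fl}_G')$) and (ii) surjectivity.

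For well-definedness, I would start with $\overline{\mathcal{L}}\in \pic_\Q^+(\op{Fl}_{L^{ss}})$ and, after suitably scaling to pass to an honest line bundle, lift via Lemma \ref{interview}(c) to an $L$-linearized $\mathcal{L}_L\in \pic^{+,\deg=0}(\op{Fl}_L)$ with nonzero $L$-invariant section $\sigma$. Restrict $\sigma$ to $\op{Fl}_L\setminus \mathcal{R}_L$ to get a section of $\iota^*\mathcal{L}_L$; since $\gamma$ is trivial by degree 0, Proposition \ref{aGain}(b) identifies this (via $i'^*$) with a nonzero section of $\tau'^*\iota^*\mathcal{L}_L$ on $\wh{\mathcal{C}}'\setminus \wh{\mathcal{R}}'$, i.e.\ a nonzero $G$-invariant section on $\mathcal{C}'\setminus \mathcal{R}'$. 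Because $\pi'$ restricts to an open embedding $\mathcal{C}'\setminus \mathcal{R}'\hookrightarrow G/Q_w'\times \wh G/\wh Q_{\wh w}'$ whose complement has codimension $\ge 2$ in the smooth target (Corollary preceding Proposition \ref{crick}), the section extends uniquely across the complement by Hartogs, yielding a nonzero $G$-invariant section of $\op{Ind}(\overline{\mathcal{L}})$.

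For surjectivity, I would reverse the chain. Start with $\mathcal{N}\in \pic_\Q^{+,\deg=0}(\op{Fl}_G')$ represented (after scaling) by an honest line bundle with nonzero $G$-invariant section $t$. Pull back through $\pi'^*$ and $i'^*$ to obtain a nonzero $L$-invariant section of some line bundle $\mathcal{L}_L'\in \pic^{\deg=0}(\op{Fl}_L\setminus \mathcal{R}_L)$. Next, use the surjectivity of $\iota^*$ (as in \cite{BKiers}*{Lemma 9.1}) to extend $\mathcal{L}_L'$ to a line bundle $\mathcal{L}_L\in \pic^{\deg=0}(\op{Fl}_L)$. At this stage the section only extends as a rational section with some pole of order $n$ along $\mathcal{R}_L$. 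Finally, apply $\epsilon^*$ through Lemma \ref{interview}(c) to obtain $\overline{\mathcal{L}}\in \pic_\Q(\op{Fl}_{L^{ss}})$; the construction shows $\op{Ind}(\overline{\mathcal{L}})=\mathcal{N}$.

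The main obstacle is the pole issue in the preceding step: one must produce a preimage line bundle on $\op{Fl}_L$ which has an honest (not just rational) nonzero invariant section. The crucial observation that resolves it is that $\mathcal{O}(\mathcal{R}_L)\in \pic^{+,\deg=0}(\op{Fl}_L)$ by Lemma \ref{aoe} (its tautological section is $L$-invariant), while $\iota^*\mathcal{O}(\mathcal{R}_L)$ is trivial on $\op{Fl}_L\setminus \mathcal{R}_L$. Thus replacing $\mathcal{L}_L$ by $\mathcal{L}_L\otimes \mathcal{O}(n\mathcal{R}_L)$ for sufficiently large $n$ does not alter $\iota^*\mathcal{L}_L$, preserves the degree zero condition, and absorbs the poles to give an honest nonzero $L$-invariant section. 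Since $L$-invariant sections of a $\deg=0$ bundle correspond to $L^{ss}$-invariant sections of its pullback along $\epsilon$, the resulting $\overline{\mathcal{L}}\in \pic_\Q^+(\op{Fl}_{L^{ss}})$ is the required preimage.
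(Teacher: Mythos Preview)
Your proof is correct. The well-definedness argument is essentially identical to the paper's: both trace a section through the isomorphisms (\ref{jaz1}), (\ref{jaz2}), (\ref{jaz3}), using Lemma \ref{interview}(c), Proposition \ref{aGain}(b), and the codimension-$\ge 2$ extension across $\pi'$.

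For surjectivity the two arguments diverge slightly. The paper does not extend the bundle on $\op{Fl}_L\setminus\mathcal{R}_L$ arbitrarily and then twist; instead it observes that $\mathcal{M}\mapsto \tilde{i}'^*\op{Ind}(\mathcal{M})$ already furnishes a canonical extension to $\op{Fl}_L$, and the commutative triangle
\[
H^0(\op{Fl}_G',\op{Ind}(\mathcal{M})) \xrightarrow{\ \sim\ } H^0(\op{Fl}_L\setminus\mathcal{R}_L,\mathcal{M})
\]
factors through $H^0(\op{Fl}_L,\tilde{i}'^*\op{Ind}(\mathcal{M}))$ with the horizontal restriction injective, forcing all maps to be isomorphisms. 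Your route---extend arbitrarily via the surjectivity of $\iota^*$, then absorb the poles by tensoring with $\mathcal{O}(n\mathcal{R}_L)$ (which is $\deg=0$ by Lemma \ref{aoe}, has an invariant global section, and is trivial off $\mathcal{R}_L$)---is equally valid and perhaps more elementary, at the cost of a non-canonical choice. Note only that $\mathcal{R}_L$ may have several components with different pole orders, so strictly one twists by $\mathcal{O}(\sum n_i\mathcal{R}_i)$; your ``sufficiently large $n$'' handles this.
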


\begin{proof}
First, 
\begin{align}\label{jaz1}
\pic_\Q^+(\op{Fl}_{L_\delta})\simeq \pic_\Q^{+,\deg=0}(\op{Fl}_L)
\end{align}
via $\epsilon_+^*$ by Lemma \ref{interview}(b). 
We will return to $\iota^*$ momentarily. 

Second, for $\mathcal{M}\in \pic_\Q^{+,\deg =0}(\op{Fl}_L\setminus\mathcal{R}_L)$, Proposition \ref{aGain} tells us that $ H^0(\op{Fl}_L\setminus\mathcal{R}_L,\mathcal{M}) \simeq H^0(\wh{\mathcal{C}}'\setminus\wh{\mathcal{R}}',\tau'^*\mathcal{M})$ via $\tau'^*$. So $\tau'^*$ restricts to an isomorphism 
\begin{align}\label{jaz2}
\pic_\Q^{+,\deg=0}(\op{Fl}_L\setminus\mathcal{R}_L)\simeq \pic_{\Q}^{+,\deg=0}(\wh{\mathcal{C}}'\setminus\wh{\mathcal{R}}'),
\end{align}
whose inverse is $i'^*$. 

Third, the isomorphism $\pi^*$ must also induce isomorphisms on the level of global sections, because sections can be extended across codimension $\ge 2$. That is, for $\mathcal{L}$ in $\pic_\Q^{+,\deg=0}(\op{Fl}_G')$, $H^0(\op{Fl}_G',\mathcal{L})\simeq H^0(\wh{\mathcal{C}}'\setminus\wh{\mathcal{R}}',\pi^*\mathcal{L})$. Thus $\pi^*$ restricts to an isomorphism 
\begin{align}\label{jaz3}
\pic_\Q^{+,\deg=0}(\op{Fl}_G')\simeq \pic_\Q^{+,\deg=0}(\wh{\mathcal{C}}'\setminus\wh{\mathcal{R}}').
\end{align}

Finally, take $\mathcal{M}$ in $\pic_\Q^{+,\deg=0}(\op{Fl}_L)$. Then $\tilde{i}'^*\op{Ind}(\mathcal{M})$ lives in $\pic^{\deg=0}_\Q(\op{Fl}_L)$, and the two agree on $\op{Fl}_L\setminus\mathcal{R}_L$ (just check on stalks). The restriction map 
$$
H^0(\op{Fl}_L,\tilde{i}'^*\op{Ind}(\mathcal{M})) \to H^0(\op{Fl}_L\setminus\mathcal{R}_L,\mathcal{M})
$$
is an injection: say a section $\sigma$ vanishes away from $\mathcal{R}_L$; it is supported only on $\mathcal{R}_L$. Sections of $\pi^*\op{Ind}(\mathcal{M})$ are supported on $\wh{\mathcal{C}}\setminus\wh{\mathcal{R}}$, so those of $\tilde{i}'^*\op{Ind}(\mathcal{M})$ are supported away from $\mathcal{R}_L$. We must conclude that $\sigma = 0$; i.e., the map is injective. 

Consider then the diagram:

\begin{center}
\begin{tikzcd}
H^0(\op{Fl}_G',\op{Ind}(\mathcal{M})) \arrow[rd, "{\rotatebox[origin=cc]{-17}{$ \sim $}}"] \arrow[d]&\\
H^0(\op{Fl}_L,\tilde{i}'^*\op{Ind}(\mathcal{M})) \arrow[r] & H^0(\op{Fl}_L\setminus \mathcal{R}_L, \mathcal{M})
\end{tikzcd}
\end{center}
As the horizontal map is an injection, all maps in sight must be isomorphisms. As a consequence, the map $\pic_\Q^{\deg=0}(\op{Fl}_L\setminus \mathcal{R}_L)\to \pic_\Q^{\deg=0}(\op{Fl}_L)$ via $\mathcal{M}\mapsto \tilde{i}'^*\op{Ind}\mathcal{M}$ takes bundles with nonzero global sections to the same, thus providing a section for the surjection 
\begin{center}
\begin{tikzcd}
~\pic_\Q^{+,\deg=0}(\op{Fl}_L) \arrow[r,twoheadrightarrow] &  ~\pic_\Q^{+,\deg=0}(\op{Fl}_L\setminus\mathcal{R}_L).
\end{tikzcd}
\end{center}

Combining this with the isomorphisms (\ref{jaz1}), (\ref{jaz2}), and (\ref{jaz3}) gives the result. 
\end{proof}

\section{Formula for induction}\label{chemII}

As a corollary to the previous section, every extremal ray of $\mathcal{F}_{2,\Q}$ is the image of an extremal ray of $\mathcal{C}(L_\delta\subset \wh L_{\delta})_\Q$. This is because the map $\op{Ind}$ is $\Q$-linear. One might ask whether the embedding $L_\delta \subset \wh L_\delta$ is of the same class as $G\subset \wh G$, in order to decide whether this induction is really a fair burden on the reader. 

\begin{proposition}
Let $\delta\in \mathfrak{S}$. Then (\ref{assumB}) holds for $L/\langle \delta\rangle\hookrightarrow \wh L/\langle\delta\rangle$. 
\end{proposition}
\begin{proof}
Suppose that a nontrivial ideal $I_1$ of $\mathfrak{l}/\C\dot\delta$ is also a nontrivial ideal of $\wh{\mathfrak{l}}/\C\dot\delta$. Decompose
\begin{align*}
\mathfrak{l} &= \C\dot\delta \oplus I_1 \oplus \mathfrak{m}\\
\wh{\mathfrak{l}} &= \C\dot\delta \oplus I_1 \oplus \wh{\mathfrak{m}},
\end{align*}
for suitable reductive Lie algebras $\mathfrak{m}, \wh{\mathfrak{m}}$. 
The $\mathfrak{h}$-weights of $\wh{\mathfrak{l}}/\mathfrak{l}$ coincide with the set of $\mathfrak{h}$-weights of $\wh{\mathfrak{m}}/\mathfrak{m}$. Since $\mathfrak{h}\cap I_1$ has positive dimension and is contained in the common kernel of these weights, we find that $\delta\not\in \mathfrak{S}$. 
\end{proof}

\begin{remark}\label{twenty-four}
The reader may notice that it is possible for $\wh L_\delta$ to have a nontrivial connected center. However, the previous proposition shows such a subgroup must intersect $L_\delta$ with dimension $0$. Strictly speaking, this means $\pic^+_\Q(\op{Fl}_{L_\delta})$ can be identified with $\mathcal{C}(L_\delta \cap [\wh L_\delta, \wh L_\delta]\subset [\wh L_\delta,\wh L_\delta])$. For, setting $\wh L_\delta' =  [\wh L_\delta,\wh L_\delta]$ and $L_\delta' = L_\delta\cap \wh L_\delta'$, we have isomorphisms 
\begin{align*}
L_\delta'/B_\delta'\times \wh L_\delta'/\wh B_\delta' &\xrightarrow{\sim} L_\delta/B_\delta\times \wh L_\delta/\wh B_\delta\\
\pic^{L_\delta'}_\Q (L_\delta'/B_\delta'\times \wh L_\delta'/\wh B_\delta')&\xleftarrow{\sim} 
\pic^{L_\delta}_\Q(L /B_\delta\times \wh L_\delta/\wh B_\delta).
\end{align*}
Nevertheless, the quotient $\mathcal{C}(L_\delta\subset \wh L_\delta) \twoheadrightarrow \mathcal{C}(L_\delta'\subset \wh L_\delta')$ has kernel generated by pairs $(0,\chi)$ where $\chi:\wh L_\delta \to \C^*$, and it is usually preferable to describe the induction map applied to $\mathcal{L} \in \pic^+(\op{Fl}_{L_\delta})$ by its action on (any) lift $\mathcal{L}_\nu\boxtimes \mathcal{L}_{\wh \nu}\in \mathcal{C}(L_\delta\subset \wh L_\delta)$. 
\end{remark}

Therefore $\op{Ind}$ can be used to find extremal rays in theory; in practice it would helpful to have a formula, which we give here. 
To be precise, we complete the proof of Theorem \ref{yadhtrib} by showing the map $\op{Ind}$ of the previous section has the formula stated in the introduction. 

\begin{theorem}
Let $(\nu, \wh \nu) \in \mathfrak{h}_{L_\delta,\Q}^*\times \wh{\mathfrak{h}}_{\wh L_\delta,\Q}^* = (\mathfrak{h}_\Q/(\dot\delta))^*\times (\wh{\mathfrak{h}}_\Q/(\dot\delta))^*$. In other words, $\nu$ and $\wh \nu$ are characters on the original tori but vanish on $\delta$. 
We claim 
$$
\op{Ind}(\mathcal{L}_\nu\boxtimes \mathcal{L}_{\wh \nu}) = \mathcal{L}_\mu\boxtimes \mathcal{L}_{\wh \mu},
$$
where 
\begin{align}\label{formulaTWO}
(\mu, \wh \mu) = (w\nu, \wh w \wh \nu) - \sum_{v\xrightarrow{\alpha_\ell}w} w\nu(\alpha_\ell^\vee) \vec\mu(D(v)) - \sum_{v\xrightarrow{\wh\alpha_\ell}\wh w} \wh w\wh \nu(\wh \alpha_\ell^\vee) \vec\mu(D(v)).
\end{align}
\end{theorem}

\begin{proof}
Set $\mathcal{L} = \mathcal{L}_\mu\boxtimes \mathcal{L}_{\wh \mu}$, where $(\mu, \wh \mu)$ are defined by (\ref{formulaTWO}). Applying Lemmas \ref{ones} and \ref{zeros}, it is first of all clear that $(\mu, \wh \mu)$ satisfy the vanishing conditions of (\ref{fide}) required for membership in $\mathcal{F}_{2,\Q}$. Now letting $p:\op{Fl}_G\to \op{Fl}_G'$ denote the natural projection, consider the diagram
\begin{center}
\begin{tikzcd}
\wh{\mathcal{C}}\setminus \wh{\mathcal{R}} \arrow[d, "\pi"] & \\ 
\op{Fl}_G -\bigcup D(v) \arrow[d, "p"] & \op{Fl}_L \setminus \mathcal{R}_L, \arrow[l, "\tilde i"] \arrow[dl, "\tilde{i}'"] \arrow[ul, "i"'] \\  
\op{Fl}_G' & 
\end{tikzcd}
\end{center}
which commutes thanks to Lemma \ref{missedme}. Evidently $p^*\mathcal{L} = \mathcal{L}_{w\nu}\boxtimes\mathcal{L}_{\wh w\wh \nu}\big|_U$, where $U$ is $\op{Fl}_G- \bigcup D(v)$. Further pulling back via $\tilde i$ yields 
$$
\tilde{i}'^* \mathcal{L} = \mathcal{L}_\nu\boxtimes \mathcal{L}_{\wh \nu} \big|_{\op{Fl}_L\setminus\mathcal{R}_L};
$$
that is, the pullbacks of $\mathcal{L}$ and $\op{Ind}(\mathcal{L}_\nu\boxtimes\mathcal{L}_{\wh \nu})$ to $\op{Fl}_L$ agree on $\op{Fl}_L\setminus\mathcal{R}_L$. Applying Proposition \ref{opendoor}, 
$
(p\circ\pi)^*\mathcal{L}$ and $(p\circ\pi)^*\op{Ind}(\mathcal{L}_\nu\boxtimes\mathcal{L}_{\wh \nu})
$
agree away from $\wh{\mathcal{R}}$, so $p^*\mathcal{L}$ and $p^*\op{Ind}(\mathcal{L}_\nu\boxtimes\mathcal{L}_{\wh \nu})$ agree on $\op{Fl}_G-\bigcup D(v)$. 

Set $\mathcal{M} = (p^*\mathcal{L})^{-1}\otimes p^*\op{Ind}(\mathcal{L}_\nu\boxtimes\mathcal{L}_{\wh \nu})$, considered as a line bundle on $\op{Fl}_G$; then $\mathcal{M} = \mathcal{O}(D)$ for $D$ some sum of divisors $D(v)$. Since $\mathcal{O}(D)$ satisfies the vanishing conditions (\ref{fide}) (it is a tensor product of line bundles that do), $D$ must actually be trivial by Lemmas \ref{ones} and \ref{zeros}. Since $p^*$ is injective, this completes the proof. 
\end{proof}

\section{On the number of components of $\mathcal{R}_L$}\label{help}

Recall that for $(w,\wh w, \delta)$ with $\delta\in \mathfrak{S}$ satisfying 
$$
\phi_\delta^\odot\left([\wh X_{\wh w}]\right)\odot_0 [X_w] = [X_e],
$$
the associated face $\mathcal{F}(w,\wh w,\delta)$ has codimension 1. 

Let $\mathcal{R}_1,\hdots, \mathcal{R}_c$ be the irreducible components of $\mathcal{R}_L$ (really its inverse image in $L/B_L\times \wh L/\wh B_{\wh L}$). Since $L$ is connected, each $\mathcal{R}_i$ is fixed by $L$ and therefore induces a line bundle $\mathcal{O}(\mathcal{R}_i)$ on $\op{Fl}_L$. An important observation is that $\dim H^0(\op{Fl}_L\setminus \mathcal{R}_L,\mathcal{O}) = 1$, so therefore 
$$
\dim H^0(\op{Fl}_L,\mathcal{O}(N_1\mathcal{R}_1)\otimes \cdots \otimes \mathcal{O}(N_c\mathcal{R}_c))=1
$$
for any choices of $N_i\ge 0$. By  \cite[Lemma 62]{BKiers}, we have the following lemma. 

\begin{lemma}
The set $\{\mathcal{O}(\mathcal{R}_1),\hdots,\mathcal{O}(\mathcal{R}_c)\}$ gives a $\Z$-basis for the kernel of the restriction \\ $\pic^{\op{deg}=0}(\op{Fl}_{L})\to \pic^{\op{deg}=0}(\op{Fl}_L\setminus \mathcal{R}_L)$.
\end{lemma}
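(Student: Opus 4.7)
The strategy is to exhibit the sequence of $\mathbb{Z}$-modules
\[
0 \longrightarrow \mathbb{Z}^c \stackrel{\Phi}{\longrightarrow} \pic(\op{Fl}_L) \longrightarrow \pic(\op{Fl}_L\setminus \mathcal{R}_L) \longrightarrow 0
\]
with $\Phi(e_i) = \mathcal{O}(\mathcal{R}_i)$, and verify that it is exact at $\pic(\op{Fl}_L)$ and that $\Phi$ is injective.

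First, I would check exactness at the middle term, which is just the standard localization exact sequence for Picard groups of smooth (stack) quotients: restricting a line bundle to the open complement of a divisor kills exactly those classes representable by a Weil divisor supported on that divisor. Since $L$ is connected, each irreducible component $\mathcal{R}_i$ is $L$-stable as a divisor, so $\mathcal{O}(\mathcal{R}_i)$ descends to an $L$-equivariant line bundle on $\op{Fl}_L$. Thus every class in the kernel of restriction is of the form $\mathcal{O}\bigl(\sum n_i \mathcal{R}_i\bigr)$ for some $n_i\in\mathbb{Z}$, i.e., lies in the image of $\Phi$.

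Next, injectivity of $\Phi$. Suppose $\sum_i n_i\mathcal{O}(\mathcal{R}_i) \cong \mathcal{O}$ in $\pic(\op{Fl}_L)$. Split $n_i = a_i - b_i$ with $a_i,b_i\ge 0$ and $a_ib_i=0$; this gives an isomorphism
\[
\mathcal{O}\Bigl(\sum_i a_i\mathcal{R}_i\Bigr) \;\cong\; \mathcal{O}\Bigl(\sum_i b_i\mathcal{R}_i\Bigr).
\]
By the hypothesis recalled just before the lemma, each of these $L$-equivariant line bundles has a one-dimensional space of global sections on $\op{Fl}_L$, generated by the respective canonical (``defining") section $s_{(a)}$, $s_{(b)}$ whose vanishing locus is the corresponding effective divisor with the prescribed multiplicities. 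Under the isomorphism, $s_{(a)}$ must map to a nonzero scalar multiple of $s_{(b)}$, so their vanishing loci (with multiplicities) coincide: $a_i = b_i$ for each $i$, and hence $n_i=0$.

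The only thing to be slightly careful about is that the canonical section of $\mathcal{O}\bigl(\sum a_i\mathcal{R}_i\bigr)$ vanishes to order exactly $a_i$ along $\mathcal{R}_i$ (not higher), which is immediate from the definition of the line bundle associated to an effective divisor; all the actual content is in the surjectivity and the one-dimensionality of global sections, which is already in hand. I expect no real obstacle: the argument is a standard localization-plus-uniqueness-of-sections argument, and the main work (computing $\dim H^0=1$) has already been done in the preceding sections.
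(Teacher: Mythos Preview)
Your argument is correct and is precisely the expected one. The paper does not actually prove this lemma but cites \cite{BKiers}*{Lemma 10.2}, where the argument is the same as yours: the localization sequence identifies the kernel of restriction with divisors supported on $\mathcal{R}_L$, and injectivity comes from the one-dimensionality of $H^0(\op{Fl}_L,\mathcal{O}(\sum N_i\mathcal{R}_i))$ for $N_i\ge 0$, which forces the canonical sections on both sides of a putative relation to have identical zero divisors.
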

As before, let $q$ denote the number of type I extremal rays on $\mathcal{F}$. 

\begin{proposition}\label{sloppyeq}
$ $
$$
c = q - |\wh \Delta| + |\Delta(\wh P(\delta))|.
$$
\end{proposition}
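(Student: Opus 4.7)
The plan is to express $c$ as a difference of Picard ranks and then match the combinatorics of arrows against the ranks of parabolic stabilizers. The preceding lemma identifies $\{\mathcal{O}(\mathcal{R}_i)\}_{i=1}^c$ as a $\Z$-basis of the kernel of the (surjective) restriction $\pic(\op{Fl}_L)\twoheadrightarrow \pic(\op{Fl}_L\setminus\mathcal{R}_L)$, giving
$$c = \operatorname{rank}\pic(\op{Fl}_L) - \operatorname{rank}\pic(\op{Fl}_L\setminus\mathcal{R}_L).$$
Combining the isomorphism of Proposition \ref{crick} with the isomorphism $\pi'^*:\pic(\op{Fl}_G')\xrightarrow{\sim}\pic(\wh{\mathcal{C}}'\setminus\wh{\mathcal{R}}')$ established just before it, the second rank becomes $\operatorname{rank}\pic(\op{Fl}_G')$.

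Next I would compute the two Picard ranks using the quotient stack presentations. For $\op{Fl}_L = [(L/B_L\times \wh L/\wh B_{\wh L})/L]$, the transitive $L$-action on $L/B_L$ (stabilizer $B_L$) reduces things to $\pic^{B_L}(\wh L/\wh B_{\wh L})$; the standard equivariant short exact sequence
$$0\to X^*(B_L)\to \pic^{B_L}(\wh L/\wh B_{\wh L})\to \pic(\wh L/\wh B_{\wh L})\to 0$$
yields $\operatorname{rank}\pic(\op{Fl}_L) = |\Delta| + |\Delta(\wh P)|$, using that $X^*(B_L) = X^*(H)$ has rank $|\Delta|$ and that $\wh L/\wh B_{\wh L}\simeq \wh L^{ss}/\wh B^{ss}_{\wh L}$ is a flag variety for the semisimple group $\wh L^{ss}$ (Picard rank $|\Delta(\wh P)|$). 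The same strategy for $\op{Fl}_G' = [(G/Q'_w\times \wh G/\wh Q'_{\wh w})/G]$, combined with $\operatorname{rank} X^*(Q'_w) = |\Delta|-|\Delta(Q'_w)|$ and $\operatorname{rank}\pic(\wh G/\wh Q'_{\wh w}) = |\wh\Delta|-|\Delta(\wh Q'_{\wh w})|$, gives
$$\operatorname{rank}\pic(\op{Fl}_G') = (|\Delta|-|\Delta(Q'_w)|) + (|\wh\Delta|-|\Delta(\wh Q'_{\wh w})|),$$
so $c = |\Delta(Q'_w)|+|\Delta(\wh Q'_{\wh w})| + |\Delta(\wh P)| - |\wh\Delta|$.

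Finally I would identify $q$ with $|\Delta(Q'_w)|+|\Delta(\wh Q'_{\wh w})|$. A type I arrow $v\xrightarrow{\alpha_\ell} w$ (with $v\in W^P$) is equivalent to the data of a simple root $\alpha_\ell$ with $s_{\alpha_\ell}w<w$ --- i.e., $\alpha_\ell\in \Delta(Q'_w)$ --- for which $v = s_{\alpha_\ell}w$ lies in $W^P$; but the latter is automatic for $w\in W^P$, since for any $\beta\in\Delta(P)$ one has $w\beta\in R^+$ and $w\beta\ne\alpha_\ell$ (else $w^{-1}\alpha_\ell = \beta\in R^+$, contradicting $s_{\alpha_\ell}w<w$), so $s_{\alpha_\ell}(w\beta)\in R^+$. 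The same reasoning on the $\wh G$-side gives $q_{\wh G} = |\Delta(\wh Q'_{\wh w})|$, hence $q = |\Delta(Q'_w)|+|\Delta(\wh Q'_{\wh w})|$ and the formula $c = q - |\wh\Delta| + |\Delta(\wh P)|$ follows. The main subtlety I expect is the Picard rank $\operatorname{rank}\pic(\wh L/\wh B_{\wh L}) = |\Delta(\wh P)|$ (not $|\wh\Delta|$): this reflects that $\wh L$ is only reductive (its own characters contribute a kernel), and it is precisely what produces the correction $-|\wh\Delta|+|\Delta(\wh P)|$ distinguishing the formula from the naïve expectation $c = q$.
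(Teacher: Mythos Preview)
Your argument is correct and takes a genuinely different route from the paper's. The paper works throughout with the $\deg=0$ subgroups: it invokes the isomorphism $\pic_\Q^{\deg=0}(\op{Fl}_L\setminus\mathcal{R}_L)\simeq\pic_\Q^{\deg=0}(\op{Fl}_G')$ coming from the definition of $\op{Ind}$, computes the left side as $|\Delta(P)|+|\Delta(\wh P)|-c$ (via Lemma~\ref{interview}), and gets the right side as $|\Delta|+|\wh\Delta|-|\Delta\setminus\Delta(P)|-q$ by combining Ressayre's codimension result $\codim\mathcal{F}=|\Delta\setminus\Delta(P)|$ (quoted at the top of the section, which uses the Case~\ref{caseB} hypothesis) with the product decomposition $\mathcal{F}_\Q\simeq\prod\Q_{\ge0}\vec\mu(D(v))\times\mathcal{F}_{2,\Q}$ of Theorem~\ref{1+2}. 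You instead compare the \emph{full} Picard groups, using Proposition~\ref{crick} and the isomorphism $\pi'^*$ just before it directly, and close the computation with the combinatorial identity $q=|\Delta(Q'_w)|+|\Delta(\wh Q'_{\wh w})|$, which you prove from the definition of $W^P$. This bypasses Ressayre's theorem entirely; in particular your argument does not appeal to the Case~\ref{caseB} hypothesis, which is consonant with the paper's observation in Section~\ref{gonzalo} that the identity persists in a Case~\ref{caseA} example. Both approaches are short; yours is a bit more self-contained, while the paper's makes the role of the face dimension $\dim\mathcal{F}_\Q$ more visible.
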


\begin{proof}
Recall the isomorphism $\pic_\Q^{\deg=0}(\op{Fl}_L\setminus \mathcal{R}_L)\simeq \pic_\Q^{\deg=0}(\op{Fl}_G')$ and that $\dim \pic_\Q^{\op{deg}=0}(\op{Fl}_{G}') = \dim \mathcal{F}_2 = \dim\mathcal{F}-q$. 
Let $r = \dim X^*(T)$ and $\wh r = \dim X^*(\wh T)$ (here $X^*(M)$ denotes the character lattice for any algebraic group $M$). Counting $\Q$-dimensions, we have 
\begin{align*}
c&=\dim \pic^{\op{deg}=0}_\Q(\op{Fl}_{L}) - \dim \pic_\Q^{\op{deg}=0}(\op{Fl}_{L}\setminus\mathcal{R}_L)\\
&=r+\wh r-\dim X^*(\wh L) - 1 - (\dim \mathcal{F}-q)\\
&=r+\wh r - \dim X^*(\wh L) - 1 -(r+\wh r-1)+q\\
&=q-\dim X^*(\wh L).
\end{align*}
\end{proof}

\section{Inequalities for testing rays $(0,\wh\omega_j)$}\label{extraz}

Recall Observation \ref{obviate} from the introduction: 
\begin{proposition}
If $(\mu,\wh \mu)$ gives an extremal ray of $\mathcal{C}(G\hookrightarrow \wh G)$ and does not belong to any regular face, then $\mu=0$ and, up to scaling, $\wh \mu$ is a fundamental dominant weight.
\end{proposition}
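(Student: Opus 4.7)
The plan is to identify which of the defining inequalities of $\mathcal{C}(G\to \wh G)$ are tight at such an extremal ray. Recall that these inequalities come in two families: the dominance conditions $\mu(\alpha_i^\vee)\ge 0$ for $\alpha_i\in\Delta$ and $\wh\mu(\wh\alpha_j^\vee)\ge 0$ for $\wh\alpha_j\in\wh\Delta$, together with the regular face inequalities of Theorem \ref{christus}. By hypothesis, every regular face inequality holds strictly at $(\mu,\wh\mu)$, so the smallest face of $\mathcal{C}(G\to \wh G)$ having $(\mu,\wh\mu)$ in its relative interior is cut out from $\mathcal{C}(G\to \wh G)$ purely by the tight dominance equalities. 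Setting $I=\{i: \mu(\alpha_i^\vee)=0\}$ and $J=\{j:\wh\mu(\wh\alpha_j^\vee)=0\}$, this face equals $\mathcal{C}(G\to \wh G)\cap V_{I,J}$, where $V_{I,J}\subseteq \mathfrak{h}^*\times\wh{\mathfrak{h}}^*$ is the linear subspace where the tight dominance coordinates vanish.

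Next I would argue that a small neighborhood of $(\mu,\wh\mu)$ in $V_{I,J}$ sits inside $\mathcal{C}(G\to \wh G)$ (working in case \ref{caseB}): the dominance conditions indexed by $I$ and $J$ hold identically on $V_{I,J}$; the complementary dominance conditions, and each regular face inequality, hold strictly at $(\mu,\wh\mu)$ and therefore on an open neighborhood of $(\mu,\wh\mu)$ inside $V_{I,J}$ by continuity. Hence this face has dimension $\dim V_{I,J}=|\Delta|+|\wh\Delta|-|I|-|J|$. An extremal ray has dimension $1$, forcing $|I|+|J|=|\Delta|+|\wh\Delta|-1$, that is, exactly one dominance inequality is strict at $(\mu,\wh\mu)$.

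Accordingly, up to positive scaling, $(\mu,\wh\mu)$ must be either $(\omega_i,0)$ for some $i$ or $(0,\wh\omega_j)$ for some $j$. The first alternative is ruled out: $(\omega_i,0)\in\mathcal{C}(G\to \wh G)$ would demand $V(N\omega_i)^G\ne(0)$ for some $N\ge 1$, but $V(N\omega_i)$ is a nontrivial irreducible representation of the semisimple group $G$ and hence carries no nonzero $G$-invariants. Only the second possibility survives, giving the statement.

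The main obstacle is case \ref{caseA}: there, $\mathcal{C}(G\to \wh G)$ lies in a proper subspace of $\mathfrak{h}^*\times\wh{\mathfrak{h}}^*$ due to hidden linear relations, which must be encoded by opposite-pair regular face equalities from Theorem \ref{christus}. Every point of $\mathcal{C}(G\to \wh G)$ therefore already lies on those regular faces, so the hypothesis of the observation is typically vacuous in case \ref{caseA}; the substantive content is in case \ref{caseB}, handled by the argument above.
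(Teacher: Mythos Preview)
Your argument is correct and matches the paper's approach: an extremal ray of $\mathcal{C}(G\to\wh G)$ avoiding every regular face must already be extremal in the ambient dominant chamber $\mathfrak{h}^*_{\Q,+}\times\wh{\mathfrak{h}}^*_{\Q,+}$, and then $(\omega_i,0)$ is excluded exactly as you do. The paper's proof is a terse two-liner with no case split; your case~\ref{caseA} discussion is correct in spirit, but the assertion that the hidden linear relations ``must be encoded by'' regular face equalities is stated without proof (it holds because no dominance hyperplane contains all of $\mathcal{C}(G\to\wh G)$, as every strictly dominant $\mu$ admits some $\wh\mu$ with $(\mu,\wh\mu)$ in the cone, and conversely), and in fact the extremality transfer to the dominant chamber works uniformly without assuming full-dimensionality.
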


\begin{proof}
If $(\mu,\wh \mu)$ is not on any regular face, then it is an extremal ray for the dominant cone $\mathfrak{h}^*_{\Q,+}\times \wh{\mathfrak{h}}^*_{\Q,+}$ itself. These are (up to scaling) all either of the form $(\omega_i,0)$, where $\omega_i$ is a fundamental weight for $G$ w.r.t. $B$, or $(0,\wh\omega_j)$, where $\wh\omega_j$ is the same for $\wh G$ w.r.t. $\wh B$. Of course, the first of these never occurs, since no non-trivial $G$ representation appears as a subrepresentation of the trivial representation for $\wh G$.
\end{proof}

Testing whether a candidate $(0,\wh\omega_j)$ is indeed a ray of the cone amounts to checking whether it belongs to the cone, which may be done by verifying the inequalities of Theorem \ref{christus}. In this section, we substantially whittle down the number of inequalities needed for this, depending on $j$. 

First define $\mathfrak{T}$ to be the set of all indivisible one-parameter subgroups of $T$ which give an extremal ray of a cone $\mathfrak{h}_{\Q,+}\cap \wh v\wh{\mathfrak{h}}_{\Q,+}$ for some $\wh v\in \wh W$.
Now fix an index $j\in \{1,\hdots, \op{rk}(\wh G)\}$. Define a set 
$$
S_j=\left\{(\wh w,\delta) \mid \delta\in \mathfrak{T}, \phi_\delta^\odot[\wh X_{\wh w}] = [X_e], \wh X_{\wh w}\subseteq \wh X_{s_i\wh w}\implies i=j  \right\}.
$$

\begin{theorem}\label{hopeful}
The ray generated by $(0,\wh \omega_j)$ is an extremal ray of $\mathcal{C}(G\hookrightarrow \wh G)$ if and only if for all $(\wh w,\delta)\in S_j$, the inequality 
$$
\wh \omega_j(\wh w\dot\delta)\le 0
$$
holds. 

Furthermore, if $\op{Wt}_\mathfrak{h}(\wh{\mathfrak{g}}/\mathfrak{g}) = \op{Wt}_\mathfrak{h}(\wh{\mathfrak{g}})$, 
then the smaller set of inequalities associated to $(\wh w , \delta)\in S_j$ with $\delta \in \mathfrak{S}$ will suffice. 
\end{theorem}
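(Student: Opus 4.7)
The plan is to prove the iff in two stages and then handle the "furthermore" clause separately. First, I reduce extremality of $(0, \wh\omega_j)$ to mere membership in $\mathcal{C}(G\to\wh G)$: the ray $\Q_{\ge 0}(0, \wh\omega_j)$ already generates an extremal ray of the ambient dominant cone $\mathfrak{h}^*_{\Q,+}\times \wh{\mathfrak{h}}^*_{\Q,+}$, lying on every defining wall except $\langle \wh\mu, \wh\alpha_j^\vee\rangle = 0$; since $\mathcal{C}(G\to\wh G)$ is contained in the dominant cone, belonging to $\mathcal{C}(G\to\wh G)$ already forces extremality, matching Observation \ref{obviate}.

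The key symmetry is that $\wh\omega_j$ is fixed by the parabolic subgroup $\wh W^{(j)} := \langle s_i : i\ne j\rangle \subseteq \wh W$, since $s_i(\wh\omega_j) = \wh\omega_j - \delta_{ij}\wh\alpha_i = \wh\omega_j$ for $i\ne j$. Hence for any $\sigma\in \wh W^{(j)}$ and any $\wh w, \delta$,
$$
\wh\omega_j((\sigma\wh w)\dot\delta) = (\sigma^{-1}\wh\omega_j)(\wh w\dot\delta) = \wh\omega_j(\wh w\dot\delta),
$$
so the value of the Ressayre inequality depends only on the left coset $\wh W^{(j)}\wh w$. For the forward direction, if $(0, \wh\omega_j)\in\mathcal{C}(G\to\wh G)$ and $(\wh w, \delta)\in S_j$ with $\delta\in\mathfrak{S}$, then $\phi_\delta^\odot[\wh X_{\wh w}] = [X_e]$ combined with $[X_{w_0^P}] = 1$ gives the Ressayre condition for $(w_0^P, \wh w,\delta)$, whence $\wh\omega_j(\wh w\dot\delta)\le 0$ by Theorem \ref{christus}; for $\delta\in\mathfrak{T}\setminus\mathfrak{S}$ (only possible in case \ref{caseB}), the same inequality follows from the fact that the corresponding general Berenstein--Sjamaar inequality remains valid on the cone, being implied by the minimal Ressayre system.

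For the reverse direction, assume all $S_j$-inequalities and let $(w, \wh w, \delta)$ be any Ressayre triple yielding the inequality $\wh\omega_j(\wh w\dot\delta)\le 0$; the goal is to produce $\wh w' \in \wh W^{(j)}\wh w$ with $(\wh w', \delta)\in S_j$. By the coset invariance from the previous paragraph, the inequality value is preserved, so the proof reduces to finding $\wh w'$ in this coset with left ascents in $\{j\}$ and $\phi_\delta^\odot[\wh X_{\wh w'}] = [X_e]$. The main obstacle is the length constraint $\ell(\wh w') = \dim \wh G/\wh P - \dim G/P$ built into $\phi_\delta^\odot[\wh X_{\wh w'}] = [X_e]$, which may differ from $\ell(\wh w)$. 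Overcoming this will require iteratively applying left reflections $s_i$ with $i\ne j$ (either raising or lowering length), while tracking the Schubert expansion of the pullback via Chevalley-type formulas and the Ressayre length relation $\ell(w) + \ell(\wh w) = \dim \wh G/\wh P$, ultimately reaching an element whose pullback is exactly the point class.

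Finally, for the "furthermore" clause, the hypothesis $\op{Wt}_\mathfrak{h}(\wh{\mathfrak{g}}/\mathfrak{g}) = \op{Wt}_\mathfrak{h}(\wh{\mathfrak{g}})$ forces $\mathfrak{T} = \mathfrak{S}$, so the $\mathfrak{T}$-indexed inequalities coincide with the $\mathfrak{S}$-indexed ones. Indeed, each wall of $\mathfrak{h}_{\Q,+}\cap \wh v\wh{\mathfrak{h}}_{\Q,+}$ is cut out by an $\mathfrak{h}$-weight of $\wh{\mathfrak{g}}$ (either a simple root of $G$ or the restriction $\wh v\wh\alpha_k|_\mathfrak{h}$ for some simple root $\wh\alpha_k$ of $\wh G$), and under the hypothesis, every such weight is also an $\mathfrak{h}$-weight of $\wh{\mathfrak{g}}/\mathfrak{g}$. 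Thus any extremal ray $\delta$ of such a cone is orthogonal to a hyperplane spanned by $\mathfrak{h}$-weights of $\wh{\mathfrak{g}}/\mathfrak{g}$, placing $\delta \in \mathfrak{S}$ as desired.
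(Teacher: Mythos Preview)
Your forward direction and your treatment of the ``furthermore'' clause are both correct. The observation that $\wh\omega_j(\wh w\dot\delta)$ depends only on the coset $\wh W^{(j)}\wh w$ is the right symmetry, and your argument that the hypothesis $\op{Wt}_\mathfrak{h}(\wh{\mathfrak{g}}/\mathfrak{g}) = \op{Wt}_\mathfrak{h}(\wh{\mathfrak{g}})$ forces $\mathfrak{T}\subseteq \mathfrak{S}$ is clean and in fact slightly more general than what the paper proves (the paper only checks this for the particular $\chi$ that arises in its construction).

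The genuine gap is in the reverse direction. You propose, for each Ressayre triple $(w,\wh w,\delta)$, to find $\wh w'\in \wh W^{(j)}\wh w$ with $(\wh w',\delta)\in S_j$, so that the given Ressayre inequality is literally one of the $S_j$-inequalities. But you never establish that such a $\wh w'$ exists, and there is no evident mechanism to produce one. Membership in $S_j$ demands simultaneously that $\ell(\wh w') = \dim \wh G/\wh P - \dim G/P$, that $\phi_\delta^\odot[\wh X_{\wh w'}] = [X_e]$ (in particular nonzero in the deformed ring), and that $\wh w'$ have no left ascent by any $s_i$ with $i\ne j$. Starting from a general Ressayre $\wh w$ (whose length is $\dim \wh G/\wh P-\ell(w)$, typically strictly larger), your sketch ``iteratively apply $s_i$ with $i\ne j$ while tracking the Schubert expansion via Chevalley-type formulas'' gives no control over the nonvanishing of the deformed pullback along the way, nor any reason the process terminates at an element satisfying all three constraints at once. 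This is the heart of the theorem, and the plan as written is not a proof.

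The paper's argument is fundamentally different: it is geometric rather than combinatorial. Assuming $(0,\wh\omega_j)\notin\mathcal{C}(G\to\wh G)$, every point of $G/B\times \wh G/\wh B$ is unstable for $\mathcal{L}_0\boxtimes\mathcal{L}_{\wh\omega_j}$; one takes a generic unstable point $x$ and its Kempf maximally destabilizing one-parameter subgroup. Replacing the dominant conjugate $\epsilon$ by a suitable extremal ray $\chi\in\mathfrak{T}$ of the face of $\mathfrak{h}_{\Q,+}\cap \wh v\wh{\mathfrak{h}}_{\Q,+}$ containing $\epsilon$, the uniqueness property of Kempf's parabolic (transported to $\chi$ via a convexity argument on the function $J(h)=\mathfrak{L}(h)/q(h)$) forces the associated Schubert data $(w,\wh w)$ to satisfy $w = w_0w_0^P$ and to have no left ascent by $s_i$, $i\ne j$: otherwise a second point $hP(\chi)$ in a different Schubert cell would realize the same Mumford number, contradicting uniqueness. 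Levi-movability is then checked by flowing $x$ to its limit under the Kempf OPS. This GIT input---specifically Kempf's theorem and the Ramanan--Ramanathan limit argument---is what produces the witness $(\wh w,\chi)\in S_j$ with $\wh\omega_j(\wh w\dot\chi)>0$; it does not arise from a purely Weyl-group-combinatorial reduction of one inequality to another.
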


Before we come to the proof of the theorem, we recall a few definitions and results from geometric invariant theory which are applicable to our context. We use the notation and formulations of \cite[\S 3]{Kumar}. 

\begin{defi}
Given an algebraic group $S$ acting on a variety $\mathbb{X}$, an $S$-linearized line bundle $\mathbb{L}$ on $\mathbb{X}$, a point $x\in \mathbb{X}$ and a one-parameter subgroup $\delta: \C^* \to S$ such that $\lim_{t\to 0}\delta(t)x$ exists, Mumford 
defines an integer $\mu^\mathbb{L}(x,\delta)$ as follows. The $\C^*$-action on $\mathbb{X}$ induced by $\delta$ has $x_0 = \lim_{t\to 0}\delta(t)x$ as a fixed point, so the fibre of $\mathbb{L}$ above $x_0$ inherits a $\C^*$ action via some character. Characters of $\C^*$ are in bijection with the integers, and we take $\mu^\mathbb{L}(x,\delta)$ to be the integer associated with the character of the fibre action. 
\end{defi}

Let $\mathbb{L} = \mathcal{L}_\mu\boxtimes \mathcal{L}_{\wh \mu}$ be a line bundle over $G/B\times \wh G/\wh B$. Let $\delta$ be a dominant OPS. Let $fP(\delta)$ and $gB, \wh g\wh B$ satisfy
$$
\phi_\delta(fP(\delta))\in \phi_\delta(gBwP(\delta))\cap \wh g \wh B\wh w\wh P(\delta),
$$ 
for some $w\in W/W_P$ and $\wh w\in \wh W/\wh W_{\wh P}$. For $x = (gB, \wh G\wh B)\in G/B\times \wh G/\wh B$, we can calculate $\mu^\mathbb{L}(x,f\delta f^{-1})$ explicitly by \cite[Proposition 3.5, Lemma 3.6]{Kumar}: 
\begin{lemma}
$ $
$$
\mu^{\mathbb{L}}(x,f\delta f^{-1}) = -\mu(w \dot \delta)-\wh \mu(\wh w \dot \delta)
$$
\end{lemma}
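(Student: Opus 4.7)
The plan is to reduce to the two factors separately, translate by $f$ to put $\delta$ in standard form, and then compute the character of $\delta$-action on the relevant fibre of $\mathcal{L}_\mu$ (resp.\ $\mathcal{L}_{\wh\mu}$) by explicit Borel--Weil bookkeeping. Since $\mathbb{L} = \mathcal{L}_\mu \boxtimes \mathcal{L}_{\wh\mu}$ is an external tensor product and the action of $f\delta f^{-1}$ on $G/B \times \wh G/\wh B$ is diagonal, $\mu^{\mathbb{L}}$ is additive over the two factors, so it suffices to show $\mu^{\mathcal{L}_\mu}(gB, f\delta f^{-1}) = -\mu(w\dot\delta)$ and the analogous identity on the $\wh G$-side.

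For the first identity, the key is the standard invariance
$\mu^{\mathcal{L}_\mu}(hy, h\gamma h^{-1}) = \mu^{\mathcal{L}_\mu}(y, \gamma)$
available because $\mathcal{L}_\mu$ is $G$-linearized. Taking $h = f^{-1}$ reduces the computation to
$\mu^{\mathcal{L}_\mu}(f^{-1}gB, \delta)$.
The hypothesis $fP \in gBwP$ lets me write $f = gbwp$ with $b \in B$ and $p \in P = P(\delta)$, so that $f^{-1}gB = p^{-1}w^{-1}B$. Now I compute
$\lim_{t\to 0}\delta(t)\cdot p^{-1}w^{-1}B$:
since $p \in P(\delta)$, the limit $p_0 := \lim_{t\to 0}\delta(t)p^{-1}\delta(t)^{-1}$ exists and lies in $L(\delta)$, and since $w^{-1}B$ is $H$-fixed (hence $\delta$-fixed), the decomposition
$\delta(t)p^{-1}w^{-1}B = (\delta(t)p^{-1}\delta(t)^{-1})\cdot \delta(t)w^{-1}B$
shows the limit equals $p_0 w^{-1}B$, which is $\delta$-fixed because $p_0$ commutes with the image of $\delta$.

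The weight of the $\delta$-action on the fibre $(\mathcal{L}_\mu)_{p_0 w^{-1}B}$ is then read off in the Borel--Weil model $\mathcal{L}_\mu = G \times_B \C_{-\mu}$: using $p_0 \in L(\delta)$ to commute it past $\delta(t)$,
\al{
\delta(t)\cdot [p_0 w^{-1}, c] = [p_0\, w^{-1}(w\delta(t)w^{-1}),\, c] = [p_0 w^{-1},\, t^{-\mu(w\dot\delta)} c],
}
because $w\delta(t)w^{-1}$ lies in $H \subseteq B$ and acts on $\C_{-\mu}$ with weight $-\mu(w\dot\delta)$. This gives $\mu^{\mathcal{L}_\mu}(gB, f\delta f^{-1}) = -\mu(w\dot\delta)$.

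The $\wh G$-factor is handled verbatim with $\wh P(\delta)$, $\wh B$, $\wh w$, $\wh\mu$ in place of $P$, $B$, $w$, $\mu$; the argument only uses that $\delta$ defines the parabolic $\wh P(\delta)$ in $\wh G$ and that the limit $\wh p_0$ of $\wh v^{-1}$ conjugated by $\delta$ lies in $\wh L(\delta)$, neither of which requires $\delta$ to be $\wh B$-dominant. Summing the two contributions yields the claimed formula. The only real obstacle is bookkeeping with conventions (especially the sign built into the Borel--Weil identification of $\mathcal{L}_\mu$ and the direction of the $\mu^{\mathbb{L}}$-character), but once these are fixed the computation is mechanical.
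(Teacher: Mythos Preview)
Your argument is correct and is essentially the standard computation that the paper defers to \cite{Kumar}*{Proposition 3.5, Lemma 3.6}; the paper gives no independent proof of this lemma. Your write-up spells out explicitly the $G$-invariance reduction, the limit computation via the Levi decomposition of $P(\delta)$, and the Borel--Weil fibre calculation that those references package.
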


Now, given an unstable point $x\in \mathbb{X}$, Kempf defines a \emph{maximally destabilizing} OPS, whose properties we recall here. Let $M(S)$ be the set of fractional one-parameter subgroups (see for example \cite[\S 6]{Kumar}) and $q$ an $S$-invariant norm $M(S)\to \mathbb{R}_{\ge 0}$. Set 
$$
q^*(x) = \inf_{\wh \delta\in M(S)}\{q(\wh \delta) | \mu^{\mathbb{L}}(x,\wh \delta)\le -1\},
$$
and 
$$
\Lambda(x) = \{\wh \delta \in M(S) | \mu^\mathbb{L}(x,\wh \delta)\le -1, q(\wh \delta) = q^*(x)\}.
$$
In \cite{Kempf}, Kempf proves that $\Lambda(x)$ is nonempty and that the associated parabolics $P(\wh \delta)$ for $\wh \delta\in \Lambda(x)$ are identical (they are thus referred to as $P(x)$); in fact $\Lambda(x)$ is a single $P(x)$-orbit under conjugation. 

\begin{proof}[Proof of Theorem \ref{hopeful}]

The direction $(\Rightarrow)$ is clear, since $\phi_\delta^\odot[\wh X_{\wh w}] = [X_e]\iff \phi_\delta^\odot[\wh X_{\wh w}]\odot_0[X_{w_0w_0^P}] = [X_e]$, where $w_0^P$ is the longest element of $W_P$. 

For $(\Leftarrow)$, assume $(0,\wh \omega_j)$ is not an extremal ray. Then $(0,\wh \omega_j)\not \in \mathcal{C}(G\hookrightarrow \wh G)$. Therefore $G/B\times \wh G/\wh B$ has no semistable points for the line bundle $\mathcal{L}_0\boxtimes \mathcal{L}_{\wh\omega_j}$. Pick any $(g,\wh g)\in G\times \wh G$ such that 
\begin{align*}
&\phi_{\delta}(gC_w^P)\cap \wh g\wh C_{\wh w}^{\wh P} \text{ and } \phi_{\delta}(gX_w^P)\cap \wh g\wh X_{\wh w}^{\wh P} \text{ are proper intersections in $\wh G/\wh P$ and }\\
&\phi_{\delta}(gC_w^P)\cap \wh g\wh C_{\wh w}^{\wh P} \text{ is dense inside } \phi_{\delta}(gX_w^P)\cap \wh g\wh X_{\wh w}^{\wh P}
\end{align*}
for any dominant $\delta$ and $(w,\wh w)\in W^P\times \wh W^{\wh P}$, where $P=P(\delta)$ and $\wh P=\wh P(\delta)$. 

Since $x=(\bar g,\overline{\wh g})\in G/B\times \wh G/\wh B$ is unstable, we may find a Kempf's OPS $\wh \delta = [\delta, a]\in \Lambda(x)$ associated to it. Let $\epsilon = f^{-1}\delta f$ be the dominant translate of $\delta$ whose image lives in $T$. Set $P = P(\epsilon), \wh P = \wh P(\epsilon)$. Find the unique $w\in W/W_P$ and $\wh w\in \wh W/\wh W_{\wh P}$ such that 
$$
\phi_{\epsilon}(fP) \in \phi_\epsilon(gBwP)\cap \wh g\wh B\wh w\wh P.
$$

\begin{lemma}\label{a-prox}
$ $
$$
\phi_\epsilon(gBwP)\cap \wh g\wh B\wh w\wh P = \{f\wh P\}.
$$
\end{lemma}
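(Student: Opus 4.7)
The point $f\wh P$ lies in the intersection by construction, so the task is uniqueness. My plan is to use Kempf's rigidity theorem: every point in the intersection will give another element of $\Lambda(x)$ with the same $\mu^\mathbb{L}$-value and the same norm, and Kempf's uniqueness of the associated parabolic will then force it to define the same coset modulo $\wh P$.

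Let $f'\wh P \in \phi_\epsilon(gBwP)\cap \wh g\wh B\wh w\wh P$ be any point. First, because $\phi_\epsilon:G/P\hookrightarrow \wh G/\wh P$ is a closed embedding and $\wh P \cap G = P$, I can choose the representative $f'$ to lie in $G$; the two inclusions then translate into $f' = gbwp = \wh g\wh b\wh w\wh p$ for suitable $b\in B, p\in P, \wh b\in \wh B, \wh p\in \wh P$ --- exactly the same Bruhat positions $(w,\wh w)$ occupied by $f$. Consequently the fractional OPS $\wh \delta' := [f'\epsilon f'^{-1},a]$ of $G$ satisfies, by the formula recalled just before the lemma,
\[
\mu^{\mathbb{L}}(x,\wh \delta') = -a\bigl(\mu(w\dot\epsilon) + \wh\mu(\wh w\dot\epsilon)\bigr) = \mu^{\mathbb{L}}(x,\wh \delta).
\]
Since $q$ is $G$-invariant, $q(\wh \delta')=q(\wh \delta)=q^*(x)$. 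Therefore $\wh \delta'\in \Lambda(x)$.

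Now I invoke Kempf's theorem: the parabolic attached to any member of $\Lambda(x)$ is the fixed subgroup $P(x)$. Thus
\[
fP(\epsilon)f^{-1} = P(\wh \delta) = P(x) = P(\wh \delta') = f'P(\epsilon)f'^{-1},
\]
which forces $f^{-1}f'\in N_G(P(\epsilon)) = P(\epsilon) = P$. Since $P\subseteq \wh P$, this yields $f'\wh P = f\wh P$, establishing the lemma.

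The only non-routine step I expect is verifying that $\mu^\mathbb{L}(x,\wh \delta')$ is defined and equals $\mu^\mathbb{L}(x,\wh \delta)$; this rests on the assertion that $f'$ inherits exactly the same relative positions $w,\wh w$ with respect to $g$ and $\wh g$ as $f$ does. That is ensured by the intersection being restricted to the specific Bruhat cells $\phi_\epsilon(gBwP)$ and $\wh g\wh B\wh w\wh P$, combined with the injectivity of $\phi_\epsilon$ on $G/P$; after that, the formula of the preceding lemma and the $G$-invariance of $q$ do the rest, and Kempf's rigidity closes the argument cleanly.
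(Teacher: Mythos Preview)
Your argument is correct and follows essentially the same route as the paper: both compute that any other point $f'\wh P$ in the intersection yields a fractional OPS $[f'\epsilon f'^{-1},a]$ with the same $\mu^{\mathbb L}$-value and norm as $\wh\delta$, hence lying in $\Lambda(x)$, and then Kempf's uniqueness of the associated parabolic forces $f'P(\epsilon)=fP(\epsilon)$. The only cosmetic difference is that you spell out the self-normalizing step $N_G(P(\epsilon))=P(\epsilon)$ and the choice of a $G$-representative for $f'$, while the paper leaves these implicit.
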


\begin{proof}
Suppose $\phi_\epsilon(hP)$ is also in the intersection. Then 
$$
 \mu^\mathbb{L}(x,h\epsilon h^{-1}) = \mu^\mathbb{L}(x,\delta) = -\mu(w\dot \epsilon)-\wh \mu(\wh w\dot\epsilon),
$$
so for $\lambda = [h\epsilon h^{-1},a]$, $\mu^\mathbb{L}(x,\lambda)\le -1$. Furthermore, $q(\lambda) = q(\wh \delta) = q^*(x)$ since $h\epsilon h^{-1}$ and $\delta$ are conjugate. So $\lambda\in \Lambda(x)$, which means $hP h^{-1} = P(h\epsilon h^{-1}) = P(\delta) = fP f^{-1}$, so $hP= fP$. 
\end{proof}

If $\epsilon$ (after rescaling) already belongs to $\mathfrak{T}$, set $\chi=\epsilon$. Otherwise, we must carefully exchange $\epsilon$ for an extremal OPS as follows. 

Recall from \cite[\S 2]{BeS} the notion of compatible elements of $\wh W$: $\wh v$ is compatible if $\dim\mathfrak{h}_{\Q,+}\cap \wh v\wh{\mathfrak{h}}_{\Q,+} = \dim \mathfrak{h}_{\Q,+}$. If $\wh v$ is compatible and $\chi_0$ is in the interior of $\mathfrak{h}_{\Q,+}\cap \wh v\wh{\mathfrak{h}}_{\Q,+}$, then (cf. \cite[Proposition 2.2.8]{BeS})
\begin{enumerate}[label=(\alph*)]
\item exchanging $\wh v$ for $\wh u\wh v$ where $\wh u\chi_0 = \chi_0$ yields 
$$
\mathfrak{h}_{\Q,+}\cap \wh v\wh{\mathfrak{h}}_{\Q,+} = \mathfrak{h}_{\Q,+}\cap \wh u \wh v\wh{\mathfrak{h}}_{\Q,+};
$$
\item if $\wh v$ is chosen to have minimal length in the right coset $\op{Stab}(\chi_0)\setminus \wh W$, then by Proposition \ref{moveme}(c)
$$
B\subseteq \wh v\wh B\wh v^{-1}.
$$
\end{enumerate}

From now on, we fix $\wh v$ which is compatible, satisfying $B\subseteq \wh v\wh B\wh v^{-1}$, such that $\epsilon \in \mathfrak{h}_{\Q,+}\cap \wh v\wh{\mathfrak{h}}_{\Q,+}$. 
Let $\chi$ be an OPS such that $\dot \chi$ is an extremal ray of the face of $\mathfrak{h}_{\Q,+}\cap \wh v\wh{\mathfrak{h}}_{\Q,+}$ containing $\epsilon$ in its interior.

\begin{lemma}
$ $
$P(\epsilon)\subseteq P(\chi)$ and $\wh P(\epsilon)\subseteq \wh P(\chi)$. 
\end{lemma}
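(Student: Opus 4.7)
The plan is to reduce each inclusion to a face-containment statement and then exploit the definition of the parabolics $P(\delta)$, $\wh P(\delta)$ in terms of which roots pair non-negatively with $\dot\delta$.

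First, recall that as subalgebras of $\mathfrak{g}$ and $\wh{\mathfrak{g}}$,
\[
\mathfrak{p}(\delta) = \mathfrak{h} \oplus \bigoplus_{\alpha(\dot\delta)\ge 0}\mathfrak{g}_\alpha,\qquad \wh{\mathfrak{p}}(\delta) = \wh{\mathfrak{h}} \oplus \bigoplus_{\wh\alpha(\dot\delta)\ge 0}\wh{\mathfrak{g}}_{\wh\alpha}.
\]
Hence $P(\epsilon)\subseteq P(\chi)$ iff every root $\alpha$ of $G$ with $\alpha(\dot\epsilon)\ge 0$ satisfies $\alpha(\dot\chi)\ge 0$, and likewise for $\wh P$. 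Because $\dot\epsilon$ and $\dot\chi$ both lie in $\mathfrak{h}_{\Q,+}$, positive $G$-roots pair non-negatively with both, so the only nontrivial case is a negative root $-\beta$ ($\beta\in R^+$) with $(-\beta)(\dot\epsilon)\ge 0$; by dominance of $\epsilon$ this forces $\beta(\dot\epsilon)=0$, and we must show $\beta(\dot\chi)=0$.

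For the $G$-half, set $\mathcal{K}=\mathfrak{h}_{\Q,+}\cap \wh v\wh{\mathfrak{h}}_{\Q,+}$ and let $F$ be the face of $\mathcal{K}$ containing $\dot\epsilon$ in its relative interior. The inequalities defining $\mathcal{K}$ are exactly $\alpha(\cdot)\ge 0$ for $\alpha\in R^+$ and $(\wh v\wh\beta)(\cdot)\ge 0$ for $\wh\beta\in \wh R^+$. Since $\dot\epsilon$ lies in the relative interior of $F$, the inequalities among these that hold with equality at $\dot\epsilon$ are precisely those that define $F$ (and hence hold identically on all of $F$). In particular, any $\beta\in R^+$ vanishing on $\dot\epsilon$ vanishes identically on $F$; since $\dot\chi\in F$, it vanishes on $\dot\chi$. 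This proves $P(\epsilon)\subseteq P(\chi)$.

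The $\wh G$-inclusion is handled by translating the entire picture by $\wh v^{-1}$. Because $\wh v^{-1}\dot\epsilon$ and $\wh v^{-1}\dot\chi$ both lie in $\wh{\mathfrak{h}}_{\Q,+}$, the same dichotomy reduces us to showing: every $\wh\beta\in \wh R^+$ with $\wh\beta(\wh v^{-1}\dot\epsilon)=0$ satisfies $\wh\beta(\wh v^{-1}\dot\chi)=0$. This is equivalent to $(\wh v\wh\beta)(\dot\epsilon)=0\Rightarrow (\wh v\wh\beta)(\dot\chi)=0$, and since $\wh v\wh\beta$ is one of the defining functionals of $\mathcal{K}$, this again follows from the fact that the equalities holding at the interior point $\dot\epsilon$ hold on all of $F\ni \dot\chi$. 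Translating back via the identity $\wh P(\delta')=\wh v\,\wh P(\wh v^{-1}\delta')\wh v^{-1}$ (for any cocharacter $\delta'$ of $\wh H$), we obtain $\wh P(\epsilon)\subseteq \wh P(\chi)$.

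The main thing to get right is the bookkeeping of which half-space inequalities cut out $\mathcal{K}$ after the $\wh v$-twist and the corresponding conjugation relation $\wh P(\wh v^{-1}\delta')=\wh v^{-1}\wh P(\delta')\wh v$; otherwise the argument is just the standard observation that the vanishing set of the defining linear functionals can only grow as one passes from an interior point of a face to an extremal ray of that face.
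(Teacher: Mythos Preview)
Your proof is correct and follows essentially the same idea as the paper's: both reduce to the convexity statement that any linear functional non-negative on $\mathcal{K}=\mathfrak{h}_{\Q,+}\cap \wh v\wh{\mathfrak{h}}_{\Q,+}$ which vanishes at the relative-interior point $\dot\epsilon$ of a face $F$ must vanish on all of $F$, hence at $\dot\chi$.

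There are two minor organizational differences worth noting. First, the paper proves only the $\wh G$-inclusion and deduces $P(\epsilon)\subseteq P(\chi)$ for free from $P(\delta)=\wh P(\delta)\cap G$; you instead treat the two inclusions separately, which is fine but slightly longer. Second, the paper splits according to the sign of $\wh v^{-1}\wh\beta$ and, in the negative case, establishes the auxiliary fact (needed later in the section) that every face of $\mathcal{K}$ is already cut out by functionals of the form $\wh\alpha|_{\mathfrak{h}}$ with $\wh v^{-1}\wh\alpha\succ 0$; this uses $B\subseteq \wh v\wh B\wh v^{-1}$. Your argument bypasses this by applying the convexity observation directly to the possibly redundant full list of defining inequalities of $\mathcal{K}$, which is a bit slicker for the lemma at hand but does not yield that auxiliary fact.
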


\begin{proof}
It suffices to show $\wh P(\epsilon)\subseteq \wh P(\chi)$. Suppose $\wh \beta$ is a root for $\wh G$ such that $\wh \beta(\dot \epsilon)\ge 0$. 

If $\wh v^{-1}\wh \beta\succ 0$, then $\wh v^{-1}\wh \beta(\zeta)\ge 0$ for any $\zeta \in \wh{\mathfrak{h}}_{\Q,+}$; take $\zeta = \wh v^{-1} \dot\chi$ and we have $\wh \beta(\dot \chi)\ge 0$. 

Otherwise, $\wh v^{-1}\wh \beta\prec 0$, so $\wh \beta(\dot \epsilon)\le 0$; therefore $\wh \beta(\epsilon)=0$. 
Then $\wh v^{-1}(-\wh \beta)$ is a positive root for $\wh G$ and satisfies $\wh v^{-1}(-\wh \beta)(\wh v^{-1} \dot \epsilon) = 0$.
 Note that the faces of $\wh v \wh{\mathfrak{h}}_{\Q,+}$ are defined by the vanishing of roots $\wh \alpha$ such that $\wh v^{-1}\wh \alpha\succ 0$. The faces of $\mathfrak{h}_{\Q,+}$ are defined by the vanishing of simple roots $\alpha_i$. Since $B\subseteq \wh v\wh B\wh v^{-1}$, there exists (for each $i$) a root $\wh \eta_i$ such that $\wh v^{-1}\wh \eta_i\succ 0$ and $\wh \eta_i|_\mathfrak{h}\equiv \alpha_i$. Therefore we have shown 
\begin{align}\label{dorian}
&\text{the faces of $\mathfrak{h}_{\Q,+}\cap \wh v\wh{\mathfrak{h}}_{\Q,+}$ are defined}\\\nonumber&\text{by the vanishing of roots $\wh \alpha$ such that $\wh v^{-1}\wh\alpha\succ 0$}.
\end{align}
In particular, $\dot \epsilon$ belongs to the face defined by $-\wh \beta$. Since $\dot \chi$ is an extremal ray of any face on which $\dot \epsilon$ lies, $-\wh \beta(\dot \chi)=\wh \beta(\dot \chi)=0$. 
\end{proof}

Let $S_\epsilon$ be the set of indices $1\le i\le \op{rk}(G)$ such that $\alpha_i(\dot \epsilon)>0$. Thus $\dot\epsilon = \sum_{i\in S_\epsilon} c_i x_i$. Note that $\dot\chi = \sum_{S'} c_i'x_i$, where $S'\subseteq S_\epsilon$ (otherwise $\dot\epsilon$ would lie in a face of $\mathfrak{h}_{\Q,+}$ that didn't include $\dot\chi$). 

\begin{proposition}\label{27!}
$ $
\begin{enumerate}[label = (\alph*)]
\item 
The only point in $\phi_\chi(gBwP(\chi))\cap \wh g\wh B\wh w\wh P(\chi)$ is $\phi_\chi(fP(\chi))$. 

\item The inequality 
$$
\wh\omega_j(\wh w\dot \chi)>0
$$
is satisfied. Moreover, for any conjugate $\tilde g\chi \tilde g^{-1}$ of $\chi$, 
$$
\mu^{\mathbb{L}}(x,\tilde g\chi \tilde g^{-1}) = \mu^{\mathbb{L}}(x,f\chi f^{-1}) \implies fP(\chi) = \tilde gP(\chi).
$$
\end{enumerate}
\end{proposition}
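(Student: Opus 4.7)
The plan leverages two ingredients: the natural projection $\wh G/\wh P(\epsilon) \to \wh G/\wh P(\chi)$, which exists since $\wh P(\epsilon) \subseteq \wh P(\chi)$; and the $\Q$-linearity of $\delta' \mapsto \mu^{\mathbb{L}}(x, f\delta' f^{-1})$ on the closure of the face $\mathcal{F}$ of $\mathfrak{h}_{\Q,+}\cap \wh v\wh{\mathfrak{h}}_{\Q,+}$ containing $\dot\epsilon$ in its interior.

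For (a), the point $\phi_\chi(fP(\chi))$ evidently lies in the intersection, being the image of the singleton $\phi_\epsilon(fP(\epsilon))$ of Lemma \ref{a-prox} under the projection. For uniqueness, observe that $gBwP(\chi) = gB\bar wP(\chi)$ and $\wh g\wh B\wh w\wh P(\chi) = \wh g\wh B\bar{\wh w}\wh P(\chi)$, where $\bar w, \bar{\wh w}$ are the minimal-length representatives in $wW_{P(\chi)}$ and $\wh wW_{\wh P(\chi)}$. Any additional point $\phi_\chi(hP(\chi))$ would lift through the fibre $h\wh P(\chi)/\wh P(\epsilon)$ of the projection to a point in $\phi_\epsilon(gBw'P(\epsilon)) \cap \wh g\wh B\wh w'\wh P(\epsilon)$ for some $(w', \wh w')$ refining $(\bar w,\bar{\wh w})$; Lemma \ref{a-prox} combined with the genericity of $(g, \wh g)$ forces $(w',\wh w') = (w,\wh w)$ and the lift to be $f\wh P(\epsilon)$, giving $hP(\chi) = fP(\chi)$.

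For the first half of (b), the argument for (a) applies verbatim to any $\delta' \in \overline{\mathcal{F}}$ in place of $\chi$, yielding $\mu^{\mathbb{L}}(x, f\delta' f^{-1}) = -\wh\omega_j(\wh w\dot{\delta'})$ (with the same $\wh w$, since $W_{\wh P(\chi)}$ fixes $\dot\chi$). This is $\Q$-linear in $\dot{\delta'}$. Writing $\dot\epsilon = \sum_i a_i \dot\chi_i$ as a positive combination over the extremal rays of $\mathcal{F}$, linearity gives
\[
-\wh\omega_j(\wh w\dot\epsilon) = \sum_i a_i \, \mu^{\mathbb{L}}(x, f\chi_i f^{-1}).
\]
The left side is strictly negative, so some summand on the right is negative; I plan to take $\chi$ to be the corresponding extremal ray $\chi_i$, giving $\wh\omega_j(\wh w\dot\chi) > 0$.

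For the uniqueness clause of (b), suppose $\mu^{\mathbb{L}}(x, \tilde g\chi\tilde g^{-1}) = \mu^{\mathbb{L}}(x, f\chi f^{-1})$. The plan is to perturb into the interior of $\mathcal{F}$ via $\chi_s := \chi + s\epsilon$ for small rational $s > 0$: then $P(\chi_s) = P(\epsilon)$, an integer multiple of $f\chi_s f^{-1}$ may be placed in Kempf's $\Lambda(x)$, and a parallel conclusion holds for $\tilde g\chi_s\tilde g^{-1}$ via bilinearity of $\mu^{\mathbb{L}}$ in the two arguments $\chi$ and $\epsilon$. Kempf's parabolic-uniqueness would then yield $\tilde g P(\epsilon) \tilde g^{-1} = f P(\epsilon) f^{-1}$, and since parabolics are self-normalizing and $P(\epsilon) \subseteq P(\chi)$, this ascends to $\tilde gP(\chi) = fP(\chi)$. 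The main anticipated obstacle is the uniqueness half of (a): precisely controlling which refinements $(w',\wh w')$ lift from the $\wh P(\chi)$-intersection requires a refined exploitation of the genericity hypothesis on $(g,\wh g)$, beyond what underlies Lemma \ref{a-prox} itself.
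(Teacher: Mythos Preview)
Your perturbation strategy for the uniqueness clause of (b) has a genuine gap. Kempf's set $\Lambda(x)$ consists of those fractional one-parameter subgroups achieving the infimum $q^*(x)$ of the norm subject to $\mu^{\mathbb{L}}(x,\cdot)\le -1$, and this infimum is attained exactly on the $P(x)$-conjugacy class of $[\epsilon,a]$. Since $\dot\chi$ is an extremal ray of the face while $\dot\epsilon$ lies in its interior, $\chi_s=\chi+s\epsilon$ is never parallel to $\epsilon$, so no rescaling of $f\chi_s f^{-1}$ lies in $\Lambda(x)$; Kempf's parabolic-uniqueness therefore says nothing about $\tilde gP(\chi_s)\tilde g^{-1}$. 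Moreover, even granting additivity of $\mu^{\mathbb{L}}(x,\cdot)$ along a fixed maximal torus, the hypothesis $\mu^{\mathbb{L}}(x,\tilde g\chi\tilde g^{-1})=\mu^{\mathbb{L}}(x,f\chi f^{-1})$ gives no control over $\mu^{\mathbb{L}}(x,\tilde g\epsilon\tilde g^{-1})$, so the ``parallel conclusion'' you need for $\chi_s$ does not follow.

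The paper's route is quite different and does not try to re-enter $\Lambda(x)$. One introduces the normalized function $J(h)=-\mu^{\mathbb{L}}(x,bhb^{-1})/q(h)$ on $\mathfrak{h}_\Q\setminus\{0\}$ (for a suitable $b$ with $bP(\epsilon)=fP(\epsilon)$), which by Kempf theory is uniquely maximized along the ray through $Y=\dot\epsilon/a$ and satisfies the Cauchy--Schwarz-type bound $J(h)\le J(Y)\,(Y,h)/\bigl(q(Y)q(h)\bigr)$, with equality on the cone $\bigoplus_{i\in S_\epsilon}\Q_{\ge 0}x_i$. Since $\dot\chi$ lies in this cone and $(x_i,x_j)\ge 0$ always, one gets $(Y,\dot\chi)>0$ and hence $J(\dot\chi)>0$, i.e.\ $\wh\omega_j(\wh w\dot\chi)>0$ for the \emph{given} $\chi$ (your linearity argument only produces \emph{some} extremal ray with this property, which does not match the proposition as stated). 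For uniqueness, writing $b^{-1}\tilde g\chi\tilde g^{-1}b=u\chi$ for some $u\in W$, one shows $(Y,u\dot\chi)<(Y,\dot\chi)$ whenever $u\chi\ne\chi$, forcing $J(u\dot\chi)<J(\dot\chi)$ and contradicting the hypothesis. Part (a) then follows from (b) exactly as in Lemma~\ref{a-prox}, which also bypasses the lifting obstacle you yourself flagged.
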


\begin{proof}
Note that (a) follows from (b) with the same proof as Lemma \ref{a-prox}. So we prove (b), closely mimicking the proof of \cite[Lemma 27]{BK}. 

First, 
we can find a $b\in G$ and some $w\in W$ so that 
$bP(\epsilon) = fP(\epsilon)$ and 
$$
b^{-1}\tilde g\chi \tilde g^{-1}b = w\chi.
$$
We hope to show that $w=e$, so that $\tilde gP(\chi) = bP(\chi) = fP(\chi)$. 

Now, the function $\mathfrak{L}: \mathfrak{h}_{\Q,+}\to \Q$ given by 
$$
r\dot \beta \mapsto -r\mu^{\mathbb{L}}(x,b\beta b^{-1}),
$$
where $r \in \Q$ and $\beta$ is an OPS of $T$, is well-defined. It also satisfies the following (cf. \cite[Lemma 27]{BK}):
\begin{enumerate}[label=(\roman*)]
\item $\mathfrak{L}(h) = \wh \omega_j(\wh w h)$ for $h\in \oplus_{{S_\epsilon}} \Q_{\ge0} x_i$
\item the function $J(h) = \mathfrak{L}(h)/q(h)$ on $\mathfrak{h}_\Q\setminus\{0\}$ is constant on $\Q_{\ge0}$-rays and achieves its maximum uniquely at the ray through $Y:=\dot \epsilon/a$. 
\end{enumerate}

Furthermore, $J$ satisfies 
$$
J(h)\le J(Y)\frac{(Y,h)}{q(Y)q(h)}
$$
for $h$ nonzero and 
$$
J(h)= J(Y)\frac{(Y,h)}{q(Y)q(h)}
$$
if furthermore we assume $h\in \oplus_{{S_\epsilon}} \Q_{\ge0} x_i$; here $(,)$ denotes the Killing form. 

First of all, this already shows that $J(\dot \chi) >0$ since $J(\dot\epsilon)>0$ and $(Y,\dot \chi)>0$ due to the pairings $(x_i,x_j)\ge 0$ in general. This shows $\mu^{\mathbb{L}}(x,b\chi b^{-1})<0$ and $\wh \omega_j(\wh w\dot \chi)>0$. 

Now assume (for the sake of contradiction) that $w\chi \ne \chi$. By induction on length of $w$, one can easily show that $(\dot \epsilon, wx_i)<(\dot \epsilon,x_i)$ if $wx_i\ne x_i$ and $i\in S_\epsilon$. Therefore $(Y,w\dot \chi)<(Y,\dot \chi)$. 

Putting this all together, we have 
$$
J(w\dot \chi)\le J(Y)\frac{(Y,w\dot \chi)}{q(Y)q(w\dot \chi)}=J(Y)\frac{(Y,w\dot \chi)}{q(Y)q(\dot \chi)}<J(Y)\frac{(Y,\dot \chi)}{q(Y)q(\dot \chi)} = J(\dot\chi),
$$
contradicting the hypothesis that $J(\dot\chi) = J(w\dot\chi)$. 
\end{proof}

By genericity of $g,\wh g$, we already know 
\begin{align}\label{eqeq}
\phi_\chi^*[\wh X_{\wh w}]\cdot [X_w] = [X_e] 
\end{align}
in the ring $H^*(G/P(\chi))$. We claim that this product doesn't vanish in the passage to the deformed product. 

\begin{proposition}
The pair $(w,\wh w)$ is Levi-movable. 
\end{proposition}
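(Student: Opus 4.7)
The plan is to verify the geometric criterion for Levi-movability stated in the proposition following the definition of $\phi_\delta^\odot$: generic $L(\chi)\times \wh L(\chi)$-translates of $\bar w^{-1}X_{\bar w}$ and $\wh w^{-1}\wh X_{\wh w}$ intersect transversally at $\dot e\wh P(\chi)$. Equation (\ref{eqeq}) already gives the non-deformed product, so what is needed is that this transversality be realizable with translates coming from the Levi, not merely from $P(\chi)\times \wh P(\chi)$. The key tool is a one-parameter degeneration using $\chi$ itself.

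First I would promote Proposition \ref{27!}(a) to a transversality statement. Genericity of $(g,\wh g)$ together with the fact that $\phi_\chi(gBwP(\chi))\cap\wh g\wh B\wh w\wh P(\chi)$ is a single point at $y_0=\phi_\chi(fP(\chi))$ implies the intersection is transverse there, which is (\ref{eqeq}). After translating by $f^{-1}$ we may assume $y_0=\dot e\wh P(\chi)$; then $g$ lies in $P(\chi)w^{-1}B$, so $g=pw^{-1}b$ with $p\in P(\chi)$ and $b\in B$, and analogously $\wh g=\wh p\wh w^{-1}\wh b$. Since $X_w$ and $\wh X_{\wh w}$ are $B$- and $\wh B$-stable respectively, we obtain transverse intersection at $\dot e\wh P(\chi)$ of $\phi_\chi(pw^{-1}X_w)$ and $\wh p\wh w^{-1}\wh X_{\wh w}$.

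Next I would apply the $\chi(t)$-flow. Because $H\subseteq B$ and $H\subseteq\wh B$, Schubert varieties are $H$-stable, and so
$$
\chi(t)\cdot pw^{-1}X_w=\bigl(\chi(t)p\chi(t)^{-1}\bigr)w^{-1}X_w,
$$
with the analogous formula on the $\wh G$-side. Since $p\in P(\chi)$, the limit $\lim_{t\to 0}\chi(t)p\chi(t)^{-1}=l$ exists in $L(\chi)$ (the unipotent radical $U(\chi)$ part is contracted by $\chi$), and similarly for $\wh p\to\wh l\in\wh L(\chi)$. The resulting one-parameter family of intersections is flat over $\mathbb{A}^1$ with constant total class $[X_e]$ (by (\ref{eqeq})), so the schematic intersection remains a single reduced point at $\dot e$ in the limit; equivalently transversality at $\dot e$ propagates to $t=0$. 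Thus $lw^{-1}X_w$ and $\wh l\wh w^{-1}\wh X_{\wh w}$ meet transversally at $\dot e$ for some specific $(l,\wh l)\in L(\chi)\times \wh L(\chi)$.

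Finally, transversality at $\dot e$ is an open condition on $(l,\wh l)\in L(\chi)\times \wh L(\chi)$ (both translates automatically contain $\dot e$ since $L(\chi)$ and $\wh L(\chi)$ fix $\dot e$ in their respective flag varieties), and thus holds on a dense open subset, yielding the geometric Levi-movability condition. The main obstacle I anticipate is justifying that the transverse intersection at the single point $y_0$ survives the $\chi(t)$-degeneration; this requires verifying that the flat family has a reduced fiber over $t=0$, which follows from the intersection class being $[X_e]$ throughout the family.
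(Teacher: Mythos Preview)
Your overall strategy—degenerate the $P$-translates to $L$-translates via a one-parameter flow and then invoke openness of transversality—is the right shape, and indeed matches the paper's approach. But the argument has a genuine gap at the step you yourself flag: the claim that ``the resulting one-parameter family of intersections is flat over $\mathbb{A}^1$'' is not justified, and the justification you offer (``follows from the intersection class being $[X_e]$ throughout'') is circular. The cohomology class $[X_e]$ only tells you that \emph{if} the intersection at $t=0$ is proper (i.e.\ $0$-dimensional), then it is a single reduced point. It does not by itself rule out a jump in fiber dimension at $t=0$; upper semicontinuity goes the wrong way. Equivalently, transversality at $\dot e$ is an open condition on $t$, not a closed one, so it need not survive the limit.

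The paper closes this gap not by a flatness argument but by a GIT argument. It flows by the \emph{Kempf} OPS $\delta$ (equivalently $\epsilon$), not by $\chi$, so that the limit point $x_0=(flw^{-1}B,\,f\wh l\wh w^{-1}\wh B)$ is again unstable with $[\delta,a]\in\Lambda(x_0)$ by Ramanan--Ramanathan \cite{RR}. This allows Proposition \ref{27!} to be applied a second time, now to $x_0$, yielding directly that
\[
\phi_\chi(lw^{-1}BwP(\chi))\cap \wh l\,\wh w^{-1}\wh B\wh w\wh P(\chi)=\{\dot e\wh P(\chi)\}
\]
as a set. Only then does the intersection number $1$ from (\ref{eqeq}) force transversality. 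Note that flowing by $\chi$ instead of $\epsilon$ would not obviously let you re-apply Proposition \ref{27!}, since $\chi$ is not known to be Kempf for the limit point; the choice of flow matters. The limits $l,\wh l$ land in $L(\epsilon)\subseteq L(\chi)$, which suffices.
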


\begin{proof}
First write $g = fpw^{-1}b$ and $\wh g = f\wh p\wh w^{-1}\wh b$ for suitable $p\in P(\epsilon), \wh p\in \wh P(\epsilon), b\in B, \wh b\in \wh B$. Then 
\begin{align*}
\delta(s)gB = f\epsilon(s) p \epsilon(s)^{-1} w^{-1}B \text{   and    } \delta(s)\wh g\wh B= f \epsilon(s)\wh p\epsilon(s)^{-1}\wh w^{-1}\wh B,
\end{align*}
so in the limit, 
$$
\lim_{s\to 0} \delta(s)(gB, \wh g\wh B) = (flw^{-1}B, f\wh l\wh w^{-1}\wh B),
$$
where $l = \lim_{s\to 0}\epsilon(s) p \epsilon(s)^{-1} \in L(\epsilon)$ and $\wh l = \lim_{s\to 0}\epsilon(s) \wh p \epsilon(s)^{-1} \in L(\epsilon)\in \wh L(\epsilon)$. 

By a result of Ramanan and Ramanathan \cite[Proposition 1.9]{RR}, the limit point 
$$
x_0 = \lim_{s\to 0} \delta(s)(gB, \wh g\wh B)
$$
is unstable and $[\delta, a]\in \Lambda(x_0)$. Obviously $\phi_\epsilon(fP(\epsilon))$ belongs to 
$\phi_\epsilon(flw^{-1}BwP(\epsilon))\cap f\wh l\wh w^{-1}\wh B\wh w\wh P(\epsilon)$, so by Proposition \ref{27!}, this time applied with the unstable point $x_0$ in mind, 
$$
\phi_\chi(flw^{-1}BwP(\chi))\cap f\wh l\wh w^{-1}\wh B\wh w\wh P(\chi)=\{\phi_\chi(fP(\chi))\}.
$$

Now the expected and actual dimensions of this intersection agree; furthermore the multiplicity at $fP(\chi)$ would only increase if it were not transverse, but we already know (\ref{eqeq}) holds. So the intersection
$$
\phi_\chi(lw^{-1}BwP(\chi))\cap \wh l\wh w^{-1}\wh B\wh w\wh P(\chi)=\{\phi_\chi(eP(\chi))\}.
$$
is transverse at $eP(\chi)$, and the pair $w,\wh w$ is Levi-movable. 
\end{proof}

\begin{lemma}
If $\op{Wt}_{\mathfrak{h}}(\wh{\mathfrak{g}}/\mathfrak{g}) = \op{Wt}_{\mathfrak{h}}(\wh{\mathfrak{g}})$, 
$\displaystyle \dim \bigcap_{\beta\in \op{Wt}_\mathfrak{h}(\wh{\mathfrak{l}}(\chi)/\mathfrak{l}(\chi))} \ker \beta=1$.
\end{lemma}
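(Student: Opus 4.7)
The plan is to show that $\dot\chi$ itself spans the intersection in question. Since $\wh{\mathfrak{l}}(\chi)/\mathfrak{l}(\chi)$ is a subquotient of $\wh{\mathfrak{g}}$ whose $\mathfrak{h}$-weights all vanish on $\dot\chi$ (being zero-weight under $\dot\chi$-adjoint action), the containment $\C\dot\chi \subseteq \bigcap_\beta \ker\beta$ is automatic; the work is to rule out any larger kernel.

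First I would identify the nonzero $\mathfrak{h}$-weights of $\wh{\mathfrak{l}}(\chi)/\mathfrak{l}(\chi)$ explicitly as $\{\wh\alpha|_\mathfrak{h} : \wh\alpha \in \wh R,\ \wh\alpha(\dot\chi) = 0,\ \wh\alpha|_\mathfrak{h}\ne 0\}$. The point is that $\wh{\mathfrak{l}}(\chi)$ is the zero $\dot\chi$-weight space of $\wh{\mathfrak{g}}$ and $\mathfrak{l}(\chi)$ is that of $\mathfrak{g}$. For a nonzero $\mathfrak{h}$-weight $\mu$, the $\mu$-weight space of $\wh{\mathfrak{l}}(\chi)$ coincides with the $\mu$-weight space of $\wh{\mathfrak{g}}$ (since any root $\wh\alpha$ with $\wh\alpha|_\mathfrak{h}=\mu$ automatically satisfies $\wh\alpha(\dot\chi) = \mu(\dot\chi) = 0$), and similarly for $\mathfrak{l}(\chi)$ inside $\mathfrak{g}$. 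The hypothesis $\op{Wt}_\mathfrak{h}(\wh{\mathfrak{g}}/\mathfrak{g}) = \op{Wt}_\mathfrak{h}(\wh{\mathfrak{g}})$ then guarantees the $\mu$-weight space does not sit entirely in $\mathfrak{g}$, so it survives in the quotient.

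Second, suppose $h\in \mathfrak{h}$ satisfies $\wh\alpha(h) = 0$ for every $\wh\alpha\in\wh R$ with $\wh\alpha(\dot\chi) = 0$. I would check that $\dot\chi + th \in \mathfrak{h}_{\Q,+}\cap \wh v\wh{\mathfrak{h}}_{\Q,+}$ for all sufficiently small $|t|$. By (\ref{dorian}) and the dominance of $\dot\chi$ with respect to both Borels, every root $\wh\alpha$ with $\wh v^{-1}\wh\alpha \succ 0$ satisfies $\wh\alpha(\dot\chi) \ge 0$: if the inequality is strict, small perturbation preserves it; if equality holds, $\wh\alpha(h) = 0$ by assumption, so $\wh\alpha(\dot\chi + th) = 0$. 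For the $\mathfrak{h}$-dominance constraint, a simple root $\alpha_i$ with $\alpha_i(\dot\chi) = 0$ is handled by the extension $\wh\eta_i$ (with $\wh\eta_i|_\mathfrak{h} = \alpha_i$ and $\wh v^{-1}\wh\eta_i\succ 0$) established just before (\ref{dorian}): then $\wh\eta_i(\dot\chi) = 0$, so $\wh\eta_i(h) = 0$, hence $\alpha_i(h) = 0$. Since $\dot\chi$ generates an extremal ray of $\mathfrak{h}_{\Q,+}\cap\wh v\wh{\mathfrak{h}}_{\Q,+}$, perturbing in both directions forces $h\in \C\dot\chi$.

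The main obstacle I anticipate is the first step: carefully verifying that the hypothesis $\op{Wt}_\mathfrak{h}(\wh{\mathfrak{g}}/\mathfrak{g}) = \op{Wt}_\mathfrak{h}(\wh{\mathfrak{g}})$ translates into the statement that every nonzero $\mathfrak{h}$-weight space of $\wh{\mathfrak{l}}(\chi)$ survives the passage to $\wh{\mathfrak{l}}(\chi)/\mathfrak{l}(\chi)$. Without this hypothesis, additional cancellations in the quotient could kill some weights and enlarge the common kernel. Once this characterization is in hand, the perturbation argument, using (\ref{dorian}) and the extension roots $\wh\eta_i$, is essentially bookkeeping.
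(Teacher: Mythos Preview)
Your proof is correct and follows essentially the same route as the paper: first use the hypothesis to identify the nonzero $\mathfrak{h}$-weights of $\wh{\mathfrak{l}}(\chi)/\mathfrak{l}(\chi)$ with those of $\wh{\mathfrak{l}}(\chi)$, then use extremality of $\dot\chi$ in $\mathfrak{h}_{\Q,+}\cap \wh v\wh{\mathfrak{h}}_{\Q,+}$ together with (\ref{dorian}) to conclude the common kernel is $\C\dot\chi$. The paper phrases the second step as cone duality (an extremal ray is orthogonal to a hyperplane spanned by a subset of $S_0$), whereas you spell out the equivalent perturbation argument explicitly.
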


\begin{proof}
Since, by (\ref{dorian}), $\mathfrak{h}_{\Q,+}\cap \wh v\wh{\mathfrak{h}}_{\Q,+}$ is the cone inside $\mathfrak{h}_{\Q}$ dual to the cone $C \subseteq  \mathfrak{h}_\Q^*$ generated by  $S_0=\{\wh \alpha|_\mathfrak{h}\mid \wh v^{-1}\wh \alpha\succ 0\}$, the extremal ray $\Q_{\ge 0}\dot \chi$ is orthogonal to a hyperplane spanned by a proper subset of $S_0$.  In other words, 
$$
\C\dot \chi = \bigcap_{\beta\in \op{Wt}_\mathfrak{h}(\wh{\mathfrak{l}}(\chi))} \ker \beta.
$$

By hypothesis, $\op{Wt}_\mathfrak{h}(\wh{\mathfrak{l}}(\chi)/\mathfrak{l}(\chi))=\op{Wt}_\mathfrak{h}(\wh{\mathfrak{l}}(\chi))$, and the result follows. 
\end{proof}

To summarize so far, we have found a dominant one-parameter subgroup $\chi: \C^* \to T$ (which we may now assume is indivisible) and Weyl group elements $w\in W$, $\wh w\in \wh W$ such that 
\begin{enumerate}[label=(\alph*)]
\item $\chi$ belongs to $\mathfrak{T}$ (in the case $\op{Wt}_{\mathfrak{h}}(\wh{\mathfrak{g}}/\mathfrak{g}) = \op{Wt}_{\mathfrak{h}}(\wh{\mathfrak{g}})$, belongs to $\mathfrak{S}$); 
\item $\phi_\chi^\odot[\wh X_{\wh w}]\odot_0 [X_w]=[X_e]$;
\item $\wh \omega_j(\wh w \dot \chi)>0$;
\item if $\mu^{\mathbb{L}}(x,h \chi h^{-1}) = \mu^{\mathbb{L}}(x,f\chi f^{-1})$, then $hP(\chi) = fP(\chi)$.
\end{enumerate}

For simplicity, now take $P = P(\chi), \wh P = \wh P(\chi)$. 

Assume for the sake of contradiction that 
 $\wh X_{\wh w}\subsetneq \wh X_{s_j\wh w}$ or that $X_{w}\subsetneq X_{v}$ for $v\ne w$. Set $\wh v=s_j\wh w$ and $v=w$  in the first case or $\wh v = \wh w$ in the second. Then since 
$\phi_\chi(gBvP)\cap \wh g\wh B\wh v\wh P$ is dense inside $\phi_\chi(gX_v)\cap \wh g\wh X_{\wh v}$ and the complement is nonempty, there must be some point $hP$ in 
$$
\phi_\chi(gBvP)\cap \wh g\wh B\wh v\wh P.
$$

Then 
$$
\mu^{\mathbb{L}}(x,h\chi h^{-1}) = -\wh\omega_j(\wh v\chi)  = -\wh \omega_j(\wh w\chi) = \mu^{\mathbb{L}}(x,f\chi f^{-1}).
$$
Therefore  $hP = fP$, a contradiction since these live in different Schubert cells of either $G/P$ or $\wh G/\wh P$. 

So we conclude that $w = w_0w_0^P$ and $(\wh w, \chi)\in S_j$, and the failed inequality (c) witnesses the fact that $(0,\wh \omega_j)$ is not in $\mathcal{C}(G\hookrightarrow \wh G)$. 
\end{proof}

\begin{corollary}\label{yippee}
If there are no pairs $( \wh w,\delta)$ such that $\phi_\delta^\odot[\wh X_{\wh w}] = [X_e]$ and $\delta\in \mathfrak{T}$, then every ray of the form $(0,\wh\omega_j)$ is extremal. 
\end{corollary}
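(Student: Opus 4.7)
The plan is to derive this corollary as an immediate consequence of Theorem \ref{hopeful}. The key observation is that the hypothesis of the corollary forces the set $S_j$ (of pairs $(\wh w,\delta)$ parametrizing inequalities in Theorem \ref{hopeful}) to be empty for \emph{every} index $j$. This makes the inequality condition of Theorem \ref{hopeful} vacuously true.

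Specifically, recall that
\begin{equation*}
S_j = \{(\wh w,\delta) \mid \delta\in\mathfrak{T},\ \phi_\delta^\odot[\wh X_{\wh w}] = [X_e],\ \wh X_{\wh w}\subseteq \wh X_{s_i\wh w}\implies i=j\}.
\end{equation*}
The hypothesis of the corollary asserts that no pair $(\wh w,\delta)$ simultaneously satisfies $\delta\in\mathfrak{T}$ and $\phi_\delta^\odot[\wh X_{\wh w}] = [X_e]$; the extra condition on ascents in $S_j$ is therefore never tested, and $S_j=\emptyset$ for each $j\in\{1,\dots,\op{rk}(\wh G)\}$.

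Applying Theorem \ref{hopeful} (in its general form, using $\mathfrak{T}$), the condition ``$\wh\omega_j(\wh w\dot\delta)\le 0$ for all $(\wh w,\delta)\in S_j$'' holds vacuously, so the ray generated by $(0,\wh\omega_j)$ is extremal in $\mathcal{C}(G\to\wh G)$. Since this holds for every $j$, the corollary follows.

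There is no serious obstacle here; the work has already been done in proving Theorem \ref{hopeful}. The only point worth flagging is the implicit use of Observation \ref{obviate}, which legitimizes restricting attention to the candidate rays $(0,\wh\omega_j)$ as the only possible extraneous extremal rays outside regular faces.
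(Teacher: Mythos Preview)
Your proof is correct and matches the paper's intended approach: the corollary is stated without proof precisely because it is an immediate consequence of Theorem \ref{hopeful}, via the vacuity of each $S_j$. Your remark about Observation \ref{obviate} is unnecessary here, since the corollary only asserts that each $(0,\wh\omega_j)$ is extremal, which Theorem \ref{hopeful} yields directly.
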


\section{Examples}\label{ampleXample}

We begin with a general review of computing pullbacks in (equivariant) cohomology, recalling without proof several standard results (see \cite{BGG}, \cite{Brion}, \cite{Graham}). Let $S=\op{Sym}^*(\mathfrak{h}^*)$. Under the Borel model, there is an isomorphism 
$$
S/J \to H^*(G/B),
$$ 
where $J$ is the ideal generated by the elements of $S^W$ vanishing at $0$. The map is induced by (and uniquely determined by) the Chern class map
\begin{align*}
\mathfrak{h}^*_{\Z} &\to H^2(G/B)\\
\lambda&\mapsto c_1(L_\lambda).
\end{align*}

Furthermore, the cohomology subrings $H^*(G/P)\subseteq H^*(G/B)$, where $P$ is a standard parabolic, are identified with the invariant subrings 
$$
\left[S/J\right]^{W_P} \subseteq S/J.
$$
In similar fashion, we set $\wh S = \op{Sym}^*(\wh{\mathfrak{h}}^*)$ and let $\wh J$ be the corresponding invariant ideal for $\wh W$. 
\begin{proposition} 
The diagram 
\begin{center}
\begin{tikzcd}
\wh S/\wh J \arrow[r] \arrow[d]& H^*(\wh G/\wh B) \arrow[d]\\
S/J \arrow[r] & H^*(G/B) 
\end{tikzcd}
\end{center}
commutes, where the horizontal arrows are the Borel isomorphisms and the vertical arrows the natural pullbacks. More generally, for standard parabolics $P\subseteq \wh P$, 
\begin{center}
\begin{tikzcd}
\left[\wh S/\wh J\right]^{\wh W_{\wh P}} \arrow[r] \arrow[d]& H^*(\wh G/\wh P) \arrow[d]\\
\left[S/J\right]^{W_P} \arrow[r] & H^*(G/P) 
\end{tikzcd}
\end{center}
commutes. 
\end{proposition}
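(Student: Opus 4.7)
The plan is to prove the top diagram (Borel/Borel case) first by directly computing what each composition does to the degree-one generators, and then to deduce the parabolic version by restricting to invariants. Since the horizontal Borel maps are ring homomorphisms, once the two compositions agree on the generators $\wh{\mathfrak{h}}^*$ of the source, they agree everywhere; well-definedness of the restriction map modulo the ideals $\wh J, J$ will then be an automatic consequence.

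First I would show the geometric compatibility of line bundles. The inclusion $\iota: G/B \hookrightarrow \wh G/\wh B$ (well-defined since $B \subseteq \wh B$) pulls back the equivariant line bundle $L_{\wh\lambda} = \wh G \times_{\wh B} \C_{-\wh\lambda}$ to the line bundle $G \times_B \C_{-\wh\lambda|_B}$ on $G/B$. Since $B = H \ltimes U$ and characters of $B$ factor through $H$, the character $\wh\lambda|_B$ is determined by $\wh\lambda|_\mathfrak{h}$, so $\iota^* L_{\wh\lambda} \simeq L_{\wh\lambda|_\mathfrak{h}}$. Functoriality of the first Chern class then gives $\iota^* c_1(L_{\wh\lambda}) = c_1(L_{\wh\lambda|_\mathfrak{h}})$, which is exactly the assertion that the diagram commutes on the degree-two generators. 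Because the Borel maps are ring homomorphisms surjective onto $H^*(\wh G/\wh B)$ (resp.\ $H^*(G/B)$) and the horizontal isomorphisms are determined by where they send degree-one elements, the full diagram commutes. As a consequence, the composition $\operatorname{Sym}^*(\wh{\mathfrak{h}}^*) \to \operatorname{Sym}^*(\mathfrak{h}^*) \to \operatorname{Sym}^*(\mathfrak{h}^*)/J$ annihilates $\wh J$, so the left vertical arrow descends to a well-defined map on quotients.

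For the parabolic version I would argue that the left vertical map, defined by restriction, carries $\wh W_{\wh P}$-invariants to $W_P$-invariants modulo $J$. To see this, recall the standard identification of $H^*(\wh G/\wh P)$ with the subring of $H^*(\wh G/\wh B)$ generated by Chern classes of line bundles pulled back from $\wh G/\wh P$; these are precisely the $L_{\wh\lambda}$ for $\wh\lambda$ extending to a character of $\wh P$. Under $\iota$, such an $L_{\wh\lambda}$ pulls back to $L_{\wh\lambda|_\mathfrak{h}}$, where $\wh\lambda|_\mathfrak{h}$ extends to a character of $P = \wh P \cap G$; thus the pullback lands in the subring $H^*(G/P) \subseteq H^*(G/B)$. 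Combining this with the Borel case of the first paragraph gives the commutativity of the parabolic square, since all four sides are ring maps that agree on a set of generators.

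The main obstacle I expect is purely bookkeeping — namely checking that the restriction map $\wh{\mathfrak{h}}^* \to \mathfrak{h}^*$ genuinely descends to a map between the invariant quotients $[\operatorname{Sym}^*(\wh{\mathfrak{h}}^*)/\wh J]^{\wh W_{\wh P}} \to [\operatorname{Sym}^*(\mathfrak{h}^*)/J]^{W_P}$, given that the Weyl group $W$ does not in general embed in $\wh W$. The resolution is that well-definedness is forced by the commutative diagram itself: the pullback on cohomology lands in $H^*(G/P)$ when restricted to $H^*(\wh G/\wh P)$, and through the Borel isomorphisms this translates to the corresponding algebraic statement about invariants, without requiring a direct Weyl-group-level argument.
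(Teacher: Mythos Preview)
Your proposal is correct and follows essentially the same approach as the paper: both arguments rest on functoriality of the first Chern class to verify commutativity on degree-one generators, then use that the Borel maps are ring homomorphisms to conclude. The paper's proof is terser—it simply invokes Chern class functoriality and notes that the kernel of the top horizontal map lands in the kernel of the bottom one, then says the parabolic diagram follows ``by restricting to the appropriate subrings''—whereas you spell out the line-bundle pullback $\iota^*L_{\wh\lambda}\simeq L_{\wh\lambda|_{\mathfrak h}}$ explicitly and give a concrete reason (characters of $\wh P$ restrict to characters of $P$) for why the parabolic pullback lands in $H^*(G/P)$.
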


\begin{proof}
By functoriality of the Chern class, 
\begin{center}
\begin{tikzcd}
\op{Sym}^*(\wh{\mathfrak{h}}^*)\arrow[r] \arrow[d]& H^*(\wh G/\wh B) \arrow[d]\\
\op{Sym}^*({\mathfrak{h}}^*) \arrow[r] & H^*(G/B) 
\end{tikzcd}
\end{center}
commutes. Furthermore, the kernel of the top horizontal map is sent to the kernel of the bottom horizontal map. The second diagram follows from restricting the first to the appropriate subrings. 
\end{proof}

Finally, recall from \cite{BGG}: 

\begin{proposition}\label{lin}
For any simple reflection $s_i$, 
$[X_{s_iw_0}] = -w_0 \omega_i$.
\end{proposition}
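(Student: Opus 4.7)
The plan is to verify the identity in the free abelian group $H^2(G/B)$ by pairing both sides against the $\Z$-basis $\{[X_{s_k}]\}$ of $H^{2N-2}(G/B)$ indexed by simple reflections, where $N = \dim G/B$. Both the Schubert divisor classes $\{[X_{s_iw_0}]\}_{i}$ and the weight classes $\{c_1(\mathcal{L}_{\omega_j})\}_{j}$ form $\Z$-bases of $H^2(G/B)$, so the claim reduces to computing $[X_{s_iw_0}] \cdot [X_{s_k}]$ and $c_1(\mathcal{L}_{-w_0\omega_i}) \cdot [X_{s_k}]$ in $H^{2N}(G/B) = \Z\cdot[X_e]$ and observing that they agree for every $k$.

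For the left-hand pairing, I would invoke Poincar\'e duality on $G/B$. The opposite Schubert variety $X^u := \overline{B^- u B/B}$ satisfies $X^u = w_0 \cdot X_{w_0 u}$ (using $B^- = w_0 B w_0^{-1}$ and $w_0^2 = e$), and since $G$ is connected, translation by $w_0$ is homotopic to the identity, giving $[X^u] = [X_{w_0 u}]$ in cohomology. The dual-basis relation then reads $[X_u] \cdot [X_{w_0 v}] = \delta_{u,v}[X_e]$ whenever $\ell(u) = \ell(v)$. Setting $u = s_iw_0$ and writing $[X_{s_k}] = [X_{w_0 v}]$ with $v = w_0 s_k$, one obtains $[X_{s_iw_0}] \cdot [X_{s_k}] = \delta_{s_iw_0,\,w_0 s_k}[X_e] = \delta_{k,\,i^*}[X_e]$, where $i \mapsto i^*$ is the involution defined by $\alpha_{i^*} = -w_0\alpha_i$, equivalently $s_{i^*} = w_0 s_i w_0$.

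For the right-hand pairing, the Schubert curve $X_{s_k}$ is isomorphic to $\mathbb{P}^1$, and restricting $\mathcal{L}_\lambda$ to the rank-one subgroup orbit generated by $U_{\pm\alpha_k}$ yields $\deg(\mathcal{L}_\lambda|_{X_{s_k}}) = \langle\lambda, \alpha_k^\vee\rangle$; hence $c_1(\mathcal{L}_\lambda) \cdot [X_{s_k}] = \langle\lambda, \alpha_k^\vee\rangle[X_e]$. Plugging in $\lambda = -w_0\omega_i$ and using that $w_0$ sends the simple coroot $\alpha_k^\vee$ to $-\alpha_{k^*}^\vee$, I compute $\langle -w_0\omega_i,\alpha_k^\vee\rangle = \langle \omega_i, \alpha_{k^*}^\vee\rangle = \delta_{i,\,k^*} = \delta_{k,\,i^*}$, matching the left-hand side. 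Since the two classes in $H^2(G/B)$ have the same pairing with every basis element $[X_{s_k}]$, they are equal, proving $[X_{s_iw_0}] = c_1(\mathcal{L}_{-w_0\omega_i})$, which is the assertion.

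The main technical step to justify with care is the identification $[X^u] = [X_{w_0 u}]$ in cohomology together with Poincar\'e duality $[X_u] \cdot [X^u] = [X_e]$; the remaining ingredients---the Chevalley-type degree formula on Schubert curves and the symmetry of $-w_0$ on the root system---are standard facts of Schubert calculus that can be cited directly.
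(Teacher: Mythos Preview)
Your argument is correct and takes a genuinely different route from the paper's. The paper works entirely inside the Borel model: it rewrites $[X_{s_iw_0}]=[X_{w_0s_j}]$ with $\alpha_j=-w_0\alpha_i$, identifies this with the BGG polynomial $P_{s_j}$, and then pins down $P_{s_j}=\omega_j$ by applying the divided-difference operators $A_k$ and using that a degree-one polynomial annihilated by all $A_k$ with $k\ne j$ must be a multiple of $\omega_j$. You instead stay in intersection theory, pairing against the curve basis $\{[X_{s_k}]\}$ via Poincar\'e duality on the left and the Chevalley-type degree formula $c_1(\mathcal{L}_\lambda)\cdot[X_{s_k}]=\langle\lambda,\alpha_k^\vee\rangle[X_e]$ on the right; the diagram-automorphism involution $i\mapsto i^*$ mediates the match on both sides. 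Your approach avoids the BGG operators entirely and is arguably more elementary, while the paper's approach meshes naturally with the surrounding discussion of the Borel isomorphism and is what one would expect given that the section is explicitly computing pullbacks through $\op{Sym}^*(\mathfrak{h}^*)/J$. The only point worth flagging is that your ``main technical step'' (that $[X^u]=[X_{w_0u}]$ and the dual-basis relation) is indeed standard and requires no more care than you indicate.
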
 

\begin{proof}
We have $[X_{s_iw_0}] = [X_{w_0w_0s_iw_0}] = [X_{w_0s_j}]$, where $\alpha_j$ is the \emph{simple} root $-w_0\alpha_i$. Under the Borel isomorphism, $[X_{w_0s_j}]$ is identified with the BGG polynomial $P_{s_j}$, which is degree $1$ and satisfies 
$$
A_{i}P_{s_j} = \left\{\begin{array}{cc}
0 & i\ne j\\
1 & i=j
\end{array}
\right.,
$$
where $A_i$ are the divided difference operators. The only linear functionals $f\in \mathfrak{h}^*$ invariant under all $s_i$, $i\ne j$ are the multiples $f = c\omega_j$. From $(c\omega_j-(c\omega_j-c\alpha_j))/\alpha_j=1$ we learn that $c=1$, so $P_{s_j} = \omega_j = -w_0 \omega_i$.
\end{proof}

Recall the identification $H^*_T(G/B) = H^*_G(G/B\times G/B) = S\otimes_{S^W} S$. This identification once again stemming from the Chern classes of line bundles associated to characters, we have another commutative diagram 
\begin{center}
\begin{tikzcd}
\wh S \otimes_{\wh S^{\wh W}} \wh S \arrow[r] \arrow[d] & H^*_{\wh T}(\wh G/\wh B) \arrow[d] \\
S\otimes_{S^W} S \arrow[r] & H^*_T(G/B)
\end{tikzcd}
\end{center}
with horizontal maps isomorphisms and vertical maps the natural restrictions. 

For the sake of concrete calculations such as in Theorem \ref{formulaONE}(b), one wants suitable polynomial representatives for $[X_u]^T$ (and $[\wh X_{\wh u}]^{\wh T}$). W. Graham gives a procedure in \cite[Proposition 4.2]{Graham} that makes this possible, as we now describe. Ultimately, we will in fact use certain approximations (cf. Proposition \ref{approxtheory}) of $[X_u]^T$ and $[\wh X_{\wh u}]^{\wh T}$ that are inspired by the formulas for related classes in D. Anderson's note \cite{And:2007}. 

%The procedure given by \cite[Proposition 4.2]{Graham:1997} can be used to find polynomial representatives for $[X_u]^T$ (and $[\wh X_{\wh u}]^{\wh T}$), which allows for concrete calculations as in Theorem \ref{formulaONE}(b). Graham's method can take any pair of bases of $S$ over $S^W$ as its starting point, but we will make a specific choice which has nice properties. 

Graham's method can take any pair of bases of $S$ over $S^W$ as its starting point, but we will make a specific choice which has nice properties. 
Let $P_v,v\in W$, be homogeneous lifts of the BGG polynomials in $S$; that is, the image of $P_v$ in $S/J$ is identified with $[X_{w_0v}]\in H^*(G/B)$. One way to construct them is by setting $P_{w_0} = \frac{1}{|W|}\prod_{\Phi^+}\alpha$ and $P_w = A_{w^{-1}w_0}P_{w_0}$. They satisfy $\deg P_v = \ell(v)$ as well as the following properties.

\begin{lemma}\label{triangular}
$$
A_{w_0}\left(P_vP_w\right) = \left\{
\begin{array}{cc}
1, & v = w_0w\\
0, & \ell(v) + \ell(w) = \ell(w_0), v\ne ww_0\\
0, & \ell(v)+\ell(w)<\ell(w_0) \\
0, & \ell(v)+\ell(w) = \ell(w_0)+1.
\end{array}
\right.
$$
In case $\ell(v)+\ell(w)>\ell(w_0)+1$, $A_{w_0}\left(P_vP_w\right)$ belongs to $S^W$ and (if nonzero) is homogeneous of degree $\ell(v)+\ell(w)-\ell(w_0)$. 
\end{lemma}

\begin{proof}
The first and second statements are from \cite{BGG}. The third follows from degree considerations. The $W$-invariance of $A_{w_0}f$ for any $f\in S$ follows from the observation that for any $i$, $A_{s_i}A_{w_0}$ is the $0$-operator; this proves the fifth statement, and the fourth follows from noting that there are no $W$-invariant linear polynomials in the subring of $S$ generated by the roots (all $P_v$ live in this subring, and the $A_v$ act on this subring). 
\end{proof}

Following \cite{Graham}, let $(b_{u,v})$ be the symmetric, $S^W$-valued matrix with $b_{u,v} = A_{w_0}(P_vP_w)$, and let $(a_{u,v})$ be its inverse matrix. Then a representative polynomial $f\in S\otimes_\C S$ for the class of the diagonal $[G(e,e)]^G = [X_e]^T\in H^*_T(G/B)$ is given by 
$$
f = \sum a_{uv} P_u\otimes P_v.
$$
We claim that $f$ has the following special form: 
\begin{lemma}
$$f = \sum_{u\in W} P_u\otimes P_{w_0u} + \sum_{\ell(u)+\ell(v)<\ell(w_0)-1} a_{u,v}P_u\otimes P_v$$
\end{lemma}

\begin{proof}
Lemma \ref{triangular} makes it clear that, if $W$ is ordered first by increasing length of its elements, then in such a way that the distances from $e$ to $v$ and from $w_0v$ to $w_0$ are equal for all $v$, the matrix $(b_{u,v})$ is of the form 
$$
\left[\begin{array}{cccc}
0 &  \cdots & 0  & 1 \\
0 &        \cdots & 1      & * \\
\vdots & \iddots & & \vdots \\
1 & \cdots & * & *
\end{array}\right],
$$
i.e., is lower-triangular with respect to the antidiagonal. The inverse of such a matrix is upper-triangular with respect to the antidiagonal, from which we immediately deduce 
$$
a_{u,v} = \left\{
\begin{array}{cc}
1, & v = w_0u \\
0, & \ell(v)+\ell(u) = \ell(w_0), v\ne u\\
0, & \ell(v)+\ell(u) >\ell(w_0).
\end{array}
\right.
$$
It remains to show that $a_{u,v} = 0$ when $\ell(v)+\ell(u) = \ell(w_0)-1$. For this, examine the sum 
$$
\sum_{q\in W} b_{w_0u,q}a_{q,v},
$$
which must equal $0$ as $v\ne w_0u$. We may partition this sum according to the value of $n_q:=\ell(w_0u)+\ell(q)$ as 
\begin{align*}
\sum_{n_q<\ell(w_0)} b_{w_0u,q}a_{q,v} + \sum_{n_q=\ell(w_0)} b_{w_0u,q}&a_{q,v} + \sum_{n_q=\ell(w_0)+1} b_{w_0u,q}a_{q,v}\\ &+\sum_{n_q>\ell(w_0)+1} b_{w_0u,q}a_{q,v}. 
\end{align*}
Lemma \ref{triangular} shows that the first term equals $0$, the second equals $a_{u,v}$, and the third equals $0$ as well. The fourth term equals $0$ by upper-triangularity of $(a_{q,v})$, observing that 
$$
\ell(w_0u)+\ell(q)>\ell(w_0)+1 \iff \ell(v) +\ell(q)>\ell(w_0).
$$
\end{proof}

\begin{corollary}
A representative for $[X_u]^T\in H^*_T(G/B)$ is given by 
$$
[X_u]^T = A_u[X_e]^T = \sum_{v^{-1}w = u} P_v\otimes P_{w_0w} + \sum_{x,y} a_{xu,y} P_{x}\otimes P_y,
$$
the first sum over all $v,w$ satisfying $\ell(v)+\ell(u) = \ell(w)$, and the second sum over all $x,y$ satisfying $\ell(x)+\ell(u) = \ell(xu)$ and $\ell(x)+\ell(y)+\ell(u)<\ell(w_0)-1$. $($Here the divided difference operator acts on the left factor of the tensor product, as in \cite{Brion}.$)$
\end{corollary}

In practice, the calculation of Theorem \ref{formulaONE}(b) may be carried out with only the ``first-order'' approximations of $[X_u]^T, [\wh X_{\wh u}]^{\wh T}$. 

\begin{proposition}\label{approxtheory}
Define approximations 
$$
f_u = \sum_{v^{-1}w = u} P_v\otimes P_{w_0w}\in S\otimes S
$$
and 
$$
\wh f_{\wh u} = \sum_{\wh v^{-1}\wh w = \wh u} P_{\wh v}\otimes P_{\wh w_0\wh w}\in \wh S\otimes \wh S.
$$
Then identifying $S\simeq H^*_T(pt)$, we have 
$$
\int_{G/B} [X_u]^T\cdot \phi_\delta^*[\wh X_{\wh u}]^{\wh T} = \int f_u\cdot \wh f_{\wh u}|_{T},
$$
where for any pure tensor $g\otimes h\in S\otimes S$, we define 
$$
\int g\otimes h = \frac{g}{\prod_{\Phi^+}\alpha} \sum_{w\in W} (-1)^{\ell(w)}wh
$$
and extend by linearity to an operator $S\otimes S\to S$. 
\end{proposition}

\begin{proof}
Observe that if $h$ is of degree $<\ell(w_0)$, $\int g\otimes h = 0$. In the context of Theorem \ref{formulaONE} we have $\ell(w_0)-\ell(u)+\ell(\wh w_0)-\ell(\wh u) = \ell(w_0)+1$; that is, $\ell(u)+\ell(\wh u) +1 = \ell(\wh w_0)$. Consider a term missing from the approximation in the product $[X_u]^T\cdot \phi_\delta^*[\wh X_{\wh u}]^{\wh T}$, for example 
$$
\left(a_{xu,y}P_x\otimes P_y\right)\cdot \left( P_{\wh v}\otimes P_{\wh w_0\wh w}\right)|_T
$$
where $\ell(x)+\ell(u) = \ell(xu)$, $\ell(x)+\ell(y)+\ell(u)<\ell(w_0)-1$, and $\ell(\wh v)+\ell(\wh u) = \ell(\wh w)$. The degree of $P_y\cdot P_{\wh w_0\wh w}|_T$ is bounded as follows:
\begin{align*}
\ell(y)+\ell(\wh w_0)-\ell(\wh w) &<\ell(w_0)-1-\ell(x)-\ell(u)+\ell(\wh w_0)-\ell(\wh v)-\ell(\wh u)\\
&\le \ell(w_0)-1-\ell(u)+\ell(\wh w_0)-\ell(\wh u)\\
& = \ell(w_0),
\end{align*}
so this term integrates to $0$. The other types of cross-terms similarly integrate to $0$. 
\end{proof}

\subsection{A root embedding of $SL_2\to SL_3$}\label{illustrious}

Define $\iota: SL_2\to SL_3$ by $\iota: A\mapsto \left[\begin{array}{c|c} A & 0 \\\hline 0 & 1\end{array}\right]$ at the level of matrices; this is the root embedding along the simple root $\alpha_1$ for $SL_3$. For notation, let $\{\alpha_1,\alpha_2,\alpha_1+\alpha_2\}$ be the positive roots for $SL_3$ \wrt the standard Borel $\wh B$ of upper-triangular matrices. Let $\alpha$ denote the positive root for $SL_2$ \wrt the Borel $B$ of upper-triangular matrices. The only indecomposable dominant one-parameter subgroup is $\alpha^\vee: t\mapsto \left[\begin{array}{cc} t & 0 \\ 0 & t^{-1} \end{array}\right]$. It is also admissible, being orthogonal to the trivial hyperplane in $\mathfrak{h}^*$. 

\subsubsection{Change of basis} We notice that $\alpha^\vee$ is not dominant \wrt $\wh B$, so we change basis as described in Section \ref{changingtime}. Our new Borel $\wh B'$ of $SL_3$ has simple roots $\gamma_1:=\alpha_1+\alpha_2$ and $\gamma_2:=-\alpha_2$, and we have $\wh B' = s_{\alpha_2}^{-1}\wh Bs_{\alpha_2}$. Observe that $\gamma_1+\gamma_2 = \alpha_1$ is still positive; i.e., $B\subseteq \wh B'$. 

We have $P(\alpha^\vee) = B$ and $\wh P(\alpha^\vee) = \wh B'$. The usual pullback (via $\phi: SL_2/B\to SL_3/\wh B'$) in cohomology sends 
$$
\begin{array}{ccc}
\left[\wh X_e\right]\mapsto 0, & & \left[\wh X_{s_{\gamma_1}s_{\gamma_2}}\right]\mapsto [X_e],\\\\
\left[\wh{X}_{s_{\gamma_1}}\right]\mapsto 0, & & \left[\wh X_{s_{\gamma_2}s_{\gamma_1}}\right]\mapsto [X_e],\\\\
\left[\wh X_{s_{\gamma_2}}\right]\mapsto 0, & & \left[\wh X_{w_0}\right]\mapsto [X_s],
\end{array}
$$
where $w_0 = s_{\gamma_1}s_{\gamma_2}s_{\gamma_1}$ and $s = s_\alpha$ are the longest elements in the two Weyl groups. 

Therefore 
$$
\begin{array}{ccc}
\phi^* \left[\wh X_{s_{\gamma_1}s_{\gamma_2}}\right]\cdot [X_s] = [X_e]; & & \phi^* \left[\wh X_{s_{\gamma_2}s_{\gamma_1}}\right]\cdot [X_s] = [X_e]; \\ \phi^*\left[\wh X_{w_0}\right] \cdot [X_e] = [X_e].
\end{array}
$$
Checking the numerical criterion for $L$-movability, we see that 
\begin{align*}
\langle \rho + s^{-1}\rho, \alpha^\vee\rangle -\langle 2\rho, \alpha^\vee\rangle + \langle \wh{\rho} + (s_{\gamma_1}s_{\gamma_2})^{-1}\wh{\rho},\alpha^\vee\rangle &= 0-2+1 = -1\\
\langle \rho + s^{-1}\rho, \alpha^\vee\rangle -\langle 2\rho, \alpha^\vee\rangle + \langle \wh{\rho} + (s_{\gamma_2}s_{\gamma_1})^{-1}\wh{\rho},\alpha^\vee\rangle &= 0-2+1 = -1\\
\langle \rho+\rho,\alpha^\vee\rangle - \langle 2\rho, \alpha^\vee\rangle + \langle \wh{\rho}+w_0^{-1}\wh\rho,\alpha^\vee\rangle &= 2-2+0 = 0,
\end{align*}
so in the deformed cohomology, we have 
$$
\begin{array}{ccc}
\phi^\odot \left[\wh X_{s_{\gamma_1}s_{\gamma_2}}\right]\odot_0 [X_s] = 0; & & \phi^\odot \left[\wh X_{s_{\gamma_2}s_{\gamma_1}}\right]\odot_0 [X_s] = 0; \\ \phi^\odot\left[\wh X_{w_0}\right] \odot_0 [X_e] = [X_e].
\end{array}
$$
Therefore if $\mu = a\omega$ and $\wh\mu = b\omega_1+c\omega_2$ are arbitrary dominant weights, the sole inequality that $(\mu,\wh\mu)$ must satisfy for membership in $\mathcal{C}(SL_2\xrightarrow{\iota} SL_3)$ is
$$
a\le b+c,
$$
and the sole regular facet $\mathcal{F}$ is the locus $a = b+c$, with face data $(e,w_0)$. 

\subsubsection{The rays}

Notice that $\alpha^\vee$ is the only element in $\mathfrak{T}$ and the hypothesis of Corollary \ref{yippee} is satisfied; therefore we have the two rays (not on $\mathcal{F}$): $(0,\omega_1)$ and $(0,\omega_2)$ (the trivial $SL_2$ representation appears in each of the fundamental representations for $SL_3$). 

On $\mathcal{F}$, we have two type I data, corresponding to 
$
s_{\gamma_1}s_{\gamma_2}\xrightarrow{\gamma_2} w_0
$
and 
$
s_{\gamma_2}s_{\gamma_1}\xrightarrow{\gamma_1} w_0.
$
From $(u,\wh u) = (e,s_{\gamma_1}s_{\gamma_2})$, we calculate 
$$
[X_{s_\alpha u}]\cdot \phi^*\left[\wh X_{\wh u}\right] = [X_e]
$$
and 
$$
[X_{u}]\cdot \phi^*\left[\wh X_{s_{\gamma_2}\wh u}\right] = [X_e],
$$
meanwhile $s_{\gamma_1} \wh u$ is of shorter length than $\wh u$. Therefore the first type I ray has coordinates 
$$
(1,0,1)
$$
in the $\{\omega,\omega_1,\omega_2\}$ basis. 

By a similar calculation (or by symmetry), the type I ray from the datum $s_{\gamma_2}s_{\gamma_1}\xrightarrow{\gamma_1} w_0$ is 
$$
(1,1,0)
$$
in the same coordinates. 

There are no type II rays. Here $L_\delta = \{e\}$ and $\wh L_\delta\simeq \C^*$, so $\wh L_\delta' = \{e\}$ as in Remark \ref{twenty-four}. One can still check that the single ray in $\mathcal{C}(L_\delta\subset \wh L_\delta)$ maps to $0$ under $\op{Ind}$. 
Note that $(1,0,1)$ and $(1,1,0)$ generate $\mathcal{F}$. Note also that their $1$s and $0$s illustrate Lemmas \ref{ones} and \ref{zeros}.
These two, together with $(0,1,0)$ and $(0,0,1)$, indeed generate $\mathcal{C}(SL_2\xrightarrow{\iota} SL_3)$. 

\subsubsection{Illustration of Proposition \ref{sloppyeq}} On our face $\mathcal{F}$, we had $q=2$ type I rays. The kernel of the induction map has rank $c=0$. Note that $|\Delta(\wh P)| = 0$ and $|\wh \Delta| = 2$, so 
$$
0= c = q - |\wh \Delta| + |\Delta(\wh P)| = 2-2+0
$$
is satisfied. 

\subsection{Principal embeddings $SL_2\to \wh G$ for $\wh G$ simple}

Suppose $SL_2\to \wh G$ is an embedding such that $B\subseteq \wh B$ where $B$ is the standard Borel of $SL_2$ and $\wh B$ a Borel subgroup of $\wh G$. We assume that $\wh G$ is not itself $SL_2$, in which case the question is uninteresting. Assume, by conjugating $\wh B$ if necessary, that the coroot $\alpha^\vee$ for $SL_2$ is a dominant coweight of $\wh G$ \wrt $\wh B$. By a result of Dynkin \cite{DYNKIN}, we may write 
$$
\alpha^\vee = \sum_{i=1}^r d_ix_i
$$
in the Lie algebra $\wh{\mathfrak{h}}$, where the $x_i$ are dual to the simple roots $\alpha_i$ given by $\wh B$ and each $d_i$ is $0,1,$ or $2$ ($r$ is the rank of $\wh G$).

Let $\mu = m \omega$ be a dominant weight for $SL_2$ and $\wh \mu$ a dominant weight for $\wh G$. Then $(\mu, \wh \mu)$ belongs to $\mathcal{C}(SL_2\to \wh G)$ if and only if 
\begin{align}\label{probred}
-\wh\mu(\alpha^\vee) + \max_{\alpha_i(\alpha^\vee)\ne 0}d_i\wh\mu(\alpha_i^\vee) \le m\le \wh\mu(\alpha^\vee);
\end{align}
see \cite[\S 5.3]{BeS}. 

\subsubsection{Minimal inequalities in the principal case with $\wh G$ simple}
In the case each $d_i = 2$, we call the embedding ``principal'' (notably, such embeddings exist and are unique up to conjugation for any $\wh G$). Then the inequalities (\ref{probred}) become 
\begin{align}\label{defred}
-\wh\mu(\alpha^\vee) + 2\max_{1\le i\le r}\wh\mu(\alpha_i^\vee) \le m\le \wh\mu(\alpha^\vee);
\end{align}

If $\wh G$ is simple, this is not the smallest possible set of inequalities. Rather, 
\begin{proposition}
The inequalities (\ref{defred}) are satisfied if and only if $m\le \wh \mu(\alpha^\vee)$. 
\end{proposition}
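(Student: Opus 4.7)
The reverse direction is trivial since $m\le \wh\mu(\alpha^\vee)$ is already one of the two inequalities in (\ref{defred}). For the forward direction, the hypothesis that $\mu=m\omega$ be dominant forces $m\ge 0$; hence to derive the left inequality of (\ref{defred}) from the right one, it suffices to show
$$
\wh\mu(\alpha^\vee)-2\wh\mu(\alpha_i^\vee)\;\ge\;0 \qquad \text{for every simple root }\alpha_i\text{ of }\wh G.
$$
Since $\wh\mu$ is an arbitrary dominant weight, by linearity this will in turn follow once I show that $\alpha^\vee-2\alpha_i^\vee$ is a non-negative integer combination of the simple coroots $\alpha_j^\vee$ of $\wh G$.

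In the principal case all $d_j=2$, so $\alpha^\vee=2\sum_j x_j=2\wh\rho^\vee=\sum_{\beta\in\wh R^+}\beta^\vee$. Letting $\theta$ denote the highest root of $\wh G$, I would isolate $\theta^\vee$ and $\alpha_i^\vee$ from the sum to obtain
$$
\alpha^\vee-2\alpha_i^\vee \;=\; \bigl(\theta^\vee-\alpha_i^\vee\bigr)+\sum_{\substack{\beta\in\wh R^+\\ \beta\ne\alpha_i,\,\beta\ne\theta}}\beta^\vee.
$$
Every remaining $\beta^\vee$ in the sum is already a non-negative integer combination of simple coroots, so everything rests on showing that the term $\theta^\vee-\alpha_i^\vee$ is likewise a non-negative integer combination.

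The key step is therefore the claim that for simple $\wh G\ne SL_2$ the coefficient of each simple coroot $\alpha_j^\vee$ in $\theta^\vee$ is a positive integer. This I would establish from two standard facts: (i) since the Dynkin diagram of $\wh G$ is connected, the support of the highest root is full, giving $\theta=\sum_j n_j\alpha_j$ with every $n_j\ge 1$; and (ii) using the identification $\theta^\vee=\sum_j n_j\frac{(\alpha_j,\alpha_j)}{(\theta,\theta)}\alpha_j^\vee$, the fact that $\theta^\vee$ lies in the coroot lattice forces the positive rational $n_j(\alpha_j,\alpha_j)/(\theta,\theta)$ to be a positive integer. Consequently $\theta^\vee-\alpha_i^\vee$ has $\alpha_i^\vee$-coefficient $\ge 0$ and other simple-coroot coefficients $\ge 1$, completing the reduction. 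I expect (ii) to be the main conceptual obstacle, though it is standard; the hypothesis $\wh G\ne SL_2$ enters precisely to ensure $\theta\ne\alpha_i$, so that the separation of $\theta^\vee$ from $\alpha_i^\vee$ is genuine — indeed, for $\wh G=SL_2$ the inequality fails for every nonzero dominant $\wh\mu$.
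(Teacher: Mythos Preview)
Your argument is correct and takes a genuinely different route from the paper's. Both proofs reduce to showing that, in the expansion $\alpha^\vee=\sum_j c_j\alpha_j^\vee$, every $c_j\ge 2$ (equivalently, $\alpha^\vee-2\alpha_i^\vee$ lies in the positive coroot cone). The paper obtains this by writing $\vec c=M^{-1}\vec d$ with $\vec d=(2,\dots,2)$ and then citing Dynkin's tables \cite{DYNKIN}*{Table 2} for the fact that every row sum of $M^{-1}$ is at least $1$ when $\wh G\ne SL_2$. You instead use the identity $\alpha^\vee=2\wh\rho^\vee=\sum_{\beta\in\wh R^+}\beta^\vee$ and peel off the two summands $\alpha_i^\vee$ and $\theta^\vee$, reducing to the claim that $\theta^\vee-\alpha_i^\vee$ has nonnegative simple-coroot coefficients; this follows uniformly from the full support of the highest root together with the integrality of the coefficients of $\theta^\vee$ in the simple-coroot basis. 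Your approach has the advantage of being type-independent and avoiding the table lookup; the paper's is shorter if one is willing to quote the tabulated row sums. One minor phrasing issue: you derive the left inequality of (\ref{defred}) from $m\ge 0$, not ``from the right one'' as you wrote, though you do state $m\ge 0$ in the preceding clause.
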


\begin{proof}
It suffices to show that $-\wh\mu(\alpha^\vee)+2 \wh \mu(\alpha_i^\vee)\le 0$ for any $1\le i\le r$. Write 
$$
\alpha^\vee = \sum_{i=1}^r c_i\alpha_i^\vee
$$
for suitable $c_i$. Then the coefficients $c_i$ and $d_i$ are related by 
$$
\vec d = M \vec c,
$$
where $M$ is the Cartan matrix for $\wh G$ (or its transpose, depending on convention). Therefore $\vec c = M^{-1}\vec d$, and since each $d_j = 2$, each $c_i$ is twice the sum of the elements in a row of $M^{-1}$. The sums across rows of $M^{-1}$ are always at least $1$ if $M$ is not the Cartan matrix for $SL_2$ (see \cite[Table 2]{DYNKIN}). Therefore $c_i\ge 2$ for all $i$ and 
$$
-\wh \mu(\alpha^\vee)+2\wh \mu(\alpha_i)^\vee \le -c_i\wh \mu(\alpha_i^\vee)+2\wh\mu(\alpha_i^\vee) \le 0.
$$
\end{proof}

So the cone $\mathcal{C}(SL_2\to \wh G)$ has only one regular facet $\mathcal{F}$, with the data $(e,w_0,\alpha^\vee)$, where $w_0$ is the longest element of the Weyl group for $\wh G$ (this makes use of the special phenomenon $w_0\alpha^\vee = -\alpha^\vee$, cf. \cite[Lemma 5.3.1]{BeS}). 

\subsubsection{The rays} Again $\mathfrak{T} = \{\alpha^\vee\}$ and Corollary \ref{yippee} implies that
$\mathcal{C}(SL_2\to \wh G)$ has the $r$ rays $(0,\omega_i)$, where $\omega_i$ is a fundamental weight for $\wh G$, in addition to any rays on $\mathcal{F}$. 

As in the previous example, $\mathcal{F}$ has no type II rays because $L_\delta' = \wh L_\delta' = \{e\}$. Therefore we restrict our attention to the type I rays on $\mathcal{F}$. 

\begin{lemma}\label{peace}
If $v\xrightarrow{\alpha}w_0$ 
and $\ell(s_\beta v) = \ell(v)+1$ for some simple root $\beta$, then 
$\beta=\alpha$.
\end{lemma}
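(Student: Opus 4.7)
The plan is to recognize this as an elementary statement about the inversion set of $v$. Because $v\xrightarrow{\alpha}w_0$ with one covering step, we have $\ell(v)=\ell(w_0)-1=|R^+|-1$. The standard identity $|N(v)|=\ell(v)$ for the inversion set $N(v)=\{\gamma\in R^+:v^{-1}\gamma\prec 0\}$ then forces exactly one positive root $\gamma_*$ to satisfy $v^{-1}\gamma_*\succ 0$.

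First I would replace $\alpha$ by its positive representative (legitimate since $s_\alpha=s_{-\alpha}$) so that $\alpha\in R^+$; then from $\ell(s_\alpha v)=\ell(v)+1$ together with the well-known criterion (see e.g.\ \cite{BGG}*{Corollary 2.3} already cited in the paper) we get $v^{-1}\alpha\succ 0$, hence $\alpha=\gamma_*$. Applying the same criterion to the hypothesis $\ell(s_\beta v)=\ell(v)+1$ with $\beta$ a (positive) simple root yields $v^{-1}\beta\succ 0$, so $\beta=\gamma_*$ as well. Thus $\alpha=\beta$.

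There is no real obstacle here: the entire content is the uniqueness of the positive root not inverted by $v$, which is immediate from the length count. The only thing to be careful about is the sign convention on $\alpha$ and confirming that ``simple'' in the statement means positive simple (which matches the usage of $\alpha_\ell$ elsewhere in the paper). I expect the written proof to be a short paragraph.
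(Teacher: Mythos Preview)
Your argument is correct: the unique positive root not inverted by $v$ must be both $\alpha$ and $\beta$, hence they coincide. The criterion $\ell(s_\gamma v)>\ell(v)\iff v^{-1}\gamma\succ 0$ holds for arbitrary positive $\gamma$ (via strong exchange), so there is no gap even though $\alpha$ is not assumed simple.

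The paper's proof takes a different and slightly shorter route. Rather than counting inversions, it observes directly that $\ell(s_\beta v)=\ell(v)+1=\ell(w_0)$, and since $w_0$ is the \emph{unique} element of maximal length, $s_\beta v$ is forced to equal $w_0=s_\alpha v$; cancelling $v$ gives $s_\beta=s_\alpha$ and hence $\beta=\alpha$. Your inversion-set argument has the mild advantage that it generalizes verbatim to any $w$ with $\ell(w)=\ell(w_0)-1$ in place of $v$ (though of course any such $w$ already arises as $s_\alpha w_0$), while the paper's argument uses nothing beyond the uniqueness of the longest element and is a one-liner. Both are perfectly adequate here.
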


\begin{proof}
Obvious from $s_\alpha v = s_\beta v$, since in this case $s_\beta v$ is forced to be $w_0$ (there is only one element of length $\ell(w_0)$). 
\end{proof}

\begin{proposition}
There are $r$ extremal rays of $\mathcal{F}$. They are $(c_i\omega, \omega_i)$ for $i=1,\hdots, r$. 
\end{proposition}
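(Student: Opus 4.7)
The plan is to classify the type I data coming from the face $\mathcal{F}$ with data $(w,\wh w,\delta)=(e,w_0,\alpha^\vee)$, compute each type I ray using Theorem \ref{formulaONE}, and then verify there are no type II rays. Since the principal embedding has all $d_i=2>0$, both $P=B$ and $\wh P=\wh B$ are Borels and $W_\delta,\wh W_\delta$ are trivial. The possibility $v\xrightarrow{\beta}w$ is ruled out since $w=e$ has no predecessors, so the type I data are exactly the $r$ pairs $v_i := s_{\wh\alpha_i}w_0\xrightarrow{\wh\alpha_i}w_0$, one per simple root of $\wh G$.

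For each $v_i$, I would invoke Theorem \ref{formulaONE} with $u=e$ and $\wh u=v_i$. The $SL_2$-component $\mu=c_1\omega$ requires computing the coefficient of $[X_e]$ in $\phi^*[\wh X_{v_i}]\cdot[X_{s_\alpha}]$ in $H^*(SL_2/B)$. Since $[X_{s_\alpha}]$ is the fundamental class, this reduces to identifying the degree-one class $\phi^*[\wh X_{v_i}]$ as a multiple of $[X_e]$. Using the Borel-model isomorphism together with Proposition \ref{lin}, $[\wh X_{v_i}]$ is represented by the linear functional $-w_0\wh\omega_i\in\wh{\mathfrak{h}}^*$, and $\phi^*$ corresponds to restriction to $\mathfrak{h}^*$. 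Evaluating on $\alpha^\vee$ and using the principal-case identity $w_0\alpha^\vee=-\alpha^\vee$ from \cite{BeS}*{Lemma 5.3.1} gives $(-w_0\wh\omega_i)(\alpha^\vee)=\wh\omega_i(\alpha^\vee)=c_i$, whence $c_1=c_i$. For the $\wh G$-component $\wh\mu=\sum\wh c_k\wh\omega_k$, Lemma \ref{peace} forces $\wh c_k=0$ for $k\ne i$; the remaining coefficient $\wh c_i$ is read off from $\phi^*[\wh X_{w_0}]=1$ (pulling back the fundamental class), giving $\wh c_i=1$. Thus $\vec\mu(D(v_i))=(c_i\omega,\wh\omega_i)$.

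Finally I would rule out type II rays via Theorems \ref{1+2} and \ref{yadhtrib}. The Levis $L(\alpha^\vee)$ and $\wh L(\alpha^\vee)$ coincide with the maximal tori (once more because every $d_i>0$), so $L^{ss}=\wh L^{ss}=\{e\}$ and $\mathcal{C}(L^{ss}\to\wh L^{ss})_\Q=\{0\}$; consequently $\mathcal{F}_{2,\Q}=\{0\}$ (equivalently $c=0$ by Proposition \ref{63}). By Lemma \ref{zeros} the $r$ type I rays are linearly independent, and Theorem \ref{1+2} then shows they exhaust the extremal rays of $\mathcal{F}$. The main subtlety is the Borel-model computation in the second paragraph: its conceptual content is that the a priori appearance of the Dynkin involution $i\mapsto i^*$ collapses to the identity thanks to $w_0\alpha^\vee=-\alpha^\vee$, so the formula involves $c_i$ directly rather than $c_{i^*}$.
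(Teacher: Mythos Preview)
Your proposal is correct and follows essentially the same approach as the paper: identify the $r$ type I data $s_{\wh\alpha_i}w_0\xrightarrow{\wh\alpha_i}w_0$, use Proposition \ref{lin} and the Borel model to compute the $SL_2$-coefficient as $(-w_0\wh\omega_i)(\alpha^\vee)=\wh\omega_i(\alpha^\vee)=c_i$, and invoke Lemma \ref{peace} to pin down the $\wh G$-coordinates. Your write-up is in fact slightly more self-contained than the paper's proof of this proposition, since you explicitly dispatch the type II rays via $L^{ss}=\wh L^{ss}=\{e\}$ (the paper handles this in the surrounding discussion rather than inside the proof).
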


\begin{proof}
We get a type I ray for each $v\xrightarrow{\alpha}w_0$ with $\alpha$ simple. Of course, for any $\alpha_i$, $s_{\alpha_i}w_0\xrightarrow{\alpha_i}w_0$ since $w_0$ is the longest element, so we do indeed get $r$ rays. The coordinates of ray $i$ are mostly zero by Lemma \ref{peace}, so it is of the form $(C_i\omega, \omega_i)$, where the coefficient $C_i$ is calculated via 
$$
[X_s]\cdot \phi^*\left[\wh X_{s_{\alpha_i}w_0}\right] = C_i[X_e].
$$
By Proposition \ref{lin}, $\left[\wh X_{s_{\alpha_i}w_0}\right]$ is identified (via the Borel isomorphism) with the linear polynomial $-w_0\omega_i$. Therefore its pullback is the linear polynomial $-w_0\omega_i(\alpha^\vee)\omega = \omega_i(\alpha^\vee)\omega = c_i\omega$; this gives $C_i = c_i$.
\end{proof}

\subsubsection{Illustration of Proposition \ref{sloppyeq}} The face $\mathcal{F}$ has $q=r$ type I rays. The kernel of the induction map has rank $c=0$, and $|\Delta(\wh P)| = 0$ while $|\wh \Delta| = r$, so 
$$
0= c = q - |\wh \Delta| + |\Delta(\wh P)| = r-r+0
$$
is satisfied. 

\subsection{A reductive embedding $GL_2\to Sp_4$}

Set $\omega_n  = \left[\begin{array}{cc} 0 & J_n \\ -J_n & 0 \end{array}\right]$, where $J_n$ is the $n\times n$ matrix  $\left[\begin{array}{ccc} 0 & & 1 \\ & \iddots & \\ 1 & & 0 \end{array}\right]$, and consider the associated group $Sp_{2n}$:
$$
Sp_{2n} = \{A\in SL_{2n} | A^t\omega_nA = \omega_n\}
$$

For any $n$, there exists an embedding $GL_n\hookrightarrow Sp_{2n}$ that sends an invertible matrix $A$ to 
$$
\left[
\begin{array}{c|c}
A & 0 \\\hline 
0 & J_nA^{-t}J_n
\end{array}
\right],
$$
(note $J_n^2=I$). 
We will consider the case $n=2$ for a concrete example: 
$$
\left[
\begin{array}{cc}
a & b \\
c & d
\end{array}
\right] 
\mapsto 
\left[
\begin{array}{cccc}
a & b & 0 & 0 \\
c & d & 0 & 0 \\
0 & 0 & \frac{a}{ad-bc} & \frac{-b}{ad-bc} \\
0 & 0 & \frac{-c}{ad-bc} & \frac{d}{ad-bc} 
\end{array}
\right]
$$

Restricted to the standard maximal torus of $GL_2$, the isomorphism $T\to \wh T$ sends 
$$
\op{diag}(a,d)\mapsto \op{diag}(a,d,d^{-1},a^{-1});
$$
furthermore, the standard Borel $B$ of upper-triangular matrices in $GL_2$ is sent to the standard Borel $\wh B$ of upper-triangular matrices in $Sp_4$. 

By way of notation, the positive roots for $Sp_4$ will be $\alpha_1, \alpha_2, \alpha_1+\alpha_2$, and $2\alpha_1+\alpha_2$. The single positive root for $GL_2$ will be $\alpha$, and we define a character on the center of $GL_2$ by 
$$
\Delta: \left[
\begin{array}{cc}
t & 0\\ 
0 & t
\end{array}
\right]\mapsto t^2
$$
($\Delta$ stands for ``determinant''). The natural restriction sends: 
\begin{align*}
\begin{array}{rcr}
\alpha_1\mapsto \alpha & &
\alpha_2\mapsto \Delta-\alpha\\
\alpha_1+\alpha_2\mapsto \Delta& &
2\alpha_1+\alpha_2\mapsto \Delta+\alpha
\end{array}
\end{align*}

The one-parameter subgroups of $T$ have a (rational) basis given by $\alpha^\vee$ and $z$, defined as 
\begin{align*}
\alpha^\vee: t&\mapsto 
\left[
\begin{array}{cc}
t & 0 \\
0 & t^{-1}
\end{array}
\right], &
z: t &\mapsto
\left[
\begin{array}{cc}
t & 0 \\
0 & t
\end{array}
\right]
\end{align*}
($z$ stands for ``center''). Note that $\{\frac{\alpha}{2},\frac{\Delta}{2}\}$ and $\{\alpha^\vee,z\}$ are (rational) dual bases to each other. 
The admissible, dominant, indivisible one-parameter subgroups are 
$$
\frak{S} = \left\{\frac{\alpha^\vee+z}{2},\alpha^\vee,\frac{\alpha^\vee-z}{2}\right\}.
$$

\subsubsection{Inequalities}

Let $\mu = a\omega+b \frac{\Delta}{2}$ and $\wh \mu = c\omega_1+d\omega_2$ be arbitrary weights for $T$, $\wh T$, respectively. We will list the inequalities they must satisfy so that $(\mu, \wh \mu)\in \mathcal{C}(G\hookrightarrow\wh G)$. First of all, they must be dominant weights, which means they satisfy $a\ge 0, c\ge 0, d\ge 0$. 

With $\delta = \dfrac{\alpha^\vee+z}{2}$, we have $P(\delta) = B$ and $\wh P(\delta) = \wh P_2$, the standard parabolic with single negative root $-\alpha_2$. The pullback in cohomology sends 
\begin{align*}
\begin{array}{rr}
[X_{s_1s_2s_1}^{\wh P}] \mapsto [X_s]; & 
 [X_{s_1}^{\wh P}] \mapsto 0~ \\ 
~[X_{s_2s_1}^{\wh P}] \mapsto [X_e]; &
[X_{e}^{\wh P}] \mapsto 0.
\end{array}
\end{align*}

Both nontrivial products survive in the deformed cohomology ring: 
\begin{align*}
\phi_{\delta}^\odot [X_{s_1s_2s_1}^{\wh P_2}] \odot_0 [X_e] &= [X_e]\\
\phi_{\delta}^\odot [X_{s_2s_1}^{\wh P_2}] \odot_0 [X_s] &= [X_e].
\end{align*}

These give the two inequalities 
$$
\frac{1}{2}a+\frac{1}{2} b - c- d\le 0 ~~\text{    and   }~~ -\frac{1}{2}a + \frac{1}{2}b - d \le 0.
$$

With $\delta = \alpha^\vee$, which is not dominant for $\wh B$, we find it convenient to work with $\wh B':=s_2^{-1}\wh Bs_2$ instead of $\wh B$. Let $\gamma_1 = s_2^{-1}\alpha_1 = \alpha_1+\alpha_2$ and $\gamma_2 = s_2^{-1}\alpha_2 = -\alpha_2$; these comprise our new base of the root system $\wh \Phi$. With respect to this base, we let $t_1,t_2$ denote the simple reflections. Then $P(\delta) = B$, and now $\wh P(\delta) = \wh P_{\gamma_1}$, the parabolic subgroup containing $\wh B'$ and the negative root $-\gamma_1$. 
We obtain just one inequality from this $\delta$: 

\begin{center}
\begin{tabular}{c|c}
$(w,\wh w)$ & inequality \\\hline
$(e,t_2t_1t_2)$ & $a-c-2d\le 0$
\end{tabular}
\end{center}

With $\delta = \frac{\alpha^\vee-z}{2}$, which is again not dominant for $\wh B'$, we work instead with $\wh B':=s_2^{-1}s_1^{-1}\wh B s_1s_2$. Our new base consists of $\beta_1 = -\alpha_1-\alpha_2$ and $\beta_2 = 2\alpha_1+\alpha_2$, and we will denote our new simple reflections by $r_1$ and $r_2$. We have $P(\delta) =B$ and $\wh P(\delta) = \wh P_{\beta_2}$ the parabolic subgroup containing $\wh B'$ and the negative root $-\beta_2$. 
We obtain the following inequalities:

\begin{center}
\begin{tabular}{c|c}
$(w,\wh w)$ & inequality \\\hline
$(e,r_1r_2r_1)$ & $\frac{1}{2}a-\frac{1}{2}b-c-d\le 0$ \\
$(s,r_2r_1)$ & $-\frac{1}{2}a-\frac{1}{2}b-d\le 0$
\end{tabular}
\end{center}

\subsubsection{Some rays}

Consider the face $\mathcal{F}_1 = \mathcal{F}(\frac{\alpha^\vee+z}{2},e,s_1s_2s_1)$. There is only one type I datum: $s_2s_1\xrightarrow{\alpha_1}s_1s_2s_1$. Corresponding to $(u,\wh u) = (e,s_2s_1)$, we calculate the type I ray coefficients via 
$$
\phi^*[\wh X_{s_2s_1}]\cdot [X_s] = [X_e]
$$
and 
$$
\phi^*[\wh X_{s_1s_2s_1}]\cdot [X_e] = [X_e],
$$
giving the ray $(\omega+b(\frac{\Delta}{2}), \omega_1) = (1,b,1,0)$, where $b$ is yet to be determined. 

To determine $b$, we use the (approximations of) equivariant classes $[X_e]^T$, $[\wh X_{s_2s_1s_2}]^{\wh T}$ in $H^*_T(G/B)$, $H^*_{\wh T}(\wh G/\wh B)$, respectively. Following Proposition \ref{approxtheory}, we write down 
$$
f_u = P_{s}\otimes 1+1\otimes P_s = \frac{\alpha}{2}\otimes 1 + 1\otimes \frac{\alpha}{2}
$$
and 
$$
\wh f_{\wh u} = P_{s_1}\otimes 1 + 1\otimes P_{s_1} = \left(\frac{2\alpha_1+\alpha_2}{2}\right)\otimes 1 + 1\otimes \left(\frac{2\alpha_1+\alpha_2}{2}\right),
$$
so that 
$$
\wh f_{\wh u}|_T = \left(\frac{\Delta+\alpha}{2}\right)\otimes 1 + 1\otimes \left(\frac{\Delta+\alpha}{2}\right).
$$
We calculate $\int f_u\cdot \wh f_{\wh u}|_T = \Delta+\alpha$, so $b$ is found by solving 
$$
\Delta+\alpha = \frac{\alpha}{2}+b\left(\frac{\Delta}{2}\right) +\frac{\Delta+\alpha}{2};
$$
i.e., $b = 1$. 

\begin{remark}
Observe that, in this case, we can find $b$ another way. Since the ray $(1,b,1,0)$ is supposed to lie on the face $\frac{1}{2}a+\frac{1}{2}b - c - d = 0$, we find $b$ must equal $1$. In more general situations, however, the dimension of $Z(G)$ may exceed $1$, so the single face equation can't fully determine the character $\chi$. See the next subsection for an example. 
\end{remark}

The pair $L_\delta \subset \wh L_\delta$ are isomorphic to $\C^* \subset SL_2$, and the sole fundamental weight $\omega^L$ for this $SL_2$ expressed in our notation above as a character of $\wh T/\langle \delta\rangle$ is $\omega_2-\omega_1$. As a character of $T/\langle \delta\rangle$, this restricts to $\frac{\Delta}{2}-\omega$. The rays of $\mathcal{C}(L_\delta\subset\wh L_\delta)$ are generated by $(\omega^L,\omega^L)$ and $(-\omega^L,\omega^L)$. They map to 
\begin{align*}
\left(e\left(\frac{\Delta}{2}-\omega\right),s_1s_2s_1(\omega_2-\omega_1)\right) &- (s_1s_2s_1(\omega_2-\omega_1))(\alpha_1^\vee)\left(\omega+\frac{\Delta}{2},\omega_1\right)\\
&=\left(\frac{\Delta}{2}-\omega,\omega_2-\omega_1\right)+\left(\omega+\frac{\Delta}{2},\omega_1\right)\\
&=(\Delta,\omega_2)
\end{align*}
and $(2\omega,\omega_2)$, respectively. In coordinates, these are the rays $(0,2,0,1)$ and $(2,0,0,1)$, which do indeed lie on $\mathcal{F}_1$. 

For another example, consider the face $\mathcal{F}_2 = \mathcal{F}(\frac{\alpha^\vee+z}{2},s,s_2s_1)$. The type I datum $e\xrightarrow{\alpha}s$ yields the same $(u,\wh u)$ pair and therefore the same ray as above. So instead consider the type I datum $s_1\xrightarrow{\alpha_2}s_2s_1$, giving the pair $(u,\wh u) = (s,s_1)$. The type I ray coefficients of Theorem \ref{formulaONE}(a) give the ray $(0,b,0,1)$, where $b$ is yet to be determined. 

Following Proposition \ref{approxtheory}, we have 
$$
f_s = 1\otimes 1
$$
and 
\begin{align*}
\wh f_{s_1s_2} &= P_{e}\otimes P_{s_2s_1} + P_{s_2}\otimes P_{s_1} + P_{s_1s_2}\otimes P_e\\ &= 1\otimes \frac{(2\alpha_1+\alpha_2)^2}{4} + (\alpha_1+\alpha_2)\otimes \left(\frac{2\alpha_1+\alpha_2}{2}\right) + \frac{(\alpha_1+\alpha_2)^2}{2}\otimes 1,
\end{align*}
so 
$$
\wh f_{s_1s_2}|_T = 1\otimes \frac{(\alpha+\Delta)^2}{4} + \Delta\otimes \frac{\alpha+\Delta}{2} + \frac{\Delta^2}{2}\otimes 1.
$$
We find that $\int \wh f_{s_1s_2} = 2\Delta$. The equation 
$$
2\Delta = b\left(\frac{\Delta}{2}\right) + \omega_2|_T = b\left(\frac{\Delta}{2}\right) + \Delta
$$
gives $b = 2$. 

The pair $L_\delta\subset \wh L_\delta$ remains the same as before. The rays $(\omega^L, \omega^L)$ and $(-\omega^L, \omega^L)$ are sent to 
\begin{align*}
\left(s\left(\frac{\Delta}{2}-\omega\right),s_2s_1(\omega_2-\omega_1)\right) &- s\left(\frac{\Delta}{2}-\omega\right)(\alpha^\vee)\left(\omega+\frac{\Delta}{2},\omega_1\right)-0\\
&=\left(\frac{\Delta}{2}+\omega,2\omega_1\right)-\left(\omega+\frac{\Delta}{2},\omega_1\right)\\
&=(0,\omega_1)
\end{align*}
and $(0,3\omega_1)$, respectively. Up to scaling, these both give the same ray, $(0,0,1,0)$ in coordinates. 

\subsubsection{Illustration of Proposition \ref{sloppyeq}}

On the face $\mathcal{F}_1$ above, we had $c = 0$ since $\op{Ind}$ sent a basis to a linearly independent set. Moreover, we had $q=1$ and $|\wh \Delta| - |\Delta(\wh P)| = 1$. 
On the face $\mathcal{F}_2$, we had $c=1$ since the two extremal rays mapped to scalar multiples of one another. This agrees with $q=2$ and $|\wh \Delta| - |\Delta(\wh P)| = 1$. 

\subsection{The maximal torus embedding $T\subset G$}

Let $G$ be an arbitrary semisimple group and $T$ a fixed maximal torus inside $G$. Choose a Borel subgroup $B$ such that $T\subset B\subset G$. The cone $\mathcal{C}(T\subset G)$ (sometimes called the \emph{Kostka cone}) consists of pairs $(\mu,\lambda)$ such that (after scaling) $-\mu$ is a nontrivial weight space in the $G$-representation $V_{\lambda}$. It is well-known (e.g., follows from \cite[Proposition 21.3]{Humphreys}) that this occurs if and only if $-\mu$ is contained in the convex hull of the set $W\lambda = \{w\lambda|w\in W\}$. From this one can deduce the rays of the associated cone. 

\begin{proposition}\label{easyKostka}
The extremal rays of $\mathcal{C}(T\subset G)$ are generated by the pairs of the form $(-w \omega_j, \omega_j)$ as $w$ varies in $W$ and $\omega_j$ ranges over the set of fundamental weights. 
\end{proposition}

\subsubsection{Inequalities}

The extremal rays of $\mathcal{C}(T\subset G)$ are all type I rays on some face (and type II rays on several faces). First, let us describe the faces; cf. \cite[\S 5.1]{BeS}.  

Let $\{x_i\}$ denote a set of dominant cocharacters satisfying $\alpha_j(x_i) = n_i\delta_{i,j}$ for some integers $n_i>0$. That is, $x_i$ is a positive scalar multiple of the $i^\text{th}$ fundamental coweight, and we may assume this multiple to be as small as possible. The admissible dominant indivisible one-parameter subgroups are the collection 
$
\mathfrak{S} = \{wx_i | w\in W\}.
$
The OPS $wx_i$ is dominant for $wBw^{-1}$ and not $B$ (unless $w=e$), so we change basis using $w^{-1}$ if needed. 

The map $\phi_{wx_i}$ is the inclusion of the basepoint $eP({wx_i})$
$$
\{pt\} = T/T \hookrightarrow G/P({wx_i}).
$$
The only cohomology class pulling back nontrivially is $[X_{w_0w^{-1}}]$. This gives the inequality 
$$
\mu(wx_i)+\lambda(w_0x_i)\le 0,
$$
which, finding $j$ such that $x_j = -w_0x_i$, and setting $v = ww_0$, becomes the more familiar inequality 
$
(-\mu)(vx_j)\le \lambda(x_j).
$

\subsubsection{The rays}
To avoid overwhelming notation, we will write our calculations assuming $w=e$ and then ``change bases'' back to report the extremal rays. 

Fix a face $(x_i, e, w_0)$. 
The minimal length representative for $w_0$ in $W/W_P$ is $w_0w_0^P$, where $w_0^P$ is the longest element of $W_P$. 
We claim there is only one type I ray datum on this face. It corresponds to $s_j$ where $\alpha_j = -w_0\alpha_i$. 
To see this, we have $s_kw_0w_0^P\to w_0w_0^P$ if and only if $w_0^Pw_0(\alpha_k)\prec 0$, so $w_0^P(-w_0\alpha_k)\succ 0$. Now, $-w_0\alpha_k$ is a simple root. If it belongs to $\Phi_L$, then $w_0^P$ sends it to $\Phi^-$. Thus it does not belong to $\Phi_L$. There is only one such simple root: $\alpha_i$. Thus $\alpha_k = \alpha_j$. 

Let $(\mu, \lambda)$ denote the corresponding type I ray. 
If $s_jw_0w_0^P\to s_ks_jw_0w_0^P$ and $s_ks_jw_0w_0^P$ belongs to $W^P$, then $s_ks_jw_0w_0^P = w_0w_0^P$, as there is a unique element of $W^P$ of maximal length. Thus there is only one nonzero coefficient in $\lambda$; $\lambda = \omega_j$. To determine $\mu$, we use Theorem \ref{formulaONE}(b) and Proposition \ref{approxtheory}.  For $\wh u = s_jw_0$,
$$
f_{\wh u} = 1\otimes P_{s_i} + P_{s_j}\otimes 1 = 1\otimes \omega_i + \omega_j\otimes 1,
$$
which restricts and ``integrates'' to $\omega_i+\omega_j$. Therefore $\mu = \omega_i = -w_0\omega_j$. 

Changing bases back again, this is the ray 
$
(w\omega_i, \omega_j),
$
which indeed satisfies the face equality 
$$
w\omega_i(wx_i) + \omega_j(w_0x_i) = \omega_i(x_i)+w_0\omega_j(x_i) = \omega_i(x_i)-\omega_i(x_i) = 0.
$$
Our ray can also be written $(-ww_0\omega_j, \omega_j)$, complying with Proposition \ref{easyKostka}. Moreover, as $w$ and $j$ vary, we produce all rays of $\mathcal{C}(T\subset G)$. 

The face $(x_i, e, w_0)$ has several other rays induced from the Levi pair $T/\langle x_i\rangle \subset L(x_i)/\langle x_i\rangle$. This cone's extremal rays are of the form 
$$
(-v\omega_k^L, \omega_k^L),
$$
where $v\in W_P$ and $\omega_k^L\in X^*(T/\langle x_i\rangle)$ pairs to $1$ with $\alpha_k^\vee$, $0$ with $\alpha_\ell^\vee$ for $\ell\ne k$ but $\alpha_\ell\in \Phi^P$, and pairs to $0$ with $x_i$. 
Such an extremal ray is induced to 
$$
(\nu, \wh\nu) = (-v\omega_k^L, w_0w_0^P\omega_k^L) - w_0w_0^P \omega_k^L(\alpha_j^\vee) (-w_0\omega_j, \omega_j).
$$
Let us try to simplify these formulas for $\nu, \wh \nu$. 
First, let $\alpha_m = -w_0^P\alpha_k$. We guess that $\wh \nu = -w_0\omega_m$. This can be verified by the following pairings. First, clearly $\wh \nu(-w_0\alpha_i^\vee) = \wh \nu(\alpha_j^\vee) =0$. For $\ell\ne i$, 
$$
\wh \nu(-w_0\alpha_\ell^\vee) = w_0w_0^P\omega_k^L(-w_0\alpha_\ell^\vee) - 0 = \omega_k^L(-w_0^P\alpha_\ell^\vee) = \omega_k(-w_0^P\alpha_\ell^\vee),
$$
which equals $0$ unless $\ell = m$. 

Secondly, we guess that $\nu = vw_0^P\omega_m$. Once again, consider the following pairings. The set $\{-v\alpha_\ell\} = -v\Delta(L)$ forms a base for the root system $\Phi_L$. We have 
$$
\nu(-v\alpha_\ell^\vee) = -v\omega_k^L(-v\alpha_\ell^\vee) - 0 = \omega_k(\alpha_\ell^\vee) = \delta_{k,\ell};
$$
on the other hand, 
$$
vw_0^P\omega_m(-v\alpha_\ell^\vee) = \omega_m(-w_0^P\alpha_\ell^\vee),
$$
which equals $0$ unless $\ell = k$. 
Now pair with $x_i$. By lying on the face, we know that 
$
\nu(x_i)+\wh \nu(w_0x_i)= 0
$,
which implies that $\nu(x_i) = w_0\omega_m(w_0x_i) = \omega_m(x_i)$. Moreover, 
$$
vw_0^P\omega_m(x_i) = \omega_m(x_i)
$$
since $vw_0^P\in W_P$, which stabilizes $x_i$. The simple coroots for any base of $\Phi_L$ together with $x_i$ forms a basis of $\mathfrak{h}$. Therefore $\nu = vw_0^P\omega_m$. 

To change bases back (if applicable), these type II rays are really 
$$
(wvw_0^P \omega_m, -w_0\omega_m);
$$
once again these comply with Proposition \ref{easyKostka}. 

\subsubsection{Illustration of Proposition \ref{sloppyeq}}

Fix a face $(x_i,e,w_0)$ as before. There is one type I ray on this face, so $q=1$. Furthermore, $|\Delta| - |\Delta(P)| = 1$ Therefore we expect $c = 1-1 = 0$. We can check this directly. 

Indeed, suppose $(x,y)$ maps to $0$ under $\op{Ind}$. Express $y = \sum n_k\omega_k^L$ in the basis of fundamental weights for $L/\langle x_i\rangle$. As we calculated above, the second coordinate of $\op{Ind}(x,y)$ is equal to $\sum n_k -w_0\omega_{m(k)}$. If this is $0$, each $n_k$ is forced to be $0$ by linear independence of the fundamental weights for $G$. So $y=0$. But $(x,0)\mapsto (x,0)$, so $x=0$ as well. 

\subsection{The natural embedding $Sp_{2n} \to SL_{2n}$, $n=2,3$}

It is a standard fact that, if $A$ is an invertible linear operator on a vector space $V$ of dimension $2n$ equipped with a symplectic form, and if $A$ preserves the form, then $A$ has determinant $1$. Therefore we have a natural embedding $Sp_{2n}\to SL_{2n}$ for any $n\ge 1$. 

In order to fix notation, we recall a particular description of this embedding from \cite[\S 8]{PR}. Once again set $\omega_n  = \left(\begin{array}{cc} 0 & J_n \\ -J_n & 0 \end{array}\right)$, where $J_n = \left(\begin{array}{ccc} & & 1 \\ & \iddots & \\ 1 & & \end{array}\right)$. The associated group $Sp_{2n}$ is 
$$
Sp_{2n} = \{A\in SL_{2n} | A^t\omega_nA = \omega_n\}
$$
We choose maximal torus and Borel subgroup $T\subset B\subset Sp_{2n}$ to be the subgroups of diagonal and upper-triangular matrices, respectively; i.e., $T = Sp_{2n}\cap \wh T$ and $B = Sp_{2n}\cap \wh B$, where $\wh T\subset \wh B$ are the standard maximal torus and Borel of $SL_{2n}$. Explicitly, 
$$
T = \{\op{diag}(t_1,t_2,\hdots,t_n,t_n^{-1},\hdots,t_2^{-1},t_1^{-1})\}; 
$$
furthermore, a one-parameter subgroup $t\mapsto \op{diag}(t^{a_1},\hdots,t^{a_n},t^{-a_n},\hdots,t^{-a_1})$ is dominant \wrt $B$ if and only if $a_1\ge a_2\ge\hdots \ge a_n\ge 0$. Notably, dominant one-parameter subgroups are also dominant \wrt $\wh B$, so no change of basis (as in Section \ref{changingtime}) is ever necessary. 

\subsubsection{The regular facets}

The set $\mathfrak{S}$ consists of $\delta_j$, for $j = 1,\hdots, n-2,$ or $n$, where 
$$
\delta_j: t\mapsto \op{diag}(\underbrace{t,t,\hdots, t}_{j},1,\hdots, 1, t^{-1},\hdots, t^{-1},t^{-1})\in T.
$$

Each $P(\delta_j)$ is a maximal parabolic (obtained by removing the $j^\text{th}$ simple root), whereas $\wh P(\delta_j)$ has base $\wh \Delta\setminus\{\wh\alpha_j, \wh \alpha_{2n-j}\}$ for each $j<n-1$, and $\wh P(\delta_n)$ is the maximal parabolic with associated Grassmannian $\op{Gr}(n,2n)$. 

Before listing some specific results for the cases $n=2,3$, we answer in the affirmative a question posed by the reviewer on the nature of the extremal rays of $\mathcal{C}(Sp_{2n}\to Sl_{2n})$. 

\begin{proposition}
Every extremal ray of $\mathcal{C}(Sp_{2n}\to Sl_{2n})$ lies on a regular face. 
\end{proposition}

\begin{proof}
The only extremal rays possibly not on a regular face are of the form $(0,\wh\omega_j)$. In fact, we will show these all lie on the same regular face (or do not belong to the cone, as witnessed by this same face). 

Take $\delta = \delta_n$ and 
$\wh w =$ $(\wh s_2\wh s_4\cdots \wh s_{2n-2})\cdots (\wh s_{n-2}\wh s_n\wh s_{n+2})(\wh s_{n-1}\wh s_{n+1})\wh s_n$. In more traditional notation of Schubert varieties for $\op{Gr}(n,2n)$, with $F_\bullet$ a fixed full flag, this gives 
$$
[\wh X_{\wh w}] = [\wh X_{a_\bullet}] = [\{V\in \op{Gr}(n,2n): \dim V\cap F_{a_i}\ge i~\forall i\}]
$$
where $a_{\bullet} = \{1,3,\hdots,2n-1\}$ is the $n$-element subset of $\{1,\hdots,2n\}$ given by $a_i = 2i-1$. 

By \cite[Lemma 4.19]{Coskun}, $\phi_\delta^*([\wh X_{\wh w}]) = [X_e]$. Let us verify that in fact $\phi_\delta^{\odot}[\wh X_{\wh w}] = [X_e]$. One easily checks that $\langle 2\rho, \dot\delta\rangle = \langle 2\rho, 2x_n\rangle = n^2+n$ and that $\langle \wh \rho, \dot\delta\rangle = \langle \wh \rho, 2\wh x_n\rangle = n^2$. Furthermore, with the notation of \cite{Bourbaki}, we identify $\wh w\dot\delta$ with 
$$
\wh w\dot \delta = 2(\epsilon_1+\epsilon_3+\hdots+\epsilon_{2n-1}) - \sum_{i=1}^{2n}\epsilon_i = \epsilon_1-\epsilon_2+\epsilon_3-\hdots+\epsilon_{2n-1}-\epsilon_{2n}. 
$$
It is then straightforward to compute the pairings
$$
\wh\omega_j(\wh w\dot\delta) = (\epsilon_1+\hdots+\epsilon_j)(\wh w\dot\delta) = \left\{
\begin{array}{cc}
1, & j \text{ odd}\\
0, & j \text{ even},
\end{array}
\right.
$$
which give $\langle \wh \rho, \wh w \dot\delta\rangle = n$. Therefore $\langle 2\rho, \dot\delta\rangle - \langle \wh \rho +\wh w^{-1}\wh \rho, \dot\delta\rangle = 0$ is satisfied. 

These latter pairings also establish that the inequality 
$$
\wh \omega_j(\wh w\dot\delta)\le 0
$$
holds with equality for $j$ even and fails for $j$ odd. Thus every ray of the form $(0,\wh\omega_j)$ is on a regular face. 
\end{proof}

For $n=2$, we obtain $5$ inequalities, hence $5$ faces, all from the single one-parameter subgroup $\delta_2$. We detail this below. 

For $n=3$, we obtain $24$ inequalities: $9$ coming from $\delta_1$ and $15$ from $\delta_3$. 

\subsubsection{Case $n=2$}

Below are listed the $5$ inequalities along with the Weyl group data from which they arise. Here $\mu = a_1\omega_1+a_2\omega_2$ and $\wh \mu = b_1\wh\omega_1+b_2\wh\omega_2+b_3\wh\omega_3$ are arbitrary dominant weights. The cohomology calculations were performed using {\tt Sage} \cite{sage} using a modification of the main algorithm in \cite{Kiers}. These results agree with those of \cite[\S 8.8]{PR}, although they write their inequalities in a different basis. 

$ $

\begin{center}
\begin{tabular}{c|c}
$(w,\wh w)$ & inequality \\\hline
$(s_2s_1s_2,\wh s_2)$ & $-a_1-2a_2+b_1+b_3 \le 0$ \\
$(s_1s_2,\wh s_1\wh s_2)$ & $-a_1-b_1+b_3\le 0$ \\
$(s_1s_2, \wh s_3\wh s_2)$ & $-a_1+b_1-b_3\le 0$\\
$(s_2, \wh s_3\wh s_1\wh s_2)$ & $a_1-b_1-b_3\le 0$\\
$(e,\wh s_2\wh s_3\wh s_1\wh s_2)$ & $a_1+2a_2-b_1-2b_2-b_3\le 0$
\end{tabular}
\end{center}

$ $

Let us show how Theorem \ref{hopeful} precludes $(0,0,1,0,0)$ from being an extremal ray. Our set $\mathfrak{T}$ now contains strictly more than $\mathfrak{S}$; in particular, $\delta_1\in \mathfrak{T}\setminus \mathfrak{S}$. Furthermore, the pullback	
$$	
\phi_{\delta_1}^\odot [\wh X_{\wh s_2\wh s_3}] = [X_e]	
$$	
and $\wh X_{\wh s_2\wh s_3} = \wh X_{\wh s_2\wh s_3\wh s_2} = \wh X_{\wh s_3\wh s_2\wh s_3}$ and $\wh X_{\wh s_2\wh s_3}\not\subseteq \wh X_{\wh s_3}$, so $(\wh s_2\wh s_3,\delta_1)\in S_1$. Since 	
$$	
\wh \omega_1(\wh s_2\wh s_3 \delta_1) = 1 \not \le 0,	
$$	
the candidate $(0,0,1,0,0)$ is not a ray. Notably, we expect the inequality for the data $(s_1s_2s_1,\wh s_2\wh s_3,\delta_1)$ to be redundant, and indeed it is 	
$$	
-a_1-a_2+b_1\le 0,	
$$	
which is half the sum of the inequalities $-a_1-2a_2+b_1+b_3\le 0$ and $-a_1+b_1-b_3\le 0$ from the table above. 	
A similar analysis shows $(0,0,0,0,1)$ can't be a ray and that $(0,0,0,1,0)$ must be. 

Now let us find, for example, the extremal rays on the face $\mathcal{F}$ given by the pair $(s_2s_1s_2,\wh s_2)$. From the datum $s_1s_2\xrightarrow{\alpha_2}s_2s_1s_2$, we obtain the $(u, \wh u)$ pair $(s_1s_2,\wh s_2)$. We have $s_1s_2\not\rightarrow s_2$, $s_1s_2\rightarrow s_2s_1s_2$ in $W^P$ and $\wh s_2\rightarrow \wh s_1\wh s_2$, $\wh s_2\not\rightarrow e$, $\wh s_2\rightarrow \wh s_3\wh s_2$ in $\wh W^{\wh P}$. The $a_1$ and $b_2$ coordinates are therefore $0$, and the others are calculated in cohomology: 
\begin{align*}
\phi_{\delta_2}^*\left[\wh X_{\wh s_2}\right]\cdot [X_{s_2s_1s_2}] &= 1[X_e] = a_2[X_e]\\
\phi_{\delta_2}^*\left[\wh X_{\wh s_1\wh s_2}\right]\cdot [X_{s_1s_2}] &= 1[X_e] = b_1[X_e]\\
\phi_{\delta_2}^*\left[\wh X_{\wh s_3\wh s_2}\right]\cdot [X_{s_1s_2}] &= 1[X_e] = b_3[X_e],
\end{align*}
so the datum $s_1s_2\xrightarrow{\alpha_2}s_2s_1s_2$ yields the extremal ray $(0,1,1,0,1)$. 

From the datum $e\xrightarrow{\wh\alpha_2} \wh s_2$, we obtain the $(u,\wh u)$ pair $(s_2s_1s_2,e)$. Although $s_2s_1s_2\xrightarrow{\alpha_1}s_1s_2s_1s_2$ in the Bruhat order, $s_1s_2s_1s_2 = s_2s_1s_2s_1$ is not a minimal-length representative in $W/W_P$, so the $a_1$ coordinate is $0$. Of course $s_2s_1s_2\not\rightarrow s_1s_2$, so $a_2 = 0$ as well. Neither of $\wh s_1, \wh s_3$ is a minimal-length representative in $\wh W/\wh W_{\wh P}$, so $b_1=b_3=0$. We have (from the original deformed cup product) $b_2=1$, so the extremal ray is $(0,0,0,1,0)$. 

These are the only two type I rays on $\mathcal{F}$; note that they are linearly independent according to Lemmas  \ref{ones} and \ref{zeros}. We expect (for dimension reasons) at least $2$ type II rays; let us calculate these.

Here $L_\delta$ is semisimple of type $A_1$ and $\wh L_\delta$ of type $A_1\times A_1$, and the embedding is diagonal. The extremal rays of $\mathcal{C}(L_\delta\to \wh L_\delta)$ are well-known: $(\omega, \omega^1)$, $(\omega, \omega^2)$, and $(0,\omega^1+\omega^2)$, where $\omega^i$ is the fundamental weight for the $i^\text{th}$ factor of $SL_2$ in $\wh L_\delta$. As elements of $\mathfrak{h}^*\times \wh{\mathfrak{h}}^*$ which vanish on $\dot\delta$, these are 
$$
(\omega_1-\frac{1}{2}\omega_2,\wh \omega_1-\frac{1}{2}\wh\omega_2),~~ (\omega_1-\frac{1}{2}\omega_2,\wh\omega_3-\frac{1}{2}\wh\omega_2),~~ (0,\wh\omega_1+\wh\omega_3-\wh\omega_2),
$$
respectively. 

These map to 
\begin{align*}
\left(\omega_1-\frac{1}{2}\omega_2,\frac{1}{2}\wh\omega_1+\frac{1}{2}\wh\omega_2-\frac{1}{2}\wh\omega_3\right) &- \left(\omega_1-\frac{1}{2}\omega_2\right)(\alpha_2^\vee)(0,1,1,0,1) \\
&- \left(\frac{1}{2}\wh\omega_1+\frac{1}{2}\wh\omega_2-\frac{1}{2}\wh\omega_3\right)(\wh\alpha_2^\vee)(0,0,0,1,0)\\&= (1,0,1,0,0),\\
\left(\omega_1-\frac{1}{2}\omega_2,-\frac{1}{2}\wh\omega_1+\frac{1}{2}\wh\omega_2+\frac{1}{2}\wh\omega_3\right) &- \left(\omega_1-\frac{1}{2}\omega_2\right)(\alpha_2^\vee)(0,1,1,0,1) \\
&- \left(-\frac{1}{2}\wh\omega_1+\frac{1}{2}\wh\omega_2+\frac{1}{2}\wh\omega_3\right)(\wh\alpha_2^\vee)(0,0,0,1,0)\\&= (1,0,0,0,1),\\
\left(0,\wh\omega_2\right) - \left(0\right)(\alpha_2^\vee)(0,1,1,0,1) 
&- \left(\wh\omega_2\right)(\wh\alpha_2^\vee)(0,0,0,1,0)= (0,0,0,0,0),
\end{align*}
respectively, under $\op{Ind}$. The first two of these really are extremal rays, but notice that the kernel of $\op{Ind}$ is nontrivial in this case. Note also that, by the symmetry of $\mathcal{F}$ under the Dynkin automorphism of $A_3$, the sets of type I rays and type II rays are invariant under this automorphism (switching indices $1$ and $3$).

Following is a complete list of the extremal rays of the cone $\mathcal{C}(Sp(2)\to SL(4))$ (cf. \cite[\S 8.8]{PR}):

$$
\begin{array}{cc}
(0,1,1,0,1), & (0,0,0,1,0), \\
(1,0,1,0,0), & (1,0,0,0,1), \\
(0,1,0,1,0); & 
\end{array}
$$
furthermore, these constitute the Hilbert basis of the semigroup (so the cone is ``saturated,'' see \cite{PR} for the development of this notion as well as several examples). Interestingly, all of these are type I on some facet.

The kernel of $\op{Ind}$ has rank $c=1$, and we observe that 
$$
1 = c = q-|\wh \Delta| + |\Delta(\wh P)| = 2 -3 + 2,
$$
illustrating again Proposition \ref{sloppyeq}. 

\subsubsection{Case $n=3$}

Below are the $24$ inequalities and extremal rays expressed in the fundamental weight basis: $\mu = a_1\omega_1+a_2\omega_2+a_3\omega_3$; $\wh \mu = b_1\wh\omega_1+b_2\wh\omega_2+b_3\wh\omega_3+b_4\wh\omega_4+b_5\wh\omega_5$. All calculations were done in {\tt Sage}. See \cite[\S 8.9]{PR} for the same results (but expressed in a different basis).

Inequalities coming from the one-parameter subgroup $\delta_1$: 

$ $

\begin{center}
\begin{tabular}{c|c}
$(w,\wh w)$ & inequality \\\hline
$(s_1s_2s_3s_2s_1,\wh s_4\wh s_3\wh s_2\wh s_1)$ & $-a_1-a_2-a_3+b_5 \le 0$ \\
$(s_1s_2s_3s_2s_1,\wh s_4\wh s_5\wh s_2\wh s_1)$ & $-a_1-a_2-a_3+b_3 \le 0$ \\
$(s_1s_2s_3s_2s_1,\wh s_2\wh s_3\wh s_4\wh s_5)$ & $-a_1-a_2-a_3+b_1 \le 0$\\
$(s_2s_1,\wh s_1\wh s_2\wh s_3\wh s_4\wh s_5\wh s_2\wh s_1)$ & $a_3-b_1-b_2-b_3\le 0$ \\
$(s_2s_1,\wh s_2\wh s_3\wh s_4\wh s_5\wh s_3\wh s_2\wh s_1)$ & $a_3-b_2-b_3-b_4\le 0$ \\
$(s_2s_1,\wh s_3\wh s_4\wh s_5\wh s_4\wh s_3\wh s_2\wh s_1)$ & $a_3-b_3-b_4-b_5\le 0$ \\
$(s_1, \wh s_1\wh s_2\wh s_3\wh s_4\wh s_5\wh s_3\wh s_2\wh s_1)$ & $a_2+a_3-b_1-b_2-b_3-b_4\le 0$\\
$(s_1, \wh s_2\wh s_3\wh s_4\wh s_5\wh s_4\wh s_3\wh s_2\wh s_1)$ & $a_2+a_3-b_2-b_3-b_4-b_5\le 0$\\
$(e, \wh s_1\wh s_2\wh s_3\wh s_4\wh s_5\wh s_4\wh s_3\wh s_2\wh s_1)$ & $a_1+a_2+a_3-b_1-b_2-b_3-b_4-b_5\le 0$
\end{tabular}
\end{center}

$ $

Inequalities coming from the one-parameter subgroup $\delta_3$: 

$ $ 

\begin{center}
\begin{tabular}{c|c}
$(w,\wh w)$ & inequality \\\hline
$(s_3s_2s_3s_1s_2s_3,\wh s_4\wh s_2\wh s_3)$ & $-a_1-2a_2-3a_3+b_1+b_3+b_5\le 0$\\
$(s_2s_3s_1s_2s_3,\wh s_4\wh s_1\wh s_2\wh s_3)$ & $-a_1-2a_2-a_3-b_1+b_3+b_5\le 0$\\
$(s_2s_3s_1s_2s_3,\wh s_3\wh s_4\wh s_2\wh s_3)$ & $-a_1-2a_2-a_3+b_1-b_3+b_5\le 0$\\
$(s_2s_3s_1s_2s_3,\wh s_5\wh s_4\wh s_2\wh s_3)$ & $-a_1-2a_2-a_3+b_1+b_3-b_5\le 0$\\
$(s_3s_1s_2s_3,\wh s_3\wh s_4\wh s_1\wh s_2\wh s_3)$ & $-a_1-a_3-b_1-b_3+b_5\le 0$\\
$(s_3s_1s_2s_3,\wh s_5\wh s_4\wh s_1\wh s_2\wh s_3)$ & $-a_1-a_3-b_1+b_3-b_5\le 0$\\
$(s_3s_1s_2s_3,\wh s_5\wh s_3\wh s_4\wh s_2\wh s_3)$ & $-a_1-a_3+b_1-b_3-b_5\le 0$\\
$(s_3s_2s_3,\wh s_5\wh s_3\wh s_4\wh s_1\wh s_2\wh s_3)$ & $a_1-a_3-b_1-b_3-b_5\le 0$\\
$(s_1s_2s_3,\wh s_2\wh s_3\wh s_4\wh s_1\wh s_2\wh s_3)$ & $-a_1+a_3-b_1-2b_2-b_3+b_5\le 0$\\
$(s_1s_2s_3,\wh s_5\wh s_3\wh s_4\wh s_1\wh s_2\wh s_3)$ & $-a_1+a_3-b_1-b_3-b_5\le 0$\\
$(s_1s_2s_3,\wh s_4\wh s_5\wh s_3\wh s_4\wh s_2\wh s_3)$ & $-a_1+a_3+b_1-b_3-2b_4-b_5\le 0$\\
$(s_2s_3,\wh s_5\wh s_2\wh s_3\wh s_4\wh s_1\wh s_2\wh s_3)$ & $a_1+a_3-b_1-2b_2-b_3-b_5\le 0$\\
$(s_2s_3,\wh s_4\wh s_5\wh s_3\wh s_4\wh s_1\wh s_2\wh s_3)$ & $a_1+a_3-b_1-b_3-2b_4-b_5\le 0$\\
$(s_3,\wh s_4\wh s_5\wh s_2\wh s_3\wh s_4\wh s_1\wh s_2\wh s_3)$ & $a_1+2a_2+a_3-b_1-2b_2-b_3-2b_4-b_5\le 0$\\
$(e,\wh s_3\wh s_4\wh s_5\wh s_2\wh s_3\wh s_4\wh s_1\wh s_2\wh s_3)$ & $a_1+2a_2+3a_3-b_1-2b_2-3b_3-2b_4-b_5\le 0$
\end{tabular}
\end{center}

$ $ 

The $15$ extremal rays:

$$
\begin{array}{ccc}
(1, 0, 0, 0, 0, 0, 0, 1), & (0, 0, 0, 0, 0, 0, 1, 0), & (1, 0, 0, 0, 0, 1, 0, 0), \\
 (0, 0, 0, 0, 1, 0, 0, 0), & (1, 0, 0, 1, 0, 0, 0, 0), & (0, 1, 0, 0, 0, 0, 1, 0), \\
  (0, 0, 1, 1, 0, 0, 1, 0), & (0, 0, 1, 0, 0, 1, 0, 0), & (0, 1, 0, 1, 0, 0, 0, 1), \\
   (0, 0, 1, 0, 1, 0, 0, 1), & (0, 1, 0, 0, 1, 0, 0, 0), & (0, 0, 1, 1, 0, 1, 0, 1), \\
    (0, 1, 0, 0, 0, 1, 0, 1), & (0, 1, 0, 1, 0, 1, 0, 0), & (1, 0, 1, 0, 1, 0, 1, 0)
\end{array}
$$

All extremal rays are type I on some face. The various maps $\op{Ind}$ send some extremal rays to $\vec 0$ or to non-extremal rays, such as $(1,0,1,0,1,1,0,1)$.

\begin{comment}

\section{Junk}

\begin{lemma}
Suppose $N\subset L$ is a normal subgroup. Then $N\ltimes U$ is a normal subgroup of $P$.
\end{lemma}

\begin{proof}
Recalling the presentation $P = L\ltimes U$, we must check that 
$$
(l,u)\cdot(n,u')\cdot(l,u)^{-1}
$$
can be expressed in the form $(n',u'')$ for any $l\in L, n\in N, u,u'\in U$. Indeed 
\begin{align*}
lunu'u^{-1}l^{-1}&=lnn^{-1}unu'u^{-1}l^{-1}\\
&=ln\tilde u l^{-1} \text{~~for some $\tilde u\in U$}\\
&=lnl^{-1}u'' \text{~~ for $u'' = l\tilde u l^{-1}\in U$}\\
&=n'u'',
\end{align*}
where $n' = lnl^{-1}\in N$ by normality of $N$ in $L$. 
\end{proof}
\end{comment}

\begin{bibdiv}
\begin{biblist}

\bib{And:2007}{article}{
    AUTHOR = {Anderson, David},
     TITLE = {Double {S}chubert polynomials and double {S}chubert varieties},
      YEAR = {2007},
      NOTE = {available at \\https://people.math.osu.edu/anderson.2804/papers/geomschpolyn.pdf}
}

\bib{Bel}{article}{
	AUTHOR = {Belkale, P.},
	TITLE = {Local systems on $\Bbb{P}^1 - S$ for $S$ a finite set},
	JOURNAL = {Compositio Math.},
	VOLUME = {129},
	YEAR = {2001},
	NUMBER = {1},
	PAGES = {67--86}
}

\bib{B} {article} {
    AUTHOR = {Belkale, P.},
    TITLE = {Extremal rays in the Hermitian eigenvalue problem},
    JOURNAL = {Math. Ann.},
    YEAR = {2018},
     eprint = {https://doi.org/10.1007/s00208-018-1751-3},
}
    
\bib{BKiers} {article} {
    AUTHOR = {Belkale, P.}
    AUTHOR = {Kiers, J.},
    TITLE = {Extremal rays in the Hermitian eigenvalue problem for arbitrary types},
    YEAR = {2018},
    NOTE = {arXiv:1803.03350},
    }

\bib{BK}{article} {
   AUTHOR = {Belkale, P.}
    AUTHOR =  {Kumar, S.},
     TITLE = {Eigenvalue problem and a new product in cohomology of flag
              varieties},
   JOURNAL = {Invent. Math.},
  FJOURNAL = {Inventiones Mathematicae},
    VOLUME = {166},
      YEAR = {2006},
    NUMBER = {1},
     PAGES = {185--228},
  %    ISSN = {0020-9910},
   %MRCLASS = {14M15 (20G05)},
 % MRNUMBER = {2242637},
 %MRREVIEWER = {Harry Tamvakis},
    %   DOI = {10.1007/s00222-006-0516-x},
      % URL = {http://dx.doi.org.libproxy.lib.unc.edu/10.1007/s00222-006-0516-x},
}

\bib{BKR}{article} {
    AUTHOR = {Belkale, P.}
    AUTHOR = {Kumar, S.}
    AUTHOR = {Ressayre, N.},
    TITLE = {A generalization of Fulton's conjecture for arbitrary groups},
    JOURNAL = {Math. Ann.},
    FJOURNAL = {Mathematische Annalen},
    VOLUME = {354},
    YEAR = {2012},
    NUMBER = {2},
    PAGES = {401--425},
}

\bib{BeS}{article} {
    AUTHOR = {Berenstein, A.}
    AUTHOR =  {Sjamaar, R.},
     TITLE = {Coadjoint orbits, moment polytopes, and the
              {H}ilbert-{M}umford criterion},
   JOURNAL = {J. Amer. Math. Soc.},
  FJOURNAL = {Journal of the American Mathematical Society},
    VOLUME = {13},
      YEAR = {2000},
    NUMBER = {2},
     PAGES = {433--466},
%      ISSN = {0894-0347},
%   MRCLASS = {53D20 (14L24)},
%  MRNUMBER = {1750957},
%MRREVIEWER = {Michel Brion},
%       URL = {https://doi.org/10.1090/S0894-0347-00-00327-1},
}

\bib{BGG}{article} {
    AUTHOR = {Bern\v ste\u\i n, I. N.}
    AUTHOR ={Gel\cprime fand, I. M.}
    AUTHOR=  {Gel\cprime fand, S. I.},
     TITLE = {Schubert cells, and the cohomology of the spaces {$G/P$}},
   JOURNAL = {Uspehi Mat. Nauk},
  FJOURNAL = {Akademiya Nauk SSSR i Moskovskoe Matematicheskoe Obshchestvo.
              Uspekhi Matematicheskikh Nauk},
    VOLUME = {28},
      YEAR = {1973},
    NUMBER = {3(171)},
     PAGES = {3--26},
    %  ISSN = {0042-1316},
   %MRCLASS = {14M15 (57F15)},
  %MRNUMBER = {0429933},
 %MRREVIEWER = {George R. Kempf},
}

\bib{BL}{book} {
   AUTHOR = {Billey, S.},
   AUTHOR = {Lakshmibai, V.},
   TITLE = {Singular Loci of Schubert Varieties},
   SERIES = {Progress in Mathematics},
   VOLUME = {182},
   PUBLISHER = {Birkh\:auser, Boston},
   YEAR = {2000},
%   PAGES = {xii+173}
}

\bib{Bourbaki}{book}{
    AUTHOR = {Bourbaki, N.},
    TITLE = {Lie groups and {L}ie algebras, {C}hapters 4--6},
    SERIES = {Elements of Mathematics},
    PUBLISHER = {Springer-Verlag, Berlin},
    YEAR = {2002},
    NOTE = {Translated from the 1968 French original by Andrew Pressley.}
}

\bib{Brion}{article} {
    AUTHOR = {Brion, M.},
    TITLE = {Equivariant Chow groups for torus actions},
    JOURNAL = {Transform. Groups},
    VOLUME = {2},
    YEAR = {1997},
    PAGES = {225--267}	
}

\bib{Coskun}{article}{
    AUTHOR = {Coskun, Izzet},
    TITLE = {Symplectic restriction varieties and geometric branching rules},
    JOURNAL = {Clay Math. Proc.},
    VOLUME = {18},
    YEAR = {2013},
    PAGES = {205--239}
}

\bib{DYNKIN}{article}{
AUTHOR = {Dynkin, E. B.},
     TITLE = {Semisimple subalgebras of semisimple {L}ie algebras},
   JOURNAL = {Mat. Sbornik N.S.},
    VOLUME = {30(72)},
      YEAR = {1952},
     PAGES = {349--462 (3 plates)},
   MRCLASS = {09.1X},
  MRNUMBER = {0047629},
MRREVIEWER = {I. Kaplansky},
}

\bib{Graham}{article}{
	AUTHOR = {Graham, William},
	TITLE = {The class of the diagonal in flag bundles},
	JOURNAL = {J. Differential Geom.},
	VOLUME = {45},
	YEAR = {1997},
	PAGES = {471--487}
}

\bib{Humphreys}{book}{
        AUTHOR = {Humphreys, James},
	TITLE = {Introduction to {L}ie algebras and representation theory},
	SERIES = {Graduate Texts in Mathematics},
	VOLUME = {9},
	PUBLISHER = {Springer-Verlag, New York},
	YEAR = {1972}
}

\bib{KLM}{article}{
    AUTHOR = {Kapovich, M.}
    AUTHOR =  {Leeb, B.}
    AUTHOR = {Millson, J.},
     TITLE = {Convex functions on symmetric spaces, side lengths of polygons
              and the stability inequalities for weighted configurations at
              infinity},
   JOURNAL = {J. Differential Geom.},
  FJOURNAL = {Journal of Differential Geometry},
    VOLUME = {81},
      YEAR = {2009},
    NUMBER = {2},
     PAGES = {297--354},
%      ISSN = {0022-040X},
%   MRCLASS = {53D20 (14N15 22E46 53C35)},
%  MRNUMBER = {2472176},
%MRREVIEWER = {Athanase Papadopoulos},
 %      URL = {http://projecteuclid.org/euclid.jdg/1231856263},
}

\bib{Kempf}{article}{
	AUTHOR = {Kempf, G.},
	TITLE = {Instability in invariant theory},
	JOURNAL = {Ann. Math.},
	VOLUME = {108},
	PAGES = {299-316},
	YEAR = {1978}
}

\bib{Kiers}{article} {
    AUTHOR = {Kiers, J.},
   TITLE = {On the saturation conjecture for $\operatorname{Spin}(2n)$},
   JOURNAL = {Exp. Math.},
   FJOURNAL = {Experimental Mathematics},
 %  VOLUME = {0},
 %  NUMBER = {0},
 %  PAGES = {1--10},
   YEAR = {2019},
 %  DOI = {10.1080/10586458.2018.1537866},
   URL = {https://doi.org/10.1080/10586458.2018.1537866},
   eprint = {https://doi.org/10.1080/10586458.2018.1537866}
%      ISSN = {0025-5831},
  % MRCLASS = {14C17 (14L30 14M15 20G05)},
  %MRNUMBER = {2965248},
%MRREVIEWER = {Dmitry A. Timash\~A$\ll$v},
   %    DOI = {10.1007/s00208-011-0728-2},
      % URL = {http://dx.doi.org.libproxy.lib.unc.edu/10.1007/s00208-011-0728-2},
}

\bib{Kly}{article}{ 
	AUTHOR = {Klyachko, A. A.},
	TITLE = {Stable bundles, representation theory and Hermitian operators},
	JOURNAL = {Selecta Math. (N.S.)},
	VOLUME = {4},
	YEAR = {1998},
	NUMBER = {3},
	PAGES = {419--445}
}

\bib{KTW}{article} {
   AUTHOR = {Knutson, A.}
   AUTHOR = {Tao, T.}
   AUTHOR = {Woodward, C.},
   TITLE = {The honeycomb model of {$\operatorname{GL}_n(\C)$} tensor products. II. Puzzles determine the facets of the Littlewood-Richardson cone},
   JOURNAL = {J. Amer. Math. Soc.},
   FJOURNAL = {Journal of the American Mathematical Society},
   VOLUME = {17},
   YEAR = {2004},
   NUMBER = {1},
   PAGES = {19--48},
}

\bib{Kumar}{article} {
    AUTHOR = {Kumar, S.},
     TITLE = {A survey of the additive eigenvalue problem},
      NOTE = {With an appendix by M. Kapovich},
   JOURNAL = {Transform. Groups},
  FJOURNAL = {Transformation Groups},
    VOLUME = {19},
      YEAR = {2014},
    NUMBER = {4},
     PAGES = {1051--1148},
 %     ISSN = {1083-4362},
 %  MRCLASS = {14M15 (15A42 17B10 17B56 20G20 22E46 51E24)},
 % MRNUMBER = {3278861},
 %MRREVIEWER = {Christian Ohn},
 %      DOI = {10.1007/s00031-014-9287-4},
 %      URL = {http://dx.doi.org.libproxy.lib.unc.edu/10.1007/s00031-014-9287-4},
}

\bib{PR}{article}{
	AUTHOR = {Pasquier, B.},
	AUTHOR = {Ressayre, N.},
	TITLE = {The saturation property for branching rules - examples},
	JOURNAL = {Exp. Math.},
	VOLUME = {22},
	YEAR = {2013},
	PAGES = {299--312},
	NUMBER = {3}

}

\bib{RR}{article}{
	AUTHOR = {Ramanan, S.},
	AUTHOR = {Ramanathan, A.},
	TITLE = {Some remarks on the instability flag},
	JOURNAL = {T\^ohoku Math. J.},
	VOLUME = {36},
	PAGES = {269--291},
	YEAR = {1984}
}
   
\bib{RDist} {article} {
   AUTHOR = {Ressayre, N.},
   TITLE = {Distribution on homogeneous spaces and Belkale-Kumar's product},
   JOURNAL = {Transform. Groups},
   FJOURNAL = {Transformation Groups},
   NOTE = {To appear},
}

\bib{Ress}{article}{
   AUTHOR = {Ressayre, N.},
   TITLE = {Geometric invariant theory and the generalized eigenvalue problem},
   JOURNAL = {Invent. Math.},
   FJOURNAL = {Inventiones Mathematicae},
   VOLUME = {180},
   YEAR = {2010},
   NUMBER = {2},
   PAGES = {389--441},
}

\bib{RessII}{article}{
	AUTHOR = {Ressayre, N.},
	TITLE = {Geometric invariant theory and generalized eigenvalue problem II},
	JOURNAL = {Ann. Inst. Fourier (Grenoble)},
	VOLUME = {61},
	YEAR = {2011},
	NUMBER = {4},
	PAGES = {1467--1491}
}

\bib{RessRich}{article}{
    AUTHOR = {Ressayre, N.}
    AUTHOR = {Richmond, E.},
    TITLE = {Branching Schubert calculus and the Belkale-Kumar product on cohomology},
    JOURNAL = {Proc. Amer. Math. Soc.},
    FJOURNAL = {Proceedings of the American Mathematical Society},
    VOLUME = {139},
    YEAR = {2011},
    NUMBER = {3},
    PAGES = {835--848},

}

\BibSpec{newtype}{
	+{,}{ \textit} {title}
	+{,}{ } {fauth}
%	+{,}{ }   {journal}
%	+{}{ \textbf}  {volume}
%	+{}{ \parenthesize} {date}
	+{}{ \parenthesize} {fyear}
%	+{,}{ } {pages}
	+{,}{ } {note}
	+{.}{} {transition}
%	+{}{ } {review}
}

\bib{sage}{newtype}{
  KEY  = {SageMath},
  AUTHOR = {S\etalchar{+}},
  FAUTH = {The Sage Developers}
%  AUTHOR = {},
  TITLE = {{S}ageMath, the {S}age {M}athematics {S}oftware {S}ystem ({V}ersion 8.0)},
  NOTE = {\url{ http://www.sagemath.org}},
  YEAR = {2009},
  FYEAR = {2017}
}

\end{biblist}
\end{bibdiv}
\vspace{0.05in}

\noindent
Department of Mathematics, Ohio State University, 281 W Lane Ave, OH 43201\\
{{email: kiers.2@osu.edu}}

\end{document}